\DeclarePairedDelimiter\floor{\lfloor}{\rfloor}
\newtheorem{theorem}{Theorem}[section]
\newtheorem{lemma}[theorem]{Lemma}
\newtheorem{conjecture}[theorem]{Conjecture}
\newtheorem{definition}[theorem]{Definition}
\newtheorem{corollary}[theorem]{Corollary}
\newtheorem{proposition}[theorem]{Proposition}
\newtheorem{fact}{Property}
\newtheorem{remark}[theorem]{Remark}
\newtheorem{eg}[theorem]{Example}  
\newcommand{\CC}{\mathbb{C}}
\newcommand{\QQ}{\mathbb{Q}}
\newcommand{\ZZ}{\mathbb{Z}}
\newcommand{\OO}{\mathcal{O}}
\newcommand{\HH}{\mathcal{H}}
\newcommand{\x}{{\bf x}}
\newcommand{\y}{{\bf y}}
\newcommand{\M}[2]{\overline{\mathcal{M}}_{#1, #2}}
\newcommand{\W}[2]{\overline{\mathcal{W}}_{#1, #2}}
\NewDocumentCommand\A{mg}{\IfNoValueTF{#2}
    {\mathscr{H}_{#1}}
    {\mathscr{H}_{#1, #2}}%
}
\newcommand{\Jac}[1]{{\rm Jac}(#1)}
\newcommand{\Gmax}[1]{G_{#1}}
\newcommand{\AW}{E}
\newcommand{\ld}{l}
\newcommand{\LL}{\mathscr{L}}
\newcommand{\mirror}{\Psi}
\newcommand{\pot}{\mathcal{F}}
\newcommand{\apot}[2]{\mathscr{A}^{#1}_{#2}}
\newcommand{\prim}{\zeta}
\newcommand{\chat}[1]{\hat{c}_{#1}}
\newcommand{\Fourptcorr}[4]{\langle #1, \; #2, \; #3, \; #4 \rangle}
\newcommand{\fourptcorr}[4]{\langle #1, \, #2, \, #3, \, #4 \rangle}
\newcommand{\Fiveptcorr}[5]{\langle #1, \; #2, \; #3, \; #4, \; #5 \rangle}
\newcommand{\Sixptcorr}[6]{\langle #1, \; #2, \; #3, \; #4, \; #5, \; #6 \rangle}
\newcommand{\sixptcorr}[6]{\langle #1, \, #2, \, #3, \, #4, \, #5, \, #6 \rangle}
\newcommand{\bgen}{\phi}
\newcommand{\Hessbase}[1]{\bgen_{#1}}
\newcommand{\HessbaseA}[1]{\mirror(\Hessbase{#1})}
\newcommand{\agen}{\theta}
\newcommand{\J}[1]{J_{#1}}
\newcommand{\Jfunc}{\mathscr{J}}
\newcommand{\num}{\ell}
\newcommand{\m}{m}
\newcommand{\n}{n}
\newcommand{\bb}{b}
\newcommand{\expon}{a}
\newcommand{\K}{K}
\newcommand{\V}{C}
\newcommand{\one}{\mathbf{1}}
\newcommand{\phase}[1]{#1}
\newcommand{\valu}{c}
\newcommand{\X}[2]{\mathfrak{#1}_{#2}}
\newcommand{\curve}{\mathscr{C}}
\newcommand{\plaincurve}{C}
\newcommand{\BigL}{\mathfrak{L}}
\newcommand{\row}{\rho^T}
\newcommand{\col}{\rho}
\DeclareMathOperator{\numer}{num}
\DeclareMathOperator{\denom}{denom}
\DeclareMathOperator{\spn}{span}
\DeclareMathOperator{\Ch}{Ch}
\DeclareMathOperator{\wt}{wt}
\DeclareMathOperator{\Res}{Res} 
\newcommand{\nRes}{\widetilde{\Res}}
\newcommand{\pinklight}[1]{\colorbox{red}{#1}}
\begin{document}


       \author{Weiqiang He}
      \address{Department of Mathematics, Sun Yat-sen University, Guangzhou, 510275, China}
       \email{hewq@mail2.sysu.edu.cn}

	 \author{Si Li}
      \address{Yau Mathematical Sciences Center, Tsinghua University, Beijing, 100084, China}
       \email{sili@mail.tsinghua.edu.cn}

	 \author{Yefeng Shen}
      \address{Department of Mathematics,  University of Oregon, Eugene, OR 97403, USA.}
       \email{yfshen@uoregon.edu}

	 \author{Rachel Webb}
      \address{Department of Mathematics, University of Michigan, Ann Arbor, MI 48109, USA.}
       \email{webbra@umich.edu}

       \title{Landau-Ginzburg Mirror Symmetry Conjecture}

       \begin{abstract}

We prove the Landau-Ginzburg mirror symmetry conjecture between invertible quasi-homogeneous polynomial singularities at all genera. That is, we show that the FJRW theory (LG A-model) of such a polynomial is equivalent to the Saito-Givental theory (LG B-model) of the mirror polynomial.

       \end{abstract}





       \maketitle

    \tableofcontents


\section{Introduction}

Mirror symmetry has been a driving force in geometry and physics for the last twenty years.
During that time, we have made tremendous progress in our understanding of mirror symmetry, but several important mathematical questions remain unanswered.

Historically, mathematical research focused on mirror symmetry between Calabi-Yau/Calabi-Yau models or Toric/Landau-Ginzburg models, rarely investigating the Landau-Ginzburg pairs.
This was mainly due to the lack of a mathematical theory for a Landau-Ginzburg (LG) A-model, although there were geometric realizations of the Landau-Ginzburg B-model in various contexts.
In the mid 2000's, Fan, Jarvis and Ruan invented FJRW theory \cite{FJR} motivated by the physical work \cite{W} of Witten.
This invention is a mathematical theory for a Landau-Ginzburg A-model, allowing mathematicians to investigate mirror symmetry between two Landau-Ginzburg models. 
In this paper, we prove a general LG/LG mirror theorem, which can be viewed as a Landau-Ginzburg parallel of the mirror theorem \cite{Givental-equiv, Givental-mirror, LLY, LLY2, LLY3, LLY4} between Calabi-Yau manifolds established by Givental and Lian-Liu-Yau. For a survey on the LG/LG mirror symmetry and an outline of the current and related works, see \cite{Li-LG}. 
For several purely algebraic constructions of LG A-model and their relationships with FJRW theory, see \cite{PV} and \cite{CLL}.

The LG/LG mirror pairs originate from an old physical construction of Berglund-H\"ubsch \cite{BH} that was completed by Krawitz \cite{K}. Let us briefly review this construction, called the BHK mirror \cite{CR}. Let $W:\mathbb{C}^N \to\mathbb{C}$ be a quasihomogeneous polynomial with an isolated critical point at the origin. We define its \emph{maximal group of diagonal symmetries} to be
\begin{equation}\label{eq:diag}
\Gmax{W} =
\left\{(\lambda_1,\dots,\lambda_N)\in(\mathbb{C}^{\times})^N\Big\vert\,W(\lambda_1\,x_1,\dots,\lambda_N\,x_N)=W(x_1,\dots,x_N)\right\}.
\end{equation}
In the BHK mirror construction,  the polynomial $W$ is required to be \emph{invertible} \cite{K, CR}, i.e., the number of variables must equal the number of monomials of $W$. By rescaling the variables, we can always write $W$ as
\begin{equation}\label{eq:form_of_W}
W=\sum_{i=1}^{N}\prod_{j=1}^{N}x_j^{a_{ij}}.
\end{equation}
We denote its \emph{exponent matrix} {by $\AW_W =\left(a_{ij}\right)_{N\times N}$. The mirror polynomial of $W$ is
\[
W^T=\sum_{i=1}^{N}\prod_{j=1}^{N}x_j^{a_{ji}},
\]
i.e., the exponent matrix $E_{W^T}$ of the mirror polynomial is the transpose matrix of $\AW_W$.

The mathematical LG A-model is the FJRW theory of $(W, \Gmax{W})$, and one geometry of the LG B-model is the Saito-Givental theory of $W^T$, where the genus zero theory is Saito's theory of primitive forms of $W^T$ \cite{Saito-primitive} and the higher genus theory is from the Givental-Teleman's formula \cite{G2,T}.
There is a longstanding conjecture that these A- and B-models are equivalent.

\begin{conjecture}[LG Mirror Symmetry Conjecture]\label{LG_conjecture}
Up to a change of variables, the  generating function of the FJRW theory at all genera for $(W, \Gmax{W})$ can be identified with the generating function of the Saito-Givental theory of $W^T$.
\end{conjecture}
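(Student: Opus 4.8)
The plan is to recognize both sides as cohomological field theories (CohFTs) and to exploit the Givental--Teleman reconstruction theorem, which asserts that a semisimple CohFT is uniquely determined by its genus-zero restriction---equivalently, by the underlying Frobenius manifold---together with the canonical $R$-matrix that the Frobenius structure itself prescribes. Granting that both the FJRW CohFT of $(W,\Gmax{W})$ and the Saito--Givental CohFT of $W^T$ are generically semisimple and satisfy Teleman's homogeneity hypotheses, the all-genera statement of Conjecture~\ref{LG_conjecture} reduces to an isomorphism of genus-zero Frobenius manifolds. First I would set up this reduction carefully: verify semisimplicity at a generic point of each Frobenius manifold, and confirm that the higher-genus theories are the Teleman reconstructions of their genus-zero data---on the B-side this holds by the very definition of the Givental--Teleman formula, while on the A-side it is Teleman's theorem applied to the (semisimple, homogeneous) FJRW CohFT.

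Next I would reduce the genus-zero comparison to a finite computation. Using the WDVV equations together with the grading by the central charges $q_i$ (the selection/dimension rule forcing most correlators to vanish), I expect a reconstruction lemma showing that the entire genus-zero potential on each side is generated from the Frobenius algebra at the origin---the pairing and three-point functions---plus a specific finite list of genus-zero four-point functions involving the algebra generators and distinguished ``basic'' insertions. This is the reconstruction half of the argument: both potentials are determined by certain terms of their genus-zero four-point functions.

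The change of variables is supplied by the Berglund--H\"ubsch--Krawitz mirror map. I would make explicit the graded isomorphism of Frobenius algebras between the FJRW state space of $(W,\Gmax{W})$, assembled from the Milnor rings of the $\Gmax{W}$-fixed loci, and the Jacobian ring $\Jac{W^T}$ equipped with its residue pairing, checking that it intertwines the pairings and the gradings and hence extends to an identification of flat coordinates. With this dictionary fixed, the remaining task is to compute the finitely many four-point functions identified above on both sides and match them. On the A-side I would evaluate the FJRW correlators using the concavity and index-zero axioms, together with WDVV to handle the cases not directly accessible; on the B-side I would compute the structure constants of Saito's Frobenius manifold via residue pairings in $\Jac{W^T}$, after pinning down a primitive form and the associated flat coordinates from the Gauss--Manin connection. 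Since invertible polynomials decompose into Fermat, loop, and chain atomic types carrying a Thom--Sebastiani tensor structure on the CohFTs, I would organize the computation type by type and then assemble.

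The hard part will be twofold. First, establishing semisimplicity and the Teleman hypotheses uniformly across all invertible types, so the reduction to genus zero is legitimate---broad (non-concave) sectors on the A-side and degenerate choices of primitive form on the B-side are the delicate points. Second, and most laborious, is the explicit matching of the genus-zero four-point functions: the A-side correlators for chain and loop polynomials can fail to be concave and must be computed indirectly, while the B-side requires controlling the flat structure of the primitive form, which is only implicitly defined. Reconciling these two computations across the full classification, under a single normalization of the mirror map, is where I expect the real difficulty to lie.
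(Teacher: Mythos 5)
Your proposal follows essentially the same route as the paper: reduce to genus zero via the Givental--Teleman reconstruction (together with Milanov's extension of the ancestor potential to the non-semisimple origin), use WDVV plus the dimension and integer-degree constraints to reconstruct both genus-zero potentials from finitely many four-point correlators, transport insertions through Krawitz's mirror isomorphism $\Jac{W^T}\cong\A{W}$, and compute those correlators type-by-type (concavity/orbifold GRR and Gu\'er\'e's formula for non-concave cases on the A-side, the perturbative primitive-form formula on the B-side). The only caveat worth flagging is that Krawitz's Frobenius-algebra isomorphism---your ``dictionary''---is only available when no chain variable of $W$ has weight $1/2$, so your plan, like the paper's theorem, proves the conjecture only in that case.
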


We remark that FJRW theory is defined in \cite{FJR} for any pair $(W, G)$ where $G$ is an \emph{admissible} subgroup of $\Gmax{W}$. 
The BHK mirror construction applies in this more general situation, yielding a mirror partner $(W^T, G^T)$ where $G^T$ is a well-defined group dual to $G$ constructed in \cite{K, BHe}.
Although $G$ is never a trivial group, $G^T$ could be a trivial group. Further more, $G^T$ is a trivial group if and only if $G=\Gmax{W}$. If $G^T$ is nontrivial, we do not know the full mathematical construction of LG B-model for $(W^T, G^T)$.
For this reason, Conjecture \ref{LG_conjecture} is stated only for $\Gmax{W}$ on the A-side.

Krawitz \cite{K} gave a clue to Conjecture \ref{LG_conjecture} by finding an explicit isomorphism that matches the \emph{FJRW ring} of $(W,\Gmax{W})$ to the \emph{Jacobi algebra} of $W^T$ for almost all invertible polynomials (when no variables of $W$ have weight $1/2$); see Theorem \ref{mirror-algebra}.

However, it is much harder to work on the whole conjecture, which requires a thorough understanding of both FJRW theory and Saito-Givental theory. Although 
the powerful Givental-Teleman formula \cite{G2,T} reduces Conjecture \ref{LG_conjecture} to its genus zero part, Conjecture \ref{LG_conjecture} was proved only in a handful of cases previous to our work. These include $A$-type singularities \cite{JKV, FSZ}, 
 $ADE$ (or simple) singularities \cite{FJR}, simple elliptic singularities \cite{KS, MS}, and 
exceptional unimodular singularities \cite{LLSS}.
All these cases were proved with case-by-case calculations and reconstruction on both sides of the mirror. They have one common feature: the \emph{central charge} of $W$ is very small (in fact all no greater than ${7\over6}$). Because of their small central charges, these singularities have special structure, which was used in a critical way to prove Conjecture \ref{LG_conjecture}. The authors of this paper faced several difficulties in extending the techniques from these earlier special results to general singularities with arbitrarily large central charges. We explain two major difficulties and our solutions here:

\begin{enumerate}[leftmargin=*]
\item B-side: a choice of primitive forms. 

Genus zero invariants of the Landau-Ginzburg B-model are determined by a choice of primitive forms, which is equivalent to an appropriate choice of splitting of the Hodge filtration associated to the singularity. Different primitive forms/splittings lead to different  invariants. This is related to the famous holomorphic anomaly \cite{BCOV}. A standard way to identify primitive forms is in terms of the good basis \cite{Saito-primitive} of the Brieskorn lattice of the singularity. Unfortunately, prior to this work, an explicit good basis was known in only a few cases, due to the difficulty of computing higher residue pairings \cite{Saito-residue}. This has long been a major difficulty in studing Landau-Ginzburg B-model invariants. For ADE and exceptional unimodular singularities, there is a unique good basis simply by the degree constraints. For simple elliptic singularities, the correct primitive form is checked in \cite{KS, MS} by hand.

In Theorem \ref{thm-good-basis} of our paper, we identify explicitly a good basis for every invertible polynomial that is mentioned in Conjecture \ref{LG_conjecture}. The good bases defined in Theorem \ref{thm-good-basis} produce the correct genus zero invariants to mirror FJRW theory (Theorem \ref{FM_iso_thm}). To establish Theorem \ref{thm-good-basis}, we find a refinement of the degree constraints using the maximal symmetry group. This allows us to compute higher residue pairings explicitly for all invertible polynomials and study their Hodge-theoretic properties in terms of concrete data. \\

\item A-side: computation of FJRW invariants. 

Aside from the algebraically nice (\emph{concave}) invariants that can be computed by orbifold Grothendieck-Riemann-Roch formulas, it is very difficult to directly compute FJRW invariants.
All previous results have used case-by-case methods to reconstruct genus zero invariants from concave ones. This method grows intractably complex as the central charge increases. In fact, the degree constraints (which is the most powerful tool in small central charges) is no longer useful since in many cases we will inevitably run into some FJRW invariants which are not known how to compute based on the current technique.  

In this work, we systematically explore the combinatorial properties of Landau-Ginzburg models. Using the classification from \cite{KreS} of invertible polynomials in terms of \emph{atomic types} (see Theorem \ref{atomic_thm}), we prove Theorem \ref{reconstruction_theorem}, a strong reconstruction and computation theorem for both A-model and B-model invariants. This theorem states that for any invertible polynomial, the full genus zero data is determined by its Frobenius algebra and some special invariants (called 4-point correlators) which are of atomic type only. The A-side special invariants can be exactly calculated by algebraic methods. 
This solves the A-side computation problem. \end{enumerate}

Our main result in this paper is;

\begin{theorem}\label{main-theorem}
The LG mirror symmetry conjecture holds for all invertible polynomials at all genera when no variables of $W$ have weight $1/2$.
\end{theorem}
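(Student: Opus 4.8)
The plan is to prove Theorem~\ref{main-theorem} by an \emph{all-genus reconstruction} strategy: reduce the equality of the full generating functions on both sides to a finite, checkable set of genus-zero data, and then verify that this data matches. The key structural input is Givental-Teleman's classification of semisimple cohomological field theories (CohFTs). Both the FJRW theory of $(W, \Gmax{W})$ and the Saito-Givental theory of $W^T$ define CohFTs on the same state space (by the Frobenius algebra isomorphism of Krawitz, Theorem~\ref{mirror-algebra}, which holds precisely because no weight equals $1/2$), and after base change to a generic point of the moduli of the singularity these CohFTs become semisimple. By Teleman's theorem, a semisimple CohFT is uniquely reconstructed from its genus-zero restriction together with the action of the $R$-matrix, which in turn is governed by the Frobenius manifold structure (the genus-zero data). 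Hence the first and most conceptual step is to set up both theories as semisimple CohFTs and reduce the all-genus statement to an isomorphism of the underlying genus-zero Frobenius manifolds.

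The second step is to reconstruct each genus-zero Frobenius manifold from a minimal set of correlators. On the B-side, Saito's theory of primitive forms produces a Frobenius manifold whose flat structure is controlled by the Jacobi algebra of $W^T$ and whose higher-point functions are determined, via the WDVV equations and the quasi-homogeneous grading (the conformal/Euler structure), by the two- and three-point functions and a small number of \emph{four-point functions}. On the A-side, one uses the FJRW axioms together with the $g=0$ topological recursion relations (WDVV) and the grading by the FJRW degree to show that the entire genus-zero potential is likewise determined by the pairing, the three-point structure constants, and finitely many genus-zero four-point correlators. The reconstruction must be organized along the classification of invertible polynomials into atomic types (Fermat, chain, and loop; see Theorem~\ref{atomic_thm}), so that it suffices to treat the basic atomic blocks and their Sebastiani-Thom sums.

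The third step, which I expect to be the \textbf{main obstacle}, is the explicit computation and matching of the distinguished genus-zero four-point functions on both sides. On the B-side these four-point functions can be extracted from residue formulas for the primitive form of $W^T$; on the A-side they require genuinely hard FJRW computations, since four-point correlators involve the virtual cycle on the moduli of $W$-spin curves and are not purely combinatorial. The difficulty is sharpened by the fact that one must control the \emph{change of variables} (the mirror map identifying flat coordinates on the two Frobenius manifolds) predicted by the conjecture, and ensure that under this map the two sets of four-point functions agree termwise. Assembling these computations for chain and loop atomic types, where the symmetry group $\Gmax{W}$ and its dual interact nontrivially, is where the bulk of the technical work lies.

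Finally, once the genus-zero Frobenius manifolds are shown to be isomorphic under the mirror map, Step one upgrades this to an isomorphism of the full CohFTs, and hence of the all-genus generating functions, completing the proof. The logical skeleton is therefore: (i) semisimplicity plus Givental-Teleman reduces everything to genus zero; (ii) WDVV plus grading reduces genus zero to the two-, three-, and finitely many four-point functions; (iii) a direct computation matches these building blocks across the mirror. The hypothesis that no variable of $W$ has weight $1/2$ enters crucially in (i), since it is exactly the condition under which Krawitz's Frobenius algebra isomorphism is available and the state spaces are identified.
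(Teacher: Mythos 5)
Your proposal follows essentially the same route as the paper: reduce to genus zero via Teleman's classification (together with Milanov's extension of the ancestor potential to the non-semisimple origin), reconstruct each genus-zero potential from the pairing, the ring structure, and finitely many four-point correlators using WDVV and the grading axioms organized by atomic type, and then compute and match those correlators explicitly under Krawitz's mirror map. The paper's Theorems \ref{main-cor}, \ref{FM_iso_thm}, and \ref{LG_pot_thm} implement exactly this skeleton, so your outline is a faithful blueprint of the actual argument.
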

We highlight key ingredients toward establishing this theorem here:
\begin{itemize}[leftmargin=*]
\item (Good basis): Krawitz's mirror map sends a natural basis in A-model to a good basis in B-model (Theorem \ref{thm-good-basis}). In this way, the A-model determines a special good basis and corresponding primitive form, which lead to a concise computation in the B-model. 
\item (Vanishing conditions): Because of its deformation-theoretic origins, the B-model has a certain $G_{W^T}$-symmetry, which forces many invariants to vanish (Lemma \ref{vanishing_lemma}). The corresponding invariants vanish on the A-side for geometric reasons. These vanishing results allow us to state pleasant combinatorial properties of non-vanishing invariants (Lemma \ref{properties_lemma}). 
\item (Splitting principle): The vanishing conditions together with the WDVV equations lead to a crucial \emph{Splitting Principle} (Proposition \ref{two_from_a_poly}): in order to compute all genus zero invariants, it is enough to compute the invariants of the (simpler) atomic polynomials with some special properties.
\item (Atomic reconstruction): Using exhaustive reconstruction techniques, we reconstruct all genus zero invariants for atomic polynomials from a few special invariants, listed in part (1) of Theorem \ref{reconstruction_theorem}. Combined with the Splitting Principle, this shows that all genus-zero invariants are determined by these special invariants.
\item (Atomic formulas): We evaluate on both the A and B side the special invariants that remain after the atomic reconstruction. This is parts (2) and (3) of Theorem \ref{reconstruction_theorem}. 

\end{itemize}

Finally, we remark that the special cases left out in Theorem \ref{main-theorem} are only the invertible polynomials containing a special chain summand\footnote{Conjecture \ref{LG_conjecture} is true for the $A_1$-singularity $W=x^2$ although it also contains a weight $1/2$ variable.} $W=x_1^{a_1}x_2+x_2^{a_2}x_3+\dots+x_{N-1}^{a_{N-1}}x_N+x_N^{a_N}$ with $a_N=2$. Our reconstruction result in fact shows that Conjecture \ref{LG_conjecture} holds once the Frobenius algebras and some genus zero 4-point invariants are identified.
Two such examples are the exceptional unimodular singularities $W=Z_{13}, W_{13}$, for which Conjecture \ref{LG_conjecture} was proved in \cite{LLSS}.
The identification of the Frobenius algebras of the other special cases requires the computation of some unknown FJRW invariants. 
\\

\noindent{\bf Outline.} 
In Section \ref{review-AB}, we review the $A$-model FJRW-theory and $B$-model Saito-Givental theory as well as the mirror construction. 
In Section \ref{section-main-result}, we outline the proof of the main theorem via several reconstruction results. 
In Section \ref{section-b-model}, we find a good basis using Krawitz's mirror map and explore combinatorial properties of non-vanishing invariants. 
In Section \ref{reconstruction-1}, we develop technical preparations for our reconstruction theorem, including WDVV equations, Splitting principle, and atomic reconstruction of Fermat type. We also prove the conjecture for Fermat polynomials as a warm-up towards the general cases. 
In Section \ref{computation}, we prove the atomic formulas via explicit calculations on both sides.
In Section  \ref{chap:reconstruction}, we complete a proof of Theorem \ref{main-theorem} via atomic reconstructions of chain type and loop type.\\

\noindent{\bf Acknowledgements.}
We would like to thank Huijun Fan, Tyler Jarvis, Yongbin Ruan, and Kyoji Saito for their insight and consistent support of this project.
We would like to thank Huai-Liang Chang, Jeremy Gu\'er\'e, Changzheng Li for helpful discussions.
In addition, W.H. would like to thank Pedro Acosta, Huazhong Ke, Dusty Ross.
S.L. and Y.S. would like to thank Hiroshi Iritani and Todor Milanov.  W.H.  is partially supported by grant 11901597 of National Natural Science Foundation of China. 
S.L. is partially supported by grant 11801300 of National Natural Science Foundation of China  and grant Z180003 of Beijing Municipal Natural Science Foundation.  
Y.S. is partially supported by Simons Collaboration grant 587119.
R.W. acknowledges the support of a National Science Foundation Graduate Research Fellowship under Grant No. DGE-1247046.
The authors started their collaboration on this project while they were visiting the University of Michigan for the ``Workshop on B-model aspects of Gromov-Witten Theory" in March, 2014. We thank the university for its hospitality.


\section{A review of the $A$- and $B$-models}\label{review-AB}
\subsection{A-model: FJRW theory}\label{sec:FJRW-theory}

One mathematical construction of an LG A-model was given by Fan, Jarvis, and Ruan \cite{FJR, FJR2}, based on a proposal of Witten \cite{W}.
This construction is called FJRW theory after its creators.
Let $W$ be a nondegenerate quasihomogeneous polynomial and let $G$ be an \emph{admissible} group.
 Briefly speaking, FJRW theory is an intersection theory on the moduli space of solutions to the Witten equation on orbifold curves for the pair $(W,G)$.

We begin with a polynomial $W \in \CC[x_1, \ldots x_N]$ that is \emph{quasihomogeneous}; that is, there exist positive rational numbers $q_1, q_2, \ldots, q_N$ such that
\[
W(\lambda^{q_1}x_1, c^{q_2}x_2, \ldots, \lambda^{q_N}x_N) = \lambda\,W(x_1,x_2, \ldots, x_N), \quad \text{for each} \; \lambda \in \CC^\times.
\]
The numbers $q_1, \ldots, q_N$ are called the \emph{weights} of $W$. The \emph{central charge} of $W$, which can be thought of as the ``dimension'' of the LG theory, is defined by
\begin{equation}\label{central-charge}
\chat{W}=\sum_{j=1}^{N}(1-2q_j).
\end{equation}
We call $W$ \emph{nondegenerate} if it has an isolated critical point at the origin and it contains no monomial of the form $x_ix_j$ for $i\neq j$. This implies that each weight is unique and $q_j\in\mathbb{Q}\cap(0,{1\over2}]$ \cite{Saito-quasihomogeneous}. We call a nondegenerate quasihomogeneous $W$ \emph{invertible} if it has the same number of monomials as variables. We say $W$ is a \emph{disjoint sum} of polynomials $W_1$ and $W_2$ and write $W = W_1 \oplus W_2$ if the variables in $W_1$ and $W_2$ are distinct.

All invertible polynomials have been classified by Kreuzer and Skarke.
\begin{theorem}[\cite{KreS}, Theorem 1]\label{atomic_thm}
A polynomial is invertible if and only if it is a disjoint sum of the three following atomic types, where $a\geq2$ and $a_i\geq2$:
\begin{itemize}
\item \emph{Fermat:}	  $ x^a.$
\item \emph{Chain:}		$x_1^{a_1}x_2 + x_2^{a_2}x_3 + \ldots + x_{N-1}^{a_{N-1}}x_N + x_N^{a_N}.$
\item \emph{Loop:}		$x_1^{a_1}x_2 + x_2^{a_2}x_3 + \ldots + x_N^{a_N}x_1.$
\end{itemize}
\end{theorem}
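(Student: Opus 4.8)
The plan is to work entirely with the \emph{exponent matrix} $E_W = (a_{ij})$ and reduce the classification to a combinatorial statement about which variables occur in which monomials. The ``if'' direction is routine verification. For each atomic type I would check that $E_W$ is nonsingular, so that the weights are uniquely determined by the quasihomogeneity equations $E_W\,\mathbf{q} = \mathbf{1}$, and that the singularity is isolated by exhibiting a finite-dimensional Milnor algebra. For Fermat $x^a$ the Milnor algebra is $\CC[x]/(x^{a-1})$; for the chain and loop the bidiagonal (resp.\ cyclic bidiagonal) shape of $E_W$ gives $\det E_W = \prod_i a_i$ (resp.\ $\prod_i a_i + (-1)^{N+1}$), which is nonzero since each $a_i \geq 2$, and a direct computation shows a power of each $x_j$ lies in the Jacobian ideal. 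Because the Milnor algebra of a disjoint sum is the tensor product of the factors, any disjoint sum of atomic types is again invertible. The substance of the theorem is the converse.

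For the ``only if'' direction I would extract from the isolated-critical-point hypothesis a combinatorial criterion on monomials. The key necessary condition is that each variable $x_j$ must be \emph{pointed at}: $W$ contains a monomial $x_j^{a}$ (a Fermat pointer) or $x_j^{a} x_k$ with $x_j$ to a power $a \ge 2$ and $x_k$ to the first power (a chain/loop pointer). Indeed, isolated singularity is equivalent to the Jacobian ideal $(\partial_1 W, \ldots, \partial_N W)$ containing a pure power of every variable, and a leading-term analysis of $\partial_k W = \sum_\ell a_{\ell k}\, M_\ell / x_k$ shows this forces some monomial to be a pointer; otherwise one produces a positive-dimensional critical locus along a coordinate subspace. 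Granting this, a counting argument finishes the structural reduction: a pointer monomial points at a single variable, so covering all $N$ variables with at most $N$ monomials forces every monomial to be a pointer and the pointing map to be a bijection. In particular no monomial can involve three or more variables, nor two variables both to a power $\ge 2$, since such a monomial is not a pointer at all. After relabeling, every $M_i$ is therefore $x_i^{a_i}$ or $x_i^{a_i} x_{\pi(i)}$.

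It remains to understand the function $\pi$. I would record one further consequence of the hypothesis: no variable is the first-power factor of two distinct monomials, i.e.\ $\#\{i : M_i = x_i^{a_i} x_k\} \le 1$ for each $k$. This is strictly stronger than matrix-invertibility: the matrix with rows $(a,0,1)$, $(0,b,1)$, $(0,0,c)$ is nonsingular, yet $x_1^a x_3 + x_2^b x_3 + x_3^c$ has a curve of critical points. Viewing $\pi$ as a functional graph on $\{1,\ldots,N\}$, with Fermat monomials giving self-loops and two-variable pointers giving the edge $i \to \pi(i)$, every vertex has out-degree one, so each connected component contains a unique cycle. The linear-pointer bound forces every vertex on a cycle of length $\ge 2$ to have no incoming tree edge, so that component is a bare cycle, a loop summand; a component whose cycle is a self-loop is a simple path feeding into a Fermat monomial, a chain summand (a lone self-loop being the degenerate chain $x^a$). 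Since variables in different components never share a monomial, $W$ is the disjoint sum of these chain and loop summands, as claimed.

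The main obstacle is the necessary condition of the second paragraph: converting ``isolated critical point'' into the existence of pointers. Showing that $x_j^{m_j} \in (\partial_1 W, \ldots, \partial_N W)$ forces a monomial of the restricted shape $x_j^a$ or $x_j^a x_k$ requires a careful analysis of the Jacobian ideal of a quasihomogeneous polynomial, equivalently of the finiteness of its Milnor algebra, together with the uniqueness refinement used in the third paragraph. Once this criterion is in hand, the bijection and the functional-graph decomposition are comparatively mechanical.
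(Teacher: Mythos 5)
The paper offers no proof of this statement at all: it is quoted verbatim from Kreuzer--Skarke (\cite{KreS}, Theorem 1), so the only meaningful comparison is with that source. Your outline is, in substance, the Kreuzer--Skarke argument itself: the pointer lemma (every variable must occur in a monomial $x_j^a$ or $x_j^a x_k$ with $a\geq 2$, since otherwise every partial derivative vanishes identically on the $x_j$-axis and the critical locus is positive-dimensional), the counting argument showing that with $N$ monomials and $N$ variables the pointing map is a bijection and every monomial is a pointer, and the functional-graph decomposition into bare cycles (loops) and directed paths feeding a self-loop (chains, with a lone self-loop being a Fermat). All of these steps are sound, and your example $x_1^a x_3 + x_2^b x_3 + x_3^c$ correctly isolates why nonsingularity of $E_W$ alone is not enough: one genuinely needs isolatedness to exclude two monomials sharing a first-power factor.

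The one place your write-up stops short of a proof is precisely that in-degree bound, which you state, motivate, and defer. It closes by the same coordinate-subspace trick as the pointer lemma, and at the point where you need it you may already assume (from your second paragraph) that every monomial has the form $x_i^{a_i}$ or $x_i^{a_i}x_{\pi(i)}$ with $a_i\geq 2$ and $\pi(i)\neq i$. Suppose $\pi(i_1)=\pi(i_2)=k$ with $i_1\neq i_2$; note $k\notin\{i_1,i_2\}$. Restrict to the plane $P$ on which all coordinates except $x_{i_1},x_{i_2}$ vanish. For every $m\notin\{i_1,i_2\}$, each partial $\partial_j M_m$ retains a positive power of $x_m$ (power $a_m\geq 2$ if $j\neq m$, power $a_m-1\geq 1$ if $j=m$), hence vanishes on $P$; and $\partial_{i_1}M_{i_1}$, $\partial_{i_2}M_{i_2}$ retain the factor $x_k$, hence also vanish on $P$. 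The full critical system on $P$ therefore reduces to the single equation $\partial_k W|_P = x_{i_1}^{a_{i_1}}+x_{i_2}^{a_{i_2}}=0$, whose zero set is a curve through the origin, contradicting isolatedness. With that lemma supplied, your graph-theoretic finish goes through verbatim, so the proposal is a correct reconstruction of the cited classification rather than a new route to it.
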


Finally, we define $\Gmax{W}$ to be the \emph{maximal group of diagonal symmetries} of $W$ in \eqref{eq:diag}. Since our goal is to prove the LG Mirror Symmetry Conjecture \ref{LG_conjecture}, in what follows we only discuss the FJRW theory of $(W,\Gmax{W})$ for invertible polynomials $W$ with the form in \eqref{eq:form_of_W}.

\subsubsection{The state space}
The FJRW theory of a pair $(W, \Gmax{W})$ is a \emph{state space}\footnote{The state space of a pair $(W,G)$ is typically denoted $\A{W}{G}$. Because we restrict our attention to $G=\Gmax{W}$, we will consistently drop the group from our notation.} $\A{W}$ and a cohomological field theory $\{\Lambda_{g,k}^W\}$, which is a set of linear maps
$$\Lambda_{g,k}^W: (\A{W})^{\otimes k}\to H^*(\M{g}{k})$$
for  $2g-2+k>0$.
Here $\M{g}{k}$ is the moduli space of stable $k$-pointed curves of genus $g$. The state space is defined as
\[
\A{W}= \bigoplus_{\gamma \in \Gmax{W}} \A{\gamma} \qquad \text{where} \qquad \A{\gamma}:= \left( H^{N_{\gamma}}({\rm Fix}(\gamma), W_{\gamma}^{\infty}; \CC )\right)^{\Gmax{W}}.
\]
Here ${\rm Fix}(\gamma)$ is the fixed locus of $\gamma$ and $N_\gamma$ is its dimension as a $\CC$-vector space. Furthermore, $W_{\gamma}$ is the restriction of $W$ to ${\rm Fix}(\gamma)$, and $W_{\gamma}^{\infty}$ is ${\rm Re}(W_{\gamma})^{-1}((M, \infty))$ for $M\gg0$. Thus, $\A{W}$ is the dual to the space of Lefschetz thimbles.

For each class $\xi \in \A{\gamma}$, we call $\gamma$ the \emph{sector} of $\xi$. If ${\rm Fix}(\gamma)=0\in\CC^N$,  we say that $\gamma$ is \emph{narrow}; otherwise we say it is \emph{broad}. Note that if $\gamma$ is narrow then $\A{\gamma}$ is 1-dimensional.

There is an alternative expression for $\A{W}$. Let
\[
\Jac{W} = \CC[x_1, \ldots x_N]/\left({\partial W\over\partial x_1},\ldots,{\partial W\over\partial x_N}\right)
\]
be the \emph{Jacobi algebra} of $W$.
It is a theorem of Wall (see \cite{Wa1} and \cite{Wa2}) that the vector space $H^{N_{\gamma}}(\CC^N, W_{\gamma}^{\infty}; \CC )$ is isomorphic to $\Jac{W_{\gamma}}d\mathbf{x}_{\gamma}$, where $d\mathbf{x}_{\gamma}$ is the product of the differentials of the variables fixed by $\gamma$.
Thus,
\[
\A{W}=\bigoplus_{\gamma\in \Gmax{W}}\left( \Jac{W_{\gamma}}d\mathbf{x}_{\gamma}\right)^{\Gmax{W}}.
\]
With this identification, we write $\xi = \lceil m\; ; \; \gamma\rfloor$ where $\xi$ corresponds to the monomial $m\in \Jac{W_{\gamma}}$.

We define a grading on $\A{W}$ as follows.
Since $\Gmax{W}$ is a finite abelian group, for any element $\gamma \in \Gmax{W}$, we may write $\gamma =\left(\exp(2 \pi \sqrt{-1} \Theta_\gamma^{(1)}), \ldots, \exp(2 \pi \sqrt{-1} \Theta_\gamma^{(N)})\right)$ for some unique $\Theta_\gamma^{(j)} \in [0,1)$. The number $\Theta_\gamma^{(j)}$ is called the \emph{$j$-th phase} of $\gamma$.
For $\xi\in \A{\gamma}$, we define
\begin{equation}\label{A-model-degree}
\deg_W(\xi) = {1\over 2}N_\gamma+\sum_{j=1}^N(\Theta_\gamma^{(j)}-q_j).
\end{equation}
Note that the degree of $\xi$ depends only on its sector.

We have a pairing $\eta_{\gamma}: \A{\gamma} \times \A{\gamma^{-1}} \rightarrow \CC$ which is induced by the intersection pairing on Lefschetz thimbles. The direct sum of these pairings gives us a nondegenerate pairing
$$\langle \; , \; \rangle: \A{W} \times \A{W} \rightarrow \CC.$$
Under the identification of $\A{\gamma}$ with $ (\Jac{W_{\gamma}}d\mathbf{x}_{\gamma})^{\Gmax{W}}$, this pairing is equal to the residue pairing on differential forms. See \cite{Ce, CIR} for expositions of this fact.

\subsubsection{The cohomological field theory}
The construction of the cohomological field theory $\{\Lambda_{g,k}^W\}$ is highly nontrivial. We will only summarize it here, and refer the interested reader to the original papers \cite{FJR} and \cite{FJR2} for more details.

The construction uses the moduli space of stable $W$-orbicurves. Let $\curve$ be a stable orbicurve of genus $g$ with marked points $p_1, \ldots, p_k$.
At each marked point and node we have a local chart $\CC/\ZZ_m$ for some positive interger $m$.
We require that the actions on the two branches of a node be inverses.

Let $\rho: \curve \rightarrow \plaincurve$ be the forgetful morphism from the orbifold curve $\curve$ to the underlying coarse curve.
If $W=\sum_{i=1}^{n}\prod_{j=1}^{n}x_j^{a_{ij}}$ is invertible, a $W$-structure consists of data $(\curve, \BigL)$ where $\BigL$ is a set of orbifold line bundles $\{\LL_1, \ldots \LL_N\}$ over $\curve$ satisfying
\[
\bigotimes_{j=1}^{N} \LL_j^{\otimes \expon_{ij}} \cong \rho^*\left(K_{\plaincurve} \otimes\bigotimes_{j=1}^{k} \OO(p_j)\right) \qquad \text{for each} \; i,
\]
where $K_{\plaincurve}$ is the canonical bundle of $\plaincurve$ and $\OO(p_j)$ is the holomorphic line bundle of degree one whose sections may have a simple pole at $p_j$.

If the local group at a marked point of an orbicurve is $\ZZ_m$, the line bundles $\LL_1, \ldots \LL_N$ induce a representation $\ZZ_m \rightarrow (\CC^{\times})^N$. The representation is required to be faithful. The image of this representation will always be in $\Gmax{W}$. The image of $1\in\ZZ_m$ singles out some $\gamma\in\Gmax{W}$ at each marked point; these group elements are called the \emph{decorations}.

Given an invertible polynomial $W$, the moduli space of pairs $(\curve, \BigL)$ is called the \emph{moduli space of stable $W$-orbicurves} and denoted by $\W{g}{k}$. According to \cite{FJR}, it is a Deligne-Mumford stack, and there is a forgetful morphism ${\rm st}: \W{g}{k}\to\M{g}{k}$. The forgetful morphism is flat, proper, and quasi-finite (see Theorem 2.2.6 of \cite{FJR}).
The decorations $\gamma_i$ at the marked points $p_i$ decompose $\W{g}{k}$ into open and closed substacks $\W{g}{k}(\gamma_1,\ldots,\gamma_k)$.
Furthermore, the stack $\W{g}{k}(\gamma_1,\ldots,\gamma_k)$ is stratified, and each closure in it is denoted by $\W{g}{k}(\Gamma_{\gamma_1,\ldots,\gamma_k})$ for some $\Gamma_{\gamma_1,\ldots,\gamma_k}$. Here $\Gamma_{\gamma_1,\ldots,\gamma_k}$ is called a \emph{$\Gmax{W}$-decorated dual graph} of an underlying stable curve of genus $g$ and $k$ marked points. We call $\Gamma_{\gamma_1,\ldots,\gamma_k}$ \emph{fully $\Gmax{W}$-decorated} if we assign some $\gamma_+\in\Gmax{W}$ and $\gamma_-=\gamma_+^{-1}$ on two sides of each node.

In \cite{FJR2} the authors perturb the polynomial $W$ to polynomials of Morse type and construct virtual cycles from the solutions of perturbed Witten equations. That is, they construct
$$[\W{g}{k}(\Gamma_{\gamma_1,\ldots,\gamma_k})]^{\rm vir}\in H_{*}(\W{g}{k}(\Gamma_{\gamma_1,\ldots,\gamma_k}),\CC)\otimes\prod_{j=1}^{k}\A{\gamma_j}.$$
As a consequence, they obtain a cohomological field theory $\{\Lambda_{g,k}^W:\A{W}^{\otimes k}\to H^*(\M{g}{k},\CC)\}$ with a flat identity $\lceil 1\; ; \; \J{W}\rfloor$, where
\[
\Lambda_{g,k}^W(\xi_1, \ldots, \xi_k)
:={|\Gmax{W}|^g\over \deg({\rm st})} {\rm PD}\, {\rm st}_*\left([\W{g}{k}(\gamma_1,\ldots,\gamma_k)]^{\rm vir}\cap\prod_{j=1}^{k}\xi_j\right)
\in H^*(\M{g}{k}).
\]
Here $ {\rm PD}$ is the Poincar\'e dual and $\J{W}$ is the \emph{exponential grading operator}, defined by
\begin{equation}\label{exponent-op}
\J{W} =\left(\exp(2\pi\sqrt{-1}q_1), \ldots,\exp(2\pi\sqrt{-1}q_N)\right)\in \Gmax{W}.
\end{equation}

\subsubsection{The FJRW potential}
The cohomological field theory allows us to define \emph{FJRW invariants} (or \emph{genus-$g$ $k$-point correlators}) as
\[
\langle\xi_1\psi_1^{\ell_1},\cdots,\xi_k\psi_k^{\ell_k}\rangle_{g}^{W}
=\int_{\M{g}{k}}\Lambda_{g,k}^{W}(\xi_1,\cdots,\xi_k) \prod_{i=1}^k\psi_i^{\ell_i}.
\]
Here $\psi_i:=c_1(L_i)$ is the $i$-th psi class, where $L_i$ is the $i$-th tautological line bundle on $\M{g}{k}$.
The invariant is \emph{primary} if there are no psi classes, i.e., $\ell_i=0$ for all $1\leq i\leq k.$ We call the classes $\xi_1, \ldots, \xi_k$ the \emph{insertions} of the correlator.

The FJRW invariants induce various structures on $\A{W}$.
The pairing $\langle,\rangle$ and the primary genus-zero $3$-point correlators define a product $\star$ on $\A{W}$, by
\begin{equation}\label{product_def}
\langle\alpha \star \beta, \gamma\rangle = \langle\alpha,\beta,\gamma\rangle^W_{0}, \quad \text{where} \; \alpha, \beta, \gamma \in \A{W}.
\end{equation}
This definition makes the pairing Frobenius with respect to $\star$, so that the {\rm FJRW} ring $(\A{W}, \star)$ is a commutative and associative Frobenius algebra with the unit $\lceil 1\; ; \; \J{W}\rfloor$.

The primary genus-zero correlators define a Frobenius manifold structure on $\A{W}$. Let $\mathcal{B}$ be a set whose elements are a basis for $\A{W}$. The pre-potential of the Frobenius manifold is
\begin{equation}\label{eq:prepotential}
\pot^{\rm FJRW}_{0,W} = \sum_{k \geq 3} \sum_{(\xi_1, \ldots, \xi_k) \in \mathcal{B}^k }\langle \xi_1, \ldots, \xi_k \rangle_{g}^{W} \frac{t_{\xi_1} \ldots t_{\xi_k}}{k!}.
\end{equation}

The Frobenius manifold pre-potential encodes the genus-0 data of the FJRW theory of $(W,G)$. The FJRW invariants of all genera are encoded in the \emph{total ancestor {\rm FJRW}-potential}
\[
\apot{\rm FJRW}{W} = \exp\left( \sum_{g \geq 0} \hbar^{g-1} \sum_{k \geq 0} \langle \xi_{i_1}\psi_1^{l_1}, \ldots, \xi_{i_k}\psi_k^{l_k} \rangle_{g}^{W} \frac{t_{i_1, l_1} \ldots t_{i_k, l_k}}{k!}\right).
\]

\subsubsection{Properties of the A-model}
Several properties of FJRW theory will be useful in our proof of Landau-Ginzburg mirror symmetry.
First, the following theorem tells us how the FJRW theory of $W$ behaves when $W=W_1\oplus W_2$ is a disjoint sum of the atomic polynomials in Theorem \ref{atomic_thm}. In this case, $\Gmax{W}=\Gmax{W_1}\times \Gmax{W_2}.$

\begin{theorem}[\cite{FJR}, Theorem 4.2.2]\label{tensor_a_thm}
Let $W_1$ and $W_2$ be invertible polynomials with no variables in common. Then as Frobenius algebras,
\[
\A{W_1} \otimes \A{W_2} \cong \A{W_1\oplus W_2}
\]
via the isomorphism $(\lceil m \; ; \; \gamma \rfloor, \lceil n \; ; \; \delta \rfloor) \mapsto \lceil mn \; ; \; \gamma\delta \rfloor $. Moreover,
\begin{align*}
&\Lambda_{g,k}^{W_1\oplus W_2}(\lceil m_1n_1 \; ; \; \gamma_1\delta_1 \rfloor, \; \ldots, \; \lceil m_kn_k \; ; \; \gamma_k\delta_k \rfloor)\\
&=\Lambda_{g,k}^{W_1}(\lceil m_1 \; ; \; \gamma_1 \rfloor, \, \ldots, \,\lceil m_k \; ; \; \gamma_k \rfloor)\, \, \Lambda_{g,k}^{W_2}(\lceil n_1 \; ; \; \delta_1 \rfloor, \, \ldots,\, \lceil n_k \; ; \; \delta_k \rfloor).
\end{align*}
\end{theorem}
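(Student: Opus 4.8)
The plan is to deduce the entire statement from the cohomological field theory factorization (the ``Moreover'' identity), since the Frobenius algebra isomorphism is a formal consequence of it. Indeed, the product $\star$ is defined by the genus-zero three-point correlators through \eqref{product_def}, so the multiplicativity of $\star$ is exactly the $(g,k)=(0,3)$ case of the displayed identity, while the unit $\lceil 1;\J{W}\rfloor$ manifestly maps to $\lceil 1;\J{W_1}\rfloor\otimes\lceil 1;\J{W_2}\rfloor$ under $\J{W_1\oplus W_2}=\J{W_1}\J{W_2}$. It therefore suffices to (i) produce the vector-space isomorphism $\A{W_1}\otimes\A{W_2}\cong\A{W_1\oplus W_2}$ compatibly with the pairing, and (ii) prove the virtual-cycle level Thom--Sebastiani formula that yields the $\Lambda$-identity.

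For (i), since the variables of $W_1$ and $W_2$ are disjoint we have $\Gmax{W_1\oplus W_2}=\Gmax{W_1}\times\Gmax{W_2}$, and for $\gamma\delta=(\gamma,\delta)$ the fixed locus splits as ${\rm Fix}(\gamma\delta)={\rm Fix}(\gamma)\times{\rm Fix}(\delta)$ with $(W_1\oplus W_2)_{\gamma\delta}=(W_1)_\gamma\oplus(W_2)_\delta$. Using Wall's identification of $\A{\gamma\delta}$ with the invariants $(\Jac{(W_1\oplus W_2)_{\gamma\delta}}\,d\mathbf{x}_{\gamma\delta})^{\Gmax{W}}$, the disjointness of variables gives $\Jac{(W_1)_\gamma\oplus(W_2)_\delta}=\Jac{(W_1)_\gamma}\otimes\Jac{(W_2)_\delta}$ and $d\mathbf{x}_{\gamma\delta}=d\mathbf{x}_\gamma\wedge d\mathbf{x}_\delta$; taking $(\Gmax{W_1}\times\Gmax{W_2})$-invariants yields the claimed assignment $(\lceil m;\gamma\rfloor,\lceil n;\delta\rfloor)\mapsto\lceil mn;\gamma\delta\rfloor$. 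The pairing respects this decomposition because, under the residue-pairing description, the residue of a disjoint sum is the product of the residues (a Thom--Sebastiani property of the residue pairing), so $\langle\,\cdot\,,\,\cdot\,\rangle_{W_1\oplus W_2}$ is the tensor of $\langle\,\cdot\,,\,\cdot\,\rangle_{W_1}$ and $\langle\,\cdot\,,\,\cdot\,\rangle_{W_2}$.

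For (ii), the starting point is that a $W$-structure decouples: because the exponent matrix of $W_1\oplus W_2$ is block diagonal, the defining isomorphisms $\bigotimes_j\LL_j^{\otimes a_{ij}}\cong\rho^*(K_{\plaincurve}\otimes\bigotimes_j\OO(p_j))$ separate into the equations for the line bundles indexed by $W_1$-variables and those indexed by $W_2$-variables. Hence a $W$-structure on a fixed orbicurve is precisely a pair consisting of a $W_1$-structure and a $W_2$-structure, and after passing to coarse curves via ${\rm st}$ one obtains an identification $\W{g}{k}^{W_1\oplus W_2}(\gamma_\bullet\delta_\bullet)\cong\W{g}{k}^{W_1}(\gamma_\bullet)\times_{\M{g}{k}}\W{g}{k}^{W_2}(\delta_\bullet)$ compatible with the forgetful maps, where $\gamma_\bullet=(\gamma_1,\dots,\gamma_k)$ and $\delta_\bullet=(\delta_1,\dots,\delta_k)$. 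One then checks that the numerical prefactor $|\Gmax{W}|^g/\deg({\rm st})$ is multiplicative: $|\Gmax{W}|=|\Gmax{W_1}|\cdot|\Gmax{W_2}|$, and the degrees of the quasifinite forgetful maps multiply under the fiber product since $W$-structures are counted as pairs.

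The crux, and the step I expect to be the main obstacle, is the factorization of the virtual cycle itself. Here I would use that the Witten equation for $W_1\oplus W_2$ decouples: since $\partial(W_1\oplus W_2)/\partial x_j$ involves only the $W_1$-variables when $x_j$ is a $W_1$-variable (and symmetrically), a Morse-type perturbation of $W_1\oplus W_2$ can be chosen of the form $W_1^{\rm pert}\oplus W_2^{\rm pert}$, so that the perturbed solution space is the product of the two solution spaces and the associated obstruction theory is the direct sum of the two. The delicate points are to verify that the orientations and the analytic (or cosection-localized) virtual classes are compatible with this product structure, so that $[\W{g}{k}^{W_1\oplus W_2}(\gamma_\bullet\delta_\bullet)]^{\rm vir}=[\W{g}{k}^{W_1}(\gamma_\bullet)]^{\rm vir}\boxtimes[\W{g}{k}^{W_2}(\delta_\bullet)]^{\rm vir}$ as classes on the fiber product, and that this external product is compatible with the Thom--Sebastiani identification of the insertion classes $\lceil m_in_i;\gamma_i\delta_i\rfloor$ with $\lceil m_i;\gamma_i\rfloor\otimes\lceil n_i;\delta_i\rfloor$. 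Granting this, the desired identity follows by pushing forward along ${\rm st}$: the Poincar\'e dual of the ${\rm st}_*$ of an external product over $\M{g}{k}$ is the cup product of the individual ${\rm st}_*$-pushforwards, which, together with the multiplicativity of the prefactor, gives $\Lambda_{g,k}^{W_1\oplus W_2}=\Lambda_{g,k}^{W_1}\cdot\Lambda_{g,k}^{W_2}$ as stated.
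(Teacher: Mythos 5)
The paper offers no internal proof of this statement for you to be compared against: it is imported verbatim from Fan--Jarvis--Ruan as Theorem 4.2.2 of \cite{FJR} (with the virtual cycles constructed in \cite{FJR2}) and is used in this paper as a black box. Judged as a free-standing argument, your proposal has a genuine gap exactly where you write ``granting this'': the factorization
\[
[\W{g}{k}(\gamma_1\delta_1,\ldots,\gamma_k\delta_k)]^{\rm vir}
=[\W{g}{k}(\gamma_1,\ldots,\gamma_k)]^{\rm vir}\boxtimes[\W{g}{k}(\delta_1,\ldots,\delta_k)]^{\rm vir}
\]
is not a delicate verification sitting at the end of the proof --- it \emph{is} the theorem. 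The parts you actually carry out (splitting of $\Gmax{W}$ and of the fixed loci, Wall's identification of the state spaces, the Thom--Sebastiani property of the residue pairing, block-diagonal decoupling of the $W$-structure equations, multiplicativity of the prefactor $|\Gmax{W}|^g/\deg({\rm st})$) are formal bookkeeping and constitute the easy half of FJR's own argument. The substance is that the virtual cycle of \cite{FJR2} --- built from compactified moduli of solutions of perturbed Witten equations, with compactness, gluing, and orientation data, and, in the broad case, taking values in $H_*(\cdot)\otimes\prod_j\A{\gamma_j}$ through Lefschetz thimbles of the Morse perturbation --- is compatible with products. Observing that the Witten equation decouples for a split perturbation $W_1^{\rm pert}\oplus W_2^{\rm pert}$ identifies solution \emph{sets} on a fixed smooth $W$-orbicurve, but says nothing yet about the compactification, the obstruction and orientation data, or the identification of thimble classes under the tensor decomposition of the broad state spaces; deferring those points defers the entire proof.

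A secondary caution: the identification $\W{g}{k}^{W_1\oplus W_2}(\gamma_\bullet\delta_\bullet)\cong\W{g}{k}^{W_1}(\gamma_\bullet)\times_{\M{g}{k}}\W{g}{k}^{W_2}(\delta_\bullet)$ is not literally an equality of stacks as stated. The faithfulness requirement on the local representations $\ZZ_m\to(\CC^\times)^{N_1+N_2}$ forces the orbicurve underlying a $(W_1\oplus W_2)$-structure to be the common refinement of the (generally different) orbicurves underlying its $W_1$- and $W_2$-parts, so the comparison passes through rerigidification morphisms; this is also precisely the point at which the multiplicativity of $\deg({\rm st})$ must be proved rather than read off from the slogan that ``$W$-structures are counted as pairs.'' Your reduction of the Frobenius algebra isomorphism to the $(g,k)=(0,3)$ case of the CohFT identity plus compatibility of pairings is correct, but it shifts no weight: all of it rests on the virtual-cycle factorization you have assumed.
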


\begin{remark}\label{remark-stablization}
The LG mirror symmetry conjecture \ref{LG_conjecture} is known for $A_1$-singularity $W=x^2$. Also if $W=x^2$ then $\A{W}=\CC$. Theorem \ref{tensor_a_thm} implies that for any invertible polynomial $W$, the FJRW theory is invariant under the stabilization $W \to W\oplus y^2$. Because of these facts, from now on, we will assume $a\geq3$ for the Fermat polynomial $x^a$.
\end{remark}

Second, certain vanishing properties of the FJRW correlators will be critical when we reconstruct the pre-potential in \eqref{eq:prepotential}.
In the A-model, these come from two of the so-called correlator ``axioms'', which are summarized in the following proposition.

\begin{proposition}[\cite{FJR}, Proposition 2.2.8 and Theorem 4.1.8]
Let $\xi_i\in \A{\gamma_i}$ and let $\Theta^{(j)}_{\gamma_i}$ be the $j$-th phase of $\gamma_i\in \Gmax{W}$.
If $\langle \xi_1, \ldots, \xi_k \rangle_0^W\neq0$, then the following equalities hold:
\begin{eqnarray}
\sum_{i=1}^k \deg_W(\xi_i) &=& \chat{W} + k -3. \label{dim_ax}\\
\ld_j &\coloneqq & q_j(k-2) - \sum_{i=1}^k \Theta_j^{\gamma_i}\in\mathbb{Z} \quad \text{for}\;\; j=1, \cdots, N. \label{lbd_ax}
\end{eqnarray}
\end{proposition}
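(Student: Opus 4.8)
The two identities are independent selection rules, and the plan is to establish them separately, both by tracking when the genus-zero moduli space of $W$-orbicurves and its virtual class can make the correlator
\[
\langle \xi_1,\ldots,\xi_k\rangle_0^W=\int_{\M{0}{k}}\Lambda_{0,k}^W(\xi_1,\ldots,\xi_k)
\]
nonzero. Equation \eqref{lbd_ax} will be an emptiness criterion for the moduli space, while \eqref{dim_ax} will be a cohomological-degree count against $\dim_\CC\M{0}{k}=k-3$.

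For \eqref{lbd_ax} I would read off the degrees of the orbifold line bundles in a $W$-structure. Quasihomogeneity of $W=\sum_i\prod_j x_j^{a_{ij}}$ with weights $q_j$ means $\sum_j a_{ij}q_j=1$ for every $i$. Taking orbifold degrees on both sides of the defining isomorphism $\bigotimes_j \LL_j^{\otimes a_{ij}}\cong\rho^*(K_\plaincurve\otimes\bigotimes_l\OO(p_l))$, and using that $\rho^*$ preserves degree, gives $\sum_j a_{ij}\deg\LL_j=2g-2+k$ for every $i$. Since $W$ is invertible the exponent matrix $\AW_W=(a_{ij})$ is invertible, so this linear system has the unique solution $\deg\LL_j=q_j(2g-2+k)$, which in genus zero is $q_j(k-2)$. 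On a smooth genus-zero curve the only orbifold points are the markings, where $\LL_j$ has monodromy $\exp(2\pi\sqrt{-1}\,\Theta^{(j)}_{\gamma_i})$, so the coarse (desingularized) bundle $\rho_*\LL_j$ has degree $q_j(k-2)-\sum_i\Theta^{(j)}_{\gamma_i}=\ld_j$. As $\rho_*\LL_j$ is an honest line bundle on a smooth projective curve, $\ld_j\in\ZZ$. Hence if some $\ld_j\notin\ZZ$ no $W$-structure with these decorations exists, $\W{0}{k}(\gamma_1,\ldots,\gamma_k)$ is empty, its virtual class vanishes, and the correlator is zero, contradicting the nonvanishing hypothesis.

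For \eqref{dim_ax} I would compute the complex cohomological degree $D$ of $\Lambda_{0,k}^W(\xi_1,\ldots,\xi_k)$ and require $D=k-3$ for the integral to survive. Since $\mathrm{st}$ is quasi-finite, $\dim_\CC\W{0}{k}(\gamma)=k-3$, so $D$ equals the complex virtual codimension. In the narrow case this codimension is the rank of the obstruction bundle $R^1\pi_*\bigl(\bigoplus_j\LL_j\bigr)$, namely $\sum_j h^1(\LL_j)=\sum_j\bigl(g-1-\deg\rho_*\LL_j\bigr)$ by Riemann--Roch (here $h^0=0$). Substituting $g=0$ and $\deg\rho_*\LL_j=q_j(k-2)-\sum_i\Theta^{(j)}_{\gamma_i}$ and using $\chat{W}=N-2\sum_j q_j$ together with $\deg_W(\xi_i)=\sum_j(\Theta^{(j)}_{\gamma_i}-q_j)$ (narrow, so $N_{\gamma_i}=0$) gives $D=\sum_i\deg_W(\xi_i)-\chat{W}$; setting $D=k-3$ yields \eqref{dim_ax}. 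For broad sectors I would incorporate the middle-dimensional cohomology $\A{\gamma_i}$ of the Milnor fiber, whose internal degree accounts exactly for the $\tfrac12 N_{\gamma_i}$ term in $\deg_W$, so the same bookkeeping closes.

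The main obstacle is precisely this degree formula in the broad case: unlike the narrow case, $[\W{0}{k}(\gamma)]^{\rm vir}$ is not simply the Euler class of a vector bundle, and one must track how the cohomological insertions from $\A{\gamma_i}$ and the index of the (now non-concave) Witten map combine to produce exactly $\chat{W}(g-1)+\sum_i\deg_W(\xi_i)$. Establishing this degree/dimension axiom rigorously is the genuine content of Theorem 4.1.8 of \cite{FJR}; by contrast, the integrality statement \eqref{lbd_ax} is elementary once the orbifold line bundle degrees are computed.
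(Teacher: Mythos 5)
Your proposal is correct and follows essentially the same route as the paper, which does not reprove this proposition but cites it from \cite{FJR} and justifies it exactly as you do: \eqref{lbd_ax} because $\ld_j$ is the degree of the honest line bundle $\rho_*\LL_j$ on the smooth coarse curve (so non-integrality forces the relevant component of $\W{0}{k}$ to be empty), and \eqref{dim_ax} as a degree count for the class $\Lambda_{0,k}^{W}$ against $\dim_\CC\M{0}{k}=k-3$. Your Riemann--Roch computation in the concave case and your deferral of the broad-sector degree formula to Theorem 4.1.8 of \cite{FJR} match the paper's (and FJR's) treatment.
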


Formula \eqref{dim_ax} is called the \emph{Dimension Axiom} because it is a consequence of the degree of the class $\Lambda_{0}^{W}(\xi_1, \ldots, \xi_k)$.
Formula \eqref{lbd_ax} is called the \emph{Integer Degree Axiom} because $\ld_j$ is the degree of the line bundle $\rho_*\LL_j$ on the underlying coarse curve, when that curve is smooth.
Formula \eqref{lbd_ax} follows from the fact that line bundles must have integer degrees, so if $\ld_j \not \in \ZZ$ then the corresponding component of $\W{0}{k}$ is empty. We  call $\ld_j$ the $j^{th}$ \emph{line bundle degree} of $\langle \xi_1, \ldots, \xi_k \rangle_0^W$.

\begin{remark}\label{rmk:lbd_determines_sector}
One useful application of formula \eqref{lbd_ax} is due to Krawitz: if the correlator $\langle \xi_1, \,\xi_2,\, \xi_3 \rangle^W_0$ is nonzero and $\xi_i\in \A{\gamma_i}$, then $\gamma_3 = \J{W}(\gamma_1\gamma_2)^{-1}$. Then from (\ref{product_def}) and the definition of the pairing, $\xi_1 \star \xi_2 \in \A{\gamma_1\gamma_2\J{W}^{-1}}$.
\end{remark}

In the remainder of this paper, we will only use primary genus-zero correlators, so we will drop the genus-subscript $g$ from the correlator notation. Moreover, when context makes the polynomial clear we will suppress $W$, writing a genus-0 A-model correlator as $\langle \xi_1, \ldots, \xi_k \rangle$.


\subsection{B-model: Saito-Givental theory}\label{section-B-model}
 In this section, we follow the B-model convention and use $f$ for a quasihomogeneous polynomial with isolated singularity at the origin:
\[
    f(\lambda^{p_1}x_1,\cdots, \lambda^{p_N}x_N)=\lambda f(x_1,\cdots, x_N).
\]
Outside of this section, $f\equiv W^T$, and $p_i\equiv q_i^T$ is the weight of $x_i$ in $W^T$.

The central charge of $f$ is $\hat c_f=\sum_i (1-2p_i)$. We will always let $d^Nx\equiv dx_1\wedge\cdots\wedge dx_N$.

The Frobenius algebra structure of the B-model is simply $\Jac{f}$ with the grading coming from the quasihomogeneous weights, equipped with the residue pairing.
Note that  $\Jac{f_1\oplus f_2} = \Jac{f_1} \otimes \Jac{f_2}$ (compare Theorem \ref{tensor_a_thm}).

The genus zero invariants (or the Frobenius manifold structure)  are induced from Saito's theory of primitive forms \cite{Saito-primitive}. Since the Frobenius manifold is generically semisimple, the higher genus invariants are given by the famous Givental-Teleman formula \cite{G2, T}.

\subsubsection{Saito's triplet for primitive forms: Brieskorn lattice, higher residue pairing and the good basis.}
Here we review the basics of Saito's theory of primitive forms.
Because we wish to prove Conjecture \ref{LG_conjecture}, we will only discuss the theory for quasihomogeneous $f$.
See \cite{Saito-primitive, Saito-existence, Saito-uniqueness} for discussions of arbitrary isolated singularities.

Let $\Omega^k_{\CC^N,0}$ be the space of germs of holomorphic $k$-forms at the origin in $\CC^N$. Define
$$
     \mathcal H_f^{(0)}=\Omega^N_{\CC^N,0}\llbracket z\rrbracket/(df\wedge+zd)\Omega^{N-1}_{\CC^N,0}
$$
which is a formally completed version of the Brieskorn lattice associated to $f$ (see \cite{Saito-residue}). Here $z$ is a formal variable.
There exists a natural semi-infinite Hodge filtration on $\mathcal H_f^{(0)}$ given by $\mathcal H_f^{(-k)}:= z^k \mathcal H_f^{(0)}$ such that
$$
    \mathcal H_f^{(-k)}/\mathcal H_f^{(-k-1)}\simeq \Omega_f, \quad \mbox{where}\ \Omega_f:=\Omega^N_{\CC^N,0}/df \wedge \Omega^{N-1}_{\CC^N,0}.
$$
We define a natural $\QQ$-grading, or \emph{weight}, on $\Jac{f}$, on $\mathcal H_f^{(0)}$, and on $\Omega_f$ which is generated by
\begin{equation}\label{defn-weight}
  \wt(x_i)=q^T_i, \quad \wt(dx_i)=q^T_i, \quad \wt(z)=1.
\end{equation}
For a homogeneous element of the form $\eta=z^k \phi(x_i)d^Nx$, we have
$$
   \wt(\eta)=\wt(\phi)+k+\sum_{i=1}^N q^T_i.
$$

In \cite{Saito-residue}, K. Saito constructs a \emph{higher residue pairing}
$
   K_f: \mathcal{H}_f^{(0)}\otimes \mathcal{H}_f^{(0)}\to z^{N}\CC[[z]]
$
satisfying the following properties.
\begin{enumerate}
\item $K_f$ is equivariant with respect to the $\QQ$-grading, i.e.,
$$
   \wt(K_f(\alpha, \beta))=\wt(\alpha)+\wt(\beta)
$$
for homogeneous elements  $\alpha, \beta\in \mathcal{H}_f^{(0)}$.
\item $K_f(\alpha, \beta)=(-1)^N \overline{K_f(\beta,\alpha)}$, where the bar operator takes $z\to -z$.
\item $K_f(v(z)\alpha, \beta)=K_f(\alpha, v(-z)\beta)=v(z)K_f(\alpha,\beta)$ for $v(z)\in \CC[[z]]$.
\item The leading $z$-order of $K_f$ defines a pairing
$$
    \mathcal{H}_f^{(0)}/z  \mathcal{H}_f^{(0)}\otimes  \mathcal{H}_f^{(0)}/z  \mathcal{H}_f^{(0)} \to \CC, \quad \alpha\otimes \beta\mapsto \lim_{z\to 0}z^{-N}K_f(\alpha,\beta)
$$
which coincides with the usual residue pairing
$
  \Omega_f\otimes \Omega_f \to \CC.
$
\end{enumerate}
The last property implies that $K_f$ defines a semi-infinite extension of the residue pairing, which explains the name ``higher residue". Following \cite{Saito-primitive}, we define a good section and a good basis for $f$.

\begin{definition}[Good basis] A good section $\sigma$ is a splitting of the projection $\mathcal H_f^{(0)}\to \Omega_f$,
\[
   \sigma: \Omega_f\to \mathcal H_f^{(0)},
\]
such that $\sigma$ preserves the $\QQ$-grading, and $K_f(\mbox{Im}(\sigma), \mbox{Im}(\sigma))\subset z^N \CC$.
A basis of the image $\mbox{Im}(\sigma)$ of a good section $\sigma$ is a good basis of $\mathcal H_f^{(0)}$ (or $f$).
\end{definition}

Equivalently, a good basis consists of  homogeneous elements $\{\eta_\alpha\}\subset \mathcal H_f^{(0)}$ such that $\{\eta_\alpha\}$ represents a basis of $\Omega_f$ and $K_f(\eta_\alpha, \eta_\beta)\in z^N \CC$ for all $\alpha$ and $\beta$.

\begin{eg}\label{eg-ADE}
The ADE singularities are those for which $\hat c_f<1$. For these singularities any homogeneous basis of $\Omega_f$ is a good basis, and any two such choices are ``equivalent'' (i.e. there exists a unique good section) \cite{Saito-primitive}.
\end{eg}

\begin{proposition}\label{prop-sum-good-basis}
Let $f(\x,\y)=f_1(\x)\oplus f_2(\y)$ be the disjoint sum of two isolated quasihomogeneous singularities, where $\x=\{x_1,\cdots,x_{N_1}\}$ and $\y=\{y_1,\cdots,y_{N_2}\}$. If $\{\eta_i(\x)\}_{i \in I}$ and $\{\varphi_\alpha(\y)\}_{\alpha \in A}$  are good bases of $\mathcal H_{f_1}^{(0)}$ and $\mathcal H_{f_2}^{(0)}$ respectively, then $\{\eta_i(\x)\varphi_\alpha(\y)\}_{(i, \alpha) \in I \times A}$ is a good basis of $\mathcal H_f^{(0)}$.
\end{proposition}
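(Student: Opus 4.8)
The plan is to check the two defining conditions of a good basis for the product set $\{\eta_i(\x)\varphi_\alpha(\y)\}$: that it represents a $\CC$-basis of $\Omega_f$, and that all of its pairwise higher residue pairings lie in $z^N\CC$, where $N=N_1+N_2$. The first condition is essentially formal. Since $f_1$ and $f_2$ involve disjoint variables, the Jacobian ideal of $f$ is generated by the two separate Jacobian ideals, so $\Jac{f}\cong\Jac{f_1}\otimes\Jac{f_2}$ (as already noted in this section), and hence $\Omega_f\cong\Omega_{f_1}\otimes_\CC\Omega_{f_2}$ via $\phi_1(\x)\,d^{N_1}\x\otimes\phi_2(\y)\,d^{N_2}\y\mapsto \phi_1\phi_2\,d^{N_1}\x\wedge d^{N_2}\y$. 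Because $\{\eta_i\}$ and $\{\varphi_\alpha\}$ represent bases of $\Omega_{f_1}$ and $\Omega_{f_2}$, their products represent a basis of $\Omega_f$. Moreover each product is homogeneous with $\wt(\eta_i\varphi_\alpha)=\wt(\eta_i)+\wt(\varphi_\alpha)$, so the section it defines preserves the $\QQ$-grading; thus only the pairing condition remains.

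The heart of the argument is a Thom--Sebastiani factorization of Saito's data. I would first promote the isomorphism above to an isomorphism of Brieskorn lattices $\Brieskorn{f}\cong\Brieskorn{f_1}\,\widehat\otimes_{\CC[[z]]}\,\Brieskorn{f_2}$, compatible with the product map on representatives; this follows from $df=df_1+df_2$, which makes the twisted differential $(df\wedge+z\,d)$ split along the two groups of variables. The key input is then the multiplicativity of the higher residue pairing,
\[
K_f\bigl(\eta_i(\x)\varphi_\alpha(\y),\,\eta_j(\x)\varphi_\beta(\y)\bigr)=\pm\,K_{f_1}(\eta_i,\eta_j)\,K_{f_2}(\varphi_\alpha,\varphi_\beta).
\]
The normalization is only a sign, fixed by comparing leading $z$-orders: the left side has leading term $z^N\Res_f$, while the right side has leading term $z^{N_1}\Res_{f_1}\cdot z^{N_2}\Res_{f_2}=z^N\,\Res_{f_1}\Res_{f_2}$, and the residue pairing of a disjoint sum factors up to sign. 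Note that the target weights are automatically compatible, since $K_{f_1}$ lands in $z^{N_1}\CC[[z]]$ and $K_{f_2}$ in $z^{N_2}\CC[[z]]$, so their product lands in $z^{N}\CC[[z]]$.

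Granting this formula, the proposition follows immediately: since $\{\eta_i\}$ and $\{\varphi_\alpha\}$ are good bases, $K_{f_1}(\eta_i,\eta_j)\in z^{N_1}\CC$ and $K_{f_2}(\varphi_\alpha,\varphi_\beta)\in z^{N_2}\CC$, whence their product lies in $z^{N_1+N_2}\CC=z^N\CC$, as required. Combined with the grading-preservation observed above, this exhibits $\{\eta_i\varphi_\alpha\}$ as the image of a good section, i.e. a good basis of $\Brieskorn{f}$.

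I expect the multiplicativity of $K_f$ to be the main obstacle, since $K_f$ is not pinned down by its listed formal properties alone and must be accessed through its construction. I would establish it via the oscillatory-integral (Fourier--Laplace) realization of the higher residue pairing, under which a disjoint sum corresponds to an external tensor product of Gauss--Manin systems and the pairing factorizes by Fubini, the only bookkeeping being the sign from reordering $d^{N_1}\x\wedge d^{N_2}\y$; alternatively one may invoke Thom--Sebastiani for Brieskorn lattices and the compatibility of Saito's pairing with it. A useful simplification in our homogeneous setting is that grading equivariance forces every value $K_f(\eta_i\varphi_\alpha,\eta_j\varphi_\beta)$ to be a single monomial $c\,z^{w}$ with $w=\wt(\eta_i)+\wt(\varphi_\alpha)+\wt(\eta_j)+\wt(\varphi_\beta)$, so the factorization reduces to a finite, weight-by-weight comparison of monomials rather than a full power-series identity.
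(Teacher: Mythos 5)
Your proposal is correct and follows essentially the same route as the paper: the paper's entire proof consists of invoking the factorization $K_f(\eta_i\varphi_\alpha,\eta_j\varphi_\beta)=\pm K_{f_1}(\eta_i,\eta_j)\,K_{f_2}(\varphi_\alpha,\varphi_\beta)$, attributed to the construction of the higher residue pairing in \cite{Saito-residue}, and then concluding exactly as you do. Your additional verifications (the basis of $\Omega_f$, grading preservation, and the proposed Thom--Sebastiani/oscillatory-integral justification of the factorization) are sound elaborations of steps the paper leaves implicit.
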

\begin{proof} It follows from the construction of the higher residue pairing in \cite{Saito-residue} that
$$
   K_f(\eta_i(\x)\varphi_\alpha(\y), \eta_j(\x)\varphi_\beta(\y))=\pm K_{f_1}(\eta_i(\x), \eta_j(\x)) K_{f_2}(\varphi_\alpha(\y), \varphi_\beta(\y)).
$$
The proposition is a direct consequence of this equality.
\end{proof}

A good basis is not unique in general.
Landau-Ginzburg mirror symmetry favors a particular choice of good basis, which we call the \emph{standard basis}.
This basis was used by Krawitz in \cite{K} to describe the mirror map between Frobenius algebras.
We define the standard basis for an atomic polynomial below, and we get a basis for a general invertible polynomial with Proposition \ref{prop-sum-good-basis}.

In this definition and later, we use $\Hessbase{f}$ to denote the element of the standard basis that spans the 1-dimensional subspace of $\Jac{f}$ of highest degree. It is a fact that $\wt(\Hessbase{f}) = \chat{f}$.

\begin{definition}\label{milnor_basis}
The \emph{standard basis} of an atomic polynomial $f$ is $\{\phi_\alpha\}_{\alpha=1}^\mu$, where $\mu=\dim_{\mathbb{C}}\Jac{f}$, $\phi_{\mu} = \Hessbase{f}$, and the monomials $\phi_{\alpha}$ are defined as follows.
\begin{itemize}
\item If $f=x^a$ is a Fermat, then $\{\phi_{\alpha}\} = \{x^{r} \mid 0\leq r\leq a-2\}$ and $\Hessbase{f} = x^{a-2}$.
\item If $f = x_1^{a_1}+x_1x_2^{a_2}+\dots+x_{N-1}x_N^{a_N}$ is a chain, then
\[
\{\phi_{\alpha}\} = \left \{\prod_{i=1}^{N}x_i^{r_i}\right\}_{\mathbf{r}}  \qquad \text{and} \qquad \Hessbase{f} = x_N^{a_N-2}\prod_{i=1}^{N-1}x_i^{a_i-1},
\]
 where $\mathbf{r}=(r_1,\cdots,r_N)$ with $r_i\leq  a_i-1$ for all $i$ and $\mathbf{r}$  is not of the form $ (*, \cdots, *, k, a_{N-2l}-1,  \cdots, 0, a_{N-2}-1, 0, a_{N}-1)$ with $k\geq 1$.
\item If $f=x_1^{a_1}x_N+x_1x_2^{a_2}+\dots+x_{N-1}x_N^{a_N}$ is a loop, then
\[
\{\phi_{\alpha}\} =\left \{\prod_{i=1}^{N}x_i^{r_i} \; \middle | \; 0\leq r_i<a_i \right\} \qquad \text{and} \qquad \Hessbase{f} = \prod_{i=1}^{N}x_i^{a_i-1}.
\]
\end{itemize}
\end{definition}

%
%
%
%

Because we are interested in mirror symmetry, the forms here are dual to the forms in Theorem \ref{atomic_thm}
See Example \ref{eg-mirror} for further clarification.
\begin{theorem}\label{thm-good-basis}
The standard basis in Definition \ref{milnor_basis} is a good basis of $f$.
\end{theorem}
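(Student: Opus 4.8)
The plan is to reduce to the atomic case and then check the two defining properties of a good basis directly. By Proposition \ref{prop-sum-good-basis} a tensor product of good bases of the atomic summands is again a good basis, and the standard basis of a disjoint sum is by definition the product of the standard bases of its summands; so it suffices to treat $f$ of Fermat, chain, or loop type. For such an $f$ I must show, writing each $\phi_\alpha$ for the class $\phi_\alpha\, d^Nx\in\Brieskorn{f}$ carrying no $z$-corrections: (i) the monomials $\{\phi_\alpha\}$ of Definition \ref{milnor_basis} descend to a $\CC$-basis of $\Omega_f$ (equivalently, of $\Jac{f}$, since $\Omega_f = \Jac{f}\cdot d^Nx$); and (ii) $K_f(\phi_\alpha, \phi_\beta)\in z^N\CC$ for all $\alpha,\beta$.

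Claim (i) is purely combinatorial. In the Fermat and loop cases the listed monomials are exactly the standard monomial bases of $\Jac{x^a}$ and of the loop Jacobi ring, and a direct count matches the Milnor number. The chain case is the delicate one: the naive box $\{0\le r_i\le a_i-1\}$ over-counts, and the exclusion of the exponent vectors $\mathbf r=(\ast,\dots,\ast,k,a_{N-2l}-1,\dots,0,a_{N-2}-1,0,a_N-1)$ with $k\ge 1$ is precisely what cuts the set down to a basis of the chain Jacobi ring; this is Krawitz's computation in \cite{K}, which I would invoke.

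For (ii) I first extract everything forced by the grading. Since $\wt(\phi_\alpha\, d^Nx)=\wt(\phi_\alpha)+\sum_i q^T_i$ and $K_f$ is weight-equivariant (property (1)) while taking values in $z^N\CC[[z]]$, each pairing is a single monomial in $z$:
\[
K_f(\phi_\alpha\, d^Nx,\ \phi_\beta\, d^Nx)=c_{\alpha\beta}\, z^{k_{\alpha\beta}},\qquad k_{\alpha\beta}=\wt(\phi_\alpha)+\wt(\phi_\beta)+2\sum_{i=1}^N q^T_i,
\]
and $c_{\alpha\beta}\neq 0$ forces $k_{\alpha\beta}\in\ZZ_{\ge N}$. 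Using $\wt(\Hessbase{f})=\chat{f}=\sum_i(1-2q^T_i)$, the balanced case $\wt(\phi_\alpha)+\wt(\phi_\beta)=\chat{f}$ gives exactly $k_{\alpha\beta}=N$; there $c_{\alpha\beta}$ is the residue pairing (property (4)) and $K_f(\phi_\alpha,\phi_\beta)\in z^N\CC$ as required. When $\chat{f}<1$ no other integer value of $k_{\alpha\beta}$ is possible, which recovers Example \ref{eg-ADE}.

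The main obstacle is the resonant case $k_{\alpha\beta}\in\ZZ_{>N}$, which genuinely occurs for loops: when the weights coincide, the top monomial $\Hessbase{f}$ and a neighbor can have weights summing to an integer exceeding $N$, and grading alone does not then force $c_{\alpha\beta}=0$. I would dispatch these in two steps. First, Saito's skew-symmetry (property (2)) yields $c_{\alpha\beta}=(-1)^{N+k_{\alpha\beta}}c_{\beta\alpha}$, which already annihilates every diagonal resonance of odd parity. For the surviving (necessarily off-diagonal) resonances, grading and symmetry are insufficient, and I would compute $K_f$ directly from Saito's construction \cite{Saito-residue}, using that for a quasihomogeneous $f$ the standard volume form $d^Nx$ is a primitive form: quasihomogeneous rescaling makes every monomial period a pure power of $z$, so the resonant coefficient is controlled by the pairing on the corresponding monodromy eigenspaces, and the claim reduces to the vanishing of this pairing on the resonant standard monomials. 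Carrying out this vanishing case-by-case for chains and loops — where for chains it must be reconciled with the exclusion condition of (i) — is the heart of the proof and its most technical step.
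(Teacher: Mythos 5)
Your reduction to atomic types and your claim (i) are consistent with the paper, and your grading analysis is correct as far as it goes: each pairing is a pure power $c_{\alpha\beta}z^{k_{\alpha\beta}}$ with $k_{\alpha\beta}=\wt(\phi_\alpha)+\wt(\phi_\beta)+2\sum_i q_i^T$, nonzero only if $k_{\alpha\beta}\in\ZZ_{\geq N}$, and the balanced case $k_{\alpha\beta}=N$ is automatic. But there is a genuine gap at precisely the step you defer: the off-diagonal resonant pairings ($k_{\alpha\beta}\in\ZZ_{>N}$) are never shown to vanish. Your skew-symmetry observation only constrains diagonal entries, and the plan to ``compute $K_f$ directly from Saito's construction'' via monodromy eigenspaces is a sketch, not an argument. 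For chains and loops many distinct standard monomials share weights, so the single integrality constraint on total weight leaves a large supply of resonant pairs, and nothing in your write-up eliminates them.

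The idea you are missing is to exploit the full finite symmetry group $G_{W^T}$, not just the weight grading. Each generator $\row_j$, acting by $x_i\mapsto \exp(2\pi\sqrt{-1}\rho_i^{(j)})\,x_i$, preserves $f$, hence acts on $\mathcal H_f^{(0)}$, and $K_f$ is invariant under this action. Applied to a pair of standard monomials with exponent vectors $\mathbf r$, $\mathbf r'$ and $m_i=r_i+r_i'$, invariance forces the pairing to vanish unless $\sum_i (m_i+2)\rho_i^{(j)}\in\ZZ$ for \emph{every} $j$, i.e.\ $(k_1,\dots,k_N)\,\AW_f=(m_1+2,\dots,m_N+2)$ with all $k_j\in\ZZ$. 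This is $N$ integrality conditions, of which yours is merely the sum, and they can be solved explicitly. For a chain the only integer solutions are $(1,\dots,1)$, $(1,\dots,1,0,2,\dots,0,2)$, and $(1,\dots,1,2,0,2,\dots,0,2)$; in the first two cases one checks directly that $\deg(x_1^{m_1}\cdots x_N^{m_N})=\chat{f}$, so $k_{\alpha\beta}=N$ exactly and the pairing lies in $z^N\CC$, while the third case forces $\mathbf m$ into the very shape that the chain clause of Definition \ref{milnor_basis} excludes --- this is the true role of that exclusion, which in your proposal enters only through claim (i). For a loop, an elementary inequality argument shows the solutions are $(1,\dots,1)$ and, for $N$ even, $(1\pm1,1\mp1,\dots,1\pm1,1\mp1)$, and again every case has total degree $\chat{f}$. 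Under the group constraint no resonances survive at all, so the ``most technical step'' of your plan never has to be carried out, and no computation of $K_f$ beyond its formal properties (grading equivariance and $G_{W^T}$-invariance) is needed.
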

This theorem will be proved in Section \ref{section-proof-good-basis}.

\begin{definition}\label{def-nRes}
We define the normalized residue $\nRes$ on $\Jac{f}$ by setting $\nRes(\phi_f)=1$. It induces a pairing $\eta$  on $\Jac{f}$ defined by
$
  \eta_{\alpha\beta}=\nRes(\phi_\alpha \phi_\beta).
$
\end{definition}


As shown in \cite{Saito-primitive}, a good basis of $f$ gives rise to a \emph{primitive form}, which is a certain family of holomorphic volume forms with respect to a universal unfolding of $f$.
The primitive form induces a Frobenius manifold structure on $\Jac{f}$ (which was called a \emph{flat structure} in \cite{Saito-primitive}). We will not give the precise definition of primitive form here. Instead, we present a perturbative description developed in \cite{LLSaito, LLSS} which is a formal solution of the Riemann-Hilbert-Birkhoff problem described in \cite{Saito-primitive}.
We also use the perturbative description of the primitive form to compute the invariants of our Landau-Ginzburg B-model.

\subsubsection{A perturbative formula}\label{section-perturbative-formula}
Given a polynomial $g(x)$, we will denote $[g(x)d^Nx]$ its class in $\mathcal H_f^{(0)}$ in this section. Let polynomials $\{\bgen_{\alpha}\}$ represent a basis of $\Jac{f}$ such that $\{[\bgen_{\alpha}d^Nx]\}$ is a good basis for $\mathcal H_f^{(0)}$.

Let $B$ denote the subspace $\spn_{\CC}\{[\bgen_{\alpha}d^Nx]\}$ of $\HH_f^{(0)}$, and $\HH_f=\HH_f^{(0)}\otimes_{\CC\llbracket z \rrbracket} \CC((z))$ be the Laurent extension. Then
$$
\HH_f^{(0)} = B\llbracket z\rrbracket \quad \text{and} \quad \HH_f=B((z)).
$$
Let $\mathbf{s}=\{\mathbf{s}_\alpha\}$ be the linear coordinates on $\Jac{f}$ dual to the basis $\{\bgen_\alpha\}$, so the coordinates $\mathbf{s}$ parametrize a local universal deformation $F=f+\sum_\alpha s_\alpha \bgen_\alpha$ of $f$. The following formula gives a perturbative way to compute the associated primitive form.

\begin{theorem}[\cite{LLSS}, Theorem 3.7]\label{B_theorem}
There is a unique pair $(\zeta, \Jfunc)$ with $\zeta \in B\llbracket z\rrbracket \llbracket \mathbf{s}\rrbracket$ and $\Jfunc \in [d^Nx]+z^{-1}B[z^{-1}]\llbracket\mathbf{s}\rrbracket$ such that
\begin{equation}\label{Jfunc_eq}
e^{(F-f)/z} \zeta(z, \mathbf{s}) = \Jfunc \quad \text{in}\; \HH_f\llbracket\mathbf{s}\rrbracket.
\end{equation}
Here $B[z^{-1}]\llbracket\mathbf{s}\rrbracket$ is formal power series in $\mathbf{s}$ valued in $B[z^{-1}]$.
\end{theorem}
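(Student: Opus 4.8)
The plan is to solve \eqref{Jfunc_eq} recursively, order by order in the deformation parameters $\mathbf{s}$, exploiting that $F-f=\sum_\alpha s_\alpha \phi_\alpha$ has no constant term in $\mathbf{s}$. The structural input is the splitting of $\mathcal{H}_f$ attached to the good basis. Since $\{[\phi_\alpha d^Nx]\}$ represents a basis of $\Omega_f$, a Nakayama-type argument over $\CC\llbracket z\rrbracket$ — peel off the $z^0$-coefficient in $\Omega_f$, then use the defining relation $[df\wedge\beta]=-z[d\beta]$ to absorb the remainder into strictly higher order in $z$, and iterate — shows that $\{[\phi_\alpha d^Nx]\}$ is a free $\CC\llbracket z\rrbracket$-basis of $\mathcal{H}_f^{(0)}$, so that $\mathcal{H}_f^{(0)}=B\llbracket z\rrbracket$ and
\[
\mathcal{H}_f = B\llbracket z\rrbracket \;\oplus\; z^{-1}B[z^{-1}].
\]
Write $P_+,P_-$ for the two projections of this direct sum. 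With the splitting fixed, the constraint $\zeta\in B\llbracket z\rrbracket\llbracket\mathbf{s}\rrbracket$ reads $P_+\zeta=\zeta$, while $\Jfunc\in[d^Nx]+z^{-1}B[z^{-1}]\llbracket\mathbf{s}\rrbracket$ reads $P_+\Jfunc=[d^Nx]$ (constant in $\mathbf{s}$) and $\Jfunc-[d^Nx]=P_-\Jfunc$. Notice that only the fact that $\{\phi_\alpha\}$ is a basis of $\Omega_f$ enters here; the higher-residue property of the good basis is what later guarantees that the resulting pair assembles into Saito's primitive form.

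Setting $\Jfunc:=e^{(F-f)/z}\zeta$ by definition, the content of the theorem is that there is a unique $\zeta\in B\llbracket z\rrbracket\llbracket\mathbf{s}\rrbracket$ with $P_+\big(e^{(F-f)/z}\zeta\big)=[d^Nx]$. Writing $e^{(F-f)/z}=1+\mathcal{N}$ with $\mathcal{N}:=\sum_{m\ge1}\tfrac{1}{m!\,z^m}(F-f)^m$, and using $P_+\zeta=\zeta$, this is equivalent to the fixed-point equation
\[
\zeta=[d^Nx]-P_+(\mathcal{N}\zeta).
\]
Because $\mathcal{N}$ strictly raises the $\mathbf{s}$-adic order, the right-hand side is a contraction in the $\mathbf{s}$-adic topology, so there is a unique solution obtained by iteration. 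Equivalently, expanding $\zeta=\sum_n\zeta_n$ in homogeneous $\mathbf{s}$-degree, one solves the triangular system $\zeta_0=[d^Nx]$ and, for $n\ge1$, $\zeta_n=-P_+R_n$, $\Jfunc_n=P_-R_n$, where $R_n:=\sum_{m=1}^n\tfrac{1}{m!\,z^m}(F-f)^m\zeta_{n-m}$ depends only on strictly lower orders. Uniqueness of the decomposition $\mathcal{H}_f=B\llbracket z\rrbracket\oplus z^{-1}B[z^{-1}]$ at each order yields uniqueness of the pair $(\zeta,\Jfunc)$.

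The step I expect to require the most care is verifying that this recursion remains inside the stated spaces — in particular that each $\Jfunc_n$ is a Laurent \emph{polynomial} in $z^{-1}$ rather than a full power series, and that $\zeta_n$ has only nonnegative $z$-powers. Here one uses the canonical form representatives $\sum_\alpha c_\alpha(z)\,\phi_\alpha\,d^Nx$ of elements of $B\llbracket z\rrbracket$, so that $(F-f)^m\zeta_{n-m}$ means the class of an honest $N$-form. The key lemma is that for any $\omega\in\Omega^N\llbracket z\rrbracket$ with nonnegative $z$-powers, its class in $\mathcal{H}_f$ lies in $B\llbracket z\rrbracket$ with nonnegative $z$-powers: reducing $\omega$ to the good basis uses only $[df\wedge\beta]=-z[d\beta]$, which can only raise the $z$-degree. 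Applying this to $\omega=(F-f)^m\zeta_{n-m}$ (a form of nonnegative $z$-degree, since $\zeta_{n-m}\in B\llbracket z\rrbracket$ and $(F-f)^m$ is $z$-independent) shows $(F-f)^m\zeta_{n-m}\in B\llbracket z\rrbracket$; the prefactor $z^{-m}$ then confines the negative powers of the $m$-th summand to $z^{-m},\dots,z^{-1}$. Hence $R_n\in z^{-n}B\llbracket z\rrbracket$ has only finitely many negative powers, giving $P_-R_n\in z^{-1}B[z^{-1}]$ and $P_+R_n\in B\llbracket z\rrbracket$ as required. This confirms $\zeta\in B\llbracket z\rrbracket\llbracket\mathbf{s}\rrbracket$ and $\Jfunc\in[d^Nx]+z^{-1}B[z^{-1}]\llbracket\mathbf{s}\rrbracket$, completing existence and uniqueness.
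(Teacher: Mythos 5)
Your proposal is correct and follows essentially the same route as the paper's argument (the recursive algorithm sketched after Theorem \ref{B_theorem}, following \cite{LLSS}): solve order by order in $\mathbf{s}$ using the splitting $\HH_f = B\llbracket z\rrbracket \oplus z^{-1}B[z^{-1}]$, with $\zeta_{(\leq k+1)} = \zeta_{(\leq k)} - R^+_{k+1}$ matching your $\zeta_n = -P_+R_n$. The extra care you take — verifying that $\{[\bgen_\alpha d^Nx]\}$ freely generates $\HH_f^{(0)}$ over $\CC\llbracket z\rrbracket$ and that each $R_n$ lies in $z^{-n}B\llbracket z\rrbracket$ so the negative part is a genuine Laurent polynomial — is exactly the bookkeeping the paper leaves implicit.
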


Furthermore, $\zeta$ is the series expansion in $\mathbf{s}$ of the \emph{primitive form} associated to the good basis $B$, and $\Jfunc$ plays the role of the FJRW J-function in the following sense. By Theorem \ref{B_theorem}, we may write
\begin{equation}\label{J-func}
\Jfunc = \left[d^Nx\left(1 + z^{-1} \sum_{\alpha}\Jfunc_{-1}^{\alpha}\bgen_{\alpha} + z^{-2}\sum_{\alpha}\Jfunc_{-2}^{\alpha}\bgen_{\alpha} + \ldots\right)\right].
\end{equation}
Let
\begin{equation}\label{flat-coord}
t_{\alpha}(\mathbf{s}) = \Jfunc_{-1}^{\alpha}(\mathbf{s}) \in \CC\llbracket\mathbf{s}\rrbracket.
\end{equation}
We call $\mathbf{t}=\{t_{\alpha}\}$ the \emph{flat coordinates} for $\Jac{f}$. In fact
\begin{equation}\label{first-order}
t_{\alpha} = s_{\alpha} + \OO(\mathbf{s}^2),
\end{equation}
and we may write each $s_{\alpha}$ as a function of $\mathbf{t}$. Then in terms of the flat coordinates, the Frobenius manifold prepotential $\pot_{0,f,\prim}^{\rm SG}$ associated to the primitive form $\zeta$ satisfies
\begin{equation}\label{compute_potential_eq}
\partial_{t_\alpha}\pot_{0,f,\prim}^{\rm SG}(\mathbf{t})=\sum_{\beta}\eta_{\alpha,\beta}\Jfunc_{-2}^{\beta}(\mathbf{t})
\end{equation}
where $\eta$ is the matrix in Definition \ref{def-nRes}. The B-model correlators are defined via
\begin{equation}\label{sg-potential-g=0}
\langle \bgen_{\alpha_1}, \;\ldots, \; \bgen_{\alpha_k}\rangle = \frac{ \partial^k \pot_{0,f,\prim}^{\rm SG}}{\partial t_{a_1}\ldots \partial t_{a_k}} (0).
\end{equation}


The proof of Theorem \ref{B_theorem} in \cite{LLSS} outlines an algorithm for recursively solving $\zeta$ and $\Jfunc$ as follows. Let $\zeta_{(\leq k)}$ be the $k$-th Taylor expansion in terms of $\mathbf{s}$. To zeroth order (in $\mathbf{s}$), equation \eqref{Jfunc_eq} is
\[
\zeta_{(\leq 0)}= [d^Nx]+z^{-1}B[z^{-1}].
\]
Because $\zeta$ has only positive powers of $z$, this is uniquely solved by $\zeta_{(\leq 0)} = [d^Nx]$. Suppose we have solved for $\zeta_{(\leq k)}$, which satisfies
\[
e^{(F-f)/z} \zeta_{(\leq k)} \in [d^Nx]+z^{-1}B[z^{-1}]\llbracket\mathbf{s}\rrbracket \quad \text{modulo} \; \mathbf{s}^{k+1}.
\]
Let $R_{k+1}$ be the $(k+1)^{th}$-order component of $e^{(F-f)/z} \zeta_{(\leq k)}$. Let $R_{k+1} = R^+_{k+1}+R^-_{k+1}$ where $R^+_{k+1}$ is the part with nonnegative powers of $z$. Then $\zeta_{(\leq k+1)} = \zeta_{(\leq k)}-R^+_{k+1}$ uniquely solves Equation (\ref{Jfunc_eq}) up to order $k+1$ in $\mathbf{s}$.

\subsubsection{B-model Saito-Givental potential}
Saito's theory of primitive forms gives the genus zero invariants (see Formula \eqref{sg-potential-g=0}) in the LG B-model. For higher genus, Givental \cite{G2} proposed a remarkable formula for the total ancestor potential of a semi-simple Frobenius manifold.
The uniqueness of Givental's formula was established by Teleman \cite{T}.
According to the work of Milanov \cite{M}, the total ancestor potential can be extended uniquely to the origin,  which is a nonsemisimple point we are interested in.

Saito's genus zero theory together with the total ancestor potential is now referred to as the Saito-Givental theory of a singularity.
We will call the extended total ancestor potential at the origin a Saito-Givental potential and denote it by $\mathscr{A}^{\rm SG}_{f,\zeta}$, where the subscript $\zeta$ shows its dependence on the chosen primitive form $\zeta$.


\subsection{Krawitz's mirror map}\label{section-Krawitz-mirror}
Recall that given an invertible polynomial
$$W = \sum_{i=1}^N\prod_{j=1}^N x_j^{\expon_{ij}},$$
its \emph{exponent matrix} is
$
\AW_W:= (\expon_{ij})_{N\times N},
$
and the mirror polynomial (also called the transpose polynomial) $W^T$ is defined by $\AW_{W^T} = (\AW_W)^T$, so
$$W^T = \sum_{j=1}^N\prod_{i=1}^N x_i^{\expon_{ij}}.$$
The inverse matrix $\AW_W^{-1}$ plays an important role in the mirror map constructed by Krawitz in \cite{K}. Let us write
\begin{equation}\label{exponent-inverse}
\AW_W^{-1} =
\left( \begin{array}{ccc}
\rho_1^{(1)} & \cdots & \rho_N^{(1)} \\
\vdots  & \vdots & \vdots  \\
\rho_1^{(N)} & \cdots & \rho_N^{(N)}
\end{array} \right),
\end{equation}
and define
\begin{eqnarray*}
&\col_j:=\left(\exp(2\pi\sqrt{-1}\rho_j^{(1)}), \ldots, \exp(2\pi\sqrt{-1}\rho_j^{(N)})\right),\\
&\row_j:=\left(\exp(2\pi\sqrt{-1}\rho_1^{(j)}), \ldots, \exp(2\pi\sqrt{-1}\rho_N^{(j)})\right).
\end{eqnarray*}
According to \cite{K}, the group $G_W$ is generated by $\{\col_j\}_{j=1}^N$ and $G_{W^T}$ is generated by $\{\row_j\}_{j=1}^N$.
Recall $q_j$ is the weight of $x_j$ in $W$. Let $q_j^T$ be the weight of $x_j$ in $W^T$. We remark that
\begin{equation}\label{weight}
q_j=\sum_{i=1}^{N}\rho_i^{(j)} \quad \text{and} \quad
q_j^T=\sum_{i=1}^{N}\rho_j^{(i)}.
\end{equation}

\begin{eg}\label{eg-mirror}  The transpose of the chain polynomial in Theorem \ref{atomic_thm} is $x_1^{a_1}+x_1x_2^{a_2}+\cdots + x_{N-1}x_N^{a_N}.$
The transpose of the loop polynomial is $x_1^{a_1}x_N+x_2^{a_2}x_1+\cdots+ x_{N-1}x_N^{a_N}.$
\end{eg}

The next theorem defines Krawitz's mirror map. Its proof consists of Theorems 2.4 and Theorem 3.1 in \cite{K}, Theorem 2.3 in \cite{A}, and Remark \ref{rmk:lbd_determines_sector}.
\begin{theorem}[\cite{K}, Krawitz's mirror map]\label{mirror-algebra}
Let $W$ be an invertible polynomial with no chain variables of weight $1/2$.
Then the ring homomorphism
$\mirror: \Jac{W^T} \rightarrow (\A{W},\star)$
generated by
\begin{equation}\label{Krawitz-mirror-map}
\mirror(x_i) =
\left\{
\begin{array}{ll}
\lceil \,x_i\; ; \; 1\rfloor, & \textit{if}\ x_i\ \textit{is a variable in a 2-variable loop summand with }\expon_{i}=2, \\
\lceil 1\; ; \; \col_i\cdot J_W\rfloor, & otherwise.
\end{array}
\right.
\end{equation}
 is a degree-preserving isomorphism of Frobenius algebras, in the sense that $\wt(\bgen) = \deg_W(\mirror(\bgen))$ for every monomial $\bgen \in \Jac{W^T}$.
Furthermore,
\begin{equation}\label{equ:mirror_sectors}
\mirror\left( \prod_{j=1}^{N} x_j^{\alpha_j} \right) \in \A{\gamma} \quad \text{where} \quad \gamma = \prod_{j=1}^{N} \col_j^{\alpha_j+1}=\left(\prod_{j=1}^{N} \col_j^{\alpha_j}\right)J_W.
\end{equation}
\end{theorem}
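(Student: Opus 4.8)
The plan is to reduce the statement to the three atomic types, then to separate the forced ``bookkeeping'' (sectors and grading) from the genuine content (that $\mirror$ respects multiplication). The mirror construction, the Jacobi algebra ($\Jac{W_1^T\oplus W_2^T}=\Jac{W_1^T}\otimes\Jac{W_2^T}$), and the FJRW state space with its $\star$-product (Theorem \ref{tensor_a_thm}) are all compatible with disjoint sums, and $\J{W}$ together with the columns $\col_j$ are block-diagonal along the summands. Hence $\mirror$ is the tensor product of the corresponding maps for the atoms, and it suffices to treat a single Fermat, chain, or loop; the weight-$1/2$ hypothesis will be felt only in the chain case.

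I would dispatch the sector formula \eqref{equ:mirror_sectors} first, since it is needed even to state the grading claim. Writing $\mirror(\prod_j x_j^{\alpha_j})$ as an iterated $\star$-product of $n=\sum_j\alpha_j$ generators and applying the product rule of Remark \ref{rmk:lbd_determines_sector} (if $\xi\in\A{\gamma}$ and $\xi'\in\A{\gamma'}$ then $\xi\star\xi'\in\A{\gamma\gamma'\J{W}^{-1}}$) a total of $n-1$ times places the result in the sector $\left(\prod_{i=1}^n\gamma_i\right)\J{W}^{-(n-1)}$, where $\gamma_i$ is the sector of the $i$-th factor. Each generator $\mirror(x_j)$ lies in sector $\col_j\J{W}$, so $\prod_i\gamma_i=\left(\prod_j\col_j^{\alpha_j}\right)\J{W}^{\,n}$ and the sector collapses to $\left(\prod_j\col_j^{\alpha_j}\right)\J{W}$. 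Finally \eqref{weight} gives $\prod_j\col_j=\J{W}$, since the $k$-th entry of $\prod_j\col_j$ is $\exp\!\left(2\pi\sqrt{-1}\sum_j\rho_j^{(k)}\right)=\exp(2\pi\sqrt{-1}q_k)$; this rewrites the sector as $\prod_j\col_j^{\alpha_j+1}$, as claimed.

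The grading claim then follows by computing $\deg_W$ from \eqref{A-model-degree}. For a narrow element ($N_\gamma=0$) one has $\deg_W=\sum_k\Theta_\gamma^{(k)}-\sum_k q_k$, and $\Theta_\gamma^{(k)}$ is the representative in $[0,1)$ of $\sum_j(\alpha_j+1)\rho_j^{(k)}$, say $\sum_j(\alpha_j+1)\rho_j^{(k)}-m_k$ with $m_k\in\ZZ$. Using $\sum_k\rho_j^{(k)}=q_j^T$ and $\sum_k q_k=\sum_j q_j^T$ (both equal the sum of all entries of $\AW_W^{-1}$, by \eqref{weight}), this simplifies to $\deg_W=\sum_j\alpha_j q_j^T-\sum_k m_k=\wt\!\left(\prod_j x_j^{\alpha_j}\right)-\sum_k m_k$. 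Thus degree-preservation is exactly the vanishing of the total integer shift $\sum_k m_k$, a combinatorial fact about the standard basis of Definition \ref{milnor_basis} (the $m_k$ being the line-bundle degrees of the associated $W$-structure, cf.\ \eqref{lbd_ax}); the broad elements, such as $\lceil x_i\,;\,1\rfloor$ in the exponent-$2$ loop, must be checked directly, where the internal degree of the monomial and the $\tfrac12 N_\gamma$ term replace the phase sum.

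The real work, and the main obstacle, is to show that $\mirror$ is a well-defined algebra \emph{homomorphism}: that the images of the $x_i$ satisfy the relations of $\Jac{W^T}$ under $\star$, equivalently $\mirror(\partial_{x_i}W^T)=0$ in the FJRW ring, and more generally $\mirror(\phi\psi)=\mirror(\phi)\star\mirror(\psi)$. Since $\star$ is defined through genus-zero three-point correlators by \eqref{product_def}, this amounts to computing enough FJRW invariants. For narrow insertions the dimension and integer-degree axioms together with concavity (which identifies the virtual class with a top Chern class) pin these numbers down; the delicate cases are the broad sectors forced by loops and by those $\col_i\J{W}$ that turn out broad for chains, and it is precisely here that the weight-$1/2$ exclusion is used, namely it is the condition guaranteeing that the Jacobian relations of $W^T$ are matched by the FJRW product and hence the reason the special chain with $a_N=2$ is set aside. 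Once multiplicativity is established, injectivity follows because distinct standard monomials are separated by their sectors (or within a sector by the broad data), and since $\dim_\CC\Jac{W^T}=\dim_\CC\A{W}$ is the common Milnor number, $\mirror$ is bijective; compatibility of $\mirror$ with the residue and intersection pairings then upgrades it to an isomorphism of Frobenius algebras.
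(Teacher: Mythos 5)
You should first be aware of how the paper itself handles this statement: it does not prove it. Immediately before the theorem the authors write that ``its proof consists of Theorems 2.4 and Theorem 3.1 in \cite{K}, Theorem 2.3 in \cite{A}, and Remark \ref{rmk:lbd_determines_sector}.'' The only ingredient the paper supplies on its own is the sector formula \eqref{equ:mirror_sectors}, obtained by iterating Remark \ref{rmk:lbd_determines_sector}, which is exactly what you do: your computation that an $n$-fold $\star$-product of the generators lands in $\bigl(\prod_j \col_j^{\alpha_j}\bigr)\J{W}$, using $\prod_j\col_j=\J{W}$ from \eqref{weight}, matches the paper's intended derivation, and your reduction to atomic summands is the content of Remark \ref{rmk:mirror_tensor}. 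Your reduction of degree-preservation to the vanishing of the total integer shift $\sum_k m_k$ is also a correct reformulation.

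The genuine gap is that the heart of the theorem is never established. That $\mirror$ is a \emph{well-defined} ring homomorphism requires showing that the elements $\mirror(x_i)$ satisfy the Jacobi relations of $W^T$ under $\star$, and that $\mirror$ is multiplicative on the standard basis; this amounts to evaluating specific genus-zero three-point FJRW correlators, including ones with broad insertions in the loop case and in chains near the boundary of the weight-$1/2$ condition. You correctly identify this as ``the real work'' and name the tools (concavity, the Dimension and Integer Degree Axioms, broad-sector analysis), but you compute no correlator and verify no relation --- and these computations are precisely the content of Krawitz's Theorems 2.4 and 3.1 and Acosta's Theorem 2.3, i.e.\ the results the paper cites rather than reproves. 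Two further steps are also unsupported: the ``combinatorial fact'' $\sum_k m_k=0$ about the standard basis is asserted, not proved; and your injectivity argument via separation by sectors cannot work as a primary mechanism, since for loops $\dim_\CC\Jac{W^T}=\prod_i a_i$ exceeds $|\Gmax{W}|=\prod_i a_i+(-1)^{N+1}$ (e.g.\ for $W=x_1^2x_2+x_2^2x_1$ one has $\mu=4>3=|\Gmax{W}|$), so sector collisions are unavoidable; the standard route is that $\Jac{W^T}$ is Gorenstein and a ring map not killing the socle is injective --- which again presupposes multiplicativity. As it stands, your proposal completes only the steps the paper derives from its own Remark \ref{rmk:lbd_determines_sector} and defers the cited results to a sketch, so it is a plan rather than a proof.
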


We will call $\mirror$ ``Krawitz's mirror map'', or simply ``the mirror map.''
In this paper, we show that by appropriate rescaling\footnote{The rescaling consists of Formula \eqref{regular-rescale} and Formula \eqref{constant-c}.}, Krawitz's mirror map identifies the FJRW and Saito-Givental potentials of all genus, proving mirror symmetry. From now on, for any monomial $\bgen \in \Jac{W^T}$, we will use the following notation for the degree:
\begin{equation}\label{AB-degree}
\deg(\mirror(\bgen)) = \deg_W(\mirror(\bgen)) \qquad \text{and} \qquad \deg(\bgen) = \wt(\bgen).
\end{equation}

\begin{remark}\label{rmk:mirror_tensor}
When $W = \bigoplus_j W_j $, both the $A$ and the $B$-model Frobenius algebras decompose as tensor products of the Frobenius algebras of the $W_j$, and in this case the mirror map is a tensor product of mirror maps.
\end{remark}


\section{Main results}\label{section-main-result}

The main result of this paper is Theorem \ref{main-theorem}, which can be more precisely stated as

\begin{theorem}[Landau-Ginzburg Mirror Symmetry Theorem]\label{main-cor} Let $W$ be an invertible polynomial with no chain variables of weight $1/2$.
Then there exists a primitive form $\prim$ of $W^T$ such that the Krawitz isomorphism $\Jac{W^T}\cong \A{W}$ identifies the Saito-Givental potential $\mathscr{A}_{W^T,\prim}^{\rm SG}$ with the FJRW potential $\mathscr{A}_{W}^{\rm FJRW}$.
\end{theorem}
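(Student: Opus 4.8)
The plan is to reduce the all-genus statement to a genus-zero statement and then invoke the Givental--Teleman uniqueness theorem. Both $\apot{\rm FJRW}{W}$ and $\apot{\rm SG}{W^T,\prim}$ arise as total ancestor potentials of Frobenius manifolds that are generically semisimple and are then extended to the (nonsemisimple) origin. By Teleman's theorem \cite{T} (building on Givental \cite{G2}), on the semisimple locus a cohomological field theory, and hence its total ancestor potential, is uniquely reconstructed from the underlying genus-zero Frobenius manifold structure together with its flat identity and Euler field, via an $R$-matrix that is itself determined by the genus-zero data. Consequently, if I can exhibit a primitive form $\prim$ of $W^T$ for which the Krawitz isomorphism $\mirror\colon\Jac{W^T}\to\A{W}$ identifies the genus-zero prepotentials $\pot^{\rm SG}_{0,W^T,\prim}$ and $\pot^{\rm FJRW}_{0,W}$ (after the rescaling indicated in the footnote to Theorem \ref{main-cor}), then the two total ancestor potentials agree on the semisimple locus, and the uniqueness of Milanov's extension \cite{M} propagates the equality to the origin and hence to all genera.

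For the genus-zero comparison I would set up parallel reconstruction theorems on the two sides. The key point is that both Frobenius manifolds carry the same grading by $\deg$ (matched by $\mirror$ through Theorem \ref{mirror-algebra}) and satisfy strong selection rules: on the A-side the Dimension Axiom \eqref{dim_ax} and the Integer Degree Axiom \eqref{lbd_ax} force most correlators to vanish, while the B-side obeys the analogous constraint coming from the $\QQ$-grading \eqref{defn-weight}. Combining these selection rules with the WDVV associativity equations, I claim that the full genus-zero prepotential of each theory is reconstructed from its Frobenius algebra, that is, its primary $3$-point correlators, together with a finite list of \emph{basic} primary genus-zero $4$-point correlators. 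Since the Frobenius algebras are already identified by Krawitz's map (Theorem \ref{mirror-algebra}), the entire genus-zero comparison reduces to matching this finite list of $4$-point numbers.

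It then remains to compute the basic $4$-point correlators and match them. On the B-side I would run the perturbative algorithm of Theorem \ref{B_theorem}, applied to the \emph{standard} good basis of Definition \ref{milnor_basis} (whose goodness is Theorem \ref{thm-good-basis}); this both fixes the primitive form $\prim$ and renders the B-model computations explicit and finite. On the A-side I would compute the corresponding genus-zero $4$-point FJRW invariants from the geometry of $\W{0}{4}$, using the line-bundle-degree constraints \eqref{lbd_ax}, the structure of the virtual class (concavity where applicable), and the WDVV equations to pin down the remaining values. A precise diagonal rescaling of the basis elements together with an overall constant is then chosen so that the residue pairing and every basic $4$-point correlator of the B-model simultaneously match their A-model counterparts under $\mirror$. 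Finally, since every invertible $W$ is a disjoint sum of atomic types (Theorem \ref{atomic_thm}), the tensor compatibility of both theories (Theorem \ref{tensor_a_thm}, Proposition \ref{prop-sum-good-basis}, and Remark \ref{rmk:mirror_tensor}) assembles the general case from the Fermat, chain, and loop summands.

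The main obstacle I expect is twofold. First, closing the reconstruction: one must prove that the selection rules and WDVV genuinely reduce everything to a finite, explicitly computable set of $4$-point correlators, which requires a delicate inductive and combinatorial analysis that is most intricate for chain and loop summands, and for the ``mixed'' $4$-point correlators of a disjoint sum $W_1\oplus W_2$, whose deformation space is the full tensor product $\Jac{W_1^T}\otimes\Jac{W_2^T}$ rather than a naive product of Frobenius manifolds. These are precisely the situations where the excluded weight-$1/2$ chain variables would otherwise break the argument, so the hypothesis of Theorem \ref{main-cor} enters here in an essential way. Second, the explicit evaluation of the genus-zero $4$-point FJRW invariants on the A-side constitutes the genuinely hard input: unlike the B-side, where Theorem \ref{B_theorem} supplies a finite recursive recipe, there is no purely formal algorithm, so one must combine the vanishing axioms, the virtual-class structure, and WDVV to determine these invariants and then verify that they agree with the B-side values after rescaling.
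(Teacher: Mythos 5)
Your proposal follows essentially the same route as the paper: reduce the all-genus statement to genus zero via Givental--Teleman uniqueness \cite{T} and Milanov's extension \cite{M}; reconstruct both genus-zero potentials from the Frobenius algebra plus finitely many $4$-point correlators using the selection rules (the Dimension and Integer Degree Axioms and their B-side analogues, Lemma \ref{vanishing_lemma}) together with the WDVV reconstruction lemma (Lemma \ref{reconstruction_lemma}); evaluate those correlators by the perturbative formula of Theorem \ref{B_theorem} with the standard good basis on the B-side, and by concavity, Grothendieck--Riemann--Roch and WDVV on the A-side; then match after the rescaling \eqref{regular-rescale}. This is exactly the skeleton of Theorems \ref{FM_iso_thm} and \ref{LG_pot_thm}.

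The one step that would fail as written is your final assembly: you invoke ``tensor compatibility of both theories'' (Theorem \ref{tensor_a_thm}, Proposition \ref{prop-sum-good-basis}, Remark \ref{rmk:mirror_tensor}) to deduce the case of a general disjoint sum from the atomic cases. On the A-side this is legitimate, since Theorem \ref{tensor_a_thm} is a statement about the full cohomological field theory. But on the B-side the cited facts concern only good bases and Frobenius \emph{algebras}; they do not show that the Saito--Givental Frobenius \emph{manifold} (the genus-zero potential attached to the primitive form of the standard basis) of $W_1^T\oplus W_2^T$ is the tensor product of those of the summands---the universal unfolding contains mixed deformation directions $\eta_i(\x)\varphi_\alpha(\y)$, and correlators with such mixed insertions are not controlled by the atomic potentials without a Manin-type tensor product theorem for Frobenius manifolds whose hypotheses would have to be verified for primitive forms. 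The paper deliberately sidesteps this: it proves Part (1) of Theorem \ref{LG_pot_thm} directly for arbitrary disjoint sums, eliminating the mixed correlators inside the induction (Lemma \ref{splitting_lemma}, Proposition \ref{two_from_a_poly}), and uses the tensor property only on the A-side to reduce the basic $4$-point computations to atomic summands. Your own ``obstacles'' paragraph contains the correct fix---handle the mixed correlators of $W_1\oplus W_2$ within the reconstruction---so the proposal is repaired by dropping the tensor-assembly shortcut and carrying that combinatorial analysis out. One smaller misattribution: the weight-$1/2$ hypothesis is not what saves the mixed-correlator analysis; it is what makes Krawitz's isomorphism (Theorem \ref{mirror-algebra}) available and keeps the relevant A-side sectors narrow, and indeed the paper notes that the B-side statements survive even when a chain ends in exponent $2$.
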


In fact, it suffices to prove this theorem at the level of Frobenius manifolds, i.e., at genus zero.
This is because in the cases we deal with, the work of Teleman \cite{T} and Milanov \cite{M} shows that the genus zero data completely determines the higher genus data of the LG models.
Thus, in the remainder of this article, we only need to prove the following theorem.

\begin{theorem}[Frobenius Manifold Mirror Symmetry Theorem]\label{FM_iso_thm} Let W be as in Theorem \ref{main-cor}.
There exists an isomorphism between a Frobenius manifold on $\Jac{W^T}$ and the Frobenius manifold on $\A{W}$. More explicitly,  there exists a primitive form $\prim$ of $W^T$ such that the Krawitz isomorphism $\Jac{W^T}\cong \A{W}$ induces
\begin{equation}\label{main-thm}
\pot_{0,W^T,\prim}^{\rm SG}=\pot_{0,W}^{\rm FJRW}.
\end{equation}
\end{theorem}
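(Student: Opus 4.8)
The plan is to promote Krawitz's isomorphism of Frobenius \emph{algebras} to an isomorphism of Frobenius \emph{manifolds} by reducing the identity \eqref{main-thm} to the comparison of finitely many genus-zero four-point correlators, and then computing those correlators on each side. Since Theorem \ref{mirror-algebra} already identifies $\Jac{W^T}$ with $(\A{W}, \star)$ as graded Frobenius algebras (the genus-zero two- and three-point data), and since $\mirror$ is generated by the images of the variables $x_i$ via \eqref{Krawitz-mirror-map}, the remaining task is to match all genus-zero $k$-point correlators for $k \geq 4$ under $\mirror$, for a suitable choice of primitive form $\prim$.

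The first main step is a \textbf{reconstruction theorem}, valid on both models: the entire prepotential is determined by the Frobenius algebra structure together with a distinguished finite set of genus-zero four-point correlators. The engine is the WDVV (associativity) equation of the Frobenius manifold, used together with the selection rules. On the A-side these are the Dimension Axiom \eqref{dim_ax} and the Integer Degree Axiom \eqref{lbd_ax}, which force the vanishing of most correlators and fix the relevant line-bundle degrees; on the B-side the analogous constraint is weight conservation under the grading \eqref{defn-weight}. Because the algebra is generated in low degree by the $\mirror(x_i)$, WDVV expresses any correlator carrying a decomposable insertion in terms of correlators with strictly fewer points or with more primitive insertions; the grading then bounds the genuinely independent data to four-point functions of a controlled shape. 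Before carrying this out, I would reduce to the atomic types of Theorem \ref{atomic_thm}: the A-model CohFT is multiplicative under disjoint sums (Theorem \ref{tensor_a_thm}), the B-model good basis is multiplicative (Proposition \ref{prop-sum-good-basis}), and the mirror map is a tensor product (Remark \ref{rmk:mirror_tensor}), so it suffices to treat Fermat, chain, and loop summands and to record their four-point data.

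It then remains to compute and match the distinguished four-point correlators. On the B-side I would run the perturbative algorithm of Theorem \ref{B_theorem}: solve \eqref{Jfunc_eq} recursively in $\mathbf{s}$ to extract the coefficients $\Jfunc_{-2}^{\beta}$, pass to flat coordinates via \eqref{flat-coord}, and read off the four-point invariants through \eqref{compute_potential_eq} and \eqref{sg-potential-g=0}; all of this is explicit polynomial algebra in $\Jac{W^T}$ using the standard basis of Definition \ref{milnor_basis}. On the A-side I would pin down the four-point FJRW invariants by combining the axioms \eqref{dim_ax}--\eqref{lbd_ax} (which eliminate most correlators and constrain the surviving ones), the Fermat case treated first as a warm-up, and WDVV relations among the remaining unknowns. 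The normalization freedom in the choice of primitive form $\prim$, together with the permitted rescaling of variables referenced at \eqref{Krawitz-mirror-map}, is exactly the slack needed to align the two normalizations of $\Hessbase{W^T}$ and the generators.

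The hard part will be the explicit evaluation and matching of the A-model four-point invariants, which are not directly accessible from the virtual-cycle definition and must be extracted indirectly through the selection rules and WDVV. The chain type is the most delicate, both because its standard basis in Definition \ref{milnor_basis} is combinatorially intricate and because this is precisely where the weight-$1/2$ hypothesis is needed: that hypothesis guarantees Krawitz's algebra isomorphism and keeps the relevant sectors narrow, so the reconstruction input has the expected form. Verifying that the two recursions terminate with matching four-point data across all atomic types, and that a single normalization choice for $\prim$ works uniformly, is where the bulk of the labor lies.
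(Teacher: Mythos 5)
Your overall architecture---reduce \eqref{main-thm} via WDVV and selection rules to a finite list of genus-zero four-point correlators, then compute those on each side, with the B-side handled by the perturbative algorithm of Theorem \ref{B_theorem}---is indeed the paper's architecture. But your plan for the A-side evaluation is a step that would fail, and it inverts the actual state of affairs. The distinguished correlators of Theorem \ref{reconstruction_theorem} are precisely the seed data that reconstruction cannot reach: the axioms \eqref{dim_ax}--\eqref{lbd_ax} only decide vanishing and fix sectors, and WDVV only expresses \emph{other} correlators in terms of these, so selection rules plus WDVV can never output the nonzero number $q_i$. Contrary to your claim that these invariants are ``not directly accessible from the virtual-cycle definition,'' the paper's key observation (Lemma \ref{lemma-chiodo}) is that for Fermat, chain, and most loop summands these correlators are \emph{concave}, so the virtual cycle is a top Chern class \eqref{concave-def} and the invariant is evaluated in closed form by Chiodo's orbifold Grothendieck--Riemann--Roch formula \eqref{eqhe}, yielding \eqref{correlator_formula} and the value $q_i$. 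The genuinely non-concave cases (loop summands with $\expon_N=2$, Section \ref{sec-exceptional}) still require geometric input beyond WDVV: either an auxiliary concave correlator computed by the same method and matched with the known $D_4$ computation of \cite{D_4}, or Gu\'er\'e's formula \eqref{jeremys_eq} for the Polishchuk--Vaintrob class together with the comparison result of \cite{CLL}. Without some such input your recursion has nothing to start from.

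A second gap is your reduction to atomic types. On the A-side this is legitimate (Theorem \ref{tensor_a_thm} is a statement about the full CohFT), but on the B-side Proposition \ref{prop-sum-good-basis} only says the product basis is a good basis; it does \emph{not} imply that the Saito--Givental prepotential of $f_1\oplus f_2$ is the tensor product of the two Frobenius manifolds, since the universal unfolding of the sum contains mixed deformation directions $\eta_i(\mathbf{x})\varphi_\alpha(\mathbf{y})$ and factorization of the prepotential would require a Manin-type tensor-product theorem whose hypotheses would have to be verified for primitive forms. The paper deliberately avoids this: its reconstruction is run for arbitrary disjoint sums, with Lemma \ref{splitting_lemma} and Proposition \ref{two_from_a_poly} reducing any correlator of type $\X{X}{-1}$ to correlators with at least two primitive insertions in a single summand, and the final-type correlators \eqref{final_type_corrs} involve $\Hessbase{W^T}$ of the \emph{whole} polynomial rather than of one summand. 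Finally, note that you also need Theorem \ref{thm-good-basis} (the standard basis is a good basis, proved via the $G_{W^T}$-symmetries of the higher residue pairing) before the primitive form $\prim$ you invoke even exists.
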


As explained in Section \ref{section-B-model}, a primitive form is associated to a good basis.  The good basis yielding mirror symmetry in Theorem \ref{FM_iso_thm} is the standard basis of Definition \ref{milnor_basis}.

Theorem \ref{FM_iso_thm} is proved by showing that $\pot_{0,W^T,\prim}^{\rm SG}$ and $\pot_{0,W}^{\rm FJRW}$ are completely determined by a handful of 4-point correlators. We then explicitly compute these correlators to show they differ only by a sign. We may exactly match the potentials by rescaling the primitive form and the B-model ring generators as in \cite[Section 6.5]{FJR},
\begin{equation}\label{regular-rescale}
x_i\to (-1)^{-\deg(x_i)}x_i, \quad \zeta\to (-1)^{-\chat{W^T}}\zeta.
\end{equation}
Thus, Theorem \ref{FM_iso_thm} is a consequence of the following theorem.
\begin{theorem}\label{LG_pot_thm}\label{reconstruction_theorem}
Let $W$ be an invertible polynomial as described in Theorem \ref{main-cor}.
\begin{enumerate}
\item Using the pairing, the ring structure, the properties of FJRW theory and Saito-Givental theory, and WDVV equations, the potentials $\pot_{0, W}^{\rm FJRW}$ and $\pot_{0, W^T, \prim}^{\rm SG}$ are completely determined by the correlators
\begin{itemize}
\item $\Fourptcorr{x_i}{ x_i}{ x_i^{\expon_i-2}}{ \Hessbase{W^T} }$ when $x_i$ is the variable in a Fermat $x_i^{\expon_i}$ with $\expon_i \neq 2$.
\item $\Fourptcorr{ x_N}{ x_N}{ x_{N-1}x_N^{\expon_N-2}}{ \Hessbase{W^T} }$ when $x_N$ is the last variable of a chain.
\item $\Fourptcorr{x_i}{ x_i}{ x_{i-1}x_i^{\expon_i-2}}{ \Hessbase{W^T} }$ when $x_i$ is a variable in a loop.
\end{itemize}
Here we use B-model notation, and $\phi_{W^T}$ is the element in $\Jac{W^T}$ of highest degree, normalized as in Definition \ref{milnor_basis}. The A-model correlators are obtained by mapping the insertions via Krawitz's mirror map in Theorem \ref{mirror-algebra}.
\item The values of these correlators are $q_i$ on the A-side.
\item The values of these correlators are $-q_i$ on the B-side.
\end{enumerate}
\end{theorem}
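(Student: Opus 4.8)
The plan is to decouple an abstract reconstruction, which uses only the common Frobenius-algebra and grading data, from two model-specific computations. Because Krawitz's map (Theorem \ref{mirror-algebra}) identifies the Frobenius algebras $\A{W}\cong\Jac{W^T}$ and is degree-preserving, and because the selection rules \eqref{dim_ax}--\eqref{lbd_ax} and the WDVV equations are available on each side, I would first prove part (1) once, as a statement about any conformal Frobenius manifold whose underlying algebra and grading are the identified ones. Parts (2) and (3) then only need to pin down the single free parameter per atomic summand, separately on the A- and B-sides. The Fermat case, treated as a warm-up in Section \ref{reconstruction-1}, serves as the template for both the reconstruction and the two computations.

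For the reconstruction in part (1), the first reduction is to atomic summands: by Theorem \ref{tensor_a_thm} and the identity $\Jac{f_1\oplus f_2}=\Jac{f_1}\otimes\Jac{f_2}$, correlators with insertions from distinct summands factor, and the standard basis of Definition \ref{milnor_basis} is multiplicative, so it suffices to treat a single Fermat, chain, or loop. Within an atomic summand I would argue in two stages. First, reduce every $k$-point correlator with $k\geq 5$ to $4$-point correlators: differentiating the WDVV identity among the $3$-point generating functions and evaluating at the origin produces relations in which an $(m+3)$-point function carrying a product insertion $\phi_a\star\phi_b$ is expressed through correlators with simpler insertions weighted by the known structure constants, and an induction on the number of insertions together with the grading \eqref{dim_ax} forces this recursion to terminate. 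Second, apply the $m=1$ form of the same relations to collapse the surviving $4$-point functions: the dimension axiom pins $\sum_i \deg_W \phi_{\alpha_i}=\chat{W}+1$, so once three insertions are generators or near-generators the fourth is pushed onto the top class $\Hessbase{W^T}$, leaving exactly the three families listed.

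For part (2), I would compute the surviving FJRW $4$-point correlators directly from the geometry of $\W{0}{4}$. The insertions are read off through the mirror map \eqref{Krawitz-mirror-map}, the integer-degree axiom \eqref{lbd_ax} singles out the unique nonempty decorated component, and on that component the relevant line bundles are concave (or the virtual class is otherwise explicitly identified), so the correlator reduces to a first Chern / Euler-class computation over $\M{0}{4}\cong\mathbb{P}^1$ and evaluates to $q_i$. For part (3), I would run the perturbative algorithm of Theorem \ref{B_theorem}: expand $\Jfunc$ in $\mathbf{s}$ to the order needed to extract $\Jfunc_{-2}$, apply \eqref{compute_potential_eq} to obtain the $4$-point function \eqref{sg-potential-g=0}, and use the short word-length of these particular insertions to keep the recursion tractable, yielding $-q_i$. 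The relative sign between $q_i$ and $-q_i$ is precisely the discrepancy the rescaling \eqref{regular-rescale} is designed to absorb.

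The hard part will be the reconstruction in part (1), specifically the bookkeeping for chains and loops, where the standard basis of Definition \ref{milnor_basis} is combinatorially intricate (the chain basis excludes a nontrivial family of monomials), products of generators need not themselves be basis monomials, and the grading and line-bundle-degree constraints must be tracked carefully to guarantee that the WDVV recursion terminates exactly at the three listed correlators and at no others. A further subtlety is ensuring that one and the same abstract recursion is simultaneously valid on the A-side, through the FJRW axioms, and on the B-side, through the Saito-Givental Frobenius structure, so that a single reconstruction statement genuinely covers both theories; establishing this is the crux of the argument.
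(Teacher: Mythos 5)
Your overall architecture (an abstract reconstruction step plus separate A- and B-side evaluations of the surviving four-point correlators) matches the paper, but two of your reductions have genuine gaps.

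First, your opening move --- reducing part (1) to atomic summands by factoring correlators across the tensor decompositions $\A{W_1}\otimes\A{W_2}$ and $\Jac{f_1}\otimes\Jac{f_2}$ --- is not justified on the B-side. Theorem \ref{tensor_a_thm} gives the factorization of the cohomological field theory on the A-side, but the analogous statement for Saito--Givental theory, namely that the Frobenius \emph{manifold} attached to a primitive form of $f_1\oplus f_2$ is the tensor product (in Manin's sense) of the Frobenius manifolds of $f_1$ and $f_2$, is precisely the point the paper deliberately avoids: Proposition \ref{prop-sum-good-basis} only says that good bases multiply, not that flat coordinates, the prepotential, or the correlators factor. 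This is why the paper's reconstruction in Section \ref{chap:reconstruction} is carried out for an arbitrary disjoint sum: Lemma \ref{splitting_lemma} and Proposition \ref{two_from_a_poly} track the quantities $\K_{W_j}$ across all summands simultaneously, and the terminal correlators carry $\Hessbase{W^T}$ of the full polynomial rather than of a single summand. Without either proving the B-side tensor factorization or redoing the cross-summand bookkeeping as the paper does, your induction does not get off the ground. A related, smaller omission: you take the selection rule \eqref{lbd_ax} as ``available on each side,'' but its B-side analog is itself a theorem (Lemma \ref{vanishing_lemma}), proved via a $G_{W^T}$-action on the Brieskorn lattice compatible with the perturbative formula \eqref{Jfunc_eq}; some such argument must be supplied.

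Second, your plan for part (2) --- ``the relevant line bundles are concave \dots so the correlator reduces to a first Chern / Euler-class computation'' --- fails exactly for loop summands with $\expon_N=2$, and for two-variable loops where the mirror images of the $x_i$ land in \emph{broad} sectors. When an insertion is broad, concavity is not even applicable; and when $N\geq 3$ and $\expon_N=2$, the bundle $\LL_{N-1}$ acquires sections on a boundary stratum, so the correlator is genuinely nonconcave. The paper handles these by three separate devices: a WDVV recursion anchored to a concave seven-point correlator matched with the known $D_4$ computation; a WDVV recursion down to a concave correlator; and Gu\'er\'e's formula for the Polishchuk--Vaintrob virtual class combined with the Chang--Li--Li comparison of virtual classes. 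Your parenthetical ``(or the virtual class is otherwise explicitly identified)'' conceals all of this; as stated, your computation does not cover these families, which are exactly the ones permitted by the hypothesis of Theorem \ref{main-cor} (only chain variables of weight $1/2$ are excluded, not loop ones).
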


\begin{remark}
The correlators in Theorem \ref{reconstruction_theorem} may be described as $\Fourptcorr{ x_i}{ x_i}{ M_i/x_i^2}{ \Hessbase{W^T} }$ where $W^T = \sum_i M_i$ and $M_i=\prod_{j=1}^Nx_j^{a_{ij}}$ is any monomial of a Fermat or loop summand, or the final monomial of a chain summand. 
Here we define $M_i/x_i^2 := \prod_{j=1}^Nx_j^{a_{ij}-2\delta_{ij}}.$
Similar notation will be used throughout the paper. 
Such a formulation of correlators and their values was first discovered for simple elliptic singularities in \cite{MS} and then verified for exceptional unimodular singularities in \cite{LLSS}. 
\end{remark}

\begin{remark} When $W$ contains a chain summand $x_1^{a_1}x_2+x_2^{a_2}x_3+\cdots + x_N^{a_N}$ with $\expon_N=2$, we show that the B-model statements in Theorem \ref{LG_pot_thm} hold.
If we further know that the Frobenius algebra structures on $\A{W}$ and $\Jac{W^T}$ coincide, and that part (2) of Theorem \ref{LG_pot_thm} hold in A-model, then Conjecture \ref{LG_conjecture} will follow. Two such examples  for $Z_{13}, W_{13}$ of exceptional unimodular singularities are established in this way in \cite{LLSS}.
\end{remark}



\section{Good basis and B-model vanishing}\label{section-b-model}
In this section, we introduce some tools in the B-model for the proof of Part (1) of Theorem \ref{LG_pot_thm}.
More explicitly, we will use the symmetries of an invertible polynomial to prove Theorem \ref{thm-good-basis} and establish the \emph{Dimension Axiom} and \emph{Integer Degree Axiom} in the B-model (Lemma \ref{vanishing_lemma}). 

\subsection{Good basis of invertible polynomials}\label{section-proof-good-basis}
In this subsection we prove Theorem \ref{thm-good-basis}.
As a consequence, we obtain a Frobenius manifold structure on the base space of the universal unfolding of the corresponding singularity.
This structure can be computed perturbatively as described in Section \ref{section-perturbative-formula}, furnishing the genus zero data in the B-model. We will adopt the same notation as in Section \ref{section-B-model} and write $f$ instead of $W^T$ for the mirror polynomial.

We only need to prove Theorem \ref{thm-good-basis} for chains and loops, since Fermat polynomials are the A-type singularities discussed in Example \ref{eg-ADE}.
We will use the following notation:
\begin{enumerate}
\item
If $g(x)$ is a polynomial,
$[g]_f$ will denote the class in $\mathcal H_f^{(0)}$ represented by $g(x) d^Nx$.
\item The linear coordinates on $\CC^N$ are $x_1, \cdots, x_N$ and $x_{N+k}\equiv x_k$.
\end{enumerate}
In the notation of Section \ref{section-Krawitz-mirror}, the inverse of the exponent matrix of $f=W^T$ is
\[
\AW_f^{-1}=(\AW_W^{-1})^T=\left(
\begin{array}{ccccc}
\rho_1^{(1)}&\rho_1^{(2)}&\cdots&\rho_1^{(N)}\\
\vdots &\vdots&\ddots&\vdots\\
\rho_N^{(1)}&\rho_N^{(2)}&\cdots&\rho_N^{(N)}\\
\end{array}
\right).
\]
Let $\row_j$ be the linear transformation
$$
  \row_j\cdot x_i= \exp(2\pi \sqrt{-1}\rho_i^{(j)})\,x_i.
$$
This transformation preserves $f$; that is, $\row_j\cdot f=f,$ for all $j$. Hence $\row_j$ induces an action on the Brieskorn lattice
\[
    \row_j: \mathcal H_f^{(0)}\to \mathcal H_{f}^{(0)}.
\]
Moreover, it is easy to see that the higher residue pairing $K_f$ is $\row_j$-invariant.
These symmetries are enough to prove that the standard basis is a good basis.

Let $x_1^{r_1}\cdots x_N^{r_N}$ and $x_1^{r_1^\prime}\cdots x_N^{r_N^\prime}$ be monomials in the standard basis for either the chain or loop type.  Let
$$(m_1, \cdots, m_N)=(r_1+r_1^\prime, \cdots, r_N+r_N^\prime).$$
The $\row_j$-invariance of $K_f$ implies the \emph{integral conditions}
\[
   \sum_{i=1}^N (m_i+2)\rho_i^{(j)}=k_i\in \ZZ, \quad \text{for all}\; j
\]
(the extra $2$ comes from two copies of $d^Nx$).
This is equivalent to
\begin{equation}\label{good-basis-cond}
(k_1,k_2,\cdots,k_N)\, \AW_f
=
(m_1+2,m_2+2,\cdots,m_N+2).
\end{equation}
The remainder of the proof splits into two cases, corresponding to the possible types of $f$.\\

\noindent{\bf The chain case.} Let  $f=x_1^{a_1}+x_1x_2^{a_2}+\cdots + x_{N-1}x_N^{a_N}$.
The exponent matrix has the form
\[
\AW_f = \left( \begin{array}{ccccc}
\expon_1& & &&\\
1 & \expon_2 & &&\\
&  \ddots & \ddots &&\\
&&&\expon_{N-1} & \\
&&&1 & \expon_N \end{array} \right).
\]
Equation \eqref{good-basis-cond} in this case becomes
$$
   m_1=k_1 a_1+k_2-2, \; \; \;m_2=k_2a_2+k_3-2, \;\;\; \ldots, \; \;\; m_{N-1}=k_{N-1}a_{N-1}+k_N-2, \;\;\; m_N=k_{N}a_N-2
$$
where $0\leq m_i\leq 2a_i-2$ and the $k_i$ are integers.
We investigate possible values for the $k_i$ and $m_i$ .
This analysis is easiest if we begin by tracing all possible values of $k_i$ back from $k_N$.
The only possibilities are
\begin{enumerate}
\item $(k_1,\cdots,k_N)=(1,1,\cdots, 1)$.
\item $(k_1,\cdots,k_N)=(1, \cdots, 1, 0,2, \cdots, 0,2)$.
\item $(k_1,\cdots, k_N)=(1, \cdots, 1, 2, 0, 2, \cdots, 0, 2)$.
\end{enumerate}
In case (3), we have
$$(m_1, \cdots, m_N)=(a_1-1, \cdots,a_{N-2l-2}-1,a_{N-2l-1}, 2a_{N-2l}-2,\cdots, 0 ,2a_{N-2}-2,0,2a_N-2).$$
This can not appear if both $x_1^{r_1}\cdots x_N^{r_N}$
and $x_1^{r_1^\prime}\cdots x_N^{ r_N^\prime}$ are in the standard basis. For cases (1) and (2), we check directly that
$$\deg(x_1^{m_1}\cdots x_N^{m_N})=\deg(x_1^{r_1}\cdots x_N^{r_N})+\deg(x_1^{r_1^\prime}\cdots x_N^{ r_N^\prime})=\hat c_f.$$
Since $K_f$ preserves the $\QQ$-grading, we have
$$
\deg K_f([x_1^{r_1}\cdots x_N^{r_N}]_f, [x_1^{r_1^\prime}\cdots x_N^{ r_N^\prime}]_f)=\deg(x_1^{r_1}\cdots x_N^{r_N})+\deg(x_1^{r_1^\prime}\cdots x_N^{ r_N^\prime})+2\sum_i q_i=N.
$$
It follows that $
  K_f([x_1^{r_1}\cdots x_N^{r_N}]_f, [x_1^{r_1^\prime}\cdots x_N^{ r_N^\prime}]_f)$ lies in $z^N \CC.
$
\\

\noindent{\bf The loop case.} Let  $f=x_1^{a_1}x_N+x_1x_2^{a_2}+\cdots+ x_{N-1}x_N^{a_N}$.
The exponent matrix of $f$ is
\[
\AW_f = \left( \begin{array}{ccccc}
\expon_1& & &&1\\
1 & \expon_2 & &&\\
&  \ddots & \ddots &&\\
&&&\expon_{N-1} & \\
&&&1 & \expon_N \end{array} \right).
\]
With the convention $k_1 \equiv k_{N+1}$, equation \eqref{good-basis-cond} above implies
\begin{equation}\label{loop-good-basis}
m_i+2=k_i a_i+k_{i+1}, \quad i=1,\cdots, N.
\end{equation}
Let
$h_i=k_i-1$ for each $i$. Equation \eqref{loop-good-basis} becomes
$$m_i+2=(h_i+1) a_i+h_{i+1}+1.$$
Since $0\leq m_i\leq 2a_i-2$, we get
\begin{equation}\label{loop-modify}
1-a_i\leq h_i a_i+h_{i+1}\leq a_i-1, \quad i=1,\cdots, N.
\end{equation}

If there is some $h_{i+1}=0$, then the above equation implies $h_i=0$, and recursively,
$$(h_1,\cdots,h_N)=(0,0,\cdots, 0).$$

Otherwise, we can assume none of the $h_i$ is zero. There are two situations. Either there is one $h_i$ with $|h_i|=1$ or all $|h_i|\geq2$. For the first case, we assume some $h_{i+1}=\pm1$. Since $h_i\neq0$ by assumption, the inequality \eqref{loop-modify} implies $h_i=\mp1$.  We can repeat this process and get the following solution when $N$ is an even number:
$$(h_1,\cdots,h_N)=(\pm1,\mp1,\cdots,\pm1,\mp1).$$

Finally we prove it is impossible to have all $|h_i|\geq2$. Equation \eqref{loop-modify} implies
\begin{equation}\label{loop-second-modify}
-1+{1-h_{i+1}\over a_{i}}\leq h_{i} \leq 1-{1+h_{i+1}\over a_{i}}.
\end{equation}
If all $|h_{i+1}|\geq 2$, this implies
\begin{equation}\label{loop-cont}
|h_{i}|<|h_{i+1}|.
\end{equation}
In fact, if $h_{i+1}\geq2$, then the RHS of inequality \eqref{loop-second-modify} implies $h_{i}<1$. By assumption, we know $h_{i}\leq-2$. However, since
$$-h_{i+1}<-1+{1-h_{i+1}\over a_{i}},$$
inequality \eqref{loop-cont} follows from the LHS of \eqref{loop-second-modify}. A similar argument works for $h_{i+1}\leq-2$. We repeat this process and we find
$$|h_{i}|=|h_{i+N}|<\cdots<|h_{i+1}|<|h_i|,$$
which is impossible.
Thus the only possibilities for the $k_i$'s are
\begin{enumerate}
\item $(k_1,\cdots,k_N)=(1,1,\cdots, 1)$, and
\item $(k_1,\cdots,k_N)=(1\pm1,1\mp1,\cdots, 1\pm1,1\mp1)$, if $N$ is even.
\end{enumerate}
In each case, again we have
$$\deg(x_1^{m_1}\cdots x_N^{m_N})=\deg(x_1^{r_1}\cdots x_N^{r_N})+\deg(x_1^{r_1^\prime}\cdots x_N^{ r_N^\prime})=\hat c_f.$$
By the same degree reason as in the chain case, we know
$
  K_f([x_1^{r_1}\cdots x_N^{r_N}]_f, [x_1^{r_1^\prime}\cdots x_N^{ r_N^\prime}]_f)$ lies in $z^N \CC.
$




\subsection{Vanishing conditions in B-model}
We will now prove the B-model properties that are the analogs of the Dimension Axiom \eqref{dim_ax} and Integer Degrees Axiom \eqref{lbd_ax} on the A-side. These give us vanishing conditions for B-model correlators which we will later use to reconstruct the potential $\pot_{0,W^T,\prim}^{\rm SG}$.

\begin{lemma}\label{vanishing_lemma} Let $\mirror$ be Krawitz's mirror map (Theorem \ref{mirror-algebra}).
The A-model correlator
\begin{equation}\label{b-correlator}
\mirror(X) = \left\langle \; \mirror\left( \prod_{i=1}^N x_i^{e_{1,i}}\right), \ldots, \mirror\left( \prod_{i=1}^N x_i^{e_{k,i}}\right) \; \right\rangle
\end{equation}
satisfies the Dimension Axiom \eqref{dim_ax} if and only if
\begin{equation}\label{b_dim_ax}
\sum_{\nu=1}^k \deg\left(\prod_{i=1}^{N} x_i^{e_{\nu,i}}\right)= \chat{W^T}+k-3,
\end{equation}
and $\mirror(X)$ satisfies the Integer Degrees Axiom \eqref{lbd_ax} if and only if
\begin{equation}\label{b_lbd_eq}
 -2q_j - \sum_{\nu=1}^k \sum_{i=1}^N\rho_i^{(j)}e_{\nu,i}  \in \ZZ \quad \text{for}\;\;j=1, \ldots, N.
 \end{equation}
Moreover, if the B-model correlator
$$X:=\left\langle \; \prod_{i=1}^N x_i^{e_{1,i}}, \ldots, \prod_{i=1}^N x_i^{e_{k,i}}\; \right\rangle$$ is nonzero, then both \eqref{b_dim_ax} and \eqref{b_lbd_eq} hold.
\end{lemma}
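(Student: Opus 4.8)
\emph{Setup and reduction.} The two equalities \eqref{b_dim_ax} and \eqref{b_lbd_eq} are the B-model shadows of the two symmetries under which Saito's construction is equivariant: the $\QQ$-grading (weight) of \eqref{defn-weight}, responsible for \eqref{b_dim_ax}, and the finite group $G_f=\langle \row_1,\dots,\row_N\rangle$ of diagonal symmetries of $f=W^T$ introduced in Section \ref{section-proof-good-basis}, responsible for \eqref{b_lbd_eq}. Parts (1) and (2) are then formal: since $\mirror$ is degree preserving and the sector of $\mirror(\prod_i x_i^{e_{\nu,i}})$ is $\gamma_\nu=\prod_j\col_j^{e_{\nu,j}+1}$ by \eqref{equ:mirror_sectors}, a direct computation of the phases $\Theta_{\gamma_\nu}^{(j)}$ using $q_j=\sum_i\rho_i^{(j)}$ shows $\Theta_{\gamma_\nu}^{(j)}\equiv q_j+\sum_i\rho_i^{(j)}e_{\nu,i}\pmod 1$, whence \eqref{dim_ax}$\Leftrightarrow$\eqref{b_dim_ax} and $\ld_j\equiv -2q_j-\sum_\nu\sum_i\rho_i^{(j)}e_{\nu,i}\pmod 1$, giving \eqref{lbd_ax}$\Leftrightarrow$\eqref{b_lbd_eq}. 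The content is part (3), which I would prove directly on the B-side. By \eqref{sg-potential-g=0}, \eqref{compute_potential_eq} and Definition \ref{def-nRes}, choosing $\alpha_1$ as a distinguished insertion and writing $\bgen_{\alpha_\nu}=\prod_i x_i^{e_{\nu,i}}$,
\[
X=\frac{\partial^{k-1}}{\partial t_{\alpha_2}\cdots\partial t_{\alpha_k}}\Big(\sum_\beta \eta_{\alpha_1\beta}\,\Jfunc_{-2}^{\beta}\Big)\Big|_{\mathbf t=0}.
\]
Hence $X\neq 0$ forces some $\beta$ with (a) $\eta_{\alpha_1\beta}\neq 0$, and (b) the monomial $\prod_{\nu=2}^k t_{\alpha_\nu}$ occurring with nonzero coefficient in $\Jfunc_{-2}^{\beta}$. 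I will impose each symmetry on (a) and (b) and combine.

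\emph{Equivariance of the perturbative data.} The crucial step is that the solution $(\prim,\Jfunc)$ of \eqref{Jfunc_eq} is equivariant for both symmetries, by the uniqueness in Theorem \ref{B_theorem}. For the weight grading, assign $\wt(x_i)=\wt(dx_i)=q_i^T$, $\wt(z)=1$ and $\wt(s_\alpha)=1-\wt(\bgen_\alpha)$, so that $(F-f)/z=\sum_\alpha s_\alpha\bgen_\alpha/z$ has weight $0$; then \eqref{Jfunc_eq} forces $\Jfunc$ to be homogeneous of weight $\sum_i q_i^T$, whence $\wt(\Jfunc_{-2}^{\beta})=2-\wt(\bgen_\beta)$ from \eqref{J-func}. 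For $G_f$, let $\row_j$ act on $x_i,dx_i$ by $\exp(2\pi\sqrt{-1}\rho_i^{(j)})$ and on $s_\alpha$ by the dual phase, so $F-f$ and $z$ are fixed; after the scalar twist $\tilde\row_j:=\exp(-2\pi\sqrt{-1}q_j)\row_j$ (chosen so $[d^Nx]$ is fixed), the normalized spaces of Theorem \ref{B_theorem} are $\tilde\row_j$-stable and \eqref{Jfunc_eq} is $\tilde\row_j$-invariant, so uniqueness gives $\tilde\row_j\Jfunc=\Jfunc$. Reading this off the $z^{-2}$-term of \eqref{J-func} shows every monomial of $\Jfunc_{-2}^{\beta}$ has the same $\row_j$-character as $\prod_i x_i^{-e_i^{\beta}}$. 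Both statements descend from $\mathbf s$ to the flat coordinates $\mathbf t$, since $t_\alpha=\Jfunc_{-1}^{\alpha}$ by \eqref{flat-coord} carries the same weight and character as $s_\alpha$.

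\emph{Selection rules and the arithmetic.} It remains to combine (a) and (b). Because $K_f$ respects the $\QQ$-grading and restricts to the residue pairing, $\eta_{\alpha_1\beta}\neq 0$ forces $\wt(\bgen_{\alpha_1})+\wt(\bgen_\beta)=\chat{f}$; because $K_f$ is $\row_j$-invariant and each insertion class $[\bgen\,d^Nx]$ carries a factor $d^Nx$ of $\row_j$-character $\exp(2\pi\sqrt{-1}q_j)$, it also forces $\sum_i\rho_i^{(j)}(e_{1,i}+e_i^{\beta})+2q_j\in\ZZ$ (the source of the term $-2q_j$ in \eqref{b_lbd_eq}). Condition (b) with weight homogeneity gives $\sum_{\nu=2}^k(1-\wt(\bgen_{\alpha_\nu}))=2-\wt(\bgen_\beta)$, and with the $\row_j$-character gives $\sum_i\rho_i^{(j)}e_i^{\beta}\equiv\sum_{\nu=2}^k\sum_i\rho_i^{(j)}e_{\nu,i}\pmod 1$. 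Eliminating $\beta$ between (a) and (b) in each case yields exactly $\sum_{\nu=1}^k\wt(\bgen_{\alpha_\nu})=\chat{W^T}+k-3$ and $-2q_j-\sum_{\nu=1}^k\sum_i\rho_i^{(j)}e_{\nu,i}\in\ZZ$, which are \eqref{b_dim_ax} and \eqref{b_lbd_eq}.

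\emph{Main obstacle.} The delicate point is the bookkeeping of the volume form $d^Nx$: it contributes one factor to the normalization of $\Jfunc$ (hence the scalar twist $\tilde\row_j$ and the weight shift $\sum_i q_i^T$) but two factors to the residue pairing $\eta$ (hence the $2q_j$ and the top weight $\chat{f}$). Keeping these consistent, and verifying that the two equivariances genuinely descend to the flat coordinates $\mathbf t$, is where care is needed; the remainder is the linear algebra above. A cleaner but less self-contained alternative would be to cite the conformality of the Saito--Givental Frobenius manifold for \eqref{b_dim_ax} and only run the $G_f$-argument for \eqref{b_lbd_eq}.
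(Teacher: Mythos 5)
Your proposal is correct and follows essentially the same route as the paper's proof: the equivalences (1)--(2) by the identical phase computation through Krawitz's sector formula \eqref{equ:mirror_sectors}, the dimension constraint from the $\QQ$-grading respected by the perturbative formula \eqref{Jfunc_eq}, and the integer-degree constraint from the very same $G_{W^T}$-action (each $\row_j$ acting on $x_i$, $z$, $s_\alpha$), the uniqueness of $(\prim,\Jfunc)$ in Theorem \ref{B_theorem}, and the $\row_j$-invariance of the higher residue pairing $K_f$. The only differences are organizational rather than conceptual: you impose the two selection rules correlator-by-correlator on $\eta_{\alpha_1\beta}$ and on the coefficients of $\Jfunc_{-2}^{\beta}$ and then eliminate $\beta$, where the paper matches $\row_j$-characters at the level of the whole prepotential $\pot_{0,f,\prim}^{\rm SG}$, and your explicit scalar twist of $\row_j$ fixing $[d^Nx]$ makes precise the equivariance statement that the paper records more loosely as invariance of $f$, $F$, $\prim$, and $\Jfunc$.
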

\begin{proof}
The equivalence of \eqref{dim_ax} and \eqref{b_dim_ax} follows from the fact that $\mirror$ is degree-preserving (Theorem \ref{mirror-algebra}) and $\hat c_W=\hat c_{W^T}$.
Also from Theorem \ref{mirror-algebra} we know
\[
\mirror\left( \prod_{i=1}^{N} x_i^{e_{\nu,i}}\right)\in \A{\gamma} \quad\text{and}\quad \gamma=\left(\prod_{i=1}^{N}\col_i^{e_{\nu,i}}\right) J_W.
\]
By directly calculating the quantity $\ld_j$ in \eqref{lbd_ax} using \eqref{weight} and \eqref{exponent-op}, we get
\[
\ld_j\equiv q_j (k-2) - \sum_{\nu=1}^k \left(\sum_{i=1}^N\rho_i^{(j)}e_{\nu,i}+q_j\right) \mod \ZZ, \quad \text{for}\;\;j=1, \ldots N.
\]
This is exactly Equation \eqref{b_lbd_eq}.

Now assume $X \neq 0$. Then \eqref{b_dim_ax} holds because the potential $\pot_{0,W^T,\prim}^{\rm SG}$ has eigenvalue $\hat{c}_{W^T}-3$ with respect to the Euler vector field $\sum_{\alpha=1}^\mu(1-\deg(\phi_\alpha))s_\alpha\frac{\partial}{\partial s_\alpha}$. This well-known fact also follows explicitly from the perturbative formula \eqref{Jfunc_eq} which respects the $\QQ$-grading.

Finally, we prove that if $X \neq 0$ then \eqref{b_lbd_eq} holds. To do so we introduce a $G_{W^T}$-action on the B-model.
Since $G_{W^T}$ is generated by $\{\row_j\}_{j=1}^{N}$, it suffices to define each $\row_j$-action as follows.
\[
\row_j\cdot x_i=\exp(2\pi\sqrt{-1}\rho_i^{(j)})\,x_i, \quad \row_j\cdot z=z, \quad \row_j\cdot s_{\alpha} =c_\alpha^{-1} s_{\alpha}.
\]
where $c_\alpha$ is the nonzero constant such that
$$\row_j \cdot \phi_\alpha=c_\alpha \phi_\alpha.$$
We can check that the action of $\row_j$ is compatible with the relations
 \begin{equation}\label{relation_eq}
 {\partial f\over\partial x_i}gd^Nx = -z {\partial g\over\partial x_i}d^Nx \quad \text{in} \quad  \mathcal H_f^{(0)}
 \end{equation}
 for each monomial $g$ in $\CC[x_1, \ldots x_N]$.
Thus the perturbative formula \eqref{Jfunc_eq} shows that $f,$ $F$, $\prim$, and $\Jfunc{}$ are all invariant under the $G_{W^T}$-action.

Furthermore, according to \eqref{J-func} and \eqref{flat-coord}, $\row_j$ acts on $t_\alpha$ by a factor of $c_\alpha^{-1}$.
Each $\row_j$ acts on the $\nu$-th insertion of $X$ by
\[
\row_j \cdot  \prod_{i=1}^{N} x_i^{e_{\nu,i}}= \exp\left(2 \pi \sqrt{-1}\sum_{i=1}^N \rho_i^{(j)}e_{\nu,i} \right)\prod_{i=1}^N x_i^{e_{\nu,i}}.
\]
Therefore $\row_j$ acts on the corresponding monomial in the prepotential \eqref{sg-potential-g=0} by a factor of
\[
\exp\left(-2 \pi \sqrt{-1}\sum_{\nu=1}^{k}\sum_{i=1}^N \rho_i^{(j)}e_{\nu,i} \right).
\]
On the other hand, the (higher) residue pairing is invariant under the $\row_j$-action. Since $\row_j\cdot d^Nx=\exp(2 \pi \sqrt{-1}q_j)d^Nx$ by \eqref{weight}, it follows that the pairing
$$
K_f(\phi_\alpha d^Nx, \phi_\beta d^Nx)
$$
is zero unless $c_\alpha c_\beta=\exp(-4 \pi \sqrt{-1}q_j)$. Then \eqref{compute_potential_eq} implies
$$
   \row_j \cdot \pot_{0,f,\prim}^{\rm SG}(\mathbf{t})= \exp(4 \pi \sqrt{-1}q_j)\,\pot_{0,f,\prim}^{\rm SG}(\mathbf{t}).
$$
Matching the above two factors, we find
$$
\exp\left(-2 \pi \sqrt{-1}\sum_{\nu=1}^{k}\sum_{i=1}^N \rho_i^{(j)}e_{\nu,i} \right)=  \exp(4 \pi \sqrt{-1}q_j).
$$
This is exactly \eqref{b_lbd_eq}.
\end{proof}


\section{Reconstruction}\label{reconstruction-1}


In this section, we will introduce the key lemma that turns the WDVV equations into a powerful tool for reconstructing genus zero potentials.
Finally, we will completely reconstruct an arbitrary sum of Fermat polynomials as an example of our proof strategy in the general case.


\subsection{A reconstruction lemma from WDVV equations}

We introduce a powerful reconstruction lemma that follows from the WDVV equations. The statement of this lemma requires the following definition.

\begin{definition} We say that an element $\xi$ or $\A{W}$ is \emph{primitive} if whenever $\xi = \xi_1 \star \xi_2$, either $\deg(\xi_1)=0$ or $\deg(\xi_2)=0$.
\end{definition}

It is easy to see that for ${\rm Jac}(W^T)$, the set of primitive elements is a subset of $\{x_1, \ldots, x_N\}$.
By mirror symmetry, the set of primitive elements in $\A{W}$ is a subset of $\{\mirror(x_1), \ldots, \mirror(x_N)\}$.  The next lemma says that the prepotential $\mathcal{F}_0$ in each theory is completely determined by correlators with mostly primitive insertions.
\begin{lemma}[\cite{FJR}, Lemma 6.2.6]\label{reconstruction_lemma}
A $k$-point correlator $\langle \xi_1, \ldots, \xi_{k-3}, \alpha, \beta, \epsilon \star \phi \rangle$ satisfies
\begin{align}
\langle \xi_1, \ldots, \xi_{k-3}, \gamma, \delta, \epsilon \star \phi \rangle = &\langle \xi_1, \ldots, \xi_{k-3}, \gamma, \epsilon, \delta \star \phi \rangle \nonumber
+ \langle \xi_1, \ldots, \xi_{k-3}, \gamma \star \epsilon, \delta, \phi \rangle \nonumber \\
&-\langle \xi_1, \ldots, \xi_{k-3}, \gamma \star \delta, \epsilon, \phi \rangle +S \label{reconst_eq}
\end{align}
where $S$ is a linear combination of correlators with fewer than $k$ insertions. 
If $k=4$, then there are no such terms in the equation, i.e., $S=0$.
In addition, the $k$-point correlators are uniquely determined by the pairing, the three-point correlators, and by correlators of the form $\langle \xi_1, \ldots \xi_n \rangle$ with $n<k$ where $\xi_i$ is primitive for $i\leq n-2$. 
\end{lemma}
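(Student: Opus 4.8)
The plan is to deduce the relation \eqref{reconst_eq} from the genus-zero WDVV (associativity) equations of the Frobenius manifold, and then to run an inductive reconstruction driven by \eqref{reconst_eq}. Since both $\pot_{0,W}^{\rm FJRW}$ and $\pot_{0,W^T,\prim}^{\rm SG}$ are prepotentials of Frobenius manifolds, the argument is uniform on the two sides, so I would phrase everything for a single Frobenius manifold with flat prepotential $F$, flat basis $\{e_a\}$, constant metric $\eta$, primary genus-zero correlators $\langle\cdots\rangle = \partial\cdots\partial F|_{0}$, and product $\star$ defined by \eqref{product_def}.

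First I would take the standard WDVV equation
\[
\sum_{\mu,\nu}(\partial_\gamma\partial_\delta\partial_\mu F)\,\eta^{\mu\nu}\,(\partial_\nu\partial_\epsilon\partial_\phi F) = \sum_{\mu,\nu}(\partial_\gamma\partial_\epsilon\partial_\mu F)\,\eta^{\mu\nu}\,(\partial_\nu\partial_\delta\partial_\phi F),
\]
apply $\partial_{\xi_1}\cdots\partial_{\xi_{k-3}}$, and set all flat coordinates to $0$. By Leibniz each side becomes a sum over splittings $I\sqcup J=\{1,\dots,k-3\}$ of products $\langle\{\xi_i\}_{i\in I},\gamma,\delta,e_\mu\rangle\,\eta^{\mu\nu}\,\langle e_\nu,\{\xi_j\}_{j\in J},\epsilon,\phi\rangle$ (and the analogue with $\delta,\epsilon$ interchanged). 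The key observation is that such a product collapses to a single $k$-point correlator exactly when one factor is a $3$-point correlator, i.e.\ when $I=\emptyset$ or $J=\emptyset$: the node contraction $\sum_{\mu,\nu}\langle\cdots,e_\mu\rangle\,\eta^{\mu\nu}\,\langle e_\nu,\alpha,\beta\rangle=\langle\cdots,\alpha\star\beta\rangle$ then produces the product insertion. Isolating these four leading terms gives $\langle\cdots,\gamma,\delta,\epsilon\star\phi\rangle+\langle\cdots,\gamma\star\delta,\epsilon,\phi\rangle$ on the left and $\langle\cdots,\gamma,\epsilon,\delta\star\phi\rangle+\langle\cdots,\gamma\star\epsilon,\delta,\phi\rangle$ on the right, and rearranging yields \eqref{reconst_eq} with $S$ the difference of all remaining terms, namely those from splittings with both $I,J\neq\emptyset$. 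Each such term is a product of two correlators with between $4$ and $k-1$ insertions each, hence strictly fewer than $k$; this establishes the claim about $S$. When $k=4$ there is a single derivative $\partial_{\xi_1}$, so every splitting has $I=\emptyset$ or $J=\emptyset$ and no remainder survives, whence $S=0$.

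For the reconstruction I would induct on $k$, the base $k=3$ being the given data. Because the algebra is graded with $\deg(\alpha\star\beta)=\deg\alpha+\deg\beta$ and any non-primitive positive-degree element factors into two elements of strictly smaller positive degree, the algebra is generated by its primitive elements; I would fix a homogeneous basis of $\star$-monomials in these generators, so that by multilinearity it suffices to reconstruct correlators whose insertions are such monomials, each carrying a well-defined length $\ell(\xi)$ equal to its number of primitive factors ($\ell=1$ iff $\xi$ is primitive). To $\langle\xi_1,\dots,\xi_k\rangle$ I attach the complexity $\Psi=\sum_i\ell(\xi_i)-\ell^{(1)}-\ell^{(2)}$, where $\ell^{(1)},\ell^{(2)}$ are the two largest of the $\ell(\xi_i)$; this is a non-negative integer, and $\sum_i\ell(\xi_i)$ is invariant under the moves below. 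If some insertion $W$ of length $\ge 2$ lies outside the two longest, I place $W$ in the last slot, factor $W=\epsilon\star\phi$ with $\epsilon$ primitive, and take $\gamma,\delta$ to be the two longest insertions; then \eqref{reconst_eq} rewrites $\langle\xi_1,\dots,\xi_k\rangle$ through the three correlators obtained by replacing $(\gamma,\delta,\epsilon\star\phi)$ by $(\gamma,\epsilon,\delta\star\phi)$, $(\gamma\star\epsilon,\delta,\phi)$, $(\gamma\star\delta,\epsilon,\phi)$, plus the lower-order $S$. In each of the three, forming $\delta\star\phi$, $\gamma\star\epsilon$, or $\gamma\star\delta$ raises the maximal length while every untouched insertion stays bounded by the old second-largest length, so $\ell^{(1)}+\ell^{(2)}$ strictly increases and $\Psi$ strictly decreases. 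Thus the inner reduction terminates at \emph{basic} correlators, those with the first $k-2$ insertions primitive; combined with the outer induction on $k$ (which determines $S$), this reconstructs every correlator from the pairing, the $3$-point correlators, and the basic correlators of size at most $k$, as asserted.

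The step I expect to be most delicate is precisely the termination of the inner reduction: I must produce a single complexity that strictly decreases under all three substitutions in \eqref{reconst_eq} simultaneously. The subtlety is that the new insertions $\gamma\star\epsilon$ and $\gamma\star\delta$ raise individual lengths, so a naive count of non-primitive insertions need not drop in every term; parking the two longest insertions in the final two slots and measuring only their complement (the quantity $\Psi$) is what forces all three terms to be strictly simpler at once. A secondary point is to verify that the node-contraction identity and the accounting of $S$ hold verbatim in both the FJRW and Saito--Givental models; this rests only on the WDVV equations, the definition of $\star$ in \eqref{product_def}, and the grading, all available in each theory, so the single argument applies to both $\pot_{0,W}^{\rm FJRW}$ and $\pot_{0,W^T,\prim}^{\rm SG}$.
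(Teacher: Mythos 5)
Your first paragraph is correct, and it is in substance the proof that exists in the literature: this paper does not prove Lemma \ref{reconstruction_lemma} at all but quotes it from \cite{FJR} (Lemma 6.2.6), and the argument there is exactly the WDVV computation you describe --- apply $\partial_{\xi_1}\cdots\partial_{\xi_{k-3}}$ to the associativity equation, evaluate at the origin, contract the splittings with $I=\emptyset$ or $J=\emptyset$ through the pairing using \eqref{product_def}, and observe that every cross term is a product of two correlators each with fewer than $k$ insertions, whence $S=0$ for $k=4$. (One small point: the ``$n<k$'' in the statement is evidently a misprint for $n\le k$, since otherwise the claim fails already at $k=4$; your conclusion ``basic correlators of size at most $k$'' matches the corrected reading and the way the paper uses the lemma.)

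The termination argument in your second paragraph, however, rests on two false claims. Length is a property of monomials, not of elements of the Frobenius algebra: when $\delta\star\phi$, $\gamma\star\epsilon$, or $\gamma\star\delta$ is re-expanded in your monomial basis, the Jacobi relations can collapse it onto monomials of much smaller length, so $\sum_i\ell(\xi_i)$ is \emph{not} invariant under the moves, and the maximal length need \emph{not} increase. Concretely, in $\Jac{x_1^2+x_1x_2^{a}}$ one has $x_2^{a}=-2x_1$, so the product of $x_2^{a-1}$ (length $a-1$) with the primitive element $x_2$ is itself primitive; thus the sentence ``$\ell^{(1)}+\ell^{(2)}$ strictly increases and $\Psi$ strictly decreases'' is unjustified as written. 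The gap is repairable in two ways. (i) Your $\Psi$ does strictly decrease, but by a direct estimate rather than by monotonicity of $\ell^{(1)}+\ell^{(2)}$: writing $A$ for the untouched insertions and $W=\epsilon\star\phi$ (so $\Psi=\sum_{A}\ell+\ell(W)$ with $\gamma,\delta$ the two longest), in each of the three new correlators discard the pair formed by the product insertion $m$ (any basis monomial in its expansion) together with $\gamma$, $\delta$, $\phi$ respectively; since $\ell^{(1)}+\ell^{(2)}$ dominates the combined length of any two slots, this gives $\Psi_{\rm new}\le\sum_{A}\ell+\ell(\epsilon)$ or $\sum_{A}\ell+\ell(\phi)$, both at most $\sum_{A}\ell+\ell(W)-1<\Psi$, no matter how the products expand. (ii) More simply, run the induction on \emph{degree} instead of length: $\deg$ is genuinely additive under $\star$ because the product is graded, degrees take values in a fixed finite set, and the sum of the two largest degrees strictly increases in all three terms; this is the actual induction in \cite{FJR}. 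With either repair your proof is complete.
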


Since the proof of Lemma \ref{reconstruction_lemma} uses only the WDVV equations, it holds for both $\pot^{\rm SG}_{0, W^T, \prim}$ and $\pot^{\rm FJRW}_{0, W}$.
This lemma implies that to compare $\pot_{0, W^T, \prim}^{\rm SG}$ and $\pot_{0, W}^{\rm FJRW}$, it suffices to compare correlators of the form
\begin{equation}\label{corr_form_eq}
X = \langle x_N, \ldots, x_N, x_{N-1}, \ldots, x_{N-1}, \ldots, x_1, \ldots, x_1, \alpha, \beta \rangle, \quad \alpha, \beta\in \Jac{W^T}.
\end{equation}
Here we are using B-side notation; the corresponding A-model correlator is
\[
\langle \mirror(x_N), \ldots, \mirror(x_N), \mirror(x_{N-1}), \ldots, \mirror(x_{N-1}), \ldots, \mirror(x_1), \ldots, \mirror(x_1), \mirror(\alpha), \mirror(\beta) \rangle.
\]
The indicies of the primitive inserstions (including $\alpha$ and $\beta$ if they are primitive) in the correlator $X$ in \eqref{corr_form_eq} are arranged in decreasing order.

In the remainder of this paper, we will apply the vanishing conditions of Lemma \ref{vanishing_lemma} to correlators of the form \eqref{corr_form_eq}.
In this context, let
\[\alpha = \prod_{i=1}^{N} x_i^{\m_i^X}, \quad \beta = \prod_{i=1}^{N} x_i^{\n_i^X},\]
and let $\num^X_i$ be the number of insertions in $X$ equal to $x_i$, ignoring $\alpha$ and $\beta$. Thus
\[\sum_{i=1}^N \num^X_i = k-2.\]

Now let $\bb^X_i$ be the real numbers defined by the equation
\begin{equation}\label{b_def}
\left(\begin{array}{c}
\bb^X_1\\
\vdots\\
\bb^X_N \end{array}\right):= \AW_W^{-1} \left(\begin{array}{c}
\num^X_1 + \m^X_1 + \n^X_1 + 2\\
\vdots\\
\num^X_N + \m^X_N + \n^X_N + 2 \end{array}\right),
\end{equation}
and let
\begin{equation}\label{ind-k}
\K^X_i= \num^X_i - \bb^X_i + 1.
\end{equation}

When there is no possibility of confusion, we will drop the superscript $X$ from the notation.

When we apply Lemma \ref{vanishing_lemma} to $X$, we produce the following lemma.

\begin{lemma}\label{properties_lemma}
Any nonvanishing A- or B-model correlator of the form in \eqref{corr_form_eq} can be written so it satisfies the following properties:
\begin{enumerate}
\item [(P1).] All the numbers $\K_i$ are integers, i,e.,
\begin{equation}\label{fact2}
\K_i\in\mathbb{Z}.
\end{equation}
\item [(P2).] The following equation holds:
\begin{equation}\label{fact1}
\sum_{i=1}^N \K_i=1.
\end{equation}
\item [(P3).]\label{fact3}
The maximum values for $\m_i$ and $\n_i$ are as follows:
\begin{itemize}
\item $\expon_i-2$ if $W$ is a Fermat polynomial,
\item $\expon_i-1$ for a chain summand $x_1^{a_1}x_N+x_1x_2^{a_2}+\dots+x_{N-1}x_N^{a_N}$ in $W^T$, subject to the additional condition that both $(\m_{1}, \m_{2}, \ldots, \m_{N})$ and $(\n_1, \n_{2}, \ldots, \n_{N})$ are not of the form $(a_{1}-1, 0, a_{3}-1, \ldots, 0, a_{N-2}-1, 0, a_{N}-1)$ with $N$ odd or $(\ldots, k, a_{N-2l}-1, 0, \ldots, 0, a_{N-2}-1, 0, a_{N}-1)$ with $k\geq1$,
\item $\expon_i-1$ for a loop summand $x_1^{a_1}+x_1x_2^{a_2}+\dots+x_{N-1}x_N^{a_N}$ in $W^T$.
\end{itemize}
\end{enumerate}
\end{lemma}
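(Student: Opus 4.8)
The plan is to obtain (P1) and (P2) as purely linear-algebraic consequences of the vanishing conditions in Lemma \ref{vanishing_lemma}, and to read off (P3) directly from the standard basis of Definition \ref{milnor_basis} using multilinearity. The key observation is that in both the A- and B-models a nonvanishing correlator of the form \eqref{corr_form_eq} satisfies both the Dimension Axiom \eqref{b_dim_ax} and the Integer Degrees Axiom \eqref{b_lbd_eq}: on the B-side this is the final assertion of Lemma \ref{vanishing_lemma}, and on the A-side it follows from the A-model Dimension and Integer Degrees Axioms together with the equivalences proved in the same lemma. Thus I may argue uniformly for both theories, noting that the total exponent of $x_i$ across all insertions of $X$ is $\num_i + \m_i + \n_i$.

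For (P1), since $\num_i \in \ZZ$, the definition \eqref{ind-k} shows $\K_i \in \ZZ$ if and only if $\bb_i \in \ZZ$. Using the convention $(\AW_W^{-1})_{ji} = \rho_i^{(j)}$ from \eqref{exponent-inverse}, the $j$-th component of \eqref{b_def} is $\bb_j = \sum_{i=1}^N \rho_i^{(j)}(\num_i + \m_i + \n_i + 2)$. On the other hand, substituting $q_j = \sum_i \rho_i^{(j)}$ from \eqref{weight} into the left-hand side of \eqref{b_lbd_eq} gives exactly $-\bb_j$. Hence \eqref{b_lbd_eq} asserts precisely that every $\bb_j \in \ZZ$, which is (P1).

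For (P2), I would sum \eqref{b_def} over $i$ and use $\sum_i \rho_j^{(i)} = q_j^T$ from \eqref{weight} to obtain $\sum_i \bb_i = \sum_j q_j^T(\num_j + \m_j + \n_j + 2)$. Here $\sum_j q_j^T(\num_j + \m_j + \n_j)$ is the total degree $\sum_\nu \deg(\prod_i x_i^{e_{\nu,i}})$ of the insertions, which equals $\chat{W^T} + k - 3$ by the Dimension Axiom \eqref{b_dim_ax}; adding the contribution $2\sum_j q_j^T$ of the $+2$ terms and using $\chat{W^T} = N - 2\sum_j q_j^T$ yields $\sum_i \bb_i = N + k - 3$. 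Since $\sum_i \num_i = k-2$, the definition \eqref{ind-k} then gives $\sum_i \K_i = (k-2) - (N+k-3) + N = 1$, which is (P2).

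For (P3), I would invoke multilinearity of the correlator to expand the two non-primitive insertions $\alpha = \prod_i x_i^{\m_i}$ and $\beta = \prod_i x_i^{\n_i}$ in the standard basis of $\Jac{W^T}$, which spans it, reducing to the case where $\alpha$ and $\beta$ are standard basis monomials. For such monomials the exponents $\m_i$ and $\n_i$ obey exactly the bounds listed in the Fermat, chain, and loop cases of Definition \ref{milnor_basis}, including the excluded configurations in the chain case. I expect the only real obstacle to be bookkeeping: correctly distinguishing $\rho_i^{(j)}$ from $\rho_j^{(i)}$ in the two conventions for $\AW_W^{-1}$ and applying the two distinct weight identities of \eqref{weight} in the right places, so that $\bb_j$ is identified with the expression in \eqref{b_lbd_eq} and the sum $\sum_i \bb_i$ collapses via \eqref{b_dim_ax}. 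A single transposition or sign slip there would break both (P1) and (P2), but the conceptual content is light.
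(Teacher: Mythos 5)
Your proposal is correct and follows essentially the same route as the paper's proof: (P1) comes from identifying the left-hand side of the Integer Degrees condition \eqref{b_lbd_eq} with $-\bb_j$ via \eqref{b_def} and \eqref{weight}, (P2) comes from the Dimension Axiom \eqref{b_dim_ax} (the paper computes $\sum_i\K_i$ directly rather than first summing the $\bb_j$, but the computation is identical in content), and (P3) is obtained by rewriting the insertions in the standard basis, with the A-model case reduced to the B-model conditions through Lemma \ref{vanishing_lemma}.
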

\begin{proof}
Let $X$ be a nonvanishing correlator of the form in \eqref{corr_form_eq}. After we write the insertions of $X$ in the standard basis, this correlator satisfies (P3).

We will prove (P1) and (P2) for the B-model only. The same proof works for the A-model because the A-model Axioms \eqref{dim_ax} and \eqref{lbd_ax} correspond to the B-model vanishing conditions by Lemma \ref{vanishing_lemma}.

Since $X$ is as in \eqref{corr_form_eq}, in the context of Lemma \ref{vanishing_lemma} we have
\begin{equation}\label{exponent-basic}
e_{\nu,i}=\left\{
\begin{array}{lll}
\nu_i & \nu=k-1\\
n_i & \nu=k\\
1 & \ell_1+\cdots+\ell_{i-1}+1\leq \nu\leq \ell_1+\cdots+\ell_i\\
0 & \text{otherwise}.
\end{array}
\right.
\end{equation}


First we will show that (P1) is equivalent to the Integer Degrees Axiom. On the B-side, this axiom says that $X$ is zero unless
\[
-2q_j - \sum_{\nu=1}^k \sum_{i=1}^N \col_i^{(j)}e_{\nu,i} \in \ZZ, \quad \text{for}\; j= 1, \ldots, N.
\]
Then using \eqref{weight} and \eqref{exponent-basic}, we have
\begin{align*}
-2q_j - \sum_{\nu=1}^k \sum_{i-1}^N \col_i^{(j)}e_{\nu,i} &= -2 \sum_{i=1}^N\col_i^{(j)} - \sum_{i=1}^N\col_i^{(j)} \num_i - \sum_{i=1}^N\col_i^{(j)}\m_i - \sum_{i=1}^N\col_i^{(j)} \n_i= -\bb_j.
\end{align*}
The last equality follows from \eqref{b_def}. So $X$ satisfies the Integer Degrees Axiom if and only if $\bb_j \in \ZZ$ for all $j$, which is true if and only if $\K_j \in \ZZ$ for all $j$.

Next we derive (P2) from the Dimension Axiom.
Let $q_i^T$ be the $i^{th}$ weight of $W^T$. Then by \eqref{b_dim_ax}, the correlator $X$ vanishes unless
\begin{equation}\label{temporary_eq}
\sum_{\nu=1}^{k}\sum_{i=1}^{N}q_i^Te_{\nu,i} = \sum_{i=1}^N(1-2q_i^T) + \sum_{i=1}^N \num_i - 1.
\end{equation}
According to Equation \eqref{exponent-basic}, the left hand side of \eqref{temporary_eq} is
$$\sum_{i=1}^N \num_i q_i^T + \sum_{i=1}^N \m_i q_i^T + \sum_{i=1}^N \n_i q_i^T.$$
This implies
$$1=\sum_{i=1}^N(\num_i - (\num_i+\m_i+\n_i+2)q_i^T + 1)=\sum_{i=1}^N \K_i.$$
Here the last equality uses \eqref{weight}, \eqref{b_def}, and \eqref{ind-k}.
\end{proof}

Lemmas \ref{reconstruction_lemma} and \ref{properties_lemma} tell us when correlators are in a particularly nice form. We make this precise with the following definition.

\begin{definition}\label{standard_form_def}
A genus-0 correlator is \emph{of type} $\X{X}{-1}$ if
\begin{enumerate}
\item it has at least four insertions,
\item it is in the form of \eqref{corr_form_eq}, and
\item it satisfies properties (P1), (P2), and (P3) in Lemma \ref{properties_lemma}.
\end{enumerate}
\end{definition}

%


Because Krawitz's mirror map matches the pairing and the 3-point correlators, to compare $\pot_{0, W}^{\rm FJRW}$ and $\pot_{0, W^T, \prim}^{\rm SG}$ it suffices to compare correlators of type $\X{X}{-1}$.

%
%
%

\subsection{A warm up example: the Fermat polynomial}

In this section we prove Part (1) of Theorem \ref{reconstruction_theorem} in the special case where $W = x_1^{\expon_1} + x_2^{\expon_2} + \ldots + x_N^{\expon_N}$ is a sum of Fermat polynomials, as a way to illustrate our general proof strategy. According to Remark \ref{remark-stablization}, we can assume $\expon_{i}>2$ for all $i$.

First, we reduce the reconstruction problem to the summands of $W$. We only need to consider correlators of type $\X{X}{-1}$.
\begin{lemma}\label{fermat_splitting_lemma}
Let $X$ be a correlator of type $\X{X}{-1}$. Then there is a unique $j \in \{1, \ldots, N\}$ such that $\K_{j}=1, \num_j\geq 2$ and $\K_{i}=\num_i=0$ for $i \neq j$. Furthermore,
$$X = \Fourptcorr{ x_j}{ x_j}{ x_j^{\expon_j-2} \alpha}{ x_j^{\expon_j - 2} \beta } \quad \text{for some } \alpha, \beta \in \Jac{W-W_j}.$$
\end{lemma}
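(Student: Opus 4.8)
The plan is to exploit that for a sum of Fermats the exponent matrix $\AW_W = \mathrm{diag}(\expon_1, \ldots, \expon_N)$ is diagonal, so that $\AW_W^{-1} = \mathrm{diag}(1/\expon_1, \ldots, 1/\expon_N)$ and every constraint from Lemma \ref{properties_lemma} decouples variable by variable. Concretely, I would first read off from \eqref{b_def} and \eqref{ind-k} the closed forms
\[
\bb_i = \frac{\num_i + \m_i + \n_i + 2}{\expon_i}, \qquad \K_i = \num_i + 1 - \bb_i = \frac{(\expon_i-1)\num_i + (\expon_i-2) - (\m_i+\n_i)}{\expon_i}.
\]
The whole lemma then reduces to an elementary analysis of these $N$ independent expressions subject to properties (P1)--(P3).

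Next I would establish that every $\K_i \ge 0$. Property (P3) gives $0 \le \m_i, \n_i \le \expon_i - 2$, so $\m_i + \n_i \le 2(\expon_i - 2)$; hence the numerator of $\K_i$ is at least $(\expon_i - 2) - 2(\expon_i - 2) = -(\expon_i - 2)$, forcing $\K_i \ge \tfrac{2}{\expon_i} - 1 > -1$ since $\expon_i \ge 3$. As $\K_i \in \ZZ$ by (P1), this yields $\K_i \ge 0$. Combined with $\sum_i \K_i = 1$ from (P2), exactly one index $j$ satisfies $\K_j = 1$ while $\K_i = 0$ for all $i \ne j$, which already singles out $j$ and establishes its uniqueness.

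Then I would solve the two resulting cases. For $i \ne j$, the equation $\K_i = 0$ rearranges to $(\expon_i - 1)\num_i = (\m_i + \n_i) - (\expon_i - 2) \le \expon_i - 2$; if $\num_i \ge 1$ the left-hand side is at least $\expon_i - 1$, a contradiction, so $\num_i = 0$ and consequently $\m_i + \n_i = \expon_i - 2$. For $i = j$, the equation $\K_j = 1$ becomes $(\expon_j - 1)\num_j = (\m_j + \n_j) + 2$, and the bounds $0 \le \m_j + \n_j \le 2(\expon_j - 2)$ squeeze $\num_j$ into $1 \le \num_j \le 2$. Since all other $\num_i$ vanish, the total number of insertions is $k = \num_j + 2$; because a correlator of type $\X{X}{-1}$ has at least four insertions (Definition \ref{standard_form_def}), I get $\num_j \ge 2$, hence $\num_j = 2$. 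Saturating the inequality then forces $\m_j + \n_j = 2(\expon_j - 2)$, and with $\m_j, \n_j \le \expon_j - 2$ this is possible only if $\m_j = \n_j = \expon_j - 2$. Writing $\alpha = \prod_i x_i^{\m_i}$, $\beta = \prod_i x_i^{\n_i}$ and factoring out $x_j^{\expon_j - 2}$ from each, the residual monomials lie in $\Jac{W - W_j}$, giving exactly the asserted form $X = \Fourptcorr{x_j}{x_j}{x_j^{\expon_j-2}\alpha}{x_j^{\expon_j-2}\beta}$.

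The one genuinely delicate point is pinning $\num_j$ to exactly $2$: integrality together with the degree bound leaves both $\num_j = 1$ and $\num_j = 2$ open, and it is precisely the structural requirement that a type $\X{X}{-1}$ correlator carry at least four insertions that rules out $\num_j = 1$. Once $\num_j = 2$ is known, maximality $\m_j = \n_j = \expon_j - 2$ comes for free by saturating the bound, which is what makes the last two insertions top-degree in $x_j$. Beyond this bookkeeping I expect no difficulty in the Fermat case; the substantive work is postponed, since the variable-by-variable decoupling that trivializes the argument here breaks down for chain and loop summands, where $\AW_W^{-1}$ is no longer diagonal.
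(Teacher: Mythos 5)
Your proposal is correct and follows essentially the same route as the paper: both derive the closed form for $\K_i$ from the diagonal exponent matrix, use (P3) to get $\K_i > -1$ and integrality (P1) to conclude $\K_i \geq 0$, invoke (P2) to isolate the unique $j$ with $\K_j = 1$, pin $\num_j = 2$ by combining the formula's upper bound with the four-insertion requirement (equivalently $\sum_i \num_i \geq 2$), and saturate $\m_j + \n_j = 2\expon_j - 4$ against the bound $\m_j, \n_j \leq \expon_j - 2$ to force $\m_j = \n_j = \expon_j - 2$. The minor differences (writing $\K_i$ as a single fraction, noting $\m_i + \n_i = \expon_i - 2$ for $i \neq j$, and the order in which the two bounds on $\num_j$ are obtained) are purely presentational.
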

\begin{proof}
By Definition \eqref{b_def}, we have $$\expon_i \bb_i = \num_i+\m_i+\n_i+2 \quad \text{for each} \; i.$$
Then \eqref{ind-k} implies
\begin{equation}\label{eq3}
\K_i  = \num_i - \frac{\num_i + \m_i + \n_i + 2}{\expon_i} + 1.
\end{equation}
Property (P3) in Lemma \ref{properties_lemma} implies that $\m_i+\n_i \leq 2\expon_i-4$, so we have
\begin{equation}\label{eq7}
\K_i \geq \num_i\left(1 - \frac{1}{\expon_i}\right) + \frac{2}{\expon_i} - 1 > -1.
\end{equation}
Since $\K_i \in \ZZ$, we have $\K_i \geq 0$. Then Property \eqref{fact1} in Lemma \ref{properties_lemma} implies that there is a unique $j \in \{1, \ldots, N\}$ such that $\K_{j}=1$ and $\K_{i}=0$ for $i \neq j$.

Moreover, by Equation $(\ref{eq7})$ we know $\num_i = 0$ for $i \neq j$. Then since
$$\sum_{i=1}^{N} \num_i \geq 2,$$
we know $\num_j \geq 2$.
Furthermore, using (\ref{eq7}), we have
$$\K_j \geq \num_j\left(1 - {1\over\expon_j}\right) + {2\over\expon_j} - 1.$$
Plugging in $\K_j=1$ it is easy to show that $\num_j \leq 2$, so $\num_j = 2$. Then (\ref{eq3}) shows that $\m_j+\n_j = 2 \expon_j-4$ and the result follows.

\end{proof}

Now we complete the proof of Part (1) of Theorem \ref{LG_pot_thm} in the Fermat case.
\begin{proposition}\label{lem:fermat_alpha}
Let $W = x_1^{\expon_1} + x_2^{\expon_2} + \ldots + x_N^{\expon_N}$ be a sum of Fermat polynomials with all $\expon_i >2$.
The potentials $\pot_{0, W}^{\rm FJRW}$ and $\pot_{0, W^T, \prim}^{\rm SG}$ are completely determined by the Frobenius algebra structure and the correlators
\[
\Fourptcorr{ x_j}{ x_j}{ x_j^{\expon_j-2}}{ \prod_{i=1}^{N} x_i^{a_i-2} }, \quad j=1,\ldots, N.
\]
\end{proposition}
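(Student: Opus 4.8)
The plan is to combine the two reconstruction inputs already in place---Lemma \ref{reconstruction_lemma} (WDVV) and Lemma \ref{fermat_splitting_lemma}---and then to run a short WDVV ``transfer'' computation that pins down the remaining unknowns. By Lemma \ref{reconstruction_lemma}, both $\pot_{0,W}^{\rm FJRW}$ and $\pot_{0,W^T,\prim}^{\rm SG}$ are determined by the Frobenius algebra structure together with all correlators of type $\X{X}{-1}$. By Lemma \ref{fermat_splitting_lemma}, each such correlator has the shape $\Fourptcorr{x_j}{x_j}{x_j^{\expon_j-2}\alpha}{x_j^{\expon_j-2}\beta}$ for a unique index $j$, with $\alpha,\beta$ monomials in $\Jac{W-W_j}$, i.e.\ in the variables $x_i$ with $i\neq j$. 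Thus it suffices to express every such $4$-point correlator in terms of the single correlator $\Fourptcorr{x_j}{x_j}{x_j^{\expon_j-2}}{\prod_{i=1}^N x_i^{a_i-2}}$.

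First I would invoke the Dimension Axiom. Since $\deg(x_j^{\expon_j-2}\alpha)=\tfrac{a_j-2}{a_j}+\deg(\alpha)$ and similarly for $\beta$, the nonvanishing condition \eqref{b_dim_ax} (encoded in property (P2), hence built into type $\X{X}{-1}$) forces $\deg(\alpha\beta)=\chat{W-W_j}$, which is exactly the top degree of $\Jac{W-W_j}$. Because the top graded piece of $\Jac{W-W_j}$ is one-dimensional, spanned by $\prod_{i\neq j}x_i^{a_i-2}$, and $\alpha\beta$ is a monomial, either $\alpha\beta=0$ in $\Jac{W-W_j}$---in which case the target correlator below vanishes---or $\alpha\beta=\prod_{i\neq j}x_i^{a_i-2}$.

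The main step is the WDVV transfer. I work in the B-model Jacobi ring (the A-model follows identically, since $\mirror$ is a ring isomorphism matching pairings and three-point functions, and Lemma \ref{reconstruction_lemma} holds on both sides). Write $\alpha=x_m\alpha'$ with $m\neq j$ and apply the $k=4$ case of \eqref{reconst_eq} (so $S=0$) with $\epsilon=x_m$, $\phi=x_j^{\expon_j-2}\alpha'$, and the remaining slots $x_j,\,x_j,\,x_j^{\expon_j-2}\beta$. Two of the three output terms die in the Jacobi ring: the diagonal term contains $x_j^{\expon_j-2}\star x_j^{\expon_j-2}=x_j^{2a_j-4}=0$ and another contains $x_j\star x_j^{\expon_j-2}=x_j^{a_j-1}=0$, both using $a_j\geq 3$. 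The survivor is $\Fourptcorr{x_j}{x_j x_m}{x_j^{\expon_j-2}\alpha'}{x_j^{\expon_j-2}\beta}$; performing the same computation after moving $x_m$ into the $\beta$-slot produces the identical correlator, so
\[
\Fourptcorr{x_j}{x_j}{x_j^{\expon_j-2}x_m\alpha'}{x_j^{\expon_j-2}\beta}=\Fourptcorr{x_j}{x_j}{x_j^{\expon_j-2}\alpha'}{x_j^{\expon_j-2}x_m\beta}.
\]
Iterating moves all of $\alpha$ into the last slot, yielding $\Fourptcorr{x_j}{x_j}{x_j^{\expon_j-2}\alpha}{x_j^{\expon_j-2}\beta}=\Fourptcorr{x_j}{x_j}{x_j^{\expon_j-2}}{x_j^{\expon_j-2}\alpha\beta}$, which by the degree analysis is either $0$ or the asserted correlator $\Fourptcorr{x_j}{x_j}{x_j^{\expon_j-2}}{\prod_i x_i^{a_i-2}}$ (note $\Hessbase{W^T}=\prod_i x_i^{a_i-2}$ for a Fermat sum, by Proposition \ref{prop-sum-good-basis} and Definition \ref{milnor_basis}). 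This completes the reduction.

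I expect the only delicate point to be the bookkeeping in the transfer step---checking that precisely the two cross terms vanish and that the surviving term is genuinely symmetric under exchanging $\alpha'$ and $\beta$---together with the verification that the whole argument is uniform across the A- and B-models through Krawitz's mirror map. The degree computation and the appeal to Lemma \ref{fermat_splitting_lemma} should be routine once the set-up is fixed.
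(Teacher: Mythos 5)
Your proposal is correct and takes essentially the same route as the paper: reduce via Lemma \ref{fermat_splitting_lemma}, apply the $k=4$ case of Lemma \ref{reconstruction_lemma} to collapse $\alpha$ and $\beta$ into one slot, then invoke the Dimension Axiom to force $\alpha\beta \propto \prod_{i\neq j}x_i^{\expon_i-2}$. The only difference is tactical: the paper does the collapse in a single WDVV application, choosing $\epsilon = x_j^{\expon_j-2}$ and $\phi=\alpha$ so that both cross terms contain the factor $x_j^{\expon_j-1}=0$, whereas you transfer one variable $x_m$ at a time via a symmetric double application; both arguments land on $\Fourptcorr{x_j}{x_j}{x_j^{\expon_j-2}}{x_j^{\expon_j-2}\alpha\beta}$ and finish identically.
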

\begin{proof}
By Lemma \ref{fermat_splitting_lemma}, we only need to reconstruct $ \fourptcorr{ x_j}{ x_j}{ x_j^{\expon_j-2} \alpha}{ x^{\expon_j - 2} \beta }$ from the correlator above. Apply the Reconstruction Lemma \ref{reconstruction_lemma} with $\gamma = x_j$, $\epsilon = x_j^{\expon_j-2}$, $\phi = \alpha$, and $\delta = x_j^{\expon_j-2} \beta$.
Then $\gamma \epsilon$ and $\gamma\delta$ both vanish because they have a factor of $x_j^{\expon_j-1}=0$.
The final correlator (with $\delta \phi$) is $ \fourptcorr{ x_j}{ x_j}{ x_j^{\expon_j-2}}{ x_j^{\expon_j - 2} \alpha \beta }$. However, by the Dimension Axiom in Lemma \ref{vanishing_lemma}, this correlator is nonzero only if $\deg(\alpha\beta)=\hat c_W-\hat c_{W_j}$. Since $\alpha\beta\in \Jac{W-W_j}$, up to a constant, we must have
$$\alpha \beta=\prod_{i\neq j} x_i^{a_i-2}.$$
\end{proof}


\subsection{Splitting Principle}

Recall from Definition \ref{standard_form_def} that a correlator $X$ is of type $\X{X}{-1}$ if it has at least four insertions, satisfies properties (P1), (P2), and (P3) in Lemma \ref{properties_lemma}, and has the form
$$X = \langle x_N, \ldots, x_N, x_{N-1}, \ldots, x_{N-1}, \ldots, x_1, \ldots, x_1, \alpha, \beta \rangle.$$
To prove Theorem \ref{reconstruction_theorem}, it suffices to show that any correlator of type $\X{X}{-1}$ can be reconstructed from the correlators in Theorem \ref{reconstruction_theorem}.
In this section we reconstruct correlators of type $\X{X}{-1}$ from correlators of ``type $\X{X}{0}$'' (see Definition \ref{def-X0}), which are associated to a particular atomic summand of $W$. 
This is called \emph{Splitting Principle}.

We will use the following notation. Suppose that $W = \bigoplus W_j$ is a disjoint sum, where each summand $W_j$ is of atomic type as described in Theorem \ref{atomic_thm}.
If $x_i$ is a variable appearing in $W_j$, we say that $x_i \in W_j$, or simply $i \in W_j$. Likewise, if $\alpha$ is a monomial in variables appearing in $W_j$, we say $\alpha \in W_j$. We define $\K_W: =\sum_{i\in W}\K_i$. For any ordered subset of indices $S \subseteq \{1,2,\cdots, N\}$, we define
\[\K_{S} : = \sum_{i \in S} \K_i, \qquad \mathbf{\K_S}:=(\K_i)_{i \in S},
\]
that is, $\mathbf{\K_S}$ is a vector of the $K_i$ such that $i$ is in $S$ and $\K_S$ is the sum of the components of this vector. We define $\mathbf{\num_S}$, $\mathbf{\m_S}$, and $\mathbf{\n_S}$ similarly.

The goal of this section is to reduce the proof of Theorem \ref{reconstruction_theorem} Part (1) to a reconstruction for each atomic type.
More specifically, in this section we prove Proposition \ref{two_from_a_poly}, which says that any correlator of type $\X{X}{-1}$ can be reconstructed from correlators satisfying $\sum_{i \in W_j} \num_i \geq 2$, for some $j$. That is, we reconstruct from correlators with at least two primitive insertions coming from some summand $W_j$ of $W$.
We say these correlators have type $\X{X}{0}$.


Throughout the remainder of this paper, we take $i+N \equiv i$ whenever $i$ is in a length-$N$ loop summand of $W$.

\subsubsection{Preliminaries on loop indices}

Let $X$ be a correlator of type $\X{X}{-1}$. We will prove the main result of this section, Proposition \ref{two_from_a_poly}, by analyzing possible values for $\K_W$.

\begin{definition}
We say that $i \in W$ is a \emph{loop index} if
\begin{equation}\label{loop-index-def}
\expon_i \bb_i + \bb_{i+1} = \num_i + \n_i + \m_i + 2.
\end{equation}
For each summand $W_j$, we say a set of loop indices $S \subset W_j$ \emph{obeys the Negative-Positive rule} (the NP-rule) if it has the property that for any index $i\in S$, if $\K_i < 0$, then the index $i+1 \in S$.
\end{definition}
Note that if $W$ is a loop, every $i \in W$ is a loop index, and if $W$ is an $N$-variable chain, every $i < N$ is a loop index.
The following lemma summarizes some useful inequalities for loop indices.

\begin{lemma}\label{K-index-ineq}
If $i \in W$ is a loop index, then the following inequalities hold:
\begin{eqnarray}
\m_{i}+\n_{i} &=& \expon_{i}(\num_{i} - \K_{i} + 1) + (\num_{i+1} - \K_{i+1} + 1) - \num_{i}-2. \label{mplusn_eq}\\
\expon_i\K_i + \K_{i+1} &\geq& (\expon_i-1)(\num_i-1) + \num_{i+1}. \label{eq2}\\
(\expon_i-1) \num_i + \num_{i+1} &\leq& \expon_i (\K_i + 1) + \K_{i+1} - 1. \label{ell_eq}\\
\K_i + \K_{i+1} &\geq& (1-\expon_i)(1+\K_i). \label{key_equation}
\end{eqnarray}
\end{lemma}
\begin{proof}
We obtain \eqref{mplusn_eq} by substituting $\bb_i = \num_i - \K_i + 1$ into \eqref{loop-index-def}.
Then \eqref{eq2} follows from using Property (P3) in Lemma \ref{properties_lemma} which says that $\n_i + \m_i \leq 2 \expon_i - 2$.
Rearranging slightly, we get \eqref{ell_eq}.
Then the last inequality follows by using $\num_i \geq 0$ and adding $(1-\expon_i)\K_i$ to both sides of \eqref{eq2}.
\end{proof}
From inequality \eqref{key_equation}, we get the following corollary.
\begin{corollary}\label{key-corollary}
If $i\in W$ is a loop index, then
$$\K_i<0\implies \K_i+\K_{i+1}\geq0$$
Furthermore, the equality holds when $(\K_i, \K_{i+1})=(-1, 1)$.
\end{corollary}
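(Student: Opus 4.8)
The plan is to read the desired statement directly off of inequality \eqref{key_equation} in Lemma \ref{K-index-ineq}, namely
\[
\K_i + \K_{i+1} \geq (1-\expon_i)(1+\K_i),
\]
together with two facts already available: the integrality of the $\K_i$ from Property (P1) of Lemma \ref{properties_lemma}, and the standing hypothesis $\expon_i \geq 2$ coming from the classification of atomic types in Theorem \ref{atomic_thm}. The whole content of the corollary is a sign analysis of the right-hand side of this inequality, so no new geometric or combinatorial input is required.

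First I would unpack the two factors on the right-hand side under the assumption $\K_i < 0$. Since $i$ is a loop index, $\K_i$ is an integer, so $\K_i < 0$ forces $\K_i \leq -1$ and hence $1 + \K_i \leq 0$. On the other hand, $\expon_i \geq 2$ gives $1 - \expon_i \leq -1 < 0$. Thus $(1-\expon_i)(1+\K_i)$ is a product of a strictly negative number and a nonpositive number, so it is nonnegative. Combining this with \eqref{key_equation} yields
\[
\K_i + \K_{i+1} \geq (1-\expon_i)(1+\K_i) \geq 0,
\]
which is the first assertion. For the equality claim, I would simply substitute $(\K_i, \K_{i+1}) = (-1,1)$: the left-hand side is $-1+1 = 0$, while the right-hand side is $(1-\expon_i)(1+(-1)) = 0$, so both the bound \eqref{key_equation} and the conclusion $\K_i + \K_{i+1} \geq 0$ are attained with equality, confirming that the estimate is sharp at this value.

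There is really no substantive obstacle here, since the statement is a formal consequence of an inequality proved in the preceding lemma; the only points requiring care are bookkeeping ones. Specifically, I would make sure to invoke integrality of $\K_i$ explicitly (it is what turns the strict hypothesis $\K_i < 0$ into $1 + \K_i \leq 0$, which is the crux of the sign argument) and to confirm that $\expon_i \geq 2$ applies to every loop index $i$ under consideration, so that $1 - \expon_i$ is genuinely negative. With those two observations in place the corollary follows immediately.
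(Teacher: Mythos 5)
Your proof is correct and takes essentially the same route as the paper, which presents the corollary as an immediate consequence of \eqref{key_equation}: the sign analysis you spell out (integrality of $\K_i$ from (P1) turning $\K_i<0$ into $1+\K_i\leq 0$, together with $\expon_i\geq 2$ making $1-\expon_i$ strictly negative) is exactly the intended argument. One small point worth noting: later applications (e.g.\ Lemma \ref{loop_condition_lemma}) use the converse reading of the ``furthermore'' clause --- that $\K_i<0$ and $\K_i+\K_{i+1}=0$ force $(\K_i,\K_{i+1})=(-1,1)$ --- and this also drops out of your chain of inequalities, since equality squeezes $(1-\expon_i)(1+\K_i)=0$ and $1-\expon_i\neq 0$ then gives $\K_i=-1$.
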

The lemma and corollary above will be used repeatedly in our reconstruction for the loop and chain polynomials. In addition, they determine $\mathbf{\K_S}$ when $S$ is a set of loop indices that obeys the NP-rule.
\begin{lemma}\label{loop_condition_lemma}
Let $S\subset W$ be a set of loop indices that obeys the NP-rule. Then
\begin{equation}\label{sum-k-indices}
\K_S\geq 0.
\end{equation}
Furthermore, we have the following cases:
\begin{itemize}
\item If $\K_S= 0$, then $\mathbf{\K_S}$ is a concatenation of (0)s and (-1,1)s.
\item If $\K_S= 1$, then $\mathbf{\K_S}$ is a concatenation of (0)s and (-1,1)s with one copy of (1), (-1,2), or (-2,3). If $(\K_i, \K_{i+1}) = (-2,3)$, then $\expon_i=2$.
\end{itemize}
\end{lemma}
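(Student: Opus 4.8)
The plan is to read the ordered vector $\mathbf{\K_S}$ from left to right and exploit a single structural fact: a negative entry is always immediately followed in $S$ by a strictly positive one. First I would record the two consequences of Lemma \ref{K-index-ineq} and Corollary \ref{key-corollary} that drive everything. Let $i \in S$ be a loop index with $\K_i = -c < 0$, so $c \geq 1$. The NP-rule forces $i+1 \in S$, and since $i$ and $i+1$ are consecutive integers, $i+1$ is the immediate successor of $i$ in $\mathbf{\K_S}$. Inequality \eqref{key_equation} gives
\[
\K_i + \K_{i+1} \;\geq\; (1-\expon_i)(1+\K_i) \;=\; (\expon_i-1)(c-1) \;\geq\; 0,
\]
with the last inequality tight only when $c=1$ (as $\expon_i \geq 2$). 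In particular $\K_{i+1} \geq c \geq 1$, so no two entries of $\mathbf{\K_S}$ that are consecutive in $S$ can both be negative.

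With isolation of negatives in hand, I would prove $\K_S \geq 0$ by a disjoint pairing. Pair each negative entry $\K_i$ with its immediate successor $\K_{i+1}$; this successor lies in $S$ by the NP-rule and is strictly positive, distinct negatives yield distinct successors, and no successor is itself negative, so the pairs $\{i,i+1\}$ are pairwise disjoint and all unpaired entries are non-negative. Writing $\K_S$ as the sum of the pair-sums (each $\geq 0$ by the displayed inequality) and the unpaired entries (each $\geq 0$) gives $\K_S \geq 0$.

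The two boundary cases then follow by forcing equalities. If $\K_S = 0$, every pair-sum and every unpaired entry must vanish; an unpaired entry equal to $0$ is immediate, while a pair-sum $\K_i + \K_{i+1} = 0$ forces $(\expon_i-1)(c-1)=0$, hence $c=1$ and the pair is exactly $(-1,1)$. Thus $\mathbf{\K_S}$ is a concatenation of $(0)$'s and $(-1,1)$'s. If $\K_S = 1$, then exactly one block contributes $1$ and the rest vanish. A lone unpaired $1$ gives the block $(1)$; a pair-sum equal to $1$ requires $(\expon_i-1)(c-1)\leq 1$ with $\expon_i\geq 2$, which leaves only $c=1$ (the pair $(-1,2)$) or $c=2$ with $\expon_i=2$ (the pair $(-2,3)$). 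Combined with the vanishing $(0)$ and $(-1,1)$ blocks, this is exactly the claimed normal form.

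The step I expect to need the most care is the loop case, where indices are cyclic ($i+N\equiv i$) and $S$ may wrap around, so there is no distinguished final entry forcing non-negativity. The pairing argument is unaffected, since isolation of negatives is a cyclic statement, so $\K_S\ge 0$ and the block decomposition hold verbatim; what changes is only that the ``concatenation'' must be read as a cyclic word, and I would state the normal form up to cyclic rotation. For chains, and for any maximal run of consecutive indices inside a loop that does not wrap, maximality together with the NP-rule forces the last index to have $\K\geq 0$, so the pairing never runs off the end and the reading is literal. Finally I would note that the entire argument uses only the integrality of the $\K_i$ from Property (P1) of Lemma \ref{properties_lemma} together with the loop-index inequalities of Lemma \ref{K-index-ineq}, all of which hold identically in the A- and B-models.
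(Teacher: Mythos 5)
Your proof is correct and follows essentially the same route as the paper: isolating negative entries via the NP-rule and Corollary \ref{key-corollary}, grouping each negative with its immediate successor, and analyzing the equality cases of \eqref{key_equation} for the sums $0$ and $1$ — your pairing decomposition is exactly the paper's splitting of $S$ into the set $A=\{i : \K_i\geq 0,\ \K_{i-1}\geq 0\}$ and its complement. The only differences are presentational: your disjoint-pairing argument for $\K_S\geq 0$ is stated more carefully than the paper's iterative one, and your remark that the loop case yields the normal form only up to cyclic rotation is consistent with how the paper later invokes the lemma (``we can number the indices so that\ldots'' in Lemma \ref{loop_kis1_lemma}).
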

\begin{proof}
If there exists some index $i\in S$ such that $\K_{i}<0$, then $i+1\in S$ by assumption and Corollary \ref{key-corollary} implies $\K_{i} + \K_{i+1} \geq 0$. Furthermore, $\K_{i-1}\geq0$ if $i-1\in S$ and 
\[\sum_{\substack{j\in S, \\ j\neq i,\, i+1}} \K_j\leq \K_S.\]
If \eqref{sum-k-indices} fails, we can repeat the process above for all negative $\K_i$ and eventually get a contradiction. Thus $\mathbf{K_S} \geq 0$.

Let $A = \{i\in S \mid \K_i \geq 0, \; \K_{i-1} \geq 0\}$. Then Corollary \ref{key-corollary} implies
\begin{equation}\label{rest-np}
\sum_{i \in A} \K_i \leq \K_S.
\end{equation}

If $\K_S= 0$, then we get $\K_i = 0$ for each $i\in A$.
Another application of Corollary \ref{key-corollary} shows that the rest of the $K_i$'s are pairs of $(-1,1)$.

If $\K_S= 1$, then \eqref{rest-np} implies there is at most one $j\in A$ such that $\K_j=1$. If there is one such $j\in A$, then as the same discussion as above shows $\K_i=0$ for $i\in A, i\neq j$ and the rest of the $K_i$'s are pairs of $(-1,1)$. If there is no such $j\in A$, then $\K_i=0$ for all $i\in A$. For the rest of the $K_i$'s, besides pairs of $(-1,1)$, there will be exactly one pair $(\K_i,\K_{i+1})$ such that $\K_i<0, \K_i+\K_{i+1}=1$. Then the statement follows from \eqref{key_equation} and $a_i\geq2$.
\end{proof}

Once we know $\mathbf{K_W}$, we can often solve for $\mathbf{\num_W}$ and $\mathbf{\m_W}+\mathbf{\n_W}$, as in the following two lemmas.

\begin{lemma}\label{klmn_lemma}
Let $i\in W$ be a loop index. Then
\begin{eqnarray}
(\K_i, \K_{i+1}) = (-1,1) &\implies& (\num_i, \num_{i+1}) = (0,0)\ \textit{and}\ \m_i + \n_i = 2 \expon_i - 2.\label{klmn_lemma-1}\\
(\K_{i}, \K_{i+1}) = (-1,2) &\implies& (\num_{i}, \num_{i+1}) = (1,0)\ \textit{or}\ (0,1)\ \textit{or}\ (0, 0).\label{klmn_lemma-2}\\
(\K_{i}, \K_{i+1}) = (-2,3) &\implies& (\num_{i}, \num_{i+1}) = (0,0).\label{klmn_lemma-3}\\
(\num_i, \num_{i+1})=(0,0) &\implies& (\K_{i}, \K_{i+1}) \neq (1,0).\label{klmn_lemma-4}\\
(\K_i, \num_i) = (0,1), \K_{i+1} \leq 0 &\implies& (\K_{i+1}, \num_{i+1}) = (0,0)\ \textit{and}\ \m_i + \n_i = 2 \expon_i - 2.\label{klmn_lemma-5}
\end{eqnarray}
\end{lemma}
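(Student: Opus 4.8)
The plan is to derive all five implications as elementary arithmetic consequences of the inequalities assembled in Lemma \ref{K-index-ineq}, together with the facts that $\num_i, \m_i, \n_i$ are nonnegative integers and each $\K_i \in \ZZ$. The central workhorse will be inequality \eqref{ell_eq}, which bounds the nonnegative quantity $(\expon_i-1)\num_i + \num_{i+1}$ from above by an explicit expression in $\K_i$ and $\K_{i+1}$; substituting the prescribed values of $(\K_i, \K_{i+1})$ pins down the possible pairs $(\num_i, \num_{i+1})$, after which \eqref{mplusn_eq} computes $\m_i + \n_i$ exactly once $\num_i, \num_{i+1}, \K_i, \K_{i+1}$ are all known. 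Recall also that $\expon_i \geq 2$ by Theorem \ref{atomic_thm}, which makes the coefficient $\expon_i - 1$ strictly positive.

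First I would handle \eqref{klmn_lemma-1}: with $(\K_i, \K_{i+1}) = (-1,1)$ the right side of \eqref{ell_eq} equals $0$, forcing $\num_i = \num_{i+1} = 0$, and then \eqref{mplusn_eq} gives $\m_i + \n_i = 2\expon_i - 2$. The same two-step pattern handles \eqref{klmn_lemma-2} (where the bound becomes $1$, leaving exactly the three listed pairs, with $(1,0)$ forced to occur only when $\expon_i = 2$) and \eqref{klmn_lemma-5} (where \eqref{ell_eq} collapses to $\num_{i+1} \leq \K_{i+1} \leq 0$, hence $\num_{i+1} = \K_{i+1} = 0$, after which \eqref{mplusn_eq} again yields $\m_i + \n_i = 2\expon_i - 2$). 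For \eqref{klmn_lemma-4} the argument is a one-line contradiction: assuming $(\K_i, \K_{i+1}) = (1,0)$ together with $\num_i = \num_{i+1} = 0$, formula \eqref{mplusn_eq} forces $\m_i + \n_i = -1 < 0$, which is impossible.

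The only case requiring a preliminary step is \eqref{klmn_lemma-3}. Here I would first feed $(\K_i, \K_{i+1}) = (-2,3)$ into \eqref{key_equation} to obtain $1 \geq \expon_i - 1$, whence $\expon_i = 2$; only with this in hand does \eqref{ell_eq} give the bound $2 - \expon_i = 0$ on $(\expon_i - 1)\num_i + \num_{i+1}$, forcing $\num_i = \num_{i+1} = 0$. I expect this to be the sole mild obstacle: one must extract $\expon_i = 2$ intrinsically from \eqref{key_equation} rather than importing it from Lemma \ref{loop_condition_lemma}, since the present statement concerns an arbitrary loop index and does not reference a set obeying the NP-rule. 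Apart from this, the proof is entirely a matter of substituting values and invoking nonnegativity and integrality, so no genuinely hard step is anticipated.
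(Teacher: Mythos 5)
Your proof is correct and follows essentially the same route as the paper's: substitute the given values of $(\K_i,\K_{i+1})$ into \eqref{ell_eq} (equivalently \eqref{eq2}) to pin down $(\num_i,\num_{i+1})$, then compute $\m_i+\n_i$ from \eqref{mplusn_eq}, with \eqref{klmn_lemma-4} obtained from \eqref{mplusn_eq} and nonnegativity of $\m_i+\n_i$. The only cosmetic difference is in \eqref{klmn_lemma-3}, where your preliminary detour through \eqref{key_equation} to extract $\expon_i=2$ is unnecessary: nonnegativity of the left side of \eqref{ell_eq} already forces $2-\expon_i\geq 0$, hence $\expon_i=2$ and $\num_i=\num_{i+1}=0$ in a single step.
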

\begin{proof}
We can check this by using Lemma \ref{K-index-ineq}.
More explicitly, we obtain the values of $(\num_i, \num_{i+1})$ by plugging the values of $(\K_i, \K_{i+1})$ into \eqref{ell_eq}. Then the values of $(\m_i,\n_i )$ will follow from \eqref{mplusn_eq}. For \eqref{klmn_lemma-4}, we get it from \eqref{mplusn_eq} and $m_i + \n_i\geq 0$. For the last property, we apply  \eqref{eq2} to obtain $0 \geq \K_{i+1} \geq \num_{i+1}$. This implies $\K_{i+1} = \num_{i+1} = 0$ and the statement follows again from \eqref{mplusn_eq}.
\end{proof}

\begin{lemma}\label{bound_ell_lemma}
Let $i \in S$ be a loop index where $\mathbf{\K_S}$ is a concatenation of (0)s and (-1,1)s, and suppose $i+1 \in S$ or $\K_{i+1} \leq 0$. Then $\num_i \leq 1$, and $\num_i=1$ implies $\m_i + \n_i = 2 \expon_i - 2$.
\end{lemma}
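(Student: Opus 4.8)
The plan is to run a case analysis on the value of $\K_i$, which by the structural hypothesis on $\mathbf{\K_S}$ must lie in $\{-1,0,1\}$: since $\mathbf{\K_S}$ is a concatenation of $(0)$s and $(-1,1)$s, the only entries appearing are $0$, $-1$, and $1$, with every $-1$ immediately followed (inside $S$) by a $1$ and every $1$ immediately preceded by a $-1$. The inequalities \eqref{ell_eq} and \eqref{mplusn_eq} of Lemma \ref{K-index-ineq}, together with Lemma \ref{klmn_lemma}, should handle each case, and I expect the only real content to be the $\K_i=0$ case, where the auxiliary hypothesis ``$i+1 \in S$ or $\K_{i+1}\le 0$'' is used.

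First I would dispose of the two cases $\K_i=\pm1$. If $\K_i=-1$, the structure forces $\K_{i+1}=1$, so $(\K_i,\K_{i+1})=(-1,1)$; since $i$ is a loop index, \eqref{klmn_lemma-1} of Lemma \ref{klmn_lemma} gives $\num_i=0$, so $\num_i\le 1$ and the second claim is vacuous. If instead $\K_i=1$, the structure forces the preceding entry $\K_{i-1}=-1$ with $i-1\in S$; because $S$ consists of loop indices, $i-1$ is itself a loop index satisfying $(\K_{i-1},\K_i)=(-1,1)$, so applying \eqref{klmn_lemma-1} to the loop index $i-1$ yields $(\num_{i-1},\num_i)=(0,0)$, and again $\num_i=0$.

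The main case is $\K_i=0$. Here I would first argue that the hypothesis guarantees $\K_{i+1}\le 0$: if $i+1\notin S$ this is assumed outright, while if $i+1\in S$ then, since $\K_i=0$ closes a $(0)$-block, the next entry begins a new block and hence $\K_{i+1}\in\{0,-1\}$. With $\K_i=0$ and $\K_{i+1}\le 0$, inequality \eqref{ell_eq} reads
\[
(\expon_i-1)\num_i+\num_{i+1}\le \expon_i+\K_{i+1}-1\le \expon_i-1,
\]
and since $\num_{i+1}\ge 0$ and $\expon_i\ge 2$ this forces $\num_i\le 1$. If $\num_i=1$, the same inequality tightens to $\num_{i+1}\le \K_{i+1}\le 0$, so $\num_{i+1}=0$ and $\K_{i+1}=0$; substituting these values into \eqref{mplusn_eq} then gives $\m_i+\n_i=2\expon_i-2$, completing the proof.

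The computations are routine substitutions into Lemma \ref{K-index-ineq}; the step requiring the most care is the structural bookkeeping — namely verifying in the $\K_i=\pm1$ cases that the relevant neighbour lies in $S$ (hence is a loop index, so that Lemma \ref{klmn_lemma} applies), and in the $\K_i=0$ case that the hypothesis uniformly delivers $\K_{i+1}\le 0$. Once these structural facts are pinned down, all three cases close immediately.
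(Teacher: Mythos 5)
Your proof is correct and follows essentially the same route as the paper's: both arguments use the block structure of $\mathbf{\K_S}$ together with \eqref{klmn_lemma-1} to force $\K_i=0$ whenever $\num_i>0$, deduce $\K_{i+1}\le 0$, and then apply the inequality \eqref{eq2}/\eqref{ell_eq} and equation \eqref{mplusn_eq} to bound $\num_i$ and compute $\m_i+\n_i$. The only cosmetic difference is that you organize the argument as a case analysis on $\K_i$ and inline the derivation of $(\K_{i+1},\num_{i+1})=(0,0)$, whereas the paper argues by contradiction from $\num_i\ge 2$ and cites the corresponding ready-made implication from Lemma \ref{klmn_lemma}.
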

\begin{proof}
Suppose $\num_i \geq 2$. Then $\K_i=0$ by \eqref{klmn_lemma-1}. If $i+1 \in S$, since $\mathbf{\K_S}$ is a concatenation of (0)s and (-1,1)s, we have $\K_{i+1}=0$ or $-1$. So $\K_{i+1} \leq 0$. Then from \eqref{eq2},
\[
0 \geq \K_{i+1} \geq (\expon_i-1)(\num_i-1) + \num_{i+1}.
\]
But the right hand side is strictly positive, which is a contradiction.

So $\num_i \leq 1$ as desired. If $ \num_i = 1$, then we saw in the previous paragraph that $\K_i=0$. Since $\K_{i+1} \leq 0$, the remainder of the result follows from\eqref{mplusn_eq} and \eqref{klmn_lemma-2}.
\end{proof}

\subsubsection{Reduction to atomic types}

We are now ready to prove the first big lemma.
\begin{lemma}\label{splitting_lemma}
Let $X$ be a correlator of type $\X{X}{-1}$ for $W = \bigoplus W_i$. There is a unique $j$ such that $\K_{W_j}=1$. If $i \neq j$ then  $\K_{W_i}=0$.
\end{lemma}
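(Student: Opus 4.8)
The plan is to decompose the total $\K_W$ across the atomic summands of $W$ and show that it distributes as $\K_{W_j}=1$ for exactly one summand and $\K_{W_i}=0$ for the rest. Since each summand is disjoint in its variables, the quantities $\K_i$ for $i \in W_j$ depend only on the data of that summand (via the block-diagonal structure of $\AW_W^{-1}$ in the definition \eqref{b_def}). Property (P2) in Lemma \ref{properties_lemma} tells us that $\sum_{i=1}^N \K_i = 1$, which means $\sum_j \K_{W_j} = 1$. The crux is therefore to prove that each individual $\K_{W_j}$ is a non-negative integer; once that is established, the partition $\sum_j \K_{W_j}=1$ into non-negative integers forces exactly one summand to carry the value $1$ and all others to vanish.

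First I would treat each atomic type separately and show $\K_{W_j} \geq 0$. For a \emph{Fermat} summand $W_j = x_j^{\expon_j}$, the single index $j$ satisfies the bound \eqref{eq7} from the warm-up section, giving $\K_j > -1$, hence $\K_j \geq 0$ since $\K_j \in \ZZ$ by (P1). For a \emph{loop} summand, every index is a loop index, so the whole index set $S = W_j$ is a set of loop indices; moreover it automatically obeys the NP-rule because for a loop the index $i+1$ is always in $W_j$ (using the convention $i+N\equiv i$). Thus Lemma \ref{loop_condition_lemma}, inequality \eqref{sum-k-indices}, applies directly to give $\K_{W_j} = \K_S \geq 0$. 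For a \emph{chain} summand $x_1^{a_1}x_2 + \cdots + x_N^{a_N}$, the indices $1,\dots,N-1$ are loop indices while the last index $N$ is not; here I would take $S = \{1,\dots,N-1\}$, verify it obeys the NP-rule (the only subtlety is the top index $N-1$, whose successor $N$ lies outside $S$, but $\K_N$ can be controlled separately), apply Lemma \ref{loop_condition_lemma} to bound $\K_S \geq 0$, and then handle the tail index $N$ — whose behavior resembles a Fermat since $x_N^{a_N}$ is the pure final monomial — to conclude $\K_N \geq 0$ as well, so that $\K_{W_j} = \K_S + \K_N \geq 0$.

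Having shown $\K_{W_j} \geq 0$ for every summand and every type, I would assemble the pieces: $\sum_j \K_{W_j} = 1$ with each summand a non-negative integer immediately yields a unique $j$ with $\K_{W_j}=1$ and $\K_{W_i}=0$ for $i \neq j$. This is the entire logical skeleton, and the only genuinely delicate step is the chain case, where the asymmetry between the loop indices $1,\dots,N-1$ and the non-loop tail index $N$ prevents a single clean application of Lemma \ref{loop_condition_lemma}. The anticipated obstacle is verifying the NP-rule at the boundary index $N-1$ and simultaneously ruling out the possibility that a negative $\K_{N-1}$ is compensated only by $\K_N$ rather than by an index inside $S$; one expects to invoke Corollary \ref{key-corollary} together with the chain-specific constraints on $(\m_N,\n_N)$ from (P3) to close this gap. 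Once the chain boundary is handled, the non-negativity of each $\K_{W_j}$ follows uniformly and the lemma is immediate.
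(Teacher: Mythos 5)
Your overall skeleton matches the paper's: reduce via (P2) (Equation \eqref{fact1}) to showing $\K_{W_j}\geq 0$ for each atomic summand, and your Fermat and loop cases are exactly the paper's argument (the Fermat bound \eqref{eq7}, and for loops the automatic NP-rule plus Lemma \ref{loop_condition_lemma}). The gap is in the chain case, and it is genuine: your plan hinges on proving $\K_N\geq 0$ for the tail index, but this is false. The tail of a chain does \emph{not} behave like a Fermat. For a Fermat, (P3) caps the exponents at $\expon_i-2$, which is what forces $\K_i>-1$; for a chain, (P3) allows $\m_N,\n_N\leq \expon_N-1$, and the corresponding estimate (the paper's inequality \eqref{eq1}) only yields $\K_N\geq \num_N(1-1/\expon_N)-1\geq -1$. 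The value $\K_N=-1$ really occurs, namely when $\num_N=0$ and $\m_N=\n_N=\expon_N-1$, which (P3) permits. That this case is real is visible elsewhere in the paper: Lemma \ref{kis0_lemma}(3) and Lemma \ref{chain_kis1_lemma} carry $\K_N\geq 0$ as an extra hypothesis, and the proof of Proposition \ref{two_from_a_poly} performs a ``preparatory reconstruction'' precisely to escape $\K_N=-1$. Your fallback tools cannot close the gap either: Corollary \ref{key-corollary} constrains the pair $(\K_{N-1},\K_N)$ only when $\K_{N-1}<0$, whereas the dangerous configuration is $\K_{N-1}\geq 0$ with $\K_N=-1$, i.e.\ the NP-rule holds on $\{1,\dots,N-1\}$, $\sum_{i<N}\K_i=0$, and the summand total is $\K_{W_j}=-1$. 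Nothing in your proposal rules this out.

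What the paper does instead is argue by contradiction from $\K_{W_j}\leq -1$. If the NP-rule fails on $\{1,\dots,N-1\}$, then $\K_{N-1}<0$, and \eqref{key_equation} gives both $\K_{N-1}+\K_N\geq 0$ and $\K_{N-2}\geq 0$; hence $\{1,\dots,N-2\}$ obeys the NP-rule, $\sum_{i\leq N-2}\K_i\geq 0$, and $\K_{W_j}\geq 0$ already, a contradiction. So the NP-rule holds on $\{1,\dots,N-1\}$, giving $\sum_{i<N}\K_i\geq 0$, and then $\K_N\geq -1$ forces $\K_N=-1$ and $\sum_{i<N}\K_i=0$. Now Lemma \ref{loop_condition_lemma} pins $(\K_1,\dots,\K_{N-1})$ down to a concatenation of $(0)$s and $(-1,1)$s, and feeding this into \eqref{ell_eq} and \eqref{mplusn_eq} determines $\num_i$ and $\m_i+\n_i$ explicitly in each of the three resulting patterns; in every case $\alpha$ and $\beta$ are forced to contain a monomial excluded from the chain standard basis of Definition \ref{milnor_basis} (for instance a factor $x_{N-1}^{k}x_N^{\expon_N-1}$ with $k\geq 1$, or one of the longer alternating forbidden patterns), violating (P3). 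This enumeration-plus-(P3)-violation step is the actual content of the chain case, and it is what your proposal is missing.
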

\begin{proof}
We will prove that $\K_{W_j} \geq 0$ for each $j$. Then the result follows from \eqref{fact1}. We have three cases, depending on the atomic type of $W_j$.
If $W_j$ is a Fermat, then $\K_{W_j} \geq 0$ by Lemma \ref{fermat_splitting_lemma}.
If $W_j$ is a loop, then $\K_{W_j} \geq 0$ by Lemma \ref{loop_condition_lemma}.

So assume $W_j$ is a chain with variables $x_1, \ldots, x_N$. We know
$$\expon_N \bb_N = \num_N + \m_N + \n_N + 2 \leq \num_N + 2\expon_N.$$ This implies
\begin{equation}\label{eq1}
\K_N=\num_N-\bb_N+1 \geq \num_N(1 - 1/\expon_N) - 1 \geq -1.
\end{equation}
Moreover, if $\K_N=-1$, then the inequality above implies $\num_N=0$.

Assume for contradiction that $\K_{W_j} \leq -1$.
If $\{1, \ldots, N-1\}$ does not obey the NP-rule, then $\K_{N-1} < 0$. So by (\ref{key_equation}), $\K_{N-1} + \K_N \geq 0$ and also $\K_{N-2} \geq 0$. Thus $\{1, \ldots, N-2\}$ obeys the NP-rule. So Lemma \ref{loop_condition_lemma} shows $\sum_{i<N-1} \K_i \geq 0$. Combining $\K_{N-1} + \K_N \geq 0$, this contradicts our assumption that $\K_{W_j} \leq -1$.
Thus $\{1, \ldots, N-1\}$ obeys the NP-rule. So \eqref{sum-k-indices} in Lemma \ref{loop_condition_lemma} and \eqref{eq1} imply
$$\K_N=-1, \quad \sum_{i<N} \K_i = 0.$$
We use Lemma \ref{loop_condition_lemma} to find three possibilities for $\mathbf{\K_{W_j}}$.
In each case we use (\ref{ell_eq}) to compute $\mathbf{\num_{W_j}}$ and (\ref{mplusn_eq}) to compute $\mathbf{\m_{W_j}+\n_{W_j}}$. Also recall that \eqref{eq1} implies $\num_N=0$ so $\m_N+\n_N = 2 \expon_N-2$. We list all the possibilities here, using the notation $\mathbf{\K} = \mathbf{\K_{W_j}}$ and so forth. Also we let $M_i = 2 \expon_i-2$. We will omit the subscript in $M_i$ in the way that the $M$ which appears in the $i^{th}$ spot represents $M_i$.
\begin{itemize}
\item
$\mathbf{\K}= (\ldots, 0,-1),$
$\mathbf{\n+\m}=(\ldots,\expon_{N-1}, M)$;
\item
$\mathbf{\K}=(-1, 1, \ldots, -1, 1, -1),$
$\mathbf{\n+\m}=(M, 0, \ldots, M, 0, M)$;
\item
$\mathbf{\K}=(\ldots, 0, -1, 1, \ldots, -1, 1, -1),$
$\mathbf{\n+\m}=(\ldots, \expon_r, M, 0, \ldots, M, 0, M)$.
\end{itemize}
In each case, $\alpha$ and $\beta$ cannot both satisfy Property (P3) in Lemma \ref{properties_lemma}. This contradicts our assumption that $X$ is of type $\X{X}{-1}$.
\end{proof}

Now we know what $(\K_{W_1}, \K_{W_2}, \ldots )$ looks like: it is a tuple of zeros with a single 1. The next lemma investigates the form of $\mathbf{\K_{W_j}}$ when $\K_{W_j} = 0$.

\begin{lemma}\label{kis0_lemma}
Let $X$ be a correlator of type $\X{X}{-1}$. If $\K_{W}=0$, then
\begin{enumerate}
\item If $W$ is a Fermat then $\num=0$.
\item If $W$ is a loop then for all $i \in W$ we have $\num_i \leq 1$.
\item If $W$ is a chain with $\K_N \geq 0$ then for all $i \in W$ we have $\num_i \leq 1$.
\end{enumerate}
Furthermore, if $\num_i=1$ then $\m_i+\n_i \geq \expon_i$.
\end{lemma}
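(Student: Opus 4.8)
The plan is to treat the three atomic types of $W$ separately, disposing of the Fermat and loop cases as quick corollaries of earlier results and reserving the real work for the chain. Throughout, $W$ denotes the single atomic summand under consideration and all of $\K_i,\num_i,\m_i,\n_i,\bb_i$ are the data attached to the given correlator $X$ of type $\X{X}{-1}$. For a Fermat $W=x^{\expon}$ the claim is immediate: inequality \eqref{eq7} from the proof of Lemma \ref{fermat_splitting_lemma} gives $\K\geq \num(1-1/\expon)+2/\expon-1$, so $\num\geq 1$ would force $\K\geq 1/\expon>0$ and hence $\K\geq 1$ by integrality, contradicting $\K_W=0$; thus $\num=0$ and the ``furthermore'' clause is vacuous. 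For the loop, the full index set $S=W$ trivially obeys the NP-rule (in a loop $i+1\in W$ for every $i$), so $\K_W=0$ together with Lemma \ref{loop_condition_lemma} shows $\mathbf{\K_W}$ is a concatenation of $(0)$s and $(-1,1)$s. Then Lemma \ref{bound_ell_lemma} applies to every $i$ (its hypothesis $i+1\in S$ always holds), giving $\num_i\leq 1$ and, when $\num_i=1$, the equality $\m_i+\n_i=2\expon_i-2\geq \expon_i$ since $\expon_i\geq 2$.

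The chain $W=x_1^{\expon_1}x_2+\cdots+x_N^{\expon_N}$ is the main case and the only place the hypotheses $\K_N\geq 0$ and $\expon_N\geq 3$ (the latter coming from the global assumption that $W$ has no chain variable of weight $1/2$, which forces $q_N=1/\expon_N\neq 1/2$) enter. Here $1,\ldots,N-1$ are loop indices while $N$ is not. The crux is to show that $\{1,\ldots,N-1\}$ obeys the NP-rule and that $\K_N=0$. First I would record the endpoint relation $\m_N+\n_N=(\expon_N-1)\num_N-\expon_N\K_N+\expon_N-2$, obtained by substituting $\bb_N=\num_N-\K_N+1$ into $\expon_N\bb_N=\num_N+\m_N+\n_N+2$. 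The NP-rule for $\{1,\ldots,N-1\}$ can fail only at its last index (a chain has no wraparound), so it suffices to rule out $\K_{N-1}<0$.

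Assuming $\K_{N-1}\leq -1$, inequality \eqref{ell_eq} forbids two consecutive negative values among loop indices: if also $\K_{N-2}\leq -1$ then $0\leq (\expon_{N-2}-1)\num_{N-2}+\num_{N-1}\leq \expon_{N-2}(\K_{N-2}+1)+\K_{N-1}-1\leq -2$, a contradiction, so $\K_{N-2}\geq 0$. Hence $\{1,\ldots,N-2\}$ obeys the NP-rule and Lemma \ref{loop_condition_lemma} gives $\sum_{i\leq N-2}\K_i\geq 0$; combined with $\K_{N-1}+\K_N\geq 0$ from Corollary \ref{key-corollary} and with $\K_W=0$, both groups must vanish, and the equality clause of Corollary \ref{key-corollary} forces $(\K_{N-1},\K_N)=(-1,1)$. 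Then \eqref{klmn_lemma-1} gives $\num_N=0$, and the endpoint relation yields the impossible value $\m_N+\n_N=-2$. (For $N=2$ the set $\{1,\ldots,N-2\}$ is empty and the same contradiction appears directly.) This establishes $\K_{N-1}\geq 0$, so $\{1,\ldots,N-1\}$ obeys the NP-rule.

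Knowing this, Lemma \ref{loop_condition_lemma} gives $\sum_{i<N}\K_i\geq 0$; since that sum equals $-\K_N\leq 0$ we get $\K_N=0$ and that $\mathbf{\K_{\{1,\ldots,N-1\}}}$ is a concatenation of $(0)$s and $(-1,1)$s. For $i<N$ the hypothesis of Lemma \ref{bound_ell_lemma} holds (either $i+1\leq N-1$ lies in the set, or $i=N-1$ and $\K_N=0\leq 0$), giving $\num_i\leq 1$ with $\m_i+\n_i=2\expon_i-2\geq \expon_i$ when $\num_i=1$. Finally, for $i=N$ the endpoint relation with $\K_N=0$ and the bound $\m_N+\n_N\leq 2\expon_N-2$ yield $(\expon_N-1)\num_N\leq \expon_N$, so $\num_N\leq \expon_N/(\expon_N-1)<2$ because $\expon_N\geq 3$; thus $\num_N\leq 1$, and $\num_N=1$ gives $\m_N+\n_N=2\expon_N-3\geq \expon_N$. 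I expect the chain step---in particular the inductive exclusion of $\K_{N-1}<0$ and its interaction with the positivity $\m_N+\n_N\geq 0$ at the non-loop endpoint---to be the main obstacle, while the Fermat and loop cases are essentially immediate from the earlier lemmas.
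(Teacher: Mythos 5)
Your Fermat and loop cases are exactly the paper's argument (equation \eqref{eq7} with $\K=0$; Lemmas \ref{loop_condition_lemma} and \ref{bound_ell_lemma}), and your chain argument follows the same skeleton: establish the NP-rule for $\{1,\ldots,N-1\}$, deduce that $\mathbf{\K}$ is a concatenation of $(0)$s and $(-1,1)$s with $\K_N=0$, apply Lemma \ref{bound_ell_lemma} for $i<N$, and use the endpoint relation $\m_N+\n_N=(\expon_N-1)\num_N-\expon_N\K_N+\expon_N-2$ for $i=N$. In fact your treatment of the non-NP case is a small improvement over the paper: the paper allows the branch $(\K_{N-1},\K_N)=(-1,1)$ and merely notes that \eqref{klmn_lemma-1} then gives $\num_N=0$, whereas you observe that $\num_N=0$ and $\K_N=1$ force $\m_N+\n_N=-2<0$ in the endpoint relation, so this branch is vacuous; that observation is correct and eliminates the $\K_N=1$ case entirely.

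The genuine gap is your appeal to the weight-$1/2$ hypothesis to assume $\expon_N\geq 3$. The lemma as stated carries no such restriction, and the paper deliberately proves it without one: the Remark following Theorem \ref{LG_pot_thm} asserts that the B-model reconstruction statements hold even when $W$ contains a chain summand with $\expon_N=2$ (this is precisely the route by which $Z_{13}$ and $W_{13}$ are treated in \cite{LLSS}), and since the reconstruction machinery --- including Lemma \ref{kis0_lemma}, which is invoked later in Proposition \ref{two_from_a_poly} and Lemma \ref{chain_reconstruction_lemma} --- uses only properties shared by both models, it must apply to such chains. When $\expon_N=2$ your final step genuinely fails rather than merely becoming harder: the bound $\num_N\leq \expon_N/(\expon_N-1)$ only gives $\num_N\leq 2$, and if $\num_N=1$ were possible the ``furthermore'' clause would be \emph{false}, since $\m_N+\n_N=2\expon_N-3=1<\expon_N$. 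The paper closes this by enumerating the possible vectors $\mathbf{\K}$, $\mathbf{\num}$, $\mathbf{\m+\n}$ via \eqref{ell_eq} and \eqref{mplusn_eq} in the cases $\num_N=2$ and $\num_N=1$ with $\expon_N=2$, and showing each violates Property (P3) of Lemma \ref{properties_lemma}; hence $\num_N=0$ whenever $\expon_N=2$ and $\K_N\geq 0$, which rescues both assertions of the lemma. Your proof needs this supplementary case analysis to establish the statement in the generality in which it is stated and used.
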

\begin{proof}

The claim for the Fermat type follows from (\ref{eq7}) when we substitute $\K = 0$.

If $W$ is a loop, by Lemma \ref{loop_condition_lemma} the tuple $\mathbf{\K_W}$ is some concatenation of of $(-1,1)$s and $(0)$s. Also, for every $i \in W$, certainly $i+1 \in W$. So this result follows from Lemma \ref{bound_ell_lemma}.

Finally, let $W$ be a chain. We will show that $\mathbf{\K}$ is a concatenation of (0)s and (-1,1)s. If the set $\{1, \ldots, N-1\}$ obeys the NP-rule, then $\sum_{i<N} \K_i \geq 0$. Since $\K_N \geq 0$ and $\sum_i \K_i = 0$, we must have $\sum_{i<N} \K_i = \K_N = 0$ and by Lemma \ref{loop_condition_lemma}, the vector $(\K_1, \ldots, \K_{N-1})$ is a concatentation of (0)s and (-1,1)s.

On the other hand, if $\{1, \ldots, N-1\}$ does not obey the NP-rule, then $\K_{N-1} \leq -1$. But then (\ref{key_equation}) shows that $\K_{N-2}$ cannot be negative, so the set $\{1, \ldots, N-2\}$ obeys the NP-rule. Also (\ref{key_equation}) shows that $\K_{N-1} + \K_N \geq 0$, so $\sum_{i  \leq N-2} \K_i \leq 0$. Then Lemma \ref{loop_condition_lemma} tells us that $\sum_{i  \leq N-2} \K_i = \K_{N-1} + \K_N = 0$. So $(\K_1, \ldots, \K_{N-2})$ is a concatentation of (0)s and (-1,1)s. Also $\K_N =-\K_{N-1}$, and plugging into (\ref{key_equation}) tells us that $0 \geq (1-\expon_i)(1+ \K_{N-1}).$ This means that $\K_{N-1} = -1$, so $\K_N$ = 1.

Thus $\mathbf{\K}$ is a concatenation of (0)s and (-1,1)s. Then Lemma \ref{bound_ell_lemma} proves this lemma for $i < N$.
Thus we only need to check when $i=N$. We've seen above that $\K_N$ is 0 or 1. If $\K_N=1$, we saw above that $\K_{N-1}=-1$ and so by Lemma \ref{klmn_lemma}, $\num_N =0$. If  $\K_N=0$, then (\ref{eq1}) says 
\[\num_N \leq \expon_N/(\expon_N-1).\]
If $\expon_N\geq 3$, then $\num_N \leq 1$, and if $\num_N=1$ then $\m_N + \n_N = 2 \expon_N-3 \geq \expon_N$.

If $\num_N=\expon_N=2$, then there are two possibilities:
\begin{itemize}
\item $\mathbf{\K} = (\ldots, -1, 1, 0)$, $\mathbf{\num} = (\ldots, 0, 0, 2)$, $\mathbf{m+n} = (\ldots, M, 1, M)$
\item $\mathbf{\K} = (\ldots, 0, 0)$, $\mathbf{\num} = (\ldots, 0, 2)$, $\mathbf{m+n} = (\ldots, \expon_{N-1}+1, M)$
\end{itemize}
Here $M = 2 \expon-2$. We used \eqref{ell_eq} and \eqref{mplusn_eq} to compute $\mathbf{\ell}$ and $\mathbf{m+n}$, respectively. In both cases, the form of $\mathbf{\m+\n}$ contradicts Property (P3) in Lemma \ref{properties_lemma}.

Then $\num_N\leq 1$, as desired. In fact, we will show that when $\expon_N=2$, we have $\num_N=0$, so the remainder of the lemma is vacuously true in this case.
For if $\num_N=1$ then there are three possibilities:
\begin{itemize}
\item $\mathbf{\K} = (\ldots, 0, -1, 1, \ldots, -1, 1, 0)$, $\mathbf{\num} = (\ldots,0, 0, 0, \ldots  0, 0, 1)$, $\mathbf{m+n} = (\ldots, \expon_r, M,$ $ 0, \ldots, M, 0, 1)$
\item $\mathbf{\K} = ( -1, 1, \ldots, -1, 1, 0)$, $\mathbf{\num} = (0, 0, \ldots, 0, 0, 1)$, $\mathbf{m+n} = (M, 0, \ldots, M, 0, 1)$
\item $\mathbf{\K} = (\ldots, 0, 0)$, $\mathbf{\num} = (\ldots, 0, 1)$, $\mathbf{m+n} = (\ldots, \expon_{N-1}, 1)$
\end{itemize}
In each case, the form of $\mathbf{\m+\n}$ contradicts Property (P3) in Lemma \ref{properties_lemma}.
\end{proof}

In the remainder of this paper we will repeatedly reconstruct correlators using Lemma \ref{reconstruction_lemma}.
This lemma allows us to write $X = A + B + C + S$ where $A$, $B$, and $C$ are $k$-point correlators and $S$ is a linear combination of correlators with fewer than $k$ insertions.
If $X$ is a correlator of type $\X{X}{-1}$ it is critical to understand when $A$, $B$, and $C$ have type $\X{X}{-1}$ and how $\mathbf{\K}^A$, $\mathbf{\K}^B$, and $\mathbf{\K}^C$ relate to $\mathbf{\K}^X$.

If $A\neq 0$ has the form of $\eqref{corr_form_eq}$ then by Lemma \ref{properties_lemma} it satisfies (P1) and (P2).
Moreover, if $A$ satisfies (P3) then it is of type $\X{X}{-1}$, and in this case $\bb_i^X = \bb_i^A$ because the changes in $\num_i^{X}, \m_i^X$, and $\n_i^X$ cancel each other out.
Hence $\K_i^X-\K_i^A = \num_i^X-\num_i^A$.

If $A$ does not satisfy (P3), then we reduce its insertions so they are in the standard basis, yielding an equivalent correlator $A'$ of type $\X{X}{-1}$.
Suppose the reconstruction only affected variables in the direct summand $W_j$ of $W$; i.e., $\num_i^{X}=\num_i^{A}, \m_i^X=\m_i^{A}$, and $\n_i^X=\n_i^{A}$ for all $i$ not in $W_j$. Then $\m_i^A = \m_i^{A'}$ and $\n_i^A=\n_i^{A'}$ for all $i \notin W_j$ (though we may have $\m_i^A \neq \m_i^{A'}$ for some $i \in W_j$).
Hence by the above discussion, $\K_{W_k}^X  = \K_{W_k}^A = \K_{W_k}^{A'}$ for all $k \neq j$. 
Then (P2) implies $\K_{W_j}^X = \K_{W_j}^A = \K_{W_j}^{A'}$ as well.

The same argument above works for the other two correlators $B$ and $C$ as well. These observations lead to the following remark.

\begin{remark}\label{rmk:reconstruction_invariance}
Suppose Lemma \ref{reconstruction_lemma} yields an equation $X = A+B+C+S$ with $X$, $A$, $B$, and $C$ correlators. We have the following results for $A$:
\begin{enumerate}
  \item If $A\neq 0$ is of type $\X{X}{-1}$, then $\K_i^X-\K_i^A = \num_i^X-\num_i^A$.
  \item If $A'$ is obtained from $A$ by writing its insertions in the standard basis, and if $\num_i^{X}=\num_i^{A}, \m_i^X=\m_i^{A}$, and $\n_i^X=\n_i^{A}$ for all $i$ not in $W_j$, then $\K_{W_j}^X = \K_{W_j}^A = \K_{W_j}^{A'}$ for all $k$ (including $k=j$).
\end{enumerate}
Furthermore, if $A$ is any nonvanishing correlator of type $\X{X}{-1}$, then
\begin{enumerate}
\item[(3)] If there exists $i \in W_j$ with $\num_i \geq 2$ and $W_j$ is a chain with $\K^A_N \geq 0$, a Fermat, or a loop, then by Lemma \ref{kis0_lemma} we have $\K^A_{W_j}=1$.
\end{enumerate}
The same results above are true for correlators $B$ and $C$ as well. 
\end{remark}

\begin{definition}\label{def-X0}
A correlator $X$ is called of type $\X{X}{0}$ for $W = \bigoplus W_j$ if $X$ is of type $\X{X}{-1}$ with $\K_{W_1}=1$ and $\K_{W_j}=0$ for $j> 1$, and
\begin{equation}\label{eq6}
\sum_{i \in W_1} \num_i \geq 2.
\end{equation}
\end{definition}

The main result of this section is to reconstruct correlators of type $\X{X}{-1}$ from correlators of type $\X{X}{0}$, see Proposition \ref{two_from_a_poly}. By using the Jacobi relations, it is not hard to get the following lemma.
\begin{lemma}\label{lem:reduce_chain}
Let $\alpha = x_1^{m_1}\ldots x_N^{m_N}$ be a monomial in the standard basis of a chain polynomial.
If $i<N$ then either $x_i\alpha=0$, or when $x_i\alpha$ is written in the standard basis as $x_1^{m'_1}\ldots x_N^{m'_N}$, with $m'_N=m_N$.
\end{lemma}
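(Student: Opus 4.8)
The plan is to exploit a $\mathbb{Z}/a_N\mathbb{Z}$-grading on $\Jac{f}$ that records the exponent of $x_N$ modulo $a_N$; this bypasses any explicit reduction of monomials to normal form. Write the chain as $f = x_1^{a_1}+x_1x_2^{a_2}+\cdots+x_{N-1}x_N^{a_N}$, so that the variable $x_N$ occurs only in the final summand $x_{N-1}x_N^{a_N}$. The defining relations of $\Jac{f}$ are the partial derivatives $\partial_{x_1}f = a_1 x_1^{a_1-1}+x_2^{a_2}$, $\partial_{x_j}f = a_j x_{j-1}x_j^{a_j-1}+x_{j+1}^{a_{j+1}}$ for $2\le j\le N-1$, and $\partial_{x_N}f = a_N x_{N-1}x_N^{a_N-1}$. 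I note two features immediately: each generator is a binomial (or a single monomial), and the exponent of $x_N$ in each of its terms is congruent modulo $a_N$. This is clear for $\partial_{x_j}f$ with $j\le N-2$ and for $\partial_{x_N}f$, while in $\partial_{x_{N-1}}f = a_{N-1}x_{N-2}x_{N-1}^{a_{N-1}-1}+x_N^{a_N}$ the two terms carry $x_N$-exponents $0$ and $a_N$, both $\equiv 0 \pmod{a_N}$.

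First I would grade $\CC[x_1,\dots,x_N]$ by the group $\mathbb{Z}/a_N\mathbb{Z}$, assigning degree $0$ to each $x_j$ with $j<N$ and degree $1$ to $x_N$, so that the degree of a monomial is its $x_N$-exponent modulo $a_N$. The observation above says precisely that the generators $\partial_{x_1}f,\dots,\partial_{x_N}f$ are homogeneous for this grading. Hence the Jacobian ideal is homogeneous and $\Jac{f}$ inherits a $\mathbb{Z}/a_N\mathbb{Z}$-grading. Since each standard-basis monomial $\prod_j x_j^{r_j}$ of Definition \ref{milnor_basis} has $r_N\le a_N-1$, its degree in this grading is exactly $r_N$; as these monomials form a $\CC$-basis of $\Jac{f}$ (Theorem \ref{thm-good-basis}) and are individually homogeneous, the graded piece of degree $c\in\{0,\dots,a_N-1\}$ is spanned by those standard-basis monomials with $r_N=c$.

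With this in hand the lemma is immediate. Because every Jacobian relation is binomial, reducing a monomial to its normal form replaces it at each step by a scalar multiple of a single monomial, so the class of $x_i\alpha$ in $\Jac{f}$ is either $0$ or a nonzero scalar multiple of a single standard-basis monomial $x_1^{m'_1}\cdots x_N^{m'_N}$. In the latter case, since $i<N$ the multiplication by $x_i$ does not change the $x_N$-exponent, so $x_i\alpha$ is homogeneous of degree $m_N$ (recall $m_N\le a_N-1$ because $\alpha$ lies in the standard basis). By the previous paragraph the unique basis monomial representing it lies in degree $m_N$, forcing $m'_N = m_N$.

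The bookkeeping that would otherwise be the main obstacle --- tracking how the cascade of substitutions $x_{j-1}x_j^{a_j-1}\mapsto x_{j+1}^{a_{j+1}}$ can climb the chain, create a factor $x_{N-1}^{a_{N-1}}$, and then either collapse to $0$ through $x_{N-1}x_N^{a_N-1}=0$ or push the $x_N$-exponent up by $a_N$ via $\partial_{x_{N-1}}f$ --- is exactly what the $\mathbb{Z}/a_N\mathbb{Z}$-grading renders automatic, since every such move preserves the $x_N$-exponent modulo $a_N$. The one point that still requires care is the claim that the normal form of a monomial under these binomial relations is again a single monomial; I would justify this by choosing any monomial order refining the grading and observing that reduction by a binomial relation is monomial-to-monomial, so that no cancellation among distinct monomials can occur during the reduction.
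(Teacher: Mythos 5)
Your $\ZZ/a_N\ZZ$-grading argument is correct and is a genuinely different mechanism from the paper's: the paper's (omitted) proof directly analyzes the rewriting process via \eqref{milnor-chain}, showing that a non-standard $x_i\alpha$ either contains a forbidden-pattern factor, which cascades into a multiple of $x_{N-1}x_N^{a_N-1}=0$, or an overflow $x_i^{a_i}$, whose elimination by the Jacobi relations never touches the exponent of $x_N$. Your observation that every partial derivative of the chain is homogeneous for the grading with $\deg x_j=0$ ($j<N$), $\deg x_N=1$ makes that bookkeeping automatic, and it rigorously proves that \emph{every} standard-basis monomial occurring in the expansion of $x_i\alpha$ has $x_N$-exponent exactly $m_N$. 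Since the paper only ever uses the lemma to conclude that $\K_N$ is unchanged when insertions are rewritten in the standard basis (Proposition \ref{two_from_a_poly} and the chain reconstruction), and correlators are multilinear, this weaker statement already suffices for all applications.

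There is, however, a genuine gap in your justification of the single-monomial claim (that a nonzero $x_i\alpha$ is proportional to \emph{one} basis monomial, as the lemma's phrasing asserts). Monomial-order reduction by the partials does not compute the standard-basis expansion, because the partials are never a Gr\"obner basis: the two pure powers $x_1^{a_1-1}$ and $x_2^{a_2}$ sit in the single generator $\partial_{x_1}f$, so only one of them can be a leading term. Concretely, under lex with $x_1>\cdots>x_N$ the standard-basis element $x_1^{a_1-1}$ reduces to $x_2^{a_2}$, which is irreducible with respect to all the generators' leading terms yet lies outside the standard basis, so reduction stalls there; under the opposite choice, pure powers of $x_1$ are never reducible and one stalls at $x_1^{a_1}$, whose true class is $0$ (via $x_1^{a_1}\propto x_1x_2^{a_2}\propto x_2x_3^{a_3}\propto\cdots\propto x_{N-1}x_N^{a_N}=0$, a rewriting that must apply \eqref{milnor-chain} in the order-increasing direction). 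Two repairs are available: (i) invoke the theorem of Eisenbud and Sturmfels that an ideal generated by binomials has a reduced Gr\"obner basis consisting of binomials, so the normal form of any term is a term; then the Gr\"obner-standard monomial classes and the paper's standard basis are two monomial bases of $\Jac{f}$, each element of the latter is a nonzero multiple of a unique element of the former, this map is injective by linear independence and hence bijective, so every monomial class is zero or a multiple of a single element of the paper's basis; or (ii) exhibit, as the paper does, an explicit terminating sequence of monomial-to-monomial rewrites from $x_i\alpha$ ending at $0$ or at a monomial of Definition \ref{milnor_basis}; since the standard basis is linearly independent, any such endpoint \emph{is} the expansion, and your grading then pins down its $x_N$-exponent as $m_N$ with no further tracking.
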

\begin{proposition}[Splitting Principle]\label{two_from_a_poly}
Any correlator of type $\X{X}{-1}$ can be reconstructed from correlators of type $\X{X}{0}$ and correlators with fewer insertions.
\end{proposition}
\begin{proof}
Let $X$ be a correlator of type $\X{X}{-1}$. Using Lemma \ref{splitting_lemma} and reordering the summands of $W$ if necessary, we may assume $\K_{W_1}=1$ and $\K_{W_j}=0$ for $j>1$.
If for all $j > 1$, the summand $W_j$ is a Fermat, then Lemma \ref{kis0_lemma} shows $\num_i = 0$ for $ i\in W_{j}$ for $j>1$. Then since $\sum_{i\in W} \num_i \geq 2$, we know (\ref{eq6}) holds.

Now assume that (\ref{eq6}) does not hold for $X$. Then we can assume that $W_2$ is a loop or chain polynomial and that there is $i \in W_2$ with $\num_i \geq 1$.

If $W_2$ is a chain, we do some preparatory reconstruction so Lemma \ref{kis0_lemma} is applicable.
Let us label the last variable of $W_2$ by $N_2$.
We know from (\ref{eq1}) that $\K_{N_2} \geq -1$. If $\K_{N_2} = -1$, we saw in the proof of Lemma \ref{splitting_lemma} that $\num_{N_2}=0$ (so in particular $i \neq N_2$) and $\m_{N_2} = \n_{N_2} = \expon_{N_2}-1$, so $X  = \fourptcorr{ x_i}{ \ldots}{ x_{N_2}\alpha}{ \beta }$. Now apply the Reconstruction Lemma \ref{reconstruction_lemma} with $\gamma = \beta$, $\delta = x_i$, $\epsilon = x_{N_2}$, and $\phi = \alpha$. Then
\begin{equation}\label{stwitch-nonnegative}
X=
\Fourptcorr{ x_{N_2}}{ \ldots}{ x_i \alpha}{ \beta}-\Fourptcorr{ x_{N_2}}{ \ldots}{ x_i \beta}{\alpha}+\Fourptcorr{ x_i}{\ldots}{ \alpha}{ x_{N_2}  \beta }+S.
\end{equation}
If these correlators are nonvanishing, by Remark \ref{rmk:reconstruction_invariance}(2) they each have $\K_{W_1}=1$. Also $\K_{N_2}\geq0$ for the first two since $\num_{N_2} \geq 1$. If $\K_{N_2}=-1$ for the last correlator,
then it vanishes because $\m_{N_2} = \expon_{N_2}-2 \neq \expon_{N_2}-1$. Thus we may assume $\K_{N_2} \geq 0$.

Now we return to the general case where $W_2$ is a chain or a loop. By Lemma \ref{kis0_lemma}, we know $\num_i=1$ and $X = \langle x_i, x_k, \ldots, x_i \alpha, \beta \rangle$ for some $k\neq i$. Apply the Reconstruction Lemma with $\gamma = \beta$, $\delta = x_k$, $\epsilon = x_i$, and $\phi = \alpha$, yielding
\begin{equation}\label{my_fav_eq}
X=\langle x_i, x_i, \ldots, \beta, x_k \alpha \rangle-\langle x_i, x_i, \ldots, \alpha, x_k \beta \rangle+\langle x_i, x_k, \ldots, \alpha, x_i  \beta \rangle+S.
\end{equation}

We need to check that if $W_2$ is a chain, Lemma \ref{kis0_lemma} is still applicable to each of these correlators; i.e., $\K_{N_2} \geq 0$. Now if $k \in W_2$ and $k = N_2$, swap the values of $i$ and $k$. This way we can assume $k \neq N_2$ (since $i$ and $k$ were distinct). 
There are two cases:
\begin{itemize}
\item If $i=N_2$ then all three of the correlators above have $\num_{N_2} > 0$, so each has $\K_{N_2} \geq 0$. 
\item If $i \neq N_2$, by Lemma \ref{lem:reduce_chain}, the exponents $\m_{N_2}$ and $\n_{N_2}$ do not change when we write the correlator insertions in the standard basis. Thus $\K_{N_2}$ is unaffected, and so is still nonnegative for each correlator. 
\end{itemize}

Now we apply Lemma \ref{kis0_lemma} to the first two correlators in (\ref{my_fav_eq}): if they do not vanish, $\K_{W_2} = 1$, since $\num_i \geq 2$. Thus these correlators have the desired form. Now, the third correlator still has $\K_{W_2} =0 $ by Remark \ref{rmk:reconstruction_invariance}(2).
Therefore, we can repeat this reconstruction on the third correlator.
Eventually the third correlator will have $\m_{i}+\n_{i} \leq \expon_i-1$, which contradicts Lemma \ref{kis0_lemma} (and thus this final correlator vanishes).
\end{proof}

\subsection{Atomic reconstruction and Fermat type}\label{proof_strategy}
Now let us restate Part (1) of Theorem \ref{reconstruction_theorem}. 
\begin{proposition}\label{almost_there_prop}
Let $W$ be an invertible polynomial and write $W^T$ as the sum of monomials $W^T = M_1 + \ldots + M_N$. 
Then the potential $\pot_{0, W^T, \prim}^{\rm SG}$ is completely determined by the Frobenius algebra structure and the correlators
\begin{equation}\label{final_type_corrs}
\Fourptcorr{ x_i}{ x_i}{ M_i/x_i^2}{ \Hessbase{W^T} }
\end{equation}
where $M_i$ is a Fermat summand $x^a$ with $a>2$; any monomial of a loop summand; or the final monomial of a chain summand.

Moreover, given an isomorphism of graded Frobenius algebras $\mirror: \Jac{W^T}\cong(\A{W},\star)$ satisfying \eqref{equ:mirror_sectors}, the potential $\pot_{0, W}^{\rm FJRW}$ is similarly determind by the correlators\\ $\Fourptcorr{ \mirror(x_i)}{ \mirror(x_i)}{ \mirror(M_i/x_i^2)}{ \mirror(\Hessbase{W^T}) }$.
\end{proposition}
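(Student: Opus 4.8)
The plan is to establish both claims by a single induction on the number $k$ of insertions, the coefficients of the two potentials being exactly the primary genus-zero correlators. The pairing and the three-point correlators constitute the Frobenius algebra structure and serve as the base cases $k=2,3$. I would treat the B-model potential $\pot_{0,W^T,\prim}^{\rm SG}$ first. For the inductive step at level $k\geq 4$ I would assemble the reduction results already in place. By the Reconstruction Lemma \ref{reconstruction_lemma} together with the discussion following \eqref{corr_form_eq}, every $k$-point correlator is determined, modulo the Frobenius structure and correlators of fewer insertions, by correlators of the form \eqref{corr_form_eq}; after writing the insertions in the standard basis and discarding those that violate the vanishing conditions of Lemma \ref{properties_lemma}, the surviving ones are precisely the correlators of type $\X{X}{-1}$. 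Proposition \ref{two_from_a_poly} then rewrites each such correlator in terms of correlators of type $\X{X}{0}$ and correlators with strictly fewer insertions, the latter absorbed by the inductive hypothesis.

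The remaining step, passing from type $\X{X}{0}$ to the prescribed four-point correlators \eqref{final_type_corrs}, is the content of Sections \ref{fermat_reconstruction_section}, \ref{chain_reconstruction_section}, and \ref{loop_reconstruction_section}, one per atomic type. A correlator of type $\X{X}{0}$ carries at least two primitive insertions from a single atomic summand $W_1$, and the strategy is to move these insertions within $W_1$ by repeated application of WDVV (Lemma \ref{reconstruction_lemma}) and the Jacobi (ring) relations, while the Dimension Axiom of Lemma \ref{vanishing_lemma} pins down the two non-primitive insertions; this forces the correlator into the base form $\Fourptcorr{x_i}{x_i}{M_i/x_i^2}{\Hessbase{W^T}}$ up to correlators of fewer insertions. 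The Fermat case is already Proposition \ref{lem:fermat_alpha}, where a single WDVV substitution isolates $\Fourptcorr{x_j}{x_j}{x_j^{\expon_j-2}}{\Hessbase{W^T}}$ once the Dimension Axiom forces $\alpha\beta=\prod_{i\neq j}x_i^{\expon_i-2}$. Combining these three steps closes the induction and proves the statement for $\pot_{0,W^T,\prim}^{\rm SG}$.

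For the A-model potential $\pot_{0,W}^{\rm FJRW}$ I would not rerun the argument but transport it across $\mirror$. Every ingredient above is engineered to survive a graded Frobenius-algebra isomorphism obeying \eqref{equ:mirror_sectors}: the WDVV equations underlying Lemma \ref{reconstruction_lemma} hold for the FJRW cohomological field theory; the ring and Jacobi relations are preserved because $\mirror$ is a ring homomorphism; the primitive elements of $\A{W}$ are carried onto $\{\mirror(x_i)\}$; and, decisively, Lemma \ref{vanishing_lemma} identifies the FJRW Dimension and Integer Degrees Axioms \eqref{dim_ax}, \eqref{lbd_ax} with the B-model conditions \eqref{b_dim_ax}, \eqref{b_lbd_eq}. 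Consequently the notions of type $\X{X}{-1}$ and $\X{X}{0}$, and all of the combinatorial inequalities in Lemmas \ref{K-index-ineq}--\ref{bound_ell_lemma}, transfer verbatim, so applying $\mirror$ to each step expresses every A-model correlator through the images $\Fourptcorr{\mirror(x_i)}{\mirror(x_i)}{\mirror(M_i/x_i^2)}{\mirror(\Hessbase{W^T})}$.

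I expect the genuine difficulty to be concentrated entirely in the atomic chain and loop reconstructions feeding the second paragraph. In contrast to the Fermat case, where one WDVV step suffices, the chain and loop recursions repeatedly produce new correlators that may re-enter type $\X{X}{0}$, and the delicate point is to show that each WDVV/Jacobi substitution strictly decreases a suitable complexity measure, namely the number of insertions or the total exponent $\m_i+\n_i$ of the two non-primitive insertions, so that the recursion terminates at the single prescribed four-point correlator. Controlling the evolution of the vector $\mathbf{\K}=(\K_i)$ under these substitutions, via Lemmas \ref{K-index-ineq}--\ref{bound_ell_lemma}, is exactly what the case analyses of the three remaining sections must secure.
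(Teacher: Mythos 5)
Your reduction framework coincides with the paper's and is sound as far as it goes: WDVV (Lemma \ref{reconstruction_lemma}) together with the vanishing conditions of Lemma \ref{properties_lemma} reduces everything to correlators of type $\X{X}{-1}$, Proposition \ref{two_from_a_poly} reduces those to type $\X{X}{0}$ plus correlators with fewer insertions, and your argument that the whole scheme transfers to the A-side through $\mirror$ (because it invokes only Lemma \ref{vanishing_lemma} and Lemma \ref{reconstruction_lemma}) is exactly how the paper disposes of the second claim. The gap is that everything after that point --- the passage from type $\X{X}{0}$ to the four-point correlators \eqref{final_type_corrs} --- is precisely the content of the proposition, and you do not prove it: you defer to Sections \ref{fermat_reconstruction_section}, \ref{chain_reconstruction_section}, \ref{loop_reconstruction_section}, which \emph{are} the paper's proof of Proposition \ref{almost_there_prop}. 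Even the Fermat case is not fully covered by Proposition \ref{lem:fermat_alpha}, which assumes $W$ is a pure sum of Fermats: when $W_1=x^{\expon}$ sits inside a mixed $W$, a type $\X{X}{0}$ correlator can carry primitive insertions from loop or chain summands having $\K_{W_j}=0$ (Lemma \ref{kis0_lemma} allows $\num_i=1$ there), and one needs the additional step of stripping those off before the one-line WDVV argument applies.

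More seriously, the termination mechanism you anticipate --- that each WDVV/Jacobi substitution strictly decreases the number of insertions or the total exponent $\m_i+\n_i$ --- is not how the chain and loop cases close, and for loops it cannot work. In the chain case the paper's well-ordering is lexicographic on the vector of primitive-insertion counts, pushing primitive insertions toward $x_N$ (the relation $>$ and Proposition \ref{reconstruct_from_greater_prop}); individual substitutions routinely leave the insertion number fixed and increase exponents of the non-primitive insertions. In the loop case there is no descent at all: a loop has no distinguished endpoint, so pushing insertions around the cycle returns the original correlator, and the proof closes only because the returned copy carries the coefficient $\prod_{r=1}^{N}\left(-1/\expon_r\right)\neq 1$, so one solves a linear equation $X=(\text{known})+cX$ with $c\neq1$ (Lemma \ref{loop_corr_lemma}), and in Lemma \ref{loop_final} the same trick gives $X=cX$, hence vanishing. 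A complexity-decreasing recursion of the kind you describe would cycle forever on a loop summand; recognizing that one must instead solve for the correlator from a non-unit self-coefficient is a missing idea, not a detail to be filled in.
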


We will give a complete proof of this proposition in this section and in Section \ref{chap:reconstruction}. The proof uses the WDVV equations, Jacobi relations (FJRW ring relations), and properties shared by correlators in both models. 
Since our proof of the first claim in Proposition \ref{almost_there_prop} essentially uses only Lemma \ref{vanishing_lemma} and Lemma \ref{reconstruction_lemma}, the second claim is an immediate corollary.

\subsubsection{Atomic reconstruction}
After reordering the summands of $W$ so that $x_i$ is in $W_1$, the correlators in \eqref{final_type_corrs} are all of type $\X{X}{0}$.
We say the correlators in \eqref{final_type_corrs} have \emph{final type}. According to Splitting Principle Proposition \ref{two_from_a_poly},
in order to prove Proposition \ref{almost_there_prop}, it suffices to reconstruct correlators of type $\X{X}{0}$ from correlators of final type.
We will prove this reconstruction in three cases, depending on whether the atomic $W_1$ is a Fermat, chain, or loop.
This is called \emph{atomic reconstruction}.

In each case, we filter the correlators with several types, denoted by $\X{F}{k}$, $\X{C}{k}$, and $\X{L}{k}$, respectively. 
Correlators of type $\X{F}{0}$ (or $\X{C}{0}$, $\X{L}{0}$) are correlators of type $\X{X}{0}$ where $W_1$ is a Fermat (or chain, loop) polynomial.
The types with the largest values of $k$ are correlators of final type.

For each atomic type, we prove Proposition \ref{almost_there_prop} by induction on $k$. 
In the $k$-th step, we reconstruct a correlator of type $\X{F}{k-1}$ (or $\X{C}{k-1}, \X{L}{k-1}$) from correlators of type $\X{F}{\geq k}$ (or $\X{C}{\geq k}, \X{L}{\geq k}$), correlators that vanish, and correlators with fewer insertions.

\begin{remark}
Let $X$ be a correlator of type $\X{X}{-1}$ with $\K_{W_j}=1$ and $\K_{W_i}=0$ for $i \neq j$ and $\sum_{r \in W_j} \num_r \geq 2$. 
By reordering the summands of $W$ we can assume $j=1$, so $X$ is of type $\X{X}{0}$.
In the remainder of our reconstruction argument we will make this assumption whenever possible.
When $X$ is of type $\X{X}{0}$, we let $\mathbf{K} = \mathbf{K_{W_1}}$ and we use $\mathbf{\num}, \mathbf{\m}$, and $\mathbf{\n}$ similarly.
\end{remark}

\subsubsection{Atomic reconstruction of Fermat type}\label{fermat_reconstruction_section}
This subsection proves Proposition \ref{almost_there_prop} for $W = \bigoplus W_i$ when $W_1$ is a Fermat polynomial $W_1= x^{\expon}$ with $\expon >2$.
We start with the following definition.

\begin{definition}
 Let $X$ be of type $\X{F}{0}$ for W. Then
\begin{itemize}
\item
$X$ is of type $ \X{F}{1}$ if $X=\Fourptcorr{ x}{ x}{ x^{\expon-2}  \alpha}{ x^{\expon - 2} \beta }$.
\item
$X$ is of type $ \X{F}{2}$ if $X = \Fourptcorr{ x}{ x}{ x^{\expon-2}}{ \Hessbase{W^T}}$.
\end{itemize}
\end{definition}

Now we prove Proposition \ref{almost_there_prop} in two steps.\\

\noindent
{\bf Step 1.}
Let $X$ be a correlator of type $\X{F}{0}$.
Using (\ref{eq7}), we have
$$\K \geq \num\left(1 - {1\over\expon}\right) + {1\over\expon} - 1.$$
Plugging in $\K=1$ we get $\num \leq 2$, so $\num = 2$. Then (\ref{eq3}) shows that $\m+\n = 2 \expon-4$.

So we know $X = \langle x_{i_1} \not \in W_1, \; \ldots, \; x_{i_s} \not \in W_1, x, x, x^{\expon-2} \alpha, x^{\expon-2}\beta \rangle$ with $\alpha,\; \beta \in W-W_1$.
If $X$ has four insertions, then it is of type $\X{F}{1}$ and we are done.

If not, there is some insertion $x_i$ where $i \not \in W_1$.
Apply the Reconstruction Lemma \ref{reconstruction_lemma} with $\delta = x_i$, $\epsilon = x$, $\phi = x^{\expon-3}  \alpha$, and $\gamma = x^{\expon-2} \beta$.
Then $\epsilon  \gamma$ has a factor of $x^{\expon-1}$ which is zero in $\Jac{W^T}$.
But the two remaining terms (with $\gamma  \delta$ and $\phi  \delta$) have $\num=3$, and so these correlators must also vanish (if $\K_{W_1}=1$ then $\num_1 \leq 2$; if $\K_{W_j}=0$ then $\num_j = 0$ by Lemma \ref{kis0_lemma}).
So we can reconstruct $X$ from correlators with strictly fewer insertions.\qed \\

\noindent
{\bf Step 2.}
Let $X = \langle x, x, x^{\expon-2}  \alpha, x^{\expon - 2}\beta \rangle$ be a correlator of type $\X{F}{1}$. Apply the Reconstruction Lemma \ref{reconstruction_lemma} with $\gamma = x$, $\epsilon = x^{\expon-2}$, $\phi = \alpha$, and $\delta = x^{\expon-2} \beta$. Then $\gamma \epsilon$ and $\gamma  \delta$ both vanish because they have a factor of $x^{\expon-1}$, and we get
$$X= \Fourptcorr{ x}{ x}{ x^{\expon-2}}{ x^{\expon - 2} \alpha\beta }.$$
Now by the Dimension Axiom in Lemma \ref{vanishing_lemma}, if $X\neq0,$ the product $\alpha\beta$ must be proportional to the unique element of top degree in $\Jac{W^T-W_1^T}$. Hence $X$ is a scalar multiple of a correlator of type $ \X{F}{2}$. \qed

The strategies to prove Proposition \ref{almost_there_prop} for chain and loop types are similar to the one for Fermat type, but much more complicated. We leave a complete proof in Section \ref{chap:reconstruction}.


\section{Computation}\label{computation}


The goal of this section is to compute the correlators in Theorem \ref{reconstruction_theorem}.
In the A-model side, the most powerful tool is from an orbifold Grothendieck-Riemann-Roch formula. When the correlator in Theorem \ref{reconstruction_theorem} is concave, then the virtual cycle can be extracted from a top Chern class \eqref{concave-def}, which will imply the very useful formula \eqref{eqhe} by \cite{C}. 
By analyzing the combinatorical aspect of the insertions in the A-model correlators, we will show that most of them in Theorem \ref{reconstruction_theorem} are concave. We will compute these concave correlators in this section and leave the computations of the nonconcave cases in Appendix  \ref{sec-exceptional}.
In the B-model side, the values of the correlators in Theorem \ref{reconstruction_theorem} follow directly from Li-Li-Saito's perturbative formula \cite{LLSaito}.

\subsection{A-model computation: concavity and nonconcavity}
We prove Part (2) of Theorem \ref{reconstruction_theorem}.
\begin{proposition}\label{Aside_corr_theorem}
Let $q_i$ be the $i^{th}$ weight of $W$ and $M_i$ be any monomial of a Fermat or loop summand, or the final monomial of a chain summand in $W^T$. Then
\[
\langle \mirror(x_i), \mirror(x_i), \mirror(M_i/x_i^2), \mirror(\Hessbase{W^T}) \rangle_0^W = q_i.
\]
\end{proposition}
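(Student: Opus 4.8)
The plan is to compute each correlator $\Fourptcorr{\mirror(x_i)}{\mirror(x_i)}{\mirror(M_i/x_i^2)}{\mirror(\Hessbase{W^T})}$ geometrically on the moduli of $W$-orbicurves over $\M{0}{4}\cong\mathbb{P}^1$. First I would use Krawitz's mirror map (Theorem \ref{mirror-algebra}, formula \eqref{equ:mirror_sectors}) to read off the four decorations: each copy of $\mirror(x_i)$ lies in the sector $\col_i\J{W}$, while $\mirror(M_i/x_i^2)$ and $\mirror(\Hessbase{W^T})$ lie in the sectors $\prod_j\col_j^{\alpha_j+1}$ determined by their monomials. With the sectors fixed I would tabulate all $N$ phases $\Theta^{(j)}_{\gamma_\nu}$ and compute the line-bundle degrees $\ld_j$ from \eqref{lbd_ax}, verifying directly that the Dimension and Integer Degree Axioms \eqref{dim_ax}, \eqref{lbd_ax} are satisfied so that the sector assignment is consistent and the correlator is not forced to vanish.

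The combinatorial heart of the argument is classifying which of these correlators are \emph{concave}, meaning that all four insertions are narrow and $\ld_j\leq-1$ for every $j$, so that $\pi_*\LL_j=0$ on the generic fiber. I would carry this out atomic-summand by atomic-summand (Fermat, chain, loop), using the explicit inverse exponent matrix $\AW_W^{-1}$ and the standard basis of Definition \ref{milnor_basis} to evaluate the phases. The expected outcome is that all but the three exceptional families listed in Section \ref{sec-exceptional} (where some insertion is broad, or some $\ld_j\geq0$) are concave, and that among the concave correlators exactly one bundle, $\LL_i$, has $\ld_i=-2$ while all others have $\ld_j=-1$ --- this is forced by matching the rank $\sum_j(-\ld_j-1)$ of the obstruction bundle $\bigoplus_j R^1\pi_*\LL_j$ with $\dim\M{0}{4}=1$. (For a Fermat summand one checks directly that $\sum_\nu\Theta^{(i)}_{\gamma_\nu}=2q_i+2$ gives $\ld_i=-2$ and $\sum_\nu\Theta^{(j)}_{\gamma_\nu}=(a_j+2)/a_j$ gives $\ld_j=-1$ for $j\neq i$.)

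For the concave correlators the virtual class collapses to $c_1(R^1\pi_*\LL_i)=-\Ch_1(R\pi_*\LL_i)$, so the correlator equals $\tfrac{|\Gmax{W}|}{\deg(\mathrm{st})}\int_{\M{0}{4}}\mathrm{st}_*\,c_1(R^1\pi_*\LL_i)$. I would then invoke Chiodo's orbifold Grothendieck--Riemann--Roch formula \cite{C}, namely \eqref{eqhe}, to express $\Ch_1(R\pi_*\LL_i)$ as a combination of $\kappa_1$, the $\psi_\nu$, and boundary divisors weighted by values of the Bernoulli polynomial $B_2$ at $q_i$ and at the phases $\Theta^{(i)}_{\gamma_\nu}$. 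Since on $\M{0}{4}\cong\mathbb{P}^1$ each of $\kappa_1$, $\psi_\nu$, and every boundary class integrates to $1$, this reduces to a finite $B_2$-arithmetic which I expect to collapse to exactly $q_i$ after substituting the relation $\sum_\nu\Theta^{(i)}_{\gamma_\nu}=q_i(k-2)-\ld_i$ from \eqref{lbd_ax}; note that it is the $W$-weight $q_i$ of $x_i$ (the weight governing $\LL_i$), rather than a $W^T$-weight, that naturally appears.

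Finally I would treat the three nonconcave families. For the first two, the plan is to apply the WDVV-based reconstruction (Lemma \ref{reconstruction_lemma}) to rewrite the nonconcave correlator as a combination of already-evaluated concave correlators, checking that the spurious cross terms vanish by the Integer Degree Axiom. For the remaining family, where neither concavity nor WDVV applies, I would import Gu\'er\'e's explicit evaluation of the relevant FJRW class \cite{Gu}. The main obstacle will be precisely this nonconcave analysis: both the exact combinatorial delineation of the three exceptional families (especially for long chains and even loops, where the phases of $\mirror(\Hessbase{W^T})$ produce broad sectors) and the verification that the WDVV recursion closes back onto the concave correlators are where the genuine difficulty lies; the concave computation itself is an essentially mechanical application of \eqref{eqhe}.
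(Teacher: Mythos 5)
Your overall route is the same as the paper's (Krawitz map $\to$ sectors and phases $\to$ concavity classification $\to$ Chiodo's formula \eqref{eqhe} and $B_2$-arithmetic $\to$ WDVV/Gu\'er\'e for the exceptional families), but there is a genuine gap in the step that carries all the weight: your concavity criterion. Concavity requires $H^0(C,\LL_j)=0$ for \emph{every} geometric fiber $C$, including the nodal curves over the boundary of $\M{0}{4}$; narrowness of the four insertions together with $\ld_j\leq -1$ on the generic (smooth) fiber is not sufficient. This is why the paper's Lemma \ref{lemma-chiodo} must control the degree pairs $(\ell_{k,+}^{(i)},\ell_{k,-}^{(i)})$ on each boundary stratum of Figure \ref{boundary}, and even then the pair $(-1,0)$ needs a separate normalization exact sequence argument (possible only because the node there is broad) to conclude $H^0=0$.

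This is not a pedantic point: it is exactly what delineates the third exceptional family, which your criterion ("some insertion broad or some $\ld_j\geq 0$") would miss. For a loop with $\expon_N=2$ and $N\geq 3$, all four insertions are narrow and the generic-fiber degrees are $\ld_j=-1$ ($j<N$), $\ld_N=-2$ by \eqref{degree-computation}, so your test classifies the correlator as concave; yet on one boundary stratum $\ell_{1,+}^{(N-1)}=0$ and $\ell_{1,-}^{(N-1)}=-2$, so $H^0(C,\LL_{N-1}|_C)=\CC$ and concavity fails. Applying \eqref{correlator_formula} there gives a wrong value: the correct computation (Gu\'er\'e's formula together with Chang--Li--Li's identification of the Polishchuk--Vaintrob and FJRW classes for narrow insertions) produces the extra contribution $\expon_{N-1}\Ch_1(R\pi_*\LL_{N-1})$, without which one does not obtain $q_N$. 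A secondary inaccuracy: for the two-variable loop with $\expon_1=\expon_2=2$, both $\mirror(x_1)$ and $\mirror(x_2)$ are broad, and WDVV cannot close back onto already-evaluated concave \emph{four}-point correlators as you propose; the paper must introduce the concave seven-point correlator $\langle J^2,\ldots,J^2\rangle$, and WDVV then pins down $X$ only up to a fourth root of unity, which is absorbed by rescaling the mirror map --- so the stated value $q_i$ holds only after that adjustment.
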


For notational convenience, in this section we will let $\agen_i=\mirror(x_i), S_i=\mirror(M_i/x_i^2),$ and $H=\mirror(\Hessbase{W^T})$.
By the symmetry of a loop polynomial, it suffices to prove Proposition \ref{Aside_corr_theorem} for $i=N$. Thus it suffices to compute the correlator
\begin{equation}\label{correlator-N}
X=\langle \agen_N, \agen_N, S_N, H \rangle.
\end{equation}

\begin{lemma}
Suppose $W = \bigoplus W_i$ and $x_N$ is a variable in the summand $W_j$. Then
\[
\langle \agen_N, \agen_N, S_N, \HessbaseA{W^T} \rangle_0^W = \langle \agen_N, \agen_N, S_N, \HessbaseA{W^T_j} \rangle_0^{W_j}.
\]
\end{lemma}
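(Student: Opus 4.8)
The plan is to collapse the four-point correlator to the single summand $W_j$ by combining the tensor-product axiom of the FJRW cohomological field theory (Theorem \ref{tensor_a_thm}) with the flat-identity property of $\Lambda^{W'}$, where $W' := \bigoplus_{i\neq j}W_i$ is the complementary sum, so that $W = W_j \oplus W'$. The only real content is to check that the four insertions decompose compatibly across $\A{W} = \A{W_j}\otimes \A{W'}$, and that the $W'$-part of $\Lambda_{0,4}^{W}$ contributes the trivial class.

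First I would record the decomposition of each insertion under $\A{W} = \A{W_j}\otimes \A{W'}$. Since $x_N$ and $M_N/x_N^2$ are classes of monomials in the variables of $W_j^T$ only, and since the mirror map is a tensor product of mirror maps (Remark \ref{rmk:mirror_tensor}) that carries units to units, we get $\agen_N = \mirror(x_N) = \mirror_j(x_N)\otimes \one_{W'}$ and $S_N = \mirror_j(M_N/x_N^2)\otimes \one_{W'}$, where $\one_{W'} = \lceil 1\,;\,J_{W'}\rfloor$ is the identity of $\A{W'}$ and $\mirror_j$ is the mirror map of $W_j$. For the Hessian insertion, the top-degree element of $\Jac{W^T}$ factors as $\Hessbase{W^T} = \Hessbase{W_j^T}\cdot \Hessbase{(W')^T}$, the product of the highest-degree standard generators (compare Proposition \ref{prop-sum-good-basis} and Definition \ref{milnor_basis}); applying $\mirror$ gives $H = \mirror(\Hessbase{W^T}) = H_j\otimes H'$ with $H_j = \mirror_j(\Hessbase{W_j^T})$ and $H' = \mirror(\Hessbase{(W')^T})$. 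Thus $H$ is the unique insertion whose $W'$-component is not the unit.

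Next I would apply Theorem \ref{tensor_a_thm} to factor the class in $H^*(\M{0}{4})$ as
\[
\Lambda_{0,4}^{W}(\agen_N,\agen_N,S_N,H) = \Lambda_{0,4}^{W_j}\!\bigl(\mirror_j(x_N),\mirror_j(x_N),\mirror_j(M_N/x_N^2),H_j\bigr)\cdot \Lambda_{0,4}^{W'}(\one_{W'},\one_{W'},\one_{W'},H').
\]
The crux is to show the second factor equals $1\in H^0(\M{0}{4})$. Because $\one_{W'}$ is a flat identity, the fundamental-class property gives $\Lambda_{0,4}^{W'}(\one_{W'},\one_{W'},\one_{W'},H') = \pi^*\Lambda_{0,3}^{W'}(\one_{W'},\one_{W'},H')$ for the forgetful map $\pi\colon \M{0}{4}\to\M{0}{3}$. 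Since $\M{0}{3}$ is a point, the class $\Lambda_{0,3}^{W'}(\one_{W'},\one_{W'},H')$ is the scalar $\langle \one_{W'},\one_{W'},H'\rangle_0^{W'} = \eta(\one_{W'}\star\one_{W'},H') = \eta(\one_{W'},H')$, which is $1$: the top element $H'$ is normalized by $\nRes(\Hessbase{(W')^T})=1$ (Definition \ref{def-nRes}), and the mirror map is an isomorphism of Frobenius algebras (Theorem \ref{mirror-algebra}), so it preserves the pairing. Hence the $W'$-factor is $\pi^*(1)=1$.

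Finally, integrating over $\M{0}{4}$ and using that the $W'$-factor is the unit of $H^0$,
\[
X = \int_{\M{0}{4}}\Lambda_{0,4}^{W_j}\!\bigl(\mirror_j(x_N),\mirror_j(x_N),\mirror_j(M_N/x_N^2),H_j\bigr) = \langle \agen_N,\agen_N,S_N,\HessbaseA{W^T_j}\rangle_0^{W_j},
\]
which is the asserted identity. The main obstacle I anticipate is the bookkeeping in the first two steps — specifically, verifying that the top-degree (Hessian) generator of $\Jac{W^T}$ splits as the product of the top generators of the summands and that $\mirror$ sends this splitting to $H_j\otimes H'$ while sending the other three insertions to tensors with $\one_{W'}$; once that is in place, the tensor axiom and the flat-identity axiom finish the argument directly.
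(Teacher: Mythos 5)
Your proposal is correct and takes essentially the same route as the paper: both factor the class via the tensor-product axiom (Theorem \ref{tensor_a_thm}) and then evaluate the complementary factor as $1$ using the flat-identity (forgetful, Axiom C4) property together with $\langle \one, \HessbaseA{W_i^T}\rangle = 1$. The only cosmetic differences are that you group the complementary summands into a single $W'$ while the paper keeps a product over $i \neq j$, and that you spell out the normalization $\nRes(\Hessbase{(W')^T})=1$ which the paper leaves implicit.
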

\begin{proof}
By Theorem 4.2.2 of \cite{FJR}, we have
\[
\Lambda_{0, 4}^W(\agen_N, \agen_N, S_N, \HessbaseA{W^T}) = \Lambda_{0, 4}^{W_j}(\agen_N, \agen_N, S_N, \HessbaseA{W^T_j})\prod_{i \neq j} \Lambda^{W_i}_{0, 4}(\one, \one, \one, \HessbaseA{W^T_i}).
\]
Here $\Lambda^{W_i}_{0, 4}(\one, \one, \one, \HessbaseA{W^T_i})\in H^0(\M{0}{4})$, so we treat it as a scalar. By Axiom C4 of Theorem 4.2.2 in \cite{FJR}, we get
$$\Lambda^{W_i}_{0, 4}(\one, \one, \one, \HessbaseA{W^T_i})=\int_{\M{0}{3}}\Lambda_{0, 3}(\one, \one, \HessbaseA{W^T_i}) = \langle \one, \HessbaseA{W^T_i} \rangle=1.$$
\end{proof}

Because of this result, in the remainder of this section we will assume that $W$ is an atomic polynomial.
Before we start the computation, let us state some useful formulas for each atomic type.
Recall that $\rho_j^{(i)}$ is the $(i,j)$-th entry of the matrix $E_W^{-1}$.\\

\noindent{\bf Fermat formulas.}  Let $W=x^a$. Then $i=N=1$, and
\begin{equation}\label{fermat-weight}
q_1=\rho_1^{(1)}={1\over a}.
\end{equation}

\noindent{\bf Chain formulas.} Let  $W = x_1^{\expon_1} x_2 + x^{\expon_2} x_3 + \ldots + x_{N-1}^{\expon_{N-1}}x_N + x_N^{\expon_N}$. Then
\[
\AW_W = \left( \begin{array}{ccccc}
\expon_1 & 1 & &&\\
& \expon_2 & 1 &&\\
&& \ddots & \ddots &\\
&&&\expon_{N-1} & 1\\
&&&& \expon_N \end{array} \right),
\]
and
\begin{eqnarray}\label{chain-exponent}
\begin{split}
\rho_j^{(i)}
&=
(-1)^{j-i}\prod_{k=i}^{j}{1\over\expon_k},  & j\geq i;\\
\rho_j^{(i)}
&=0,  &j<i.
\end{split}
\end{eqnarray}
Since $q_i = \sum_{j=1}^N \rho_j^{(i)}$, we have
\begin{equation}\label{chain-weight}
q_i =\sum_{j=i}^{N}(-1)^{j-i}\prod_{k=i}^{j}{1\over \expon_k}.
\end{equation}

\noindent{\bf Loop formulas.} Let  $W = x_1^{\expon_1} x_2 + x^{\expon_2} x_3 + \ldots + x_{N-1}^{\expon_{N-1}}x_N + x_N^{\expon_N}x_1$. Then
\[
\AW_W = \left( \begin{array}{ccccc}
\expon_1 & 1 & &&\\
& \expon_2 & 1 &&\\
&& \ddots & \ddots &\\
&&&\expon_{N-1} & 1\\
1&&&& \expon_N \end{array} \right).
\]
Define
$$L_W=\left(\prod_{k=1}^N\expon_k+(-1)^{N+1}\right)^{-1}.$$
Then
\begin{eqnarray}\label{loop-exponent}
\begin{split}
\rho_j^{(i)} &= (-1)^{j-i}\left(\prod_{k=j+1}^{N}\expon_k\right)\left(\prod_{k=1}^{i-1}\expon_k\right) L_W, &  j\geq i,\\
\rho_j^{(i)} &= (-1)^{N+j-i}\left(\prod_{k=j+1}^{i-1}\expon_k\right) L_W,  & j<i.
\end{split}
\end{eqnarray}
Here  we use the convention that an empty product is 1.
These formulas lead to the following expression for the $i^{th}$ weight of $W$:
\begin{equation}\label{loop-weight}
q_i =\sum_{j=i}^{N}(-1)^{j-i}\left(\prod_{k=j+1}^{N}\expon_k\right)\left(\prod_{k=1}^{i-1}\expon_k\right)L_W+\sum_{j=1}^{i-1}(-1)^{N+j-i}\left(\prod_{k=j+1}^{i-1}\expon_k\right)L_W
.
\end{equation}

\subsubsection{Combinatorial preparation}
Let $c$ be an integer such that $c\in [-2,2]$, we define 
\begin{equation}\label{phase-number}
Y_{i,c}:=q_i+c\rho_N^{(i)}.
\end{equation}
The following results are useful later.
\begin{lemma}\label{chain-comb}
For $W = x_1^{\expon_1} x_2 + x_2^{\expon_2} x_3 + \ldots + x_{N-1}^{\expon_{N-1}}x_N + x_N^{\expon_N}$ with $\expon_N>2$, then $
Y_{i,c} \in (0,1)
$ except:
\begin{itemize}
\item $Y_{N,-2} \in (-1,0)$.
\item $Y_{N,-1}=0$.
\item $Y_{N-1,2}=0$ and $Y_{N,2}=1$ if $\expon_N=3$.
\end{itemize}
\end{lemma}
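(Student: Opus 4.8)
The plan is to reduce the entire statement to a single downward recursion in the index $i$. First I would record two elementary recursions coming directly from the chain formulas \eqref{chain-exponent} and \eqref{chain-weight}: from \eqref{chain-exponent} we get $\rho_N^{(i)} = -\frac{1}{\expon_i}\rho_N^{(i+1)}$ for $i<N$ with $\rho_N^{(N)} = 1/\expon_N$, and from the quasihomogeneity relation $\expon_i q_i + q_{i+1} = 1$ (equivalently from \eqref{chain-weight}) we get $q_i = \frac{1-q_{i+1}}{\expon_i}$ for $i<N$ with $q_N = 1/\expon_N$. Combining these into the definition $Y_{i,c}=q_i+c\rho_N^{(i)}$ yields the key identity
\begin{equation}\label{Y-recursion-plan}
Y_{i,c} = \frac{1 - Y_{i+1,c}}{\expon_i} \quad \text{for } i < N, \qquad Y_{N,c} = \frac{1+c}{\expon_N}.
\end{equation}

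Next I would isolate the one fact that drives the induction: since $\expon_i \geq 2$, if $Y_{i+1,c}\in(0,1)$ then $Y_{i,c} = \frac{1-Y_{i+1,c}}{\expon_i}\in\left(0,\frac{1}{\expon_i}\right)\subseteq(0,1)$. Thus once some $Y_{i+1,c}$ lands in the open unit interval, every $Y_{j,c}$ with $j\leq i$ does too. The whole lemma therefore reduces to evaluating the base value $Y_{N,c}$ and, in the few cases where it falls outside $(0,1)$, taking one further step of \eqref{Y-recursion-plan} to re-enter $(0,1)$.

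Then I would dispatch the base cases using $\expon_N\geq 3$. From \eqref{Y-recursion-plan}, $Y_{N,-2}=-1/\expon_N\in(-1,0)$, $Y_{N,-1}=0$, $Y_{N,0}=1/\expon_N$, $Y_{N,1}=2/\expon_N$, and $Y_{N,2}=3/\expon_N$; the middle three lie in $(0,1)$, $Y_{N,2}\in(0,1)$ precisely when $\expon_N>3$, and $Y_{N,2}=1$ when $\expon_N=3$. These three anomalous base values are exactly the listed exceptions. For $c\in\{0,1\}$, and for $c=2$ with $\expon_N>3$, the base already lies in $(0,1)$ and the propagation fact finishes the claim. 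For the anomalous cases I would take one more step: $Y_{N-1,-1}=1/\expon_{N-1}\in(0,1)$; for $c=-2$, $Y_{N-1,-2}=\frac{1+1/\expon_N}{\expon_{N-1}}\in(0,1)$; and for $c=2,\ \expon_N=3$, the value $Y_{N,2}=1$ forces $Y_{N-1,2}=0$ (the third exception) and then $Y_{N-2,2}=1/\expon_{N-2}\in(0,1)$. Applying the propagation fact downward from the first index at which $Y$ re-enters $(0,1)$ completes every case.

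The argument is essentially mechanical once \eqref{Y-recursion-plan} is in hand, so there is no serious obstacle; the only step requiring care is $c=-2$, where $Y_{N,-2}<0$ makes $1-Y_{N,-2}>1$. Here one must use that $1+1/\expon_N\leq 4/3 < 2 \leq \expon_{N-1}$ to keep $Y_{N-1,-2}$ strictly below $1$, and it is precisely the hypothesis $\expon_N>2$ that guarantees this bound.
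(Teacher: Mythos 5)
Your proof is correct, and it takes a genuinely different route from the paper's. The paper argues directly from the closed-form alternating series: for $i=N$ it reads off $Y_{N,c}=(c+1)/\expon_N$ (your base case), and for $i<N$ it sandwiches $Y_{i,c}$ via the chain
\[
0\leq \Bigl(1-\tfrac{3}{\expon_N}\Bigr)\prod_{k=i}^{N-1}\tfrac{1}{\expon_k}
\leq\prod_{k=i}^{N-1}\tfrac{1}{\expon_k}-(|c|+1)\prod_{k=i}^{N}\tfrac{1}{\expon_k}
\leq Y_{i,c}<\tfrac{1}{\expon_i}+\tfrac{2}{\expon_i\expon_N}<1,
\]
using that the weight series is alternating with strictly decreasing terms; the exceptional case $(\expon_N,c,i)=(3,2,N-1)$ is then extracted from the equality analysis of this chain. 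You instead derive the one-step recursion $Y_{i,c}=(1-Y_{i+1,c})/\expon_i$ --- which indeed follows from $\expon_i q_i+q_{i+1}=1$ together with $\expon_i\rho_N^{(i)}+\rho_N^{(i+1)}=0$ (the latter is row $i$ of $\AW_W\AW_W^{-1}=I$, or can be read off from \eqref{chain-exponent}) --- and run a downward induction. This is cleaner: the alternating-series bookkeeping disappears, the propagation step $Y_{i+1,c}\in(0,1)\Rightarrow Y_{i,c}\in(0,1/\expon_i)$ is trivial, and the exceptions emerge mechanically as base-value anomalies that self-correct after one or two steps (in particular $Y_{N,2}=1\Rightarrow Y_{N-1,2}=0\Rightarrow Y_{N-2,2}=1/\expon_{N-2}$, which makes the third exception transparent rather than the endpoint of an equality discussion). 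What the paper's estimate buys is an explicit uniform lower and upper bound on $Y_{i,c}$ for all $i<N$ at once, but nothing in the lemma or its later applications requires that extra information.

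One small correction to your closing remark: the inequality $1+1/\expon_N<\expon_{N-1}$ needed in the $c=-2$ step holds already for $\expon_N\geq 2$, since then $1+1/\expon_N\leq 3/2<2\leq\expon_{N-1}$. The hypothesis $\expon_N>2$ is genuinely used earlier, at the base values: it is what places $Y_{N,1}=2/\expon_N$ in $(0,1)$ and keeps $Y_{N,2}=3/\expon_N\leq 1$, i.e., it is what makes the stated list of exceptions at $i=N$ correct in the first place.
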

\begin{proof}
From \eqref{phase-number}, \eqref{chain-weight}, and \eqref{chain-exponent}, we have
$$Y_{i,c} =\sum_{j=i}^{N}(-1)^{j-i}\prod_{k=i}^{j}{1\over \expon_k}+c(-1)^{N-i}\prod_{k=i}^{N}{1\over \expon_k}.$$
If $i=N$, then
$$Y_{N,c}={c+1\over\expon_N}$$
and the result follows since $\expon_N>2$.
If $i<N$, then since $q_i$ in \eqref{chain-weight} is an alternating series, with strictly decreasing absolute value for each term, and since $|c|\leq2$, the result follows from
$$0\leq (1-{3\over\expon_N})\prod_{k=i}^{N-1}{1\over\expon_k}\leq\prod_{k=i}^{N-1}{1\over\expon_k}-(|c|+1)\prod_{k=i}^N{1\over\expon_k}\leq Y_{i,c}<{1\over\expon_i}+|c|\prod_{k=i}^{N}{1\over \expon_k}\leq {1\over\expon_i}+{2\over \expon_i\expon_N}<1.$$
Here $Y_{i,c}=0$ if and only if the first three equalities hold. That happens if and only if $\expon_N=3, c=2$, and $i=N-1$.
\end{proof}

\begin{lemma}\label{loop-comb}
Let $W = x_1^{\expon_1} x_2 + x^{\expon_2} x_3 + \ldots + x_{N-1}^{\expon_{N-1}}x_N + x_N^{\expon_N}x_1$.
\begin{enumerate}
\item
If $N-i$ is odd, then $Y_{i,c}\in (0,1)$ except
\begin{itemize}
\item $Y_{1,1}=0$ if $N=\expon_N=2$.
\item $Y_{1,2}=0$ if $N=2, \expon_N=3$.
\item $Y_{N-1,2}\in (-1,0)$ if $N>2$ and $\expon_N=2$.
\end{itemize}

\item
If $N-i$ is even, then $Y_{i,c}\in (0,1)$ except
\begin{itemize}
\item $Y_{2,1}=1$ if $i=N=2, \expon_N=2$.
\item $Y_{N,2}=1$ if $i=N>2, \expon_N=3$.
\item $Y_{N,2}\in (1,2)$ if $i=N>2, \expon_N=2$.
\item $Y_{N,c}\in (-1,0)$ if $N>2$ and $c=-1, -2$.
\end{itemize}
\end{enumerate}
\end{lemma}
\begin{proof}

\noindent{\bf (1) $N-i$ is odd.} In this case, by \eqref{loop-exponent} and \eqref{loop-weight}, we first write $Y_{i,c}$ as
\begin{eqnarray}\label{loop-first-term}
\begin{split}
Y_{i,c}=&\sum_{r=1}^{N-1-i\over2}(\expon_{i-1+2r}-1)\left(\prod_{k=i+2r}^{N}\expon_k\right)\left(\prod_{k=1}^{i-1}\expon_k\right)L_W\\
&+\left(\expon_N-(c+1)\right)\left(\prod_{k=1}^{i-1}\expon_k\right)\,L_W\\
&+\sum_{r=1}^{\floor{{i\over2}}}\left(\prod_{k=2r}^{i-1}\expon_k-\prod_{k=2r+1}^{i-1}\expon_k\right)L_W
.
\end{split}
\end{eqnarray}
If $N=2$, then $i=1$ and the result follows from
$$Y_{1,c}=\left(\expon_N-(c+1)\right)\,L_W.$$
If $N>2$, then the sum of first and third line on the RHS of Equation \eqref{loop-first-term} is strictly positive. We know $Y_{i,c}>0$ as long as the second line is non-negative or the first line is non-zero. Thus $Y_{i,c}<0$ only if
$$c=\expon_N=2 \quad \text{and}\quad i=N-1.$$
In order to prove the other side of the inequality, if $i<N-1$, we rewrite $Y_{i,c}$ as
\begin{eqnarray}\label{loop-second-term}
\begin{split}
Y_{i,c}=&\left(\prod_{k=i+1}^{N}\expon_k\right)\left(\prod_{k=1}^{i-1}\expon_k\right)\,L_W\\
&-\sum_{r=1}^{N-3-i\over2}(\expon_{i+2r}-1)\left(\prod_{k=i+2r+1}^{N}\expon_k\right)\left(\prod_{k=1}^{i-1}\expon_k\right)\,L_W\\
&-\left(\expon_{N-1}\expon_N-(\expon_N-c)\right)\left(\prod_{k=1}^{i-1}\expon_k\right)\,L_W\\
&-\sum_{r=1}^{\floor{{i\over2}}}\left(\prod_{k=2r-1}^{i-1}\expon_k-\prod_{k=2r}^{i-1}\expon_k\right)L_W
.
\end{split}
\end{eqnarray}
Since
$$\left(\expon_{N-1}\expon_N-(\expon_N-c)\right)\left(\prod_{k=1}^{i-1}\expon_k\right)\,L_W\geq0,$$
we get
$$Y_{i,c}\leq\left(\prod_{k=i+1}^{N}\expon_k\right)\left(\prod_{k=1}^{i-1}\expon_k\right)\,L_W<1.$$
If $i=N-1$, the result follows from a similar discussion by rewriting $Y_{N-1,c}$ from \eqref{loop-weight},
$$Y_{N-1,c}=(\expon_N-c)\left(\prod_{k=1}^{N-2}\expon_k\right)\,L_W+\sum_{j=0}^{N-2}(-1)^{j+1}\left(\prod_{k=j+1}^{N-2}\expon_k\right)\,L_W.$$

\noindent{\bf (2) $N-i$ is even.} In this case, the result follows from a similar discussion by rewriting $Y_{i,c}$ as
\begin{eqnarray*}
\begin{split}
Y_{i,c}=&\sum_{r=1}^{N-i\over2}(\expon_{i-1+2r}-1)\left(\prod_{k=i+2r}^{N}\expon_k\right)\left(\prod_{k=1}^{i-1}\expon_k\right)L_W\\
&+\left((1+c)\expon_1-1\right)\left(\prod_{k=2}^{i-1}\expon_k\right)\,L_W\\
&+\sum_{r=1}^{\floor{{i-1\over2}}}\left(\prod_{k=2r+1}^{i-1}\expon_k-\prod_{k=2r+2}^{i-1}\expon_k\right)L_W.
\end{split}
\end{eqnarray*}
\end{proof}

Now we continue with our computation of $X=\langle\agen_N,\agen_N,S_N,H\rangle$. We notice that $S_N=\agen_N^{\expon_N-2}$ when $W$ is a Fermat polynomial and $S_N = \agen_{N-1}\agen_N^{\expon_N-2}$ when $W$ is a chain or loop polynomial as above.
We will sometimes use $\agen_N,S_N$ and $H$ to denote the correponding sector and use the symbols $\phase{\agen_N}^{(i)}$, $\phase{S_N}^{(i)}$, and $\phase{H}^{(i)}$ to refer to the $i$-th phase of these sector.

\begin{lemma}
For each atomic type polynomial $W$ with no variable of weight $1/2$, then
\begin{align*}
\phase{\agen_N}^{(i)} &=q_i+\rho_N^{(i)}-\floor{q_i+\rho_N^{(i)}},\\
\phase{H}^{(i)}&=1-q_i,\\
\phase{S_N}^{(i)}&
=q_i-2\rho_N^{(i)}+\delta_N^i.
\end{align*}
Recall that $\LL_i$ is the $i$-th orbifold line bundle in the $W$-structure. If $(N, \expon_N)\neq(2,2)$, then on each smooth fiber, the degree of $\LL_i$ is
\begin{equation}\label{degree-computation}
\ld_i:=\deg\LL_i=-1-\delta_N^i.
\end{equation}
\end{lemma}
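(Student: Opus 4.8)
The plan is to read every phase directly off Krawitz's mirror map, reduce the two nontrivial phases to the single matrix identity $\AW_W^{-1}\AW_W = I$, and then substitute into the Integer Degrees Axiom \eqref{lbd_ax}. I would begin by recording the general phase formula: by \eqref{equ:mirror_sectors} the class $\mirror\!\left(\prod_j x_j^{\alpha_j}\right)$ lies in the sector $\gamma = \left(\prod_j \col_j^{\alpha_j}\right)J_W$, and since the $i$-th entry of $\col_j$ is $\exp(2\pi\sqrt{-1}\,\rho_j^{(i)})$ while $J_W$ has $i$-th phase $q_i$, the $i$-th phase of $\gamma$ is $\sum_j \alpha_j\rho_j^{(i)} + q_i$ taken modulo $1$ into $[0,1)$. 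For $\agen_N = \mirror(x_N)$ the exponent vector is $e_N$, so this is exactly $\phase{\agen_N}^{(i)} = q_i + \rho_N^{(i)} - \floor{q_i + \rho_N^{(i)}}$ by the definition of the phase, which settles the first formula with no further work. (In the Fermat case $N=1$ and every assertion below is immediate since $q_1 = \rho_1^{(1)} = 1/\expon_1$, so I would focus on chains and loops.)

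For $S_N$ and $H$ the essential input is the column relation coming from $\AW_W^{-1}\AW_W = I$, namely $\sum_j \rho_j^{(i)}(\AW_W)_{j,k} = \delta_{ik}$. Since $M_N$ is the $N$-th monomial of $W^T$, its exponent vector is the $N$-th column of $\AW_W$, so $M_N/x_N^2$ has exponents $(\AW_W)_{j,N} - 2\delta_{jN}$ and the phase formula collapses to
\[
\phase{S_N}^{(i)} \equiv \delta_{iN} - 2\rho_N^{(i)} + q_i \pmod 1.
\]
For $H = \mirror(\Hessbase{W^T})$ I would plug in the explicit Hessian exponents of Definition \ref{milnor_basis} ($a_j-1$ in each slot, with $a_N-2$ in the last slot of a chain) and combine the column relation in the form $a_k\rho_k^{(i)} = \delta_{ik} - \rho_{k-1}^{(i)}$ with $\sum_j \rho_j^{(i)} = q_i$ from \eqref{weight}; the sum telescopes to $\sum_j \alpha_j^{H}\rho_j^{(i)} \equiv 1 - 2q_i$, giving $\phase{H}^{(i)} \equiv 1 - q_i$ (conceptually, $H$ occupies the sector $J_W^{-1}$ dual to the unit). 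To promote these congruences to the stated equalities in $[0,1)$ I would invoke the combinatorial bounds of Lemmas \ref{chain-comb} and \ref{loop-comb}: for $H$ the value $1-q_i$ lies in $[1/2,1)$ automatically because $q_i \in (0,1/2]$, and for $S_N$ the bounds on $Y_{i,-2} = q_i - 2\rho_N^{(i)}$ (for $i \neq N$) and on $Y_{N,-2}+1$ (the diagonal term) place every value in $[0,1)$.

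With all three phases in hand and $k=4$, the Integer Degrees Axiom \eqref{lbd_ax} gives
\[
\ld_i = 2q_i - \bigl(2\,\phase{\agen_N}^{(i)} + \phase{S_N}^{(i)} + \phase{H}^{(i)}\bigr).
\]
Substituting the three expressions, the $q_i$- and $\rho_N^{(i)}$-contributions cancel and I am left with
\[
\ld_i = -1 - \delta_N^i + 2\floor{q_i + \rho_N^{(i)}}.
\]
The final step is to show the floor vanishes, i.e. $Y_{i,1} = q_i + \rho_N^{(i)} \in [0,1)$, and this bookkeeping is where I expect the real work to lie. Reading Lemmas \ref{chain-comb} and \ref{loop-comb} at $c=1$, one finds $Y_{i,1}\in(0,1)$ in every case except the loop exceptions $Y_{1,1}=0$ and $Y_{2,1}=1$, both of which occur only when $N = \expon_N = 2$; hence under the hypothesis $(N,\expon_N)\neq(2,2)$ the floor is zero and $\ld_i = -1 - \delta_N^i$. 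The main obstacle throughout is not any single computation but the careful interfacing with the combinatorial lemmas, since each claimed equality of phases in $[0,1)$, and the vanishing of the floor, relies on knowing exactly which boundary values $Y_{i,c}$ fall outside $(0,1)$ and confirming that those coincide precisely with the excluded degenerate cases.
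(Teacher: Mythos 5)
Your proposal is correct and follows essentially the same route as the paper: the sectors are read off from Krawitz's mirror map \eqref{equ:mirror_sectors} (your matrix identity $\AW_W^{-1}\AW_W=I$ and telescoping is exactly the ``direct computation using the ring isomorphism'' the paper performs), Lemmas \ref{chain-comb} and \ref{loop-comb} together with \eqref{fermat-weight} place the claimed values in $[0,1)$, and the Integer Degrees Axiom \eqref{lbd_ax} with $k=4$ yields $\ld_i=-1-\delta_N^i+2\floor{q_i+\rho_N^{(i)}}$. Your identification of the floor obstruction with the single exceptional value $q_N+\rho_N^{(N)}=1$ at $(N,\expon_N)=(2,2)$ is precisely the observation the paper makes, so the hypothesis $(N,\expon_N)\neq(2,2)$ finishes the degree computation exactly as in the paper's proof.
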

\begin{proof}
The proof is a direct computation using the ring isomorphism in Theorem \ref{mirror-algebra}. Lemma \ref{chain-comb}, Lemma \ref{loop-comb} and Equation \eqref{fermat-weight} show that the quantities listed are in $[0,1)$. In particular, if $N=\expon_N=2$, i.e., $W = x_1^{\expon_1} x_2 + x_2^{2} x_1$, then
$$q_N+\rho_N^{(N)}=1.$$
Otherwise, $q_i+\rho_N^{(i)}\in [0,1)$. 
Then \eqref{degree-computation} follows, since
\begin{align*}
\ld_i:=\deg\LL_i &= 2q_i-2\phase{\agen_N}^{(i)} - \phase{S_N}^{(i)}-\phase{H}^{(i)}\\
&=2q_i-2(q_i+\rho_N^{(i)}) - (q_i-2\rho_N^{(i)}+\delta_N^i) - (1-q_i)\\
&=-1-\delta_N^i.
\end{align*}
\end{proof}
The following $\Gmax{W}$-decorated graphs will be useful in the computation of $X$.
\begin{figure}[H]
\centering
\begin{tikzpicture}[xscale=1,yscale=1]
\draw [-] (0, 0)--(north west:1);
\draw [-] (0, 0)--(south west:1);
\draw [-] (0, 0)--(.85, 0);
\draw [-] (1.15, 0)--(2, 0);
\draw [-] (2, 0)--+(north east:1);
\draw [-] (2, 0)--+(south east:1);
\node [ left] at (north west:1) {$\agen_N$};
\node [ left] at(south west:1) {$\agen_N$};
\node [ right] at (2.7, .7) {$H$};
\node [right] at (2.7, -.7) {$S_N$};
\node [above] at (.5, 0){$\gamma_{1,+}$};
\node [above] at (1.5, 0){$\gamma_{1,-}$};
\draw [fill=black] (0, 0) circle [radius = .05];
\draw [fill=black] (2, 0) circle [radius = .05];

\begin{scope}[shift ={(5, 0)}]
\draw [-] (0, 0)--(north west:1);
\draw [-] (0, 0)--(south west:1);
\draw [-] (0, 0)--(.85, 0);
\draw [-] (1.15, 0)--(2, 0);
\draw [-] (2, 0)--+(north east:1);
\draw [-] (2, 0)--+(south east:1);
\node [ left] at (north west:1) {$\agen_N$};
\node [ left] at(south west:1) {$H$};
\node [ right] at (2.7, .7) {$\agen_N$};
\node [right] at (2.7, -.7) {$S_N$};
\node [above] at (.5, 0){$\gamma_{2,+}$};
\node [above] at (1.5, 0){$\gamma_{2,-}$};
\draw [fill=black] (0, 0) circle [radius = .05];
\draw [fill=black] (2, 0) circle [radius = .05];
\end{scope}

\begin{scope}[shift ={(10, 0)}]
\draw [-] (0, 0)--(north west:1);
\draw [-] (0, 0)--(south west:1);
\draw [-] (0, 0)--(.85, 0);
\draw [-] (1.15, 0)--(2, 0);
\draw [-] (2, 0)--+(north east:1);
\draw [-] (2, 0)--+(south east:1);
\node [ left] at (north west:1) {$\agen_N$};
\node [ left] at(south west:1) {$S_N$};
\node [ right] at (2.7, .7) {$\agen_N$};
\node [right] at (2.7, -.7) {$H$};
\node [above] at (.5, 0){$\gamma_{2,-}$};
\node [above] at (1.5, 0){$\gamma_{2,+}$};
\draw [fill=black] (0, 0) circle [radius = .05];
\draw [fill=black] (2, 0) circle [radius = .05];
\end{scope}
\end{tikzpicture}
\caption{Boundary strata on $\overline{\mathscr{W}}_{0,4}(\agen_N,\agen_N,S_N,H)$}
\label{boundary}
\end{figure}
Note that the two graphs on the right are the same. Here the element $\gamma_{k,\pm}\in G_W$ is chosen uniquely such that the Interger Degree Axiom \eqref{lbd_ax} is satisfied for each component.  It is possible that $\A{\gamma_{k,\pm}}=\emptyset$.
Let $\gamma_{k,\pm}^{(i)}$ be the $i$-th phase of $\gamma_{k,\pm}$ and
\begin{equation}\label{phase-degree}
\left\{
\begin{array}{ll}
h_{1,+}^{(i)}:=q_i-2\phase{\agen_N}^{(i)},\quad
&h_{1,-}^{(i)}:=q_i-\phase{S_N}^{(i)}-\phase{H}^{(i)},\\
h_{2,+}^{(i)}:=q_i-\phase{\agen_N}^{(i)}-\phase{H}^{(i)},\quad
&h_{2,-}^{(i)}:=q_i-\phase{\agen_N}^{(i)}-\phase{S_N}^{(i)}.
\end{array}
\right.
\end{equation}
Let $\ell_{k,+}^{(i)}$ ($\ell_{k,-}^{(i)}$) be the degree of the line bundle $\mathscr{L}_i$ on the left(right) component of the $k$-th graph above for $k=1, 2$. It follows that
\begin{eqnarray}
\ell_{k,\pm}^{(i)}&=&\floor*{h_{k,\pm}^{(i)}}, \label{graph-line-bundle-degree}\\
\gamma_{k,\pm}^{(i)}&=&h_{k,\pm}^{(i)}-\floor*{h_{k,\pm}^{(i)}}. \label{cut-phase}
\end{eqnarray}
In particular, if $(N, \expon_N)\neq(2,2)$, we can use the symbol in \eqref{phase-number} to rewrite the following numbers
\begin{equation}\label{phase-notation}
\left\{
\begin{array}{llll}
q_i=Y_{i,0}, & \agen_N^{(i)}=Y_{i,1}, & H^{(i)}=1-Y_{i,0}, & S_N^{(i)}=Y_{i,-2}+\delta_N^i, \\
h_{1,+}^{(i)}=-Y_{i,2}, & h_{1,-}^{(i)}=Y_{i,2}-1-\delta_N^i, & h_{2,+}^{(i)}=Y_{i,-1}-1, & h_{2,-}^{(i)}=-Y_{i,-1}-\delta_N^i.
\end{array}
\right.
\end{equation}
\subsubsection{Concavity Axiom}
Now we introduce the Concavity Axiom from \cite{FJR} to compute the necessary FJRW invariants. We recall the universal $W$-structure $(\LL_1,\ldots,\LL_N)$ on the universal curve $\pi: \mathscr{C}\to\overline{\mathscr{W}}_{g,k}(\gamma_1,\ldots,\gamma_k)$. A correlator
$\langle\xi_1,\ldots,\xi_k\rangle_{g}$
is called \emph{concave} if all the insertions $\xi_j$ are narrow and 
for each geometric point $[C]\in \overline{\mathscr{W}}_{g,k}(\gamma_1,\ldots,\gamma_k)$,
\begin{equation}\label{concave-revision}
H^0\left(C, \LL_i\right)=0, \quad 1\leq i\leq N.
\end{equation}
In this case, $\pi_*(\bigoplus_{i=1}^N\LL_i)=0$, $R^1\pi_*(\bigoplus_{i=1}^N\LL_i)$ is locally free, and the Concavity Axiom (see Theorem 4.1.8 in \cite{FJR}) implies
\begin{equation}\label{concave-def}
[\W{g}{k}(\Gamma_{\gamma_1,\ldots,\gamma_k})]^{\rm vir}=c_{\rm top}\left(R^1\pi_*(\bigoplus_{i=1}^N\LL_i)\right)^\vee\cap[\W{g}{k}(\Gamma_{\gamma_1,\ldots,\gamma_k})].
\end{equation}
Here $c_{\rm top}$ is the top Chern class and $[\W{g}{k}(\Gamma_{\gamma_1,\ldots,\gamma_k})]$ is the fundamental cycle.
Then Theorem 1.1.1 in \cite{C} expresses the FJRW virtual cycles in terms of tautological classes on $\M{g}{k}$. In particular, on $\M{0}{4}$ we have $R^1\pi_*\LL_{i}\neq0$ for some unique $\LL_i$ and
\begin{equation}\label{eqhe}
\langle\xi_1,\ldots,\xi_k\rangle=
\int_{\M{0}{4}}
\left(\frac{B_{2}(q_i)}{2} \kappa_1 -\sum_{j=1}^4 \frac{B_{2}(\Theta^{(i)}_{\gamma_j})}{2}\psi_j  +\sum_{\Gamma_{cut}}\frac{B_{2}(\Theta^{(i)}_{\gamma^+})}{2}[\Gamma_{cut}].\right)
\end{equation}
Here $\kappa_1$ is the first kappa class, $\psi_j$ is the $j$-th psi class, $B_{2}$ is the second Bernoulli polynomial that $B_2(x)=x^2-x+{1\over6}$, and $\Gamma_{cut}$ are all the fully $G_W$-decorated graphs on the boundary. For the correlator $X=\langle \agen_N, \agen_N, S_N, H \rangle$ in \eqref{correlator-N}, the graphs are listed in Figure \ref{boundary}.

\begin{lemma}\label{lemma-chiodo}
Consider the correlator $X=\langle \agen_N, \agen_N, S_N, H \rangle$ in \eqref{correlator-N}.
Assume $\expon_N>2$. If for $k=1,2$, the unordered pairs $(\ell_{k,+}^{(i)}, \ell_{k,-}^{(i)})$ satisfy
\begin{eqnarray}\label{concave-cond}
\begin{split}
(\ell_{k,+}^{(N)}, \ell_{k,-}^{(N)})\in \{(-2,-1)\}; \quad
(\ell_{k,+}^{(i)}, \ell_{k,-}^{(i)})\in \{(-1,-1), (-1,0)\}, \quad i<N,
\end{split}
\end{eqnarray}
and the sectors of $\Theta_N$, $S_N$ and $H$ are narrow, then
\begin{equation}\label{correlator_formula}
X =\frac{1}{2}\left[ -q_{N}(1 - q_{N}) + \sum_{j=1}^4 \Theta^{(N)}_{\gamma_j}(1 - \Theta^{(N)}_{\gamma_j})  - \sum_{\Gamma_{cut}}\Theta^{(N)}_{\gamma_+}(1 - \Theta^{(N)}_{\gamma_+}) \right].
\end{equation}
\end{lemma}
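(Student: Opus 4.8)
The plan is to realize $X$ as an intersection number on $\M{0}{4}$ via the Concavity Axiom and Chiodo's formula \eqref{eqhe}, so the first task is to verify that $X$ is concave under the stated hypotheses. Concavity requires $H^0(C,\LL_i)=0$ for every $1\le i\le N$ and every geometric point $[C]\in\W{0}{4}$; since the insertions $\agen_N,S_N,H$ are assumed narrow, only this vanishing remains to be checked. Because $\expon_N>2$ we have $(N,\expon_N)\neq(2,2)$, so \eqref{degree-computation} applies and on a smooth fiber $C\cong\mathbb{P}^1$ the vanishing is immediate: $\deg\LL_i=-1-\delta_N^i<0$ forces $H^0(\mathbb{P}^1,\LL_i)=0$. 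On the three boundary fibers, i.e. the nodal curves parametrized by the boundary points of $\M{0}{4}\cong\mathbb{P}^1$ displayed in Figure \ref{boundary}, I would use the normalization sequence $0\to\LL_i\to\LL_i|_{C_+}\oplus\LL_i|_{C_-}\to\LL_i|_{\rm node}\to0$ on a two-component rational curve and pass to global sections. The bidegrees on the two components are exactly the pairs $(\ell_{k,+}^{(i)},\ell_{k,-}^{(i)})$ constrained by \eqref{concave-cond}. When both entries are negative (the cases $(-2,-1)$ and $(-1,-1)$) each $H^0$ on a component vanishes, hence $H^0(C,\LL_i)=0$; in the remaining case $(-1,0)$ the degree-$0$ component contributes a one-dimensional space of sections, but the evaluation map $H^0(C_-,\LL_i|_{C_-})\to\LL_i|_{\rm node}\cong\CC$ at the node is an isomorphism, so the kernel $H^0(C,\LL_i)$ is again zero. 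This establishes concavity, so \eqref{concave-def} applies.

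Next I would identify the unique line bundle contributing to the virtual cycle. Under concavity $\pi_*\LL_i=0$ and $R^1\pi_*\LL_i$ is locally free of rank $-\chi(C,\LL_i)=-\deg\LL_i-1=\delta_N^i$ on genus-zero fibers. Thus $R^1\pi_*\LL_i=0$ for $i<N$, while $R^1\pi_*\LL_N$ is a line bundle, matching the assertion in \eqref{eqhe} that exactly one summand has nonvanishing $R^1\pi_*$; here that index is $i=N$. Consequently $c_{\rm top}\big(R^1\pi_*(\oplus_i\LL_i)\big)^\vee=c_1\big((R^1\pi_*\LL_N)^\vee\big)$, and Chiodo's formula \eqref{eqhe} applies with $i=N$, expressing $X$ as an integral over $\M{0}{4}$ of $\kappa_1$, the $\psi_j$, and the boundary classes, each weighted by $B_2$ evaluated at the relevant $N$-th phase.

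Finally I would evaluate the integral using the standard values $\int_{\M{0}{4}}\kappa_1=\int_{\M{0}{4}}\psi_j=\int_{\M{0}{4}}[\Gamma_{cut}]=1$, together with the fact that $\M{0}{4}$ has exactly three boundary strata, namely the splitting $\{\agen_N\agen_N\mid S_N H\}$ and the two decoration-identical copies of $\{\agen_N H\mid \agen_N S_N\}$ in Figure \ref{boundary}, whose node phases are read off from \eqref{cut-phase}. Writing $\tfrac12 B_2(x)=-\tfrac12 x(1-x)+\tfrac{1}{12}$ and summing, the constant $\tfrac1{12}$-terms contribute $\tfrac1{12}(1-4+3)=0$, and the surviving terms are precisely the right-hand side of \eqref{correlator_formula}. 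The main obstacle is the concavity verification on the boundary fibers, specifically the $(-1,0)$ degenerations, where vanishing of $H^0$ rests on the node-evaluation argument rather than on degree positivity; a secondary point that must be handled carefully is that the two decoration-identical graphs in Figure \ref{boundary} are nevertheless \emph{two} distinct boundary divisors, which is exactly what produces the count $1-4+3=0$ and makes the constant terms cancel.
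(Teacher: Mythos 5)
Your overall strategy coincides with the paper's: verify concavity fiber by fiber, use Riemann--Roch to see that $R^1\pi_*\LL_N$ is the unique rank-one contribution, then evaluate Chiodo's formula \eqref{eqhe} against $\int_{\M{0}{4}}\kappa_1=\int_{\M{0}{4}}\psi_j=\int_{\M{0}{4}}[\Gamma_{cut}]=1$. Your conversion of $B_2$-terms, with the constant pieces cancelling as $\tfrac{1}{12}(1-4+3)=0$ because there are exactly three boundary divisors, is correct and is precisely the (implicit) passage from \eqref{eqhe} to \eqref{correlator_formula}.

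However, there is a genuine gap at the step you yourself single out as the crux: the $(-1,0)$ boundary case. You assert that the evaluation map $H^0(C_-,\LL_i|_{C_-})\to\LL_i|_{\rm node}\cong\CC$ is an isomorphism, but the identification $\LL_i|_{\rm node}\cong\CC$ is not automatic on an orbifold curve: the fiber appearing in the normalization sequence is the space of invariants under the local isotropy group at the node. If that group acted nontrivially on the fiber of $\LL_i$ (a narrow node), the invariant fiber would be zero, every invariant section on either branch would automatically vanish at the node, the gluing condition would be vacuous, and one would get $H^0(C,\LL_i)\cong H^0(C_-,\LL_i|_{C_-})\cong\CC\neq 0$; that is, concavity would actually fail. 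So the broadness of the node in the $(-1,0)$ configuration is doing all the work and must be proved, not assumed. The paper closes exactly this hole with \eqref{narrow-singular}: from \eqref{phase-degree} and \eqref{graph-line-bundle-degree} one has $\ell_{k,+}^{(i)}+\ell_{k,-}^{(i)}=-1-\delta_N^i-\delta_{\rm narrow}$, so for $i<N$ the hypothesis $(\ell_{k,+}^{(i)},\ell_{k,-}^{(i)})=(-1,0)$ forces $\delta_{\rm narrow}=0$, i.e., the node is broad; only then is the invariant fiber $\CC$ and your evaluation map an isomorphism. Adding this one observation makes your argument complete and essentially identical to the paper's proof.
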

\begin{proof}
For a singular curve $[C]\in \overline{\mathscr{W}}_{0,4}(\agen_N,\agen_N,S_N,H)$, from \eqref{phase-degree} and \eqref{graph-line-bundle-degree}, we know 
\begin{equation}\label{narrow-singular}
\ell_{k,+}^{(i)}+\ell_{k,-}^{(i)}=\floor*{h_{k,+}^{(i)}}+\floor*{h_{k,-}^{(i)}}=-1-\delta_N^i-\delta_{\rm narrow},
\end{equation}
where $\delta_{narrow}$ is 1 when the local isotropy group at the node acts nontrivially on the fiber and 0 otherwise.  Thus we can check \eqref{concave-cond} holds.

To apply Concavity axiom \eqref{concave-def} we must check \eqref{concave-revision}, which is true if the line bundle degrees are negative on all components of all stratifications.
Combine \eqref{degree-computation}, we only need to check when $(\ell_{k,+}^{(i)}, \ell_{k,-}^{(i)})= (-1,0)$ and $i<N$. According to \eqref{narrow-singular}, the unique node $n\in C$ must be broad. We denote the normalization of $C$ by $p: C_1\coprod C_2\to C$, and get a long exact sequence
\begin{eqnarray*}
0\to H^0(C, \LL_i|_C)\to H^0(C_1, \LL_i|_{C_1})\oplus H^0(C_2, \LL_i|_{C_2})\to H^0(n, \LL_i|_n) \\
\to H^1(C, \LL_i|_C) \to H^1(C_1, \LL_i|_{C_1})\oplus H^1(C_2, \LL_i|_{C_2})\to0.
\end{eqnarray*}
Let us focus on the first line. Since $(\ell_{k,+}^{(i)}, \ell_{k,-}^{(i)})= (-1,0)$, the third term is just $\mathbb{C}$. The broadness implies that the last arrow is an isomorphism. Thus \eqref{concave-revision} follows.

Now we apply Riemann-Roch formula to \eqref{degree-computation} and \eqref{concave-cond}. Then $R^1\pi_*\LL_{i<N}=0$, and $R^1\pi_*\LL_N$ is a vector bundle of rank $1$.  Now formula \eqref{correlator_formula} follows from
Equation \eqref{eqhe} and
$$\int_{\M{0}{4}} \kappa_1 = \int_{\M{0}{4}} \psi_i = \int_{\M{0}{4}} [ \Gamma_{cut}]=1.$$
\end{proof}


\subsubsection{Chain Computation}
Let  $W = x_1^{\expon_1} x_2 + x_2^{\expon_2} x_3 + \ldots + x_{N-1}^{\expon_{N-1}}x_N + x_N^{\expon_N}$ with $\expon_N>2$.
Combine Lemma \ref{chain-comb} with the notation in \eqref{phase-notation}, we get the following corollary.
\begin{corollary}
The sectors $\Theta_N$, $S_N$, and $H$ are narrow. That is, $1\leq i\leq N,$
$\agen_N^{(i)}, H^{(i)},$ and $S_N^{(i)}$ are in $(0,1)$ for all $1\leq i \leq N$. Furthermore, we have
$$-1\leq h_{1,+}^{(i)}\leq0 \quad\text{and}\quad -1\leq h_{2,+}^{(i)}<0.$$
The first equality holds if and only $i=N$ and $\expon_N=3.$
The second equality holds if and only if $i=N-1$ and $\expon_N=3$.
The third equality holds if and only $i=N$.
\end{corollary}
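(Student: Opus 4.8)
The plan is to reduce every quantity appearing in the statement to one of the numbers $Y_{i,c}=q_i+c\rho_N^{(i)}$ and then read off all the claims directly from the sign analysis in Lemma \ref{chain-comb}, with no further recourse to the weight formulas \eqref{chain-weight} and \eqref{chain-exponent}. Since $\expon_N>2$ we have $(N,\expon_N)\neq(2,2)$, so the dictionary \eqref{phase-notation} applies and gives
\[
\agen_N^{(i)}=Y_{i,1},\quad H^{(i)}=1-Y_{i,0},\quad S_N^{(i)}=Y_{i,-2}+\delta_N^i,\quad h_{1,+}^{(i)}=-Y_{i,2},\quad h_{2,+}^{(i)}=Y_{i,-1}-1.
\]
Once these identifications are in place, every assertion becomes an elementary inequality for a single $Y_{i,c}$ with $c\in\{-2,-1,0,1,2\}$, and the whole corollary is a matter of matching the exceptional values recorded in Lemma \ref{chain-comb} to the right endpoint.

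For narrowness I would treat the three phases separately. Lemma \ref{chain-comb} records no exception at $c=1$ or $c=0$, so $Y_{i,1}\in(0,1)$ and $Y_{i,0}\in(0,1)$ for every $i$; hence $\agen_N^{(i)}=Y_{i,1}\in(0,1)$ and $H^{(i)}=1-Y_{i,0}\in(0,1)$, which gives narrowness of $\Theta_N$ and $H$. For $S_N$, when $i<N$ we have $S_N^{(i)}=Y_{i,-2}\in(0,1)$ directly, while for $i=N$ the lone exception $Y_{N,-2}\in(-1,0)$ in Lemma \ref{chain-comb} is exactly compensated by the shift $\delta_N^N=1$, so $S_N^{(N)}=Y_{N,-2}+1\in(0,1)$. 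This shows all three sectors are narrow.

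For the two inequalities I would invoke Lemma \ref{chain-comb} once more, now tracking its exceptional entries to pin down the equality cases. For $h_{1,+}^{(i)}=-Y_{i,2}$, generically $Y_{i,2}\in(0,1)$ forces $-1<h_{1,+}^{(i)}<0$, while the two $c=2$ exceptions occur only when $\expon_N=3$: $Y_{N,2}=1$ yields $h_{1,+}^{(N)}=-1$, and $Y_{N-1,2}=0$ yields $h_{1,+}^{(N-1)}=0$. These are precisely the two endpoints of $-1\le h_{1,+}^{(i)}\le 0$, with equality at the lower end iff $i=N,\ \expon_N=3$ and at the upper end iff $i=N-1,\ \expon_N=3$. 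For $h_{2,+}^{(i)}=Y_{i,-1}-1$, the only $c=-1$ exception is $Y_{N,-1}=0$; hence $Y_{i,-1}<1$ always, giving the strict bound $h_{2,+}^{(i)}<0$, while $Y_{N,-1}=0$ gives $h_{2,+}^{(N)}=-1$ and every other $i$ gives $Y_{i,-1}>0$, i.e.\ $h_{2,+}^{(i)}>-1$. Thus $-1\le h_{2,+}^{(i)}<0$ with equality iff $i=N$.

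The entire argument is bookkeeping against Lemma \ref{chain-comb}, so I do not anticipate a genuine obstacle; the only points requiring care are to observe that the $\delta_N^i$ shift is exactly what rescues narrowness of $S_N$ at $i=N$, and that the $c=2$ exceptions (active only for $\expon_N=3$) are the common source of both endpoint cases for $h_{1,+}$.
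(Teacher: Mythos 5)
Your proposal is correct and follows essentially the same route as the paper: the text derives this corollary precisely by combining Lemma \ref{chain-comb} with the dictionary \eqref{phase-notation} (valid here since $\expon_N>2$ rules out $(N,\expon_N)=(2,2)$), exactly as you do. Your bookkeeping of the exceptional cases $Y_{N,-2}\in(-1,0)$, $Y_{N,-1}=0$, and $Y_{N-1,2}=0$, $Y_{N,2}=1$ for $\expon_N=3$ matches the intended identification of the equality cases.
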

Using \eqref{graph-line-bundle-degree},  \eqref{cut-phase}, and \eqref{narrow-singular}, it is easy to check that the corollary above implies
\[(\ell_{1,+}^{(i)}, \ell_{1,-}^{(i)})=
\left\{
\begin{array}{ll}
(0,-1), & \text{if} \ i=N-1, \expon_N=3,\\
(-1,-1-\delta_N^i),& \text{otherwise}.
\end{array}
\right.
\]
and
\[(\ell_{2,+}^{(i)}, \ell_{2,-}^{(i)})=(-1,-1-\delta_N^i).
\]
Then by Lemma \ref{lemma-chiodo}, only $\mathscr{L}_N$ has nonzero contribution to the correlator $X$ and \eqref{correlator_formula} is applicable. A direct computation shows
$$\agen_N^{(N)}=2q_N, \quad H^{(N)}=S_N^{(N)}=1-q_N, \quad \gamma_{1,+}^{(N)}=1-3q_N, \quad \gamma_{2,+}^{(N)}=0.$$
We plug these numbers into \eqref{correlator_formula} and get
\[
X =
\frac{1}{2}\Big(
 2(2q_N)(1-2q_N) + 2(1-q_N)q_N
- q_N(1 -q_N) - (1-3q_N)3q_N- 2(0)(1-0) \Big)
=q_N.
\]

\subsubsection{Loop polynomial $W = x_1^{\expon_1} x_2 + x^{\expon_2} x_3 + \ldots + x_{N-1}^{\expon_{N-1}}x_N + x_N^{\expon_N}x_1$.}
If $\expon_N>2$, we again have concavity.
Recall the notations in \eqref{phase-notation}, we know that Lemma \ref{loop-comb} implies
$H^{(i)}, S_N^{(i)}, \agen_N^{(i)}\in (0,1)$, and 
\begin{itemize}
\item $h_{1,+}^{(N)},  h_{2,-}^{(N)}\in (-1,0)$ and $h_{1,-}^{(N)},  h_{2,+}^{(N)}\in (-2,-1)$ if $\expon_N>3$.
\item $h_{k,+}^{(i)},  h_{k,-}^{(i)}\in [-1,0]$ otherwise. 
\end{itemize}
Moreover, we know $h_{k,+}^{(i)}+h_{k,-}^{(i)}=-1-\delta_N^i$ for all $k=1,2$. 
According to \eqref{graph-line-bundle-degree}, we have
\begin{itemize}
\item The pair $(\ell_{1,+}^{(N)},\ell_{1,-}^{(N)})=(-1,-2)$ and the pair $(\ell_{2,+}^{(N)},\ell_{2,-}^{(N)})=(-2,-1)$ if $\expon_N>3$.
\item The pair $(\ell_{k,+}^{(i)},  \ell_{k,-}^{(i)})\in \{(-1,-1), (-1,0)\}$ otherwise.
\end{itemize}
Thus the correlator $X$ satisfies the condition in Lemma \ref{lemma-chiodo} and we can apply Formula \eqref{correlator_formula} to compute its value. A direct computation shows
$$\agen_N^{(N)}=Y_{N,1}, \quad H^{(N)}=1-Y_{N,0}, \quad S_N^{(N)}=1+Y_{N,-2}, \quad \gamma_{1,+}^{(N)}=1-Y_{N,2}, \quad \gamma_{2,+}^{(N)}=Y_{N,-1}.$$
As a consequence, we have
\begin{align*}
X &= \frac{1}{2}\left(
\begin{array}{c}
2\,Y_{N,1}(1-Y_{N,1})+(1-Y_{N,0})\,Y_{N,0}+(1+Y_{N,-2})(-Y_{N,-2})\\
\\
-Y_{N,0}(1-Y_{N,0})-(1-Y_{N,-2})(Y_{N,-2})-2\,Y_{N,-1}(1-Y_{N,-1})
\end{array}
\right)=q_N.
\end{align*}

If $\expon_N=2$, then $X$ is never concave. We can classify them into three exceptional families. The computation of each family is known. We list the computations in Appendix \ref{sec-exceptional}.

\subsection{B-model computation: a perturbative formula}
In this section we prove Part (3) of Theorem \ref{reconstruction_theorem}.
\begin{proposition}\label{Bside_corr_theorem}
Let $q_i$ be the $i^{th}$ weight of $W$ and $M_i$ be any monomial of a Fermat or loop summand, or the final monomial of a chain summand in $W^T$, then
\begin{equation}\label{b-correlator-computation}
\langle x_i, x_i, M_i/x_i^2, \Hessbase{W^T} \rangle = -q_i,
\end{equation}
\end{proposition}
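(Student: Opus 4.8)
The plan is to extract this correlator directly from the perturbative formula of Theorem \ref{B_theorem}, reducing the whole computation to a single residue identity in the Brieskorn lattice of $f=W^T$. First I would use the residue pairing to cut down the number of derivatives. Since $\Hessbase{W^T}$ spans the top-degree (socle) piece of $\Jac{W^T}$, the product $\Hessbase{W^T}\cdot\phi_\beta$ has nonzero class only when $\deg(\phi_\beta)=0$, i.e. $\phi_\beta=\one$, and then $\nRes(\Hessbase{W^T})=1$. Hence $\eta_{\Hessbase{W^T},\beta}=\delta_{\beta,\one}$, and \eqref{compute_potential_eq} collapses to $\partial_{t_{\Hessbase{W^T}}}\pot_{0,W^T,\prim}^{\rm SG}=\Jfunc_{-2}^{\one}$. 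Writing the correlator as a fourth $t$-derivative of the prepotential, this gives
\[
\langle x_i,x_i,M_i/x_i^2,\Hessbase{W^T}\rangle=\partial_{t_{x_i}}^2\,\partial_{t_{M_i/x_i^2}}\,\Jfunc_{-2}^{\one}\big|_{0},
\]
so it suffices to isolate the coefficient of $s_{x_i}^2\,s_{M_i/x_i^2}$ in $\Jfunc_{-2}^{\one}$, up to the change to flat coordinates.

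The key algebraic input I would establish is the identity $[M_i\,d^Nx]=-z\,q_i\,[d^Nx]$ in $\HH_f^{(0)}$. Here $M_i$ is the unique monomial of $f$ in which $x_i$ appears to its maximal power $\expon_i$, so $x_i\,\partial_{x_i}f=\expon_i M_i+(\text{other monomials})$; applying the Brieskorn relation \eqref{relation_eq} with $g=x_i$ gives $x_i\,\partial_{x_i}f\,d^Nx=-z\,d^Nx$. For a Fermat or the final monomial of a chain there are no other monomials, so $\expon_i[M_i\,d^Nx]=-z[d^Nx]$ and the claim follows from $q_i=1/\expon_i$. For a loop the correction term yields the cyclic recursion
\[
[M_i\,d^Nx]=-\tfrac{z}{\expon_i}[d^Nx]-\tfrac{1}{\expon_i}[M_{i+1}\,d^Nx],
\]
which resums (using $\prod_k\expon_k+(-1)^{N+1}\neq 0$) to exactly $-z\,q_i[d^Nx]$, matching the loop weight formula \eqref{loop-weight}. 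When $W^T$ is a disjoint sum this computation happens entirely inside the summand containing $x_i$ and passes through Proposition \ref{prop-sum-good-basis} unchanged, so I may treat $f$ atomic.

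Next I would run the recursion of Theorem \ref{B_theorem} to cubic order, writing $\prim=\sum_{k\ge0}\prim^{(k)}$ by $\mathbf{s}$-degree. One checks $\prim^{(1)}=0$ (the good basis lies in the $z^0$ part, so the order-$\mathbf{s}$ remainder has no nonnegative $z$-powers), hence $t_\alpha=s_\alpha+O(\mathbf{s}^2)$. At order $\mathbf{s}^3$ the only source of a $z^{-2}$ term in $\Jfunc$ is $\tfrac{1}{6z^3}(F-f)^3[d^Nx]$: the contribution $\tfrac1z(F-f)\prim^{(2)}$ lands in $z^{\ge-1}$ by a degree count, since $(F-f)$ carries no $z$ and $\prim^{(2)}$ has only nonnegative $z$-powers. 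Selecting the multiset $\{x_i,x_i,M_i/x_i^2\}$, whose product is $M_i$, and inserting $[M_i\,d^Nx]=-z\,q_i[d^Nx]$ with the multinomial factor $\tfrac16\cdot 3$, produces the coefficient $-\tfrac{q_i}{2}$ for $s_{x_i}^2 s_{M_i/x_i^2}$ in $\Jfunc_{-2}^{\one}$.

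Finally I would dispose of the change of variables. Because $e^{(F-f)/z}$ has weight $0$, $\Jfunc_{-2}^{\one}$ is homogeneous of weight $2$ in $\mathbf{s}$ for $\wt(s_\alpha)=1-\deg(\phi_\alpha)$; any term of $\mathbf{s}$-degree $<3$ and weight $2$ must be $\propto s_{\one}^2$ (a linear term would force $\deg=-1$, a quadratic term forces both factors to be the unit). Since $\one\notin\{x_i,x_i,M_i/x_i^2\}$, the term $\tfrac12 s_{\one}^2$ cannot feed $\partial_{t_{x_i}}^2\partial_{t_{M_i/x_i^2}}|_0$, so the flat-coordinate third derivative equals the cubic $\mathbf{s}$-coefficient, namely $-q_i$. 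The main obstacle I anticipate is the loop resummation together with the bookkeeping that neither the corrections $\prim^{(k)}$ nor the nonlinear part of $\mathbf{s}(\mathbf{t})$ disturb the relevant coefficient; both are controlled by the $\QQ$-grading, which is the decisive structural fact throughout.
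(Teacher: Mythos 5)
Your proposal is correct and follows essentially the same route as the paper's proof: the perturbative formula of Theorem \ref{B_theorem} with $\zeta_{(\leq 1)}=[d^Nx]$, reduction of the correlator to the coefficient of $s_{x_i}^2 s_{M_i/x_i^2}$ in $\Jfunc_{-2}^{\one}$ coming from $\tfrac{1}{3!}\bigl((F-f)/z\bigr)^3$, and the key Brieskorn-lattice identity $[M_i\,d^Nx]=-q_i\,z\,[d^Nx]$ proved via \eqref{relation_eq}, with the cyclic resummation against \eqref{loop-weight} in the loop case. The only cosmetic differences are that you verify the irrelevance of $\zeta_{(\geq 2)}$ and of the flat-coordinate change by direct $z$-power and $\QQ$-weight counting, where the paper cites Proposition 3.12 of \cite{LLSS} and the explicit quadratic term $s_1^2/2$; these are equivalent bookkeeping.
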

\begin{proof}

Since the variables are symmetric in loop case, we only need to deal with $i=N$.

Let $f=W^T$. We recall that the perturbative formula \eqref{compute_potential_eq} takes the following form
\begin{equation}\label{perturbative-computation}
e^{(F-f)/z} \zeta(z, \mathbf{s}) =
\left[d^Nx\left(1 + z^{-1} \sum_{\alpha}\Jfunc_{-1}^{\alpha}\bgen_{\alpha} + z^{-2}\sum_{\alpha}\Jfunc_{-2}^{\alpha}\bgen_{\alpha} + \ldots\right)\right]
\in\; \HH_f\llbracket\mathbf{s}\rrbracket.
\end{equation}
By definition \eqref{sg-potential-g=0} and Equation \eqref{compute_potential_eq}, we know
\begin{equation}
\label{eq:b_model_formula}
\langle x_N, x_N, M_N/x_N^2, \Hessbase{W^T} \rangle=\left.\frac{\partial^4 \pot_{0,f,\prim}^{\rm SG}}{\partial^2 t_{x_N}\partial t_S \partial t_H}\right|_{\mathbf{t}=0}=\left.\frac{\partial^3 \Jfunc_{-2}^1(\mathbf{t})}{\partial^2 t_{x_N}\partial t_S}\right|_{\mathbf{t}=0}.
\end{equation}
Here we denote $S:=M_N/x_N^2, H:=\Hessbase{W^T}$ and $t_{\alpha}$ is the flat coordinate dual to $\phi_\alpha$.
Following the notations in Section \ref{section-perturbative-formula}, we use the subscript $(\leq k)$ to denote the $k$-th Taylor expansion in terms of $\mathbf{s}$ (or $\mathbf{t}$). As shown in Proposition 3.12 in \cite{LLSS}, the perturbative formula implies that $(\pot_{0,f,\prim}^{\rm SG})_{(\leq 4)}({\mathbf t})$ depends on
$\zeta_{(\leq 1)}(\mathbf s)$, the primitive form up to first order, only.
The algorithm described in Section \ref{section-perturbative-formula} shows that
$$
\zeta_{(\leq 1)}(\mathbf{s})=[d^Nx].
$$
Therefore we only need to expand the LHS of \eqref{perturbative-computation} using $\zeta_{(\leq 1)}(\mathbf{s})$
$$\left[d^Nx\left(1+{F-f\over z}+{1\over 2!}({F-f\over z})^2+{1\over 3!}({F-f\over z})^3+\OO({\mathbf s}^4)\right)\right]$$
to compute the 4-point function. The term $\Jfunc_{-2}^1$ corresponds to the coefficient in front of $z^{-2}[\phi_1d^Nx]=z^{-2}[d^Nx]$. The correlation function \eqref{eq:b_model_formula} comes from $t_{X_N}^2t_{S}z^{-2}[d^Nx]$.

The contribution from ${1\over 2!}({F-f\over z})^2$ is ${s_1^2\over2}.$ This has no contribution to the RHS of Equation \eqref{eq:b_model_formula}, since the Equation \eqref{first-order} shows
\[
s_{\alpha}= t_{\alpha} + \OO(\mathbf{t}^2).
\]
Thus the correlator $\langle x_N, x_N, M_N/x_N^2, \Hessbase{W^T} \rangle$ is just twice of the coefficient of $s_{x_N}^2s_{S}\,z^{-2}\phi_1$ in ${1\over 3!}({F-f\over z})^3$ (again using $s_{\alpha}= t_{\alpha} + \OO(\mathbf{t}^2).$). On the other hand, since
$$x_N\, x_N\, {M_N\over x_N^2}=M_N,$$
to obtain \eqref{b-correlator-computation}, we only need to prove the following equation: 
\begin{equation}\label{correlator-brieskorn}
[M_Nd^Nx]=-q_N\, z[d^Nx] \in \HH_f.
\end{equation}
For both Fermat polynomial and chain polynomial, Equation \eqref{correlator-brieskorn} is true because
$$\expon_N [M_Nd^Nx]= -z[d^Nx]\in \HH_f.$$
For the loop polynomial, Equation \eqref{correlator-brieskorn} follows from Equation \eqref{loop-weight} and by cancelling $M_1,\cdots, M_{N-1}$ among the relations
$$\expon_i [M_id^Nx]+[M_{i+1}d^Nx]=-z[d^Nx]\in \HH_f.$$
\end{proof}

\section{A proof of Proposition \ref{almost_there_prop}}\label{chap:reconstruction}

In this section, we complete a proof of Proposition \ref{almost_there_prop} for $W = \bigoplus W_i$ when $W_1$ is a chain or a loop.

\subsection{Atomic reconstruction of Chain type}\label{chain_reconstruction_section}
This subsection proves Proposition \ref{almost_there_prop} for $W = \bigoplus W_i$ when $W_1$ is a chain $W_1= x_1^{\expon_1}x_2 + x_2^{\expon_2}x_3+\ldots + x_N^{\expon_N}$.
\subsubsection{Preliminary facts about chain polynomials.}
We will repeatedly use the following relations in $\Jac{W_1^T}$:
\begin{equation}\label{milnor-chain}
\left\{
\begin{array}{l}
\expon_{1} x_1^{\expon_1-1}=-x_2^{\expon_2};\\
\expon_{i} x_{i-1}x_i^{\expon_i-1}=-x_{i+1}^{\expon_{i+1}}, \quad i=2,\cdots,N-1;\\
x_{N-1}x_N^{\expon_N-1}=0.
\end{array}
\right.
\end{equation}
These relations imply
\begin{equation}\label{chain_vanishing_eq}
x_{i-1}x_i^{\expon_i}=0, \quad i<N.
\end{equation}

Additionally, the following lemma tells us what $\mathbf{K}$ looks like in most cases.
\begin{lemma}\label{chain_kis1_lemma}
If $\K_{W_1} = 1$ and $W_1 = x_1^{\expon_1}x_2 + x_2^{\expon_2}x_3+\ldots + x_N^{\expon_N}$ is a chain, then $\K_N \leq \num_N$. If in addition $\K_N \geq 0$, then $\mathbf{\K}$ is one of the following:
\begin{itemize}
\item A concatenation of (0)s and (-1,1)s followed by (1)
\item A concatenation of (0)s and (-1,1)s with one of (-1,2), (-2,3), or (1), followed by (0).
\end{itemize}
\end{lemma}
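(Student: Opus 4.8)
The plan is to separate the statement into the inequality $\K_N \le \num_N$, which needs no hypothesis on the total $\K_{W_1}$, and the structural classification of $\mathbf{\K}$ under the extra assumption $\K_N\ge 0$; the first part turns out to be the engine driving the second. For the inequality, note that since $x_N^{\expon_N}$ is the last monomial of the chain, the bottom row of $\AW_W$ is $(0,\dots,0,\expon_N)$, so the definition \eqref{b_def} gives $\expon_N \bb_N = \num_N+\m_N+\n_N+2 \ge 2$, whence $\bb_N>0$. On the other hand, by Property (P1) of Lemma \ref{properties_lemma} and \eqref{ind-k} we have $\bb_N = \num_N-\K_N+1 \in \ZZ$. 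A positive integer is at least $1$, so $\bb_N\ge 1$ and therefore $\K_N = \num_N-\bb_N+1 \le \num_N$. I would record $\K_N\le\num_N$ as the key bound used repeatedly below.

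For the classification I would assume $\K_N\ge 0$ and analyze the loop-index set $S=\{1,\dots,N-1\}$ (every $i<N$ is a loop index of a chain), distinguishing whether $S$ obeys the NP-rule, i.e.\ whether $\K_{N-1}\ge 0$. If it does, then Lemma \ref{loop_condition_lemma} gives $\K_S=\sum_{i<N}\K_i\ge 0$; since $\K_S+\K_N=\K_{W_1}=1$ with both summands nonnegative, either $(\K_S,\K_N)=(0,1)$ or $(1,0)$. In the first case Lemma \ref{loop_condition_lemma} forces $(\K_1,\dots,\K_{N-1})$ to be a concatenation of $(0)$s and $(-1,1)$s, and appending $\K_N=1$ gives the first listed form; in the second case it is such a concatenation with one extra copy of $(1)$, $(-1,2)$, or $(-2,3)$, and appending $\K_N=0$ gives the second form. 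These produce exactly the two possibilities in the statement.

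The main obstacle is ruling out the remaining case $\K_{N-1}\le -1$. Here I would first observe that Corollary \ref{key-corollary} at the loop index $N-2$ prevents $\K_{N-2}$ from also being negative (two negatives cannot sum to something $\ge 0$), so $\{1,\dots,N-2\}$ obeys the NP-rule and has nonnegative $\K$-sum, while Corollary \ref{key-corollary} at $N-1$ gives $\K_{N-1}+\K_N\ge 0$. Splitting $1=\sum_{i\le N-2}\K_i+(\K_{N-1}+\K_N)$ into nonnegative parts and invoking the bound \eqref{key_equation} pins $(\K_{N-1},\K_N)$ down to one of $(-1,1)$, $(-1,2)$, or $(-2,3)$. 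Each of these contradicts Part 1: Lemma \ref{klmn_lemma} forces $\num_N=0$ for $(-1,1)$ and $(-2,3)$ and $\num_N\le 1$ for $(-1,2)$, whereas $\K_N$ equals $1$, $3$, and $2$ respectively, violating $\K_N\le\num_N$. (The degenerate $N=2$ chain, where $\{1,\dots,N-2\}$ is empty, is dispatched by the identical $(\K_1,\K_2)$ analysis.) Hence $\K_{N-1}\ge 0$ always holds and only the first case survives. The delicate point is precisely this elimination, where the two halves of the lemma interlock: the inequality $\K_N\le\num_N$ is exactly what removes the stray pairs $(-1,2)$, $(-2,3)$, and $(-1,1)$ that the additive bookkeeping of Lemma \ref{loop_condition_lemma} alone would permit.
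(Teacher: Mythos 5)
Your proof is correct, and its skeleton coincides with the paper's: the bound $\K_N \leq \num_N$ from the last row of \eqref{b_def} plus integrality (your ``$\bb_N$ is a positive integer'' is the same computation the paper packages as $\K_N < \num_N+1$), then the reduction to the NP-rule for $\{1,\dots,N-1\}$, then the split $\{\sum_{i<N}\K_i,\ \K_N\} = \{0,1\}$ and Lemma \ref{loop_condition_lemma}. Where you genuinely diverge is the elimination of $\K_{N-1}<0$: the paper does this in one stroke, substituting $\K_{N-1}\le -1$, $\num_{N-1}\ge 0$, and $\num_N-\K_N\ge 0$ into \eqref{mplusn_eq} at index $N-1$ and contradicting the (P3) bound $\m_{N-1}+\n_{N-1}\le 2\expon_{N-1}-2$; you instead run the additive bookkeeping (Corollary \ref{key-corollary} at the loop indices $N-2$ and $N-1$, then \eqref{key_equation}) to pin $(\K_{N-1},\K_N)$ down to $(-1,1)$, $(-1,2)$, or $(-2,3)$, and rule out each via Lemma \ref{klmn_lemma} together with $\K_N\le\num_N$. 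Note that both eliminations lean on Part 1 --- the paper's through the $\num_N-\K_N$ term, yours through the final contradiction --- so the ``interlocking'' you highlight is equally present in the paper's argument. The paper's inequality is shorter and requires no case analysis (and no separate treatment of the degenerate $N=2$ chain); yours is longer but more structural: it reuses the lemmas already established for the loop-index bookkeeping and exhibits exactly which $\K$-configurations are being excluded, which makes it more transparent why both the hypothesis $\K_{W_1}=1$ and the bound $\K_N\le\num_N$ are needed.
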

\begin{proof}
We know that $\m_N + \n_N = (\num_N - \K_N + 1) - \num_N -2$. Combining this with $\m_N + \n_N \geq 0$, we know $\K_N \leq \num_N$ because
\[
\K_N \leq \left(\num_N + \frac{\expon_N-2}{\expon_N-1}\right)\left(\frac{\expon_N-1}{\expon_N}\right)<\num_N+1.
\]

If $\K_N \geq 0$, then $\{1, \ldots, N-1\}$ obeys the NP-rule.
Otherwise we must have $\K_{N-1} < 0$. However, because $\m_{N-1} + \n_{N-1} \leq 2 \expon_{N-1}-2$, equation (\ref{mplusn_eq}) shows that
\[
(\expon_{N-1}-1)\num_{N-1} - \expon_{N-1} \K_{N-1} + \expon_{N-1} + (\num_N - \K_N) - 1 \leq 2 \expon_{N-1} -2.
\]
Because $\num_{N-1} \geq 0$, and $\K_{N-1} \leq -1$, and $\num_N - \K_N \geq 0$, the left hand side of this inequality must be at least $2 \expon_{N-1}-1$. This is a contradiction.

Thus $\{1, \ldots, N-1\}$ obeys the NP-rule and Lemma \ref{loop_condition_lemma} implies $\sum_{i < N} \K_i\geq0$. Then we have $\{\sum_{i < N} \K_i, \K_N\} = \{0,1\}$. Thus by Lemma \ref{loop_condition_lemma}, we obtain the first case if $\K_N=1$ and we obtain the second case if $\K_N=0$.
\end{proof}

We introduce the following definition.
\begin{definition}
Let $X$ and $X'$ be correlators of type $\X{C}{0}$ with the same number of insertions. Assume $\sum^N_{i=1} \num_i =   \sum^N_{i=1} \num_i'$, where $1, \ldots, N$ are the indices in $W_1$.
We say that $X > X'$ if $\num_N = \num_N', \ldots, \num_{r+1} = \num_{r+1}'$, and $\num_{r} > \num_{r}'$, for some $r\in W_1$.
We say that $X$ is maximal if there does not exist $X' > X$, or equivalently, if 
$$X =\langle x_{i_1} \not \in W_1, \; \ldots, \; x_{i_s} \not \in W_1, x_N, \; \ldots, \; x_N, \; \alpha, \;\beta \rangle.$$
\end{definition}
The relation $>$ is well-defined because of the ordering of primitive insertions in correlators of type $\X{X}{-1}$ (see \eqref{corr_form_eq}).
Also, this relation is transitive.
We immediately have
\begin{lemma}\label{chain_reconstruction_lemma}
Let $X$ be a correlator of type $\X{C}{0}$. If there is $i\in W_1$ such that we can rewrite $X$ as $X = \Fiveptcorr{\ldots}{x_i}{\ldots}{x_{i+1}^{\expon_{i+1}} \alpha}{ \beta }$ with $i+1\in W_1$, then $X$ can be reconstructed from correlators with fewer insertions and correlators $Z$ of type $\X{C}{0}$ satisfying $Z>X$.
\end{lemma}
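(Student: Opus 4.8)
The plan is to reduce $X$ by a single application of the WDVV reconstruction lemma (Lemma~\ref{reconstruction_lemma}), combining the distinguished primitive insertion $x_i$ with the insertion $x_{i+1}^{\expon_{i+1}}\alpha$ and evaluating the resulting star products through the chain Jacobi relations \eqref{milnor-chain} together with their consequence \eqref{chain_vanishing_eq}. Concretely, in the notation of \eqref{reconst_eq} I would take $\gamma=\beta$, $\delta=x_i$, $\epsilon=x_{i+1}$, and $\phi=x_{i+1}^{\expon_{i+1}-1}\alpha$, so that $\epsilon\star\phi=x_{i+1}^{\expon_{i+1}}\alpha$ recovers $X$. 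This yields $X=T_1+T_2-T_3+S$, where $S$ collects correlators with fewer insertions and
\[
T_1=\langle\ldots,\beta,x_{i+1},x_i x_{i+1}^{\expon_{i+1}-1}\alpha\rangle,\quad
T_2=\langle\ldots,\beta x_{i+1},x_i,x_{i+1}^{\expon_{i+1}-1}\alpha\rangle,\quad
T_3=\langle\ldots,\beta x_i,x_{i+1},x_{i+1}^{\expon_{i+1}-1}\alpha\rangle.
\]

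In $T_1$ I would rewrite the merged factor using $\expon_{i+1}x_i x_{i+1}^{\expon_{i+1}-1}=-x_{i+2}^{\expon_{i+2}}$ (valid for $i\le N-2$), giving $T_1=-\tfrac{1}{\expon_{i+1}}\langle\ldots,\beta,x_{i+1},x_{i+2}^{\expon_{i+2}}\alpha\rangle$; in the boundary case $i=N-1$ this slot is instead $x_{N-1}x_N^{\expon_N-1}\alpha=0$ by the last relation of \eqref{milnor-chain}, so $T_1$ vanishes. The decisive feature is that both $T_1$ and $T_3$ carry a standalone primitive $x_{i+1}$ in place of the standalone $x_i$ of $X$: their primitive multiplicities have $\num_{i+1}$ increased by one and $\num_i$ decreased by one, with all higher multiplicities unchanged. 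Hence, after verifying via Remark~\ref{rmk:reconstruction_invariance} that each nonvanishing term is again of type $\X{C}{0}$ — in particular that reducing its insertions to the standard basis preserves $\K_{W_1}=1$ — the reverse-lexicographic comparison defining $>$ shows $T_1,T_3>X$, which is exactly what the statement requires of these two contributions.

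The main obstacle is the middle term $T_2$, which retains the standalone primitive $x_i$ and therefore has the same primitive profile as $X$; \emph{a priori} it is neither shorter nor strictly larger in the $>$ order, so the whole point of the proof is to argue it away. The mechanism I would exploit is that $T_2$ differs from $X$ only in that one power of $x_{i+1}$ has migrated out of the $\alpha$-insertion and into the $\beta$-insertion. I would therefore run a secondary, downward induction on the $x_{i+1}$-degree carried by the $\alpha$-slot: re-applying the same decomposition transfers a further power of $x_{i+1}$ into $\beta$ while again producing only index-raising ($>X$) terms, and this process must terminate, because once $\beta$ accumulates the power $x_{i+1}^{\expon_{i+1}}$ the relation $x_i x_{i+1}^{\expon_{i+1}}=0$ from \eqref{chain_vanishing_eq} forces a vanishing, or else the accumulated monomial leaves the standard basis in a form excluded by Property~(P3) of Lemma~\ref{properties_lemma}. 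The genuinely delicate part is the bookkeeping — controlling the standard-basis reductions of $\beta x_{i+1}$ and $x_{i+1}^{\expon_{i+1}-1}\alpha$ through Remark~\ref{rmk:reconstruction_invariance}, keeping every intermediate correlator of type $\X{C}{0}$, and confirming that the secondary induction is well-founded — so that at the end only correlators with fewer insertions and correlators $Z>X$ survive.
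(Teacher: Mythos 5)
Your decomposition of $X$ into $T_1+T_2-T_3+S$ and the analysis of $T_1$ and $T_3$ are fine, but the treatment of the middle term $T_2$ contains a genuine gap: the secondary induction does not terminate in the way you claim. Your iteration transfers powers of $x_{i+1}$ from the $\alpha$-slot to the $\beta$-slot while the insertion $x_i$ stays \emph{standalone} throughout; consequently, when the $\beta$-slot has accumulated $x_{i+1}^{\expon_{i+1}}$, the relation $x_ix_{i+1}^{\expon_{i+1}}=0$ of \eqref{chain_vanishing_eq} is simply not applicable, because that relation kills a product occurring \emph{inside a single insertion}, whereas here $x_i$ and $x_{i+1}^{\expon_{i+1}}$ sit in different insertions. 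Nor is Property (P3) violated: the accumulated insertion $\beta\, x_{i+1}^{\expon_{i+1}}$ is merely written in a non-standard form, and by \eqref{milnor-chain} it equals $-\expon_i\,\beta\, x_{i-1}x_i^{\expon_i-1}$ (resp.\ $-\expon_1\,\beta\, x_1^{\expon_1-1}$ when $i=1$), which is generically a nonzero standard-basis monomial (for instance when $\beta$ involves no $x_{i-1},x_i$). So when your iteration runs out of $x_{i+1}$-factors to peel off the $\alpha$-slot, you are left with a correlator that, by Remark \ref{rmk:reconstruction_invariance}(1), has exactly the same primitive profile $\mathbf{\num}$ and the same $\mathbf{\K}$ as $X$: it is neither shorter nor $>X$, and the argument does not close. (Indeed, rewriting its $\beta$-slot via \eqref{milnor-chain} puts it back into the hypothesis of the lemma with the two non-primitive slots interchanged, so your procedure just shuttles the power back and forth.)

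The paper avoids this trap by a different first move: it takes $\gamma=x_i$, $\delta=\beta$, $\epsilon=x_{i+1}^{\expon_{i+1}}$ (the \emph{whole} power) and $\phi=\alpha$, so that the term containing $\gamma\star\epsilon=x_ix_{i+1}^{\expon_{i+1}}$ genuinely vanishes by \eqref{chain_vanishing_eq}, and the two surviving correlators carry $x_{i+1}^{\expon_{i+1}}$ as a \emph{standalone} insertion. Powers of $x_{i+1}$ are then peeled off that standalone insertion; its exponent strictly decreases at each step, and every branch of the recursion ends with a standalone primitive $x_{i+1}$, i.e.\ with $\num_{i+1}$ increased, which is what makes the induction well-founded. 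Note also that even the correct version requires a further argument you pass over: the correlators produced with a standalone $x_{i+1}$ need not be of type $\X{C}{0}$ (after reduction to the standard basis one can have $\K_{W_1}=0$), and the paper spends the second half of its proof reconstructing those via the mechanism of Proposition \ref{two_from_a_poly}, checking that the outputs are still $>X$; your appeal to Remark \ref{rmk:reconstruction_invariance} alone does not settle this.
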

\begin{proof}
Apply the Reconstruction Lemma \ref{reconstruction_lemma} to $X$ with $\gamma = x_i$, $\delta = \beta$, $\epsilon = x_{i+1}^{\expon_{i+1}}$, and $\phi = \alpha$. Then $\gamma  \epsilon=x_ix_{i+1}^{\expon_{i+1}}$ vanishes by \eqref{chain_vanishing_eq} and the other two correlators have the form $X'=\langle \ldots, x_{i+1}^{\expon_{i+1}}, \ldots, \alpha', \beta' \rangle$.
Apply the Reconstruction Lemma \ref{reconstruction_lemma}  to $X'$ with $\gamma = \alpha'$, $\delta = \beta'$, $\epsilon = x_{i+1}$, and $\phi = x_{i+1}^{\expon_{i+1}-1}$. The correlators with $\phi \delta$ and $\gamma \delta$ have the form $\Fiveptcorr{\ldots}{x_{i+1}}{\ldots}{*}{*}$; the remaining correlator looks like $X'' = \langle\ldots, x_{i+1}^{\expon_{i+1}-1}, \ldots, \alpha'', \beta''\rangle$.

Perform a similar reconstruction on $X''$, this time with $\phi = x_{i+1}^{\expon_{i+1}-2}$. By repeating the process, we can reduce the exponent of $x_{i+1}$ and eventually, we will have determined $X$ from correlators with fewer insertions and correlators of the form $Y=\Fiveptcorr{\ldots}{x_{i+1}}{\ldots}{*}{*}$ with $\num_i^Y = \num_i^X-1$ and $\num_{i+1}^Y = \num_{i+1}^X+1$.
After reducing insertions to the standard basis, all the nonvanishing correlators we get from this process are of type $\X{X}{-1}$. Furthermore, if each such $Y$ is of type $\X{C}{0}$, then the result follows since $Y>X$. Thus we only need to reconstruct those correlators that are not of type $\X{C}{0}$. 
Then we must have $\K_{W_1}=0$ for such a correlator $Y$.

On the other hand, since $X$ is of type $\X{C}{0}$, besides $x_i$, $X$ must have at least one more insertion $x_k$, with $x_k\in W_1$. Since the Reconstruction Lemma \ref{reconstruction_lemma} does not change the insertions in dotted positions of $X = \Fiveptcorr{\ldots}{x_i}{\ldots}{x_{i+1}^{\expon_{i+1}} \alpha}{ \beta }$, we know $Y$ could be rewritten in the form of $Y=\Fiveptcorr{x_{i+1}}{x_k}{\ldots}{*}{*}.$ Since $\K_{W_1}=0$, we may assume $\K_{N_1}\geq0$, otherwise we do a preparatory reconstruction as \eqref{stwitch-nonnegative} to get $\K_{N_1}\geq0$. Thus by Lemma \ref{kis0_lemma}, we know $i+1\neq k$ and $\num_{i+1}=\num_{k}=1$. Then we can repeat the process as in Proposition \ref{two_from_a_poly} to reconstruct $Y$ from type $\X{C}{0}$ correlators $Z$ such that $Z>X$, and correlators with fewer insertions. Such correlators $Z$ will be of the form 
$Z=\Fiveptcorr{x_{i+1}}{x_{i+1}}{\ldots}{\alpha_Z}{\beta_Z}$ if $k\leq i$, or 
$Z=\Fiveptcorr{x_{k}}{x_{k}}{\ldots}{\alpha_Z}{\beta_Z}$ if $k>i$.
We remark that during the process, the ordered pair of inserions $(x_i, x_k)$ in \eqref{my_fav_eq} are replaced by the ordered pair $(x_{i+1}, x_k)$ if $k\leq i$, or by $(x_k, x_{i+1})$ if $k>i$. This guarantees that we have $Z>X$.
\end{proof}
By the above lemma, we have
\begin{proposition}\label{reconstruct_from_greater_prop}
Let $X $ be a correlator of type $\X{C}{0}$ which is not maximal. Then $X$ can be reconstructed from correlators with fewer insertions and correlators $Z$ of type $\X{C}{0}$ satisfying $Z>X$.
\end{proposition}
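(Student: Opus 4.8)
The plan is to reduce the statement of Proposition \ref{reconstruct_from_greater_prop} to Lemma \ref{chain_reconstruction_lemma} by producing, from any non-maximal correlator $X$ of type $\X{C}{0}$, an insertion of the form $x_{i+1}^{\expon_{i+1}}\alpha$ with $i+1\in W_1$. Since $X$ is not maximal, there is some primitive insertion $x_i$ with $i<N$ (i.e.\ $i$ is not the last variable of the chain $W_1$), for otherwise every primitive $W_1$-insertion would be $x_N$ and $X$ would be maximal. The goal is to show that one of the two non-primitive insertions $\alpha,\beta$ can be written with a factor $x_{i+1}^{\expon_{i+1}}$, so that the hypothesis of Lemma \ref{chain_reconstruction_lemma} is met; the conclusion of that lemma is exactly the conclusion we want.

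First I would use the chain relations \eqref{milnor-chain}, specifically $\expon_{i}x_{i-1}x_i^{\expon_i-1}=-x_{i+1}^{\expon_{i+1}}$ for $2\le i\le N-1$ and $\expon_1 x_1^{\expon_1-1}=-x_2^{\expon_2}$, to trade a high power of $x_i$ inside $\alpha$ or $\beta$ for a power of $x_{i+1}$. Concretely, because $X$ is of type $\X{C}{0}$ it has $\K_{W_1}=1$ and at least two primitive $W_1$-insertions, and by Lemma \ref{chain_kis1_lemma} (after the usual preparatory reconstruction as in \eqref{stwitch-nonnegative} to arrange $\K_N\ge 0$) the vector $\mathbf{\K}$ takes one of the listed forms. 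I would then argue that the standard-basis monomials $\alpha,\beta$ together with the primitive insertions force, via property (P3) and the count \eqref{fact1}, the appearance of $x_{i+1}^{\expon_{i+1}}$ as a factor that can be extracted using \eqref{milnor-chain}; the key point is that the degree and line-bundle constraints of Lemma \ref{properties_lemma} do not allow all exponents of $x_{i+1}$ in $\alpha,\beta$ to stay below $\expon_{i+1}$ while simultaneously keeping $\num_i>0$ at a non-final index.

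The main obstacle I expect is the bookkeeping needed to guarantee that the extracted factor $x_{i+1}^{\expon_{i+1}}$ genuinely appears (rather than the monomial simply vanishing or already lying in the standard basis with no $x_{i+1}^{\expon_{i+1}}$ factor), and that applying Lemma \ref{chain_reconstruction_lemma} produces only correlators $Z$ that are strictly greater in the ordering $>$ or have fewer insertions. This is precisely where Lemma \ref{chain_reconstruction_lemma} already does the heavy lifting: it guarantees that reconstructing from an insertion $x_{i+1}^{\expon_{i+1}}\alpha$ yields correlators with fewer insertions and correlators $Z$ of type $\X{C}{0}$ with $Z>X$. So once I exhibit the required factor, the proof is essentially a one-line invocation.

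Thus the proof reduces to: (1) observe $X$ non-maximal gives a primitive insertion $x_i$ with $i<N$; (2) use the relations \eqref{milnor-chain} and the constraints (P1)--(P3) to write $X$ in the form $\Fiveptcorr{\ldots}{x_i}{\ldots}{x_{i+1}^{\expon_{i+1}}\alpha}{\beta}$ with $i+1\in W_1$; and (3) apply Lemma \ref{chain_reconstruction_lemma}. I would present (2) carefully since it is the substantive step, checking the cases of $\mathbf{\K}$ from Lemma \ref{chain_kis1_lemma}, and treat (1) and (3) as immediate.
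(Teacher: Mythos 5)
Your reduction to Lemma \ref{chain_reconstruction_lemma} is the right endpoint, but the way you propose to reach it has a genuine gap: you have discarded the first (and essential) stage of the argument. The paper does \emph{not} extract the factor $x_{i+1}^{\expon_{i+1}}$ from $X$ itself. It first runs an iterative reconstruction that moves every factor of $x_N, x_{N-1},\ldots,x_{i+1}$ out of $\alpha$ and into $\beta$ (each step producing correlators with an extra primitive $x_j$-insertion, $j>i$, which are exactly the correlators $Z>X$), ending with a correlator $Z$ satisfying $\m_k+\n_k\le \expon_k-1$ for all $k>i$. Only with this extra control does the case analysis on $\mathbf{\K}$ close: it is what forces $\K_N\ge 0$ via \eqref{chain_kn_eq1}, rules out $(-1,1)$-pairs at positions above $i$, forces $\num_N=0$ in the $\K_N=0$ case, and pins down $\m_i=\n_i=\expon_i-1$, which is where the factor comes from via \eqref{milnor-chain}. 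The preparatory move \eqref{stwitch-nonnegative} you invoke only arranges $\K_N\ge0$; it does not give the bound $\m_k+\n_k\le\expon_k-1$ for $k>i$, so Lemma \ref{chain_kis1_lemma} alone leaves you with many configurations (pairs above $i$, $\num_N=1$, large exponents above $i$) that the paper's case analysis never has to face.

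Moreover, the mechanism you cite for producing the factor is incorrect: properties (P1)--(P3) and \eqref{fact1} can never force "exponents of $x_{i+1}$ above $\expon_{i+1}$" --- in the standard basis all exponents are $\le \expon_{i+1}-1$ by (P3), and the factor has to be \emph{manufactured} from positions $\le i$ using \eqref{milnor-chain}, which is possible only when the exponent data below $i$ cooperates. This can fail at the index witnessing non-maximality. Concretely, take a four-variable chain with $\mathbf{\K}=(0,1,0,0)$ and $\mathbf{\num}=(1,1,0,0)$; then \eqref{mplusn_eq} gives $\m_1=\n_1=\expon_1-1$, $\m_2+\n_2=\expon_2-2$, $\m_3+\n_3=\expon_3-1$, $\m_4+\n_4=\expon_4-2$, a perfectly legitimate non-maximal correlator of type $\X{C}{0}$ with $\K_N\ge0$. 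The only relation creating $x_3^{\expon_3}$ is $x_1x_2^{\expon_2-1}\propto x_3^{\expon_3}$, and no rewriting of $\alpha$ or $\beta$ ever makes an $x_2^{\expon_2-1}$ available together with an $x_1$ (rewriting $x_1^{\expon_1-1}\propto x_2^{\expon_2}$ consumes all the $x_1$'s), so the form $\Fiveptcorr{\ldots}{x_2}{\ldots}{x_3^{\expon_3}\alpha}{\beta}$ required by Lemma \ref{chain_reconstruction_lemma} is unattainable at $i=2$; one must fall back to $i=1$, and in other configurations to multi-step rewrites chaining up from $x_1^{\expon_1-1}$. Deciding, for an arbitrary non-maximal type-$\X{C}{0}$ correlator, which index and which rewriting chain works (or showing the correlator vanishes) is precisely the combinatorial content of Proposition \ref{reconstruct_from_greater_prop}; your proposal asserts it rather than proves it, and the hypotheses under which you would have to prove it are strictly weaker than those the paper's preparation provides.
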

\begin{proof}
Since $X $ is not maximal, we can choose $i$ to be the largest index such that $i \in W_{1}$ with $i < N$ and $\num_i \geq 1$. So $X = \Fourptcorr{ \ldots}{ x_i}{ x_N^{\m_N}  \alpha_X}{ \beta_X }$ for some $\m_N\geq0$.

If $\m_N\geq1,$ then we apply Reconstruction Lemma \ref{reconstruction_lemma} with $\gamma = \beta_X$, $\delta = x_i$, $\epsilon = x_N$, and $\phi = x_N^{\m_N-1} \alpha_X.$
By Remark \ref{rmk:reconstruction_invariance}, the correlators with $\delta \phi$ and $\delta \gamma$ are type $\X{C}{0}$-correlators of the form $\Fourptcorr{ \ldots}{ x_N}{*}{*}$.
The correlator with $\epsilon  \gamma$ equals $\langle\ldots, x_i, x_N^{\m_N-1} \alpha_X, x_N  \beta_X \rangle$.
By induction reconstruct $X$ from $\X{C}{0}$-correlators of the form $\Fourptcorr{ \ldots}{ x_N}{*}{*}$ and the $\X{C}{0}$-correlator $Y = \langle  \ldots, x_i, \alpha_Y, \beta_Y \rangle$ where $\m^Y_N=0$.

Similarly, we move all $x_{N-1}$ from $\alpha_Y$ to $\beta_Y$, and so on, until we move all $x_{i+1}$ from $\alpha$ to $\beta$.
Thus we reconstruct $X$ from correlators $X^{\prime}$ of type $\X{C}{0}$ with $X^{\prime}>X$, and the correlator $Z = \langle\ldots, x_i, \alpha_Z, \beta_Z \rangle$ where $\m^Z_{i+1} = \ldots = \m^Z_N = 0$.
After reducing to the standard basis, $Z$ is of type $\X{X}{0}$ and $\m^Z_{k} + \n^Z_{k} \leq \expon_k-1$ for $k> i$.

From here on we will speak only of the correlator $Z$ and drop the $Z$-superscript from our notation.
By definition,
\begin{equation}\label{chain-last}
\m_N + \n_N = (\num_N - \K_N + 1) \expon_N - \num_N - 2.
\end{equation}
But $\m_N + \n_N \leq \expon_N-1$, so $(\num_N - \K_N + 1) \expon_N - \num_N - 2 \leq \expon_N-1$. This shows
\begin{equation}\label{chain_kn_eq1}
\K_N \geq \left(\num_N - \frac{1}{\expon_N-1}\right)\left(\frac{\expon_N-1}{\expon_N}\right)>-{1\over \expon_N}.
\end{equation}
Thus $\K_N \geq 0$ and we may use Lemma \ref{chain_kis1_lemma}.
This lemma gives us a list of possible vectors ${\bf \K}$ which we analyze case by case. 
In each case, if the correlator is not in the desired form, we write the insertions in a nonstandard basis and so that there is some $k$ with $\num_k\geq1$ and $\m_k + \n_k \geq \expon_k$. Then we use Lemma \ref{chain_reconstruction_lemma} to finish the reconstruction.\\

\noindent{\bf{Case $\K_N=1$:}}
In this case $\mathbf{\K}$ is a concatenation of (0)s and (-1,1)s, followed by $\K_N=1$.

If $\mathbf{\K} = (\ldots, -1,1,1)$, then $\mathbf{\num} =( \ldots, 0, 0, *)$ and $\mathbf{\m + \n} = (\ldots, 2 \expon_{N-2}-2, 0, *)$ by Lemma \ref{klmn_lemma}. Then $N-2>i$, but $\m_{N-2} + \n_{N-2} \geq \expon_{N-2}$, contradicting our assumption on $Z$.
Similarly, we reach a contradiction if there is $j > i$ such that $(\K_j, \K_{j+1}) = (-1,1)$.

Therefore, $\mathbf{\K} = ( \ldots, \underline{0},0, \ldots, 0,1)$ and $\mathbf{\num} = (\ldots, \underline{1}, 0, \ldots,0,*)$, where the underline marks the $i^{th}$ spot and $\num_i=1$ by \eqref{ell_eq}.
Possibly, $i=N-1$.
If $i \neq N-1$, then by assumption $(\K_{i+1}, \num_{i+1}) = (0,0)$ so $\m_i+\n_i = 2 \expon_i-2$ by \eqref{mplusn_eq}.
If $i=N-1$, then $(\K_{i+1}, \num_{i+1}) = (\K_N,\num_N)$ where $\num_N \geq \K_N$. Then (\ref{mplusn_eq}) shows $\m_i + \n_i \geq 2 \expon_i-2$ so by Property (P3) we know $\m_i + \n_i = 2 \expon_i-2$.
Thus $\mathbf{\m + \n} = ( \ldots, \underline{2 \expon_i-2}, *, \ldots, *, *).$ Now we have three cases. In each case we compute $\mathbf{\m+\n}$ by first using \eqref{ell_eq} to compute $\ell$ and then using (\ref{mplusn_eq}).

\begin{enumerate}
\item
$\mathbf{\K}=(  \ldots, 0, \underline{0}, \ldots, 1),$
$\mathbf{\m+\n}=(\ldots, \expon_{i-1}, \underline{M}, \ldots, *).$
\item
$\mathbf{\K}=(  -1, 1, \ldots, -1, 1, \underline{0}, \ldots, 1),$
$\mathbf{\m+\n}=(M, 0, \ldots, M, 0, \underline{M}, \ldots, *).$
\item
$\mathbf{\K}=( \ldots, 0,-1, 1, \ldots, -1, 1, \underline{0}, \ldots, 1),$
$\mathbf{\m+\n}=( \ldots, \expon_r, M, 0, \ldots, M, 0, \underline{M}, \ldots,* ).$
\end{enumerate}
Here  $M = 2 \expon-2$ with the appropriate subscripts.
In each case if there is a factor of $\alpha$ that equals $x_{i+1}^{\expon_{i+1}}$ ($\alpha$ will not be written in the standard basis), then we can apply Lemma \ref{chain_reconstruction_lemma}. We find a factor of $x_{i+1}^{\expon_{i+1}}$ in $\alpha$ for each case as follows:
\begin{enumerate}
\item Here $\alpha$ has a factor of $x_{i-1} x_{i}^{\expon_i-1}$, which by \eqref{milnor-chain} equals $x_{i+1}^{\expon_{i+1}}$ in $\Jac{W_1^T}$.
\item Repeatedly apply \eqref{milnor-chain} starting with $\expon_1 x_1^{\expon_1-1} = - x_2^{\expon_2}$.
\item Repeatedly apply \eqref{milnor-chain} starting with $\expon_{r+1} x_r x_{r+1}^{\expon_{r+1}-1} = - x_{r+2}^{\expon_{r+2}}$.
\end{enumerate}

%
%

\noindent{\bf{Case $\K_N=0$:}}
In this case, (\ref{chain_kn_eq1}) shows that $\num_N = 0$. Let $i$ be the largest index such that $(\K_i, \num_i) \neq (0,0)$.
 Since $0 \leq \m_i + \n_i$, equation (\ref{mplusn_eq}) shows $\K_i \leq \num_i$.
 Then Lemma \ref{klmn_lemma} shows that $(\K_{i-1}, \K_i)$ cannot be $(-2,3)$, $(-1,2)$, or $(-1,1)$.
 Six cases remain, which are enumerated below.
 In each case an underline is below the $i^{th}$ index, $M = 2 \expon - 2$, and we computed $\mathbf{\num}$ and $\mathbf{\m+\n}$ using (\ref{ell_eq}) and (\ref{mplusn_eq}), respectively.
\begin{itemize}
\item $\mathbf{\K}=( \ldots, -1,  1,  \underline{1},  0,  \ldots,  0) $
$\mathbf{\num}=( \ldots,  0, 0,  \underline{2}, 0,  \ldots,  0) $
$\mathbf{\m+\n}=( \ldots,  M,  0, \underline{M-1},  *,  \ldots,  *) $

\item
$\mathbf{\K}=( \ldots, 0, \underline{1}, 0, \ldots, 0)$,
$\mathbf{\num}=(\ldots, 0, \underline{2}, 0, \ldots, 0)$,
$\mathbf{\m+\n}=(\ldots, \expon_{i-1}, \underline{M-1}, *, \ldots, *)$;

\item
$\mathbf{\K}=(\ldots, 0, \underline{1}, 0, \ldots, 0)$,
$\mathbf{\num}=(\ldots, 1, \underline{1}, 0, \ldots, 0)$,
$\mathbf{\m+\n}=(\ldots, M, \underline{\expon_i-2}, *, \ldots, *)$;

\item
$\mathbf{\K}=(\ldots, 0, \underline{0}, 0, \ldots, 0)$,
$\mathbf{\num}=(\ldots, 0, \underline{1}, 0, \ldots, 0)$,
$\mathbf{\m+\n}=(\ldots, \expon_{i-1}, \underline{M}, * , \ldots, *)$;

\item
$\mathbf{\K}=(\ldots, -1, 1, \underline{0}, 0, \ldots, 0)$,
$\mathbf{\num}=(\ldots, 0, 0, \underline{1}, 0, \ldots, 0)$,
$\mathbf{\m+\n}=(\ldots, M, 0, \underline{M}, *, \ldots, *)$;

\item
$\mathbf{\K}=( \ldots, 1, \underline{0}, 0, \ldots, 0)$,
$\mathbf{\num}=(\ldots, \num_{i-1}, \underline{1}, 0, \ldots, 0)$,
$\mathbf{\m+\n}=( \ldots, *, \underline{M}, *, \ldots, *)$.
\end{itemize}
Notice that in each case, $i$ is the same as the $i$ we had defined previously. Now the discussion is similar to the $\K_N=1$ case. More explicitly, we have the following situations:
\begin{itemize}
\item The first case here splits into Cases (2) and (3) above.

\item The second and fourth case here are same as Case (1) when $\K_N=1$.

\item
For the third case, $(\K_{i-1}, \K_i)=(0,1)$. As we did for $(\K_{N-1}, \K_N)=(0,1)$, we find a factor of $\alpha$ equal to $x_{i}^{\expon_i}$ (note the different index). All three cases above are possible. Since $\num_{i-1}=1$, we can apply Lemma \ref{chain_reconstruction_lemma}.

\item
For the fifth case, let $r$ be the last index before the $(-1,1)$ pairs of $\mathbf{\K}$.
If $\K_r=0$, then we can find a factor of $\alpha$ equal to $x_{i+1}^{\expon_{i+1}}$ (like in Cases (2)-(3) of $(\K_{N-1}, \K_N)=(0,1)$).
If $\K_r=1$ (but not as part of a (-1,1) pair), then $\m_r + \n_r = \num_r(\expon_r-1)$ is 0 only if $\num_r=0$.

Similarly, for the last case, let $r = i-1$. Then $\m_r + \n_r = \num_r(\expon_r-1)$, which will be 0 only if $\num_r=0$.

If $\num_r > 0,$ then $\m_r + \n_r>0$ and as usual we can find that $\alpha$ has a factor $x_{i+1}^{\expon_{i+1}}$.

Thus we only need to check for both of these cases when $\num_r=0$. Since $X$ was of type $\X{X}{0}$ and the reconstruction from $X$ to $Z$ did not change the number of insertions in $W_1$, there is some  $k < r < i$ such that $\num_k \geq 1$. But in the indices less than $r$, the vector $\mathbf{\K}$ is a concatenation of (-1,1)s and (0)s. This means that if we truncate the vector $\mathbf{\K}$ before the $k$-th place, then the truncated vectors will look like the fourth case or the fifth case with $\K_r=0$. So we can use \eqref{milnor-chain} to find a factor of $\alpha$ equal to $x_{k+1}^{\expon_{k+1}}$.

\end{itemize}
\end{proof}

\subsubsection{Reconstruction procedure}
The remainder of this subsection proves Proposition \ref{almost_there_prop} when $W_1 = x_1^{\expon_1}x_2 + x_2^{\expon_2}x_3+\ldots + x_N^{\expon_N}$ is a chain.
\begin{definition}
Let $X$ be a correlator of type $\X{C}{0}$. We say
\begin{itemize}
\item
$X$ is of type $\X{C}{1}$ if
$
X= \langle x_{i_1} \not \in W_1, \; \ldots, \; x_{i_s} \not \in W_1, x_N, \; \ldots, \; x_N, \; \alpha, \;\beta \rangle.
$

\item
 $X$ is of type $\X{C}{2}$ if
$
X = \Fiveptcorr{ x_N}{ \ldots}{ x_N}{ \alpha}{ \beta}.
$

\item
 $X$ is of type $\X{C}{3}$ if
$
X = \Fourptcorr{ x_N}{ x_N}{ \alpha}{ \beta}.
$

\item
 $X$ is of type $\X{C}{4}$ if
$
X = \Fourptcorr{ x_N}{ x_N}{ x_{N-1}x_N^{\expon_N-2}}{ \Hessbase{W^T}  }
$.
\end{itemize}
\end{definition}
We will do the reconstruction in four steps. In $k$-th step, we will reconstruct a correlator of type $\X{C}{k-1}$ from correlators of type $\X{C}{\geq k}$, correlators that vanish, and correlators with fewer insertions. \\

\noindent
{\bf Step 1.} We do this by applying Proposition \ref{reconstruct_from_greater_prop}.\\

\noindent
{\bf Step 2.}
Let $X$ be a correlator of type $\X{C}{1}$. Our discussion breaks into three cases:
\begin{enumerate}
\item $\m_N\n_N\neq0$.
\item $\m_N\n_N=0$ and $(\num_N,\expon_N)\neq(2,2)$.
\item $\m_N\n_N=0$ and $(\num_N,\expon_N)=(2,2)$.
\end{enumerate}

For case (1), we will reconstruct $X$ from correlators with fewer insertions and correlators of type $\X{C}{2}$. We may assume $\m_N \neq 0$.
Let $S$ be the set of insertions $x_i$ in $X$ such that $x_i \not \in W_1$. If $S\neq\emptyset$, choose some $x_i \in S$ and use the Reconstruction Lemma \ref{reconstruction_lemma} with $\gamma = \beta$, $\delta = x_i$, $\epsilon = x_N$, and $\phi = \alpha/x_N.$
Then all correlators coming from the reconstruction have $\num_{N} \geq \num_N^X \geq 2$, so $\K_N \geq 0$ (we saw in the proof of Lemma \ref{splitting_lemma} that when $\K_N<0$, we have $\num_N-0$). Then Remark \ref{rmk:reconstruction_invariance}(3) implies that all the correlators have type $\X{C}{0}$ or they vanish.
Moreover, all these correlators have $\m_N = \m_N^X-1$.
Repeat this same reconstruction until either $\m_N=0$ in all correlators or $S = \emptyset$.

In cases (2) and (3) we must have $\m_N+\n_N \leq \expon_N-1$ so (\ref{chain_kn_eq1}) holds, and $\mathbf{\K}$ is in the form of Lemma \ref{chain_kis1_lemma}.
So $\K_N$ is 0 or 1; if $\K_N=0$ then (\ref{chain_kn_eq1}) shows that $\num_N=0$, and if $\K_N=1$ then (\ref{chain_kn_eq1}) shows that $\num_N$ is 1, 2 or 3.
But since $X$ is of type $\X{C}{1}$ we know $\num_N \geq 2$, so in fact
$\K_N=1$ and $(\num_N,\expon_N)=(2,2), (2,3)$ or $(3,2)$.

If $(\num_N,\expon_N)\neq(2,2)$, then by \eqref{chain-last}, we get
$$\m_N + \n_N = \num_N(\expon_N-1)-2 = \expon_N-1.$$
We have the following cases, where again $M = 2 \expon-2$:
\begin{enumerate}
\item
$\mathbf{\K}=( -1, 1, \ldots, -1, 1, 1),$
$\mathbf{\m+\n}=(M, 0, \ldots, M, 0, \expon_N-1);$
\item
$\mathbf{\K}=( *, \ldots, 0, 1),$
$\mathbf{\m+\n}=(*, \ldots, \expon_{N-1}, \expon_N-1);$
\item
$\mathbf{\K}=(*, \ldots, 0, -1, 1, \ldots, -1, 1, 1),$
$\mathbf{\m+\n}=(*, \ldots, \expon_r, M, 0, \ldots, M, 0, \expon_N-1).$
\end{enumerate}
Without loss of generality assume $\m_N=0$ so $\n_N = \expon_N-1$. Then in every case $X$ cannot have Property (P3), so it vanishes. 

If $(\num_N,\expon_N)=(2,2)$, then $X= \langle x_{i_1} \not \in W_1, \; \ldots, \; x_{i_s} \not \in W_1, x_N, \; x_N, \; \alpha, \;\beta \rangle.$
Since $\K_N=1$ and $\mathbf{\num} = (0, \ldots, 0, 2)$, a calculation using \eqref{mplusn_eq} and Lemma \ref{chain_kis1_lemma} shows that there is some $j \in W_1$ with $\m_j+\n_j \geq \expon_j$. In particular $\m_j>0$.
Reconstruct $X$ as in case (1), beginning with $\delta = x_{i_1}$, $\gamma=\beta$, $\epsilon=x_j$, and $\phi=\alpha/x_j$.
All resulting correlators have type $\X{C}{1}$ as discussed in case (1).
By repeating this reconstruction, we can determine $X$ from correlators with fewer insertions, type $\X{C}{1}$ correlators of the form $\langle x_N,\; x_N, \; x_j, \; \ldots \rangle$, and a type $\X{C}{1}$ correlator with the same primitive insertions as $X$ but with $\m_j=0$ for all $j \in W_1$.
The correlators $\langle x_N,\; x_N, \; x_j, \; \ldots \rangle$ can be reconstructed from correlators $\langle x_N,\; x_N, \; x_N, \; \ldots \rangle$ as in Proposition \ref{reconstruct_from_greater_prop}.
The final correlator vanishes by the discussion at the start of this paragraph.\\

\noindent
{\bf Step 3.}
Let $X$ be a correlator of type $\X{C}{2}$.
From (\ref{eq1}), since $\num_N \geq 2$, we find
\[
\K_N \geq {\expon_N-2\over\expon_N}.
\]
Thus $\K_N \geq 0$ and equality is possible only if $\expon_N=2$. If $\K_N=0$ and $\expon_N=2$, then \eqref{eq1} shows that $\num_N\leq 2$, so in fact $X$ is of type $\X{C}{3}$.

If $\K_N \neq 0$, then $\K_N=1$ by Lemma \ref{chain_kis1_lemma}.
Then (\ref{eq1}) shows $\num_N \leq 2\expon_N/(\expon_N-1)$, so $\num_N=2$ (and $X$ is of type $\X{C}{3}$), or $\num_N=\expon_N=3$, or $\expon_N=2$ and $\num_N=3$ or $4$.
We will show that in each case where $\num_N>2$, the correlator does not satisfy (P3) in Lemma \ref{properties_lemma}, a contradiction.

If $\num_N=\expon_N=3$, then $\m_N + \n_N = 3 \expon_N - 5 = 2 \expon_N-2$.
Either $(\K_{N-1}, \num_{N-1})$ is $(0,0)$ or it is $(1,0)$; in each case, $\m_{N-1} + \n_{N-1} \geq 1$.
Without loss of generality $\m_{N-1} \geq 1$, so that $\alpha$ has a factor of $x_{N-1} x_N^{\expon_N-1}$, violating Property (P3).

Similarly, if $\expon_N=2$ and $\num_N=3$ or 4, we can check all possibilities for $\mathbf{\K}$ and $\mathbf{\num}$ and show that $\mathbf{\m+\n}$ violates Property (P3).\qed\\

\noindent
{\bf Step 4.}
Let $X$ be a correlator of type $\X{C}{3}$.
We know $\mathbf{\num} = (0, \ldots, 0, 2)$. By \eqref{klmn_lemma-5}, if $M = 2\expon-2$ we have three possibilities for $\mathbf{\K}$:
\begin{enumerate}
\item
$\mathbf{\K} = (0, \ldots, 0, 0, 1),$ $\mathbf{\m+\n}=(\expon_1-1, \ldots, \expon_{N-2}-1, \expon_{N-1}, 2\expon_N-4)$.

\item
$\mathbf{\K}=(-1,1, \ldots, -1,1, 1),$ $\mathbf{\m+\n} = (M, 0, \ldots, M, 0, 2\expon_N-4)$.

\item
$\mathbf{\K}=(0, \ldots, 0, 0, -1, 1, \ldots, -1, 1, 1)$, $\mathbf{\m+\n} = (\expon_1-1, \ldots, \expon_{r-1}-1, \expon_r, M, 0, \ldots,  M,$ $0, 2\expon_N-4)$.
\end{enumerate}

In all cases, if $X\neq0$, we must have
\begin{equation}\label{chain-middle}
X = \Fourptcorr{ x_N}{ x_N}{ x_1^{\m_1} \ldots x_{N-1}^{\m_{N-1}}x_N^{\expon_N-2}  \alpha}{ x_1^{\n_1} \ldots x_{N-1}^{\n_{N-1}} x_N^{\expon_N-2}  \beta }
\end{equation}
where $\m_i + \n_i = \expon_i-1$ for $i<N-2$ and $\m_{N-1} + \n_{N-1} = \expon_{N-1}$.

In the first case, both $\m_{N-1}$ and $\n_{N-1}$ are at least 1. If $\m_{N} = \expon_N-1$, then $\alpha=0$ by \eqref{milnor-chain}, since it has a factor of $x_{N-1}x_N^{\expon_N-1}$. This shows that $\m_N = \n_N = \expon_N-2$ and \eqref{chain-middle} follows.

In the second case, $\alpha = x_1^{\expon_1-1} x_3^{\expon_3-1} \ldots x_{N-2}^{\expon_{N-2}-1} x_N^{\m_N}$.
The relations \eqref{milnor-chain} show 
$$\alpha \propto x_2^{\expon_2-1}x_4^{\expon_4-1} \ldots x_{N-1}^{\expon_{N-1}} x_N^{\m_N}.$$
If $\m_N = \expon_N-1$, we have a factor of $\alpha$ equal to $x_{N-1} x_N^{\expon_N-1}$, and $\alpha=0$ by \eqref{milnor-chain}.
Otherwise, $\m_N = \n_N = \expon_N-2$ and \eqref{chain-middle} follows.

In the last case,  $\alpha$ has a factor equal to $x_{r}^{\m_r} x_{r+1}^{\expon_{r+1}-1} \ldots x_{N-2}^{\expon_{N-2}-1} x_N^{\m_N}$. As before we can use the relations \eqref{milnor-chain} to rewrite this as $x_r^{\m_r-1}x_{r+2}^{\expon_{r+2}-1} \ldots x_{N-1}^{\expon_{N-1}} x_N^{\m_N}$, and as before if $X\neq0$, then \eqref{chain-middle} follows.

Finally, we apply the Reconstruction Lemma \ref{reconstruction_lemma} to $X$ in \eqref{chain-middle} with $\gamma = x_N$, $\epsilon = x_{N-1}x_N^{\expon_N-2}$, $\phi = x_1^{\m_1} \ldots x_{N-1}^{\m_{N-1}-1}  \alpha$, and $\delta =  x_1^{\n_1} \ldots x_{N-1}^{\n_{N-1}} x_N^{\expon_N-2}  \beta$. Then $\epsilon  \gamma $ and $\gamma  \delta$ have a factor of $x_{N-1} x_N^{\expon_N-1}$, and hence both are 0 by \eqref{milnor-chain}.
The last correlator is of type $\X{C}{4}$.\qed

\subsection{Atomic reconstruction of Loop type}\label{loop_reconstruction_section}
This section proves Proposition \ref{almost_there_prop} for $W = \bigoplus W_i$ when $W_1$ is a loop $W_1=x_1^{\expon_1}x_2+x_2^{\expon_2}x_3+ \ldots + x_N^{\expon_N}x_1$.

\subsubsection{Preliminary facts about loop polynomials} First, we have
\begin{equation}\label{milnor-loop}
\expon_i x_{i-1} x_i^{\expon_i-1} = -x_{i+1}^{\expon_{i+1}} \in \Jac{W_1^T} \qquad \text{for}\; i=1,\cdots, N.
\end{equation}
Recalling that $\phi_{W_1^T}=\prod_{i=1}^{N}x_{i}^{\expon_i-1}$, we obtain the following vanishing conditions in $\Jac{W_1^T}$:
\begin{equation}\label{loop_vanishing_lemma}
\left\{
\begin{array}{ll}
x_{i-1}  x_i^{\expon_i} = 0, &\\
x_1 ^{\expon_1-1} x_3^{\expon_3-1} \ldots x_{N-2}^{\expon_{N-2}-1} x_N^{\expon_N} = 0, & 2\nmid N, \\
(\phi_{W_1^T}/x_i)x_k=0, & k \neq i, (N, \expon_i, k) \neq (2,2, i+1).
\end{array}
\right.
\end{equation}

Second, we may apply Lemmas \ref{loop_condition_lemma} and \ref{klmn_lemma} and renumber the variable indices so that $\mathbf{\K}$ ends with the exceptional tuple (1), (-1,2), or (-2, 3). Then we get
\begin{lemma}\label{loop_kis1_lemma}
If $\K_{W_1} = 1$ and $W_1=x_1^{\expon_1}+x_1x_2^{\expon_2}+ \ldots + x_N^{\expon_N}x_1$ is a loop, then we can number the indices so that $\mathbf{\K}$ is some concatentation of (0)s and (-1,1)s, followed by one of (1), (-1,2), or (-2,3). The final case can only occur if $\expon_{N-1}=2$. Also, in the last two cases, $\num_{N-1}+\num_N \leq 1$.
\end{lemma}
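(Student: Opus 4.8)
The plan is to derive the statement directly from the two preparatory lemmas, using the cyclic symmetry of the loop to handle the indexing. First I would note that since $W_1$ is a loop, every $i \in W_1$ is a loop index, and with the cyclic convention $i+N \equiv i$ the whole set $S = W_1$ obeys the NP-rule trivially: $i+1 \in S$ for every $i \in S$. Thus Lemma \ref{loop_condition_lemma} applies with $\K_S = \K_{W_1} = 1$. Read around the cycle, its block-decomposition argument (via Corollary \ref{key-corollary}, whose pairing wraps around exactly because $N+1 \equiv 1$) shows that $\mathbf{\K}$ is a cyclic concatenation of blocks $(0)$ and $(-1,1)$ together with a single special block equal to $(1)$, $(-1,2)$, or $(-2,3)$; and when the special block is $(-2,3)$ sitting at a pair $(i,i+1)$, the lemma forces $\expon_i = 2$.

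Next I would invoke the cyclic symmetry of a loop polynomial: relabelling $x_i \mapsto x_{i+c}$ carries the loop to itself (with exponents permuted). I would use this to rotate the indices so that the cyclic word is cut immediately after the special block. This cut does not split any $(-1,1)$ pair, so the remaining $(0)$s and $(-1,1)$s form a genuine linear concatenation preceding the special block, which now occupies position $N$ (in the $(1)$ case) or positions $(N-1,N)$ (in the $(-1,2)$ and $(-2,3)$ cases). In the $(-2,3)$ case the relevant index is $i = N-1$, so $\expon_{N-1}=2$, as claimed.

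Finally, for the last two cases I would read off the multiplicities from Lemma \ref{klmn_lemma}, which applies since $N-1$ is a loop index. In the $(-1,2)$ case, \eqref{klmn_lemma-2} gives $(\num_{N-1},\num_N) \in \{(1,0),(0,1),(0,0)\}$; in the $(-2,3)$ case, \eqref{klmn_lemma-3} gives $(\num_{N-1},\num_N)=(0,0)$. Either way $\num_{N-1}+\num_N \leq 1$.

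I expect the only subtle step to be the cyclic bookkeeping: Lemma \ref{loop_condition_lemma} is phrased for a linearly ordered set, so I must be careful to justify that its block structure remains valid when read cyclically and that the rotation placing the special block last neither splits a $(-1,1)$ pair nor the special block itself. Once the renumbering is fixed, the remaining assertions are immediate citations of the preparatory lemmas, so no further computation is needed.
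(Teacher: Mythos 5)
Your proof is correct and takes essentially the same approach as the paper, whose entire argument is the sentence preceding the lemma: apply Lemmas \ref{loop_condition_lemma} and \ref{klmn_lemma} and renumber the variable indices so that $\mathbf{\K}$ ends with the exceptional tuple $(1)$, $(-1,2)$, or $(-2,3)$. Your additional care with the cyclic bookkeeping --- the NP-rule holding trivially since $i+1\in W_1$ for every $i$, and the rotation cutting the cyclic word at a block boundary so no $(-1,1)$ pair or special block is split --- supplies exactly the details the paper leaves implicit.
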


\subsubsection{Reconstruction procedure}
As in the previous sections, we define progressively simpler correlator types and perform the reconstruction in a number of steps.

%

\begin{definition}
Let $X$ be a correlator of type $\X{L}{0}$.
\begin{itemize}
\item
$X$ is  of type $\X{L}{1}$ if
$
X = \langle x_p, x_p, \ldots, \alpha, \beta \rangle
$
for some $p \in W_1$.

\item
$X$ is of type $\X{L}{2}$ if
$
X = \Fiveptcorr{ x_p}{ \ldots}{ x_p}{ \alpha}{ \beta}
$
for some $p \in W_1$.

\item
$X$ is of type $\X{L}{3}$ if
$
 X = \Fourptcorr{ x_p}{ x_p}{ \alpha}{ \beta}
$
for some $p \in W_1$.

\item
$X$ is of type $\X{L}{4}$ if
$
X = \Fourptcorr{ x_p}{ x_p}{x_{p-1} x_p^{\expon_p-2}}{\Hessbase{W^T}}
$
with $p \in W_1$.
\end{itemize}
\end{definition}

In $k$-th step, we will reconstruct a correlator of type $\X{L}{k-1}$ from correlators of type $\X{L}{\geq k}$ and correlators with fewer insertions.
Each type has $\K_{W_1}=1$ by Lemma \ref{kis0_lemma}(3), so we will generally not need to check this condition.\\

\noindent
{\bf Step 1.}
Let $X$ be a correlator of type $\X{L}{0}$.
If there exists some $\num_i\in W_1$ such that $\num_i \geq 2$, then we are done. If $\num_i \leq 1$ for all $i \in W_1$, then we will show that for some $i\in W_1$, we have $\num_i = 1$ and $\m_i + \n_i \geq \expon_i$.

By Proposition \ref{two_from_a_poly} we can assume there exist distinct $i, k \in W_1$ such that $\num_i=\num_k=1$. Now assume $\K_{W_1}$ is in the form of Lemma \ref{loop_kis1_lemma}. If some $\num_i = 1$ for $i \leq N-2$, then Lemma \ref{bound_ell_lemma} shows $\m_i + \n_i = 2 \expon_i - 2\geq\expon_i$.

Otherwise, $\num_{N-1} = \num_N = 1$, and by Lemma \ref{loop_kis1_lemma} we must have $\K_N=1$. In this case $\K_{N-1}$ is 1 or 0. But if $\K_{N-1}=1$ then $\K_{N-2}=-1$ and $\num_{N-1}=0$ by \eqref{klmn_lemma-1}. So $\K_{N-1}=0$ and (\ref{mplusn_eq}) shows $\m_{N-1}+\n_{N-1} = 2\expon_{N-1}-2\geq\expon_{N-1}$.

Now we do the reconstruction part of this step. Let $k\in W_1$ be any index such that $\num_k=1$ and $\m_k + \n_k \geq \expon_k$. Then $X=\Fiveptcorr{ \ldots}{x_k}{ x_i}{ x_k  \alpha}{ \beta}$ where $\m_k\geq1$ and $i\neq k$ (we do not require $i \in W_1$). Using the Reconstruction Lemma \ref{reconstruction_lemma} with $\gamma = \beta$, $\delta = x_i$, $\epsilon = x_k$, and $\phi = \alpha$, we find
\[
X=\Fiveptcorr{ \ldots}{ x_k}{ x_k}{ \beta}{ x_i  \alpha }-\Fiveptcorr{ \ldots}{ x_k}{ x_k}{ \alpha}{ x_i  \beta }-\Fiveptcorr{ \ldots}{ x_k}{ x_i}{ \alpha}{ x_k  \beta}+S.
\]
After rewriting the last insertion in the standard basis, the first two correlators are of type $\X{L}{1}$. We use the same reconstruction on the third correlator until it has the form $\Fiveptcorr{ \ldots}{x_k}{ x_i}{ \widetilde{\alpha}}{ x_k^{\m_k}  \beta}$, where $\widetilde{\alpha}$ is a monomial in the standard basis with no factor of $x_k$. When we rewrite the last insertion in the standard basis, we find that $\m_k + \n_k < \expon_k$. This contradicts Lemma \ref{kis0_lemma} if $\K_{W_1}=0$, so $\K_{W_1}=1$.
Then either the third correlator is of type $\X{L}{1}$, or it has some $k'$ such that $\num_{k'}=1$ and $\m_{k'}+\n_{k'} \geq \expon_{k'}$. We can repeat the same reconstruction, moving all $x_k'$ from $\alpha$ to $\beta$. Eventually, we will run out of such indices, showing that the third form eventually vanishes.\\

\noindent
{\bf Step 2.}
Let $X$ be a correlator of type $\X{L}{1}$.
This step is similar to Step 1. Since $\num_p \geq 2$, we know that $p$ is $N$ or $N-1$ by Lemma \ref{bound_ell_lemma}. Then $\K_N=1$ by Lemma \ref{loop_kis1_lemma}. We will show that $\m_p + \n_p \geq \expon_p$. Then the same reconstruction argument as in Step 1 gives the desired result.
\begin{itemize}
\item
If $p = N-1$, then \eqref{klmn_lemma-1} implies $\K_{N-1} = 0$ and \eqref{mplusn_eq} implies $\m_p + \n_p \geq 3 \expon_p - 4 \geq \expon_p.$
\item
If $p = N$, then using (\ref{mplusn_eq}) we compute $\m_p + \n_p \geq \num_p( \expon_p -1)- 1 - \K_{p+1}$, since $\num_{p+1} \geq 0$. If $\K_{p+1}=-1$, we are done, since $\num_p \geq 2$ and $2\expon_p - 2 \geq \expon_p$. If $\K_{p+1}=0$, then $\m_p+\n_p \geq \expon_p$ unless $\num_p=2$ and $\expon_p = 2$.
\end{itemize}
Now we address this exceptional case.\\

\noindent
{\bf Exceptional case:}
Let $X$ be a correlator of type $\X{L}{1}$ with $(p, \K_p, \num_p, \expon_p) = (N, 1 ,2,2)$.
If $X$ is not of type $\X{L}{2}$, there exists an index $i\neq N$, such that
\begin{equation}\label{exc-loop-step2}
X = \Sixptcorr{x_N}{ x_N}{ \ldots}{ x_i}{ \alpha}{ \beta }.
\end{equation}

We claim that $\num_{N-1}=0$.
From Lemma \ref{loop_kis1_lemma}, we know that $\K_{N-1}$ is 1 or 0.
If $\K_{N-1}=1$, then it is part of a (-1,1) pair, so \eqref{klmn_lemma-1} implies that $\num_{N-1}=0$.
If $\K_{N-1}=0$, then (\ref{ell_eq}) tells us that $\num_{N-1}=0$.

Next we claim that we can find $\alpha' \propto \alpha$ and $\beta' \propto \beta$ in $\Jac{W^T}$ such that
\begin{equation}\label{exc-loop-step2-claim2}
\n'_{N-2} > 0\ \text{and}\ \m'_{N-1}+\n'_{N-1}\geq \expon_{N-1}.
\end{equation}
This $\alpha'$ and $\beta'$ may not be in the standard basis.

If $\K_{N-1}=0$ then in fact we may take $\alpha = \alpha'$ and $\beta = \beta'$.
For in this case (\ref{mplusn_eq}) implies $\m_{N-1}+\n_{N-1} = \expon_{N-1}$.
If $W_1$ is a 2-variable loop, then similarly $\m_N+\n_N=1$, and we get \eqref{exc-loop-step2-claim2}.
If not, then $\K_{N-2}=0$. Otherwise we will have $(\K_{N-3},\K_{N-2})=(-1,1)$ and $\num_{N-3}=\num_{N-2}=0$. Then \eqref{mplusn_eq} would force $\m_{N-2}+\n_{N-2} = -1$ which is nonsensical. So $\K_{N-2}=0$ and \eqref{mplusn_eq} implies $\m_{N-2}+\n_{N-2} \geq 1$.
Thus again we get \eqref{exc-loop-step2-claim2}.

If $\K_{N-1}=1$, then $(\K_{N-2}, \K_{N-1})=(-1,1)$ and $\mathbf{\K}=(\ldots, 0, -1, 1, \ldots, -1, 1, 1).$ Here the $0$ is the last $0$ before the $(-1,1)$-sequence; say it occurs at the $r^{th}$ spot. We know from (\ref{mplusn_eq}) that
$$\mathbf{\m+\n}=(\ldots, t, 2 \expon_{r+1}-2, 0, \ldots, 2\expon_{N-2}-2, 0, *)$$ where $t \geq \expon_r$. Then we may assume $\alpha$ has a factor of $x_r x_{r+1}^{\expon_{r+1}-1} \ldots x_{N-2}^{\expon_{N-2}-1}$. Using the relation in \eqref{milnor-loop}, this factor is proportional to  a multiple of $x_{N-1}^{\expon_{N-1}}$. Let $\alpha'$ equal $\alpha$ with this replacement, and let $\beta'=\beta$. Then $\m'_{N-1} \geq \expon_{N-1}$ and $\n'_{N-2} = \expon_{N-2}-1$, and \eqref{exc-loop-step2-claim2} follows.

Finally, we use the Reconstruction Lemma \ref{reconstruction_lemma} with $\gamma = \beta$, $\delta = x_i$, $\epsilon = x_{N-1}$, and $\phi = \alpha''$ where $\alpha' = x_{N-1}  \alpha''$ to get
\begin{align*}
X=
\sixptcorr{ \ldots}{ x_N}{ x_N}{ x_{N-1}}{ x_i  \alpha''}{ \beta}&- \sixptcorr{ \ldots}{ x_N}{ x_N}{ x_{N-1}}{ \alpha''}{ x_i  \beta }\\
&+ \sixptcorr{ \ldots}{ x_N}{ x_N}{ x_i}{ \alpha''}{ x_{N-1}\beta }+S.
\end{align*}
After reducing the insertions of the first three correlators, by Remark \ref{rmk:reconstruction_invariance}(3) they all are of type $\X{L}{1}$ if they are nonvanishing.
By Lemma \ref{loop_kis1_lemma} they have $\K_N=1$.
As discussed earlier, a nonvanishing correlator in the form of \eqref{exc-loop-step2} with $\K_N=1$ must have $\num_{N-1} = 0$.
Thus the first two correlators vanish.
Use the same reconstruction on the third correlator until $\n_{N-1} = \expon_{N-1}$.
We still have $\n_{N-2}>0$, so now $\beta$ contains the factor $x_{N-2}x_{N-1}^{\expon_{N-1}}$, which is 0 by \eqref{loop_vanishing_lemma}. So the third correlator is also zero.
Thus we have reconstructed $X$ from correlators with fewer insertions.\qed\\

\noindent
{\bf Step 3.}
Let $X$ be a correlator of type $\X{L}{2}$, so $\num_i=0$ if $i \neq p$.
Since $X$ is also of type $\X{L}{1}$, we know $\K_N=1$, and $p=N-1$ or $N$ from Step 2.

If $p=N-1$, since $\num_p\neq0$, then \eqref{klmn_lemma-1} implies $\K_p\neq 1$. Thus $\K_p=0, \K_{p+1}=1,\num_{p+1}=0$, and \eqref{mplusn_eq}  implies that $\m_p+\n_p = \num_p(\expon_p-1) + \expon_p-2$. Thus Property (P3) in Lemma \ref{properties_lemma} implies that $\num_p \leq 2$ where $\num_p=2$ if and only if $\expon_p=2$.

If $p=N$, then $\K_p=1, \num_{p+1}=\num_1=0,$ and  $\K_{p+1}=\K_1=0$ or $-1$.
Then \eqref{mplusn_eq} and Property (P3) in Lemma \ref{properties_lemma} imply $\m_p+\n_p = \num_p(\expon_p-1)-1\leq 2\expon_p-2$, so $\num_p \leq 3$. If $\num_p=3$, then $\expon_p=2$ and $\K_{p+1}=0$. In this case
\[
X= \Fiveptcorr{ x_N}{x_N}{x_N}{\alpha}{\beta}.
\]

We will reconstruct $X$ from correlators with fewer insertions.
Since Property (P3) in Lemma \ref{properties_lemma} implies $\m_N+\n_N\leq 2\expon_N-2=2$, by \eqref{klmn_lemma-4}, there is no $1$ before any $0$ in the vector ${\bf \K}$. So there are two possibilities:
\begin{itemize}
\item $\mathbf{\K}=(  0, \ldots, 0, 1)$, $\mathbf{\num}=( 0, \ldots, 0, 3)$ and $\mathbf{\m+\n}=(\expon_1-1, \ldots, \expon_{N-1}+1, 2)$.
\item $\mathbf{\K}=(0, \ldots,0, 0, -1, 1, \ldots, -1, 1, 1)$,  $\mathbf{\num}=(0, \ldots, 0,0,0,0, \ldots, 0,0,3)$, and  $\mathbf{\m+\n}=(\expon_1-1, \ldots, \expon_{r-1}-1, \expon_r, M, 0, \ldots, M, 1, 2)$.
\end{itemize}

Here $M = 2\expon-2$, with the appropriate subscripts.
In the first case, $\m_N=\n_N = 1$ and both $\m_{N-1}$ and $\n_{N-1}$ are at least 2 (note that $\expon_{N-1}\neq 2$ in this case since that would imply $\expon_{N-1}+1=3 > 2\expon_{N-1}-2$).
We can choose $\alpha$ so that $\m_{N-2} > 0$. Now apply the Reconstruction Lemma \ref{reconstruction_lemma} with $\gamma = \alpha$, $\delta = x_N$, $\epsilon = x_{N-1}^{\n_{N-1}-1}$, and $\phi=\beta/\epsilon$. Thus $\alpha$ contains $x_{N-1}^2x_N$ and $\phi$ contains $x_{N-1}x_N$. Then $\phi  \delta$ and $\gamma  \delta$ both have a factor of $x_{N-1}x_N^2$, so they vanish by \eqref{loop_vanishing_lemma}. Similarly $\gamma  \epsilon$ has a factor of $x_{N-2}x_{N-1}^{\expon_{N-1}}=0$.


In the second case, again $\m_N=\n_N = 1$. Without loss of generality we can assume that $\n_{N-1}=1$. Apply the Reconstruction Lemma with $\gamma = \alpha$, $\delta = x_N$, $\epsilon = x_{r+1}$, and $\beta = \epsilon  \phi$. 
Then $\delta  \phi$ has a factor equal to $x_{N-1}x_N^2$, which equals 0 by \eqref{loop_vanishing_lemma}.
Similarly, $\epsilon  \gamma$ has a factor equal to $x_r x_{r+1}^{\expon_{r+1}}=0$.
Finally, in the third correlator, $\delta  \gamma$ has a factor equal to $x_r x_{r+1}^{\expon_{r+1}-1} x_{r+3}^{\expon_{r+3}-1} \ldots x_{N-2}^{\expon_{N-2}-1} x_{N}^2.$
As in the exceptional case in Step 2, we can use the relation \eqref{milnor-loop}  to rewrite the first terms to have a factor $x_{N-1}^{\expon_{N-1}}$. Since this is multiplied by $x_N^{\expon_N}$, it is zero by \eqref{loop_vanishing_lemma}.\qed\\

The last step will use the following lemmas.
\begin{lemma}\label{loop_corr_lemma}
If the correlator
\begin{equation}\label{loopy_form}
X_{k,i}=\Fourptcorr{ x_k}{ x_i}{ x_{k-1}x_k^{\expon_k-1}}{ \alpha\,\phi_{W_1^T}/x_i }, \quad \alpha\in \Jac{W^T-W_1^T}
\end{equation}
is of type $\X{X}{-1}$ then it can be reconstructed from correlators of type $\X{L}{4}$.
 \end{lemma}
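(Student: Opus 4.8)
The plan is to reconstruct the correlator $X_{k,i}$ in \eqref{loopy_form} down to type $\X{L}{4}$ correlators by applying the Reconstruction Lemma \ref{reconstruction_lemma} in a way that splits the pair of distinct primitive insertions $x_k, x_i$ into two copies of the \emph{same} primitive insertion, which is exactly the defining feature of a type $\X{L}{3}$ correlator. First I would observe that the insertion $x_{k-1}x_k^{\expon_k-1}$ is, up to the nonzero scalar from \eqref{milnor-loop}, equal to $-\frac{1}{\expon_k}x_{k+1}^{\expon_{k+1}}$; more usefully, it can be written as $x_k^{\expon_k-2}\cdot (x_{k-1}x_k)$, so it carries a factor we can peel off. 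The key move is to apply Lemma \ref{reconstruction_lemma} with $\gamma = x_i$ (the ``wrong'' primitive index we want to eliminate), $\delta = \alpha\,\phi_{W_1^T}/x_i$, $\epsilon = x_k$, and $\phi = x_{k-1}x_k^{\expon_k-2}$, so that $\epsilon \star \phi = x_{k-1}x_k^{\expon_k-1}$ reproduces the third insertion.

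With this choice the WDVV identity \eqref{reconst_eq} (with $S=0$ since $k=4$) expresses $X_{k,i}$ as a combination of $\langle x_i, x_k, x_{k-1}x_k^{\expon_k-2}, (\alpha\phi_{W_1^T}/x_i)\star x_k\rangle$-type terms plus terms in which the products $\gamma\star\delta = x_i\star(\alpha\phi_{W_1^T}/x_i)$ and $\gamma\star\epsilon = x_i\star x_k$ appear. The crucial vanishing is that $x_i\star(\alpha\phi_{W_1^T}/x_i)$ is proportional to $\alpha\,\phi_{W_1^T}$, which vanishes in $\Jac{W^T}$ when $\deg(\alpha)>0$ and otherwise forces $\alpha$ to be a scalar, reducing us to the genuinely atomic situation; and that the remaining products either vanish by the vanishing relations \eqref{loop_vanishing_lemma} or produce a correlator whose two primitive insertions are now both $x_k$. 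In the latter case we land in type $\X{L}{3}$, and Step 4 of the loop reconstruction (which reconstructs $\X{L}{3}$ from $\X{L}{4}$) finishes the job. I would track carefully that each resulting correlator still satisfies (P1)--(P3) via Remark \ref{rmk:reconstruction_invariance}, so that it is of type $\X{X}{-1}$ and we may legitimately reapply our structural lemmas; in particular $\K_{W_1}=1$ is preserved, keeping us in the loop case.

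The main obstacle I anticipate is controlling the product $x_i\star(\alpha\,\phi_{W_1^T}/x_i)$ and the term $x_k\star(\alpha\,\phi_{W_1^T}/x_i)$: one must verify, using the precise form of the Hessian element $\phi_{W_1^T}=\prod_i x_i^{\expon_i-1}$ and the third vanishing relation in \eqref{loop_vanishing_lemma} (with its genuine exceptional index $(N,\expon_i,k)=(2,2,i+1)$), that multiplying the ``cycle class'' $\phi_{W_1^T}/x_i$ by any primitive $x_k$ with $k\neq i$ either vanishes or collapses to a multiple of $\phi_{W_1^T}/x_k$ or $\phi_{W_1^T}$ itself. This is where the asymmetry of the loop (and the single excluded 2-variable case) enters, and I expect the bookkeeping of which monomials survive in $\Jac{W_1^T}$ to be the delicate part. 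Once that multiplication table is pinned down, the rest is a mechanical unwinding of \eqref{reconst_eq}: every term is either zero, has strictly fewer ``off-diagonal'' primitive insertions, or is already of type $\X{L}{3}$, so the reconstruction terminates after finitely many applications and expresses $X_{k,i}$ purely in terms of type $\X{L}{4}$ correlators as claimed.
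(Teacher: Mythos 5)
Your proposed WDVV application does not close up, and the two key claims you make about its terms are both problematic. With $\gamma = x_i$, $\delta = \alpha\,\phi_{W_1^T}/x_i$, $\epsilon = x_k$, $\phi = x_{k-1}x_k^{\expon_k-2}$ and spectator $x_k$, Lemma \ref{reconstruction_lemma} produces three correlators. First, the ``crucial vanishing'' is false: $\gamma\star\delta = x_i\star(\alpha\,\phi_{W_1^T}/x_i) = \alpha\,\phi_{W_1^T}$ does \emph{not} vanish in $\Jac{W^T}$, since $\Jac{W^T}\cong \Jac{W_1^T}\otimes\Jac{W^T-W_1^T}$ and $\alpha\,\phi_{W_1^T}$ is a product of nonzero factors (for $\alpha = \Hessbase{W^T-W_1^T}$ it is exactly $\Hessbase{W^T}$). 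This particular error is harmless, because the resulting term $\Fourptcorr{x_k}{x_k}{x_{k-1}x_k^{\expon_k-2}}{\alpha\,\phi_{W_1^T}}$ is precisely a type $\X{L}{4}$ correlator. The fatal term is the \emph{other} cross term, $\langle x_k,\, x_i\star x_k,\, \alpha\,\phi_{W_1^T}/x_i,\, x_{k-1}x_k^{\expon_k-2}\rangle$: for a loop with $i\neq k$, the product $x_ix_k$ is a nonzero standard-basis monomial, so this term does not vanish by \eqref{loop_vanishing_lemma}; it is not of type $\X{L}{3}$ (its primitive content is not two copies of $x_k$); and since all four terms of \eqref{reconst_eq} share the same total exponent vector, it satisfies the Dimension and Integer Degree axioms whenever $X_{k,i}$ does, so it cannot be discarded. (For instance, for $W=x_1^2x_2+x_2^2x_3+x_3^2x_1$ this term is $\langle x_1, x_3, x_1x_2, x_1x_3\rangle$, a legitimate type $\X{L}{0}$ correlator with distinct primitive insertions.) Your dichotomy ``vanish or land in $\X{L}{3}$'' fails exactly here; moreover, falling back on Step 4 of the loop reconstruction to finish would be circular, since Step 4 itself invokes Lemma \ref{loop_corr_lemma}.

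The paper's proof avoids this with a different pairing and an essential cyclic trick that a single WDVV application cannot replace. One first uses the Jacobi relation \eqref{milnor-loop} to replace $x_{k-1}x_k^{\expon_k-1}$ by $-x_{k+1}^{\expon_{k+1}}/\expon_k$, and then applies Lemma \ref{reconstruction_lemma} with $\gamma = \alpha\,\phi_{W_1^T}/x_i$ (the \emph{big} insertion), $\delta = x_k$, $\epsilon = x_{k+1}$, $\phi = x_{k+1}^{\expon_{k+1}-1}$, and spectator $x_i$. Then \emph{both} cross terms have the form $(\alpha\,\phi_{W_1^T}/x_i)\star x_j$ with $j\in\{k,k+1\}$, which vanish by \eqref{loop_vanishing_lemma} unless $j=i$, in which case they are exactly the type $\X{L}{4}$ correlators $A_i$ or $A_{i+1}$; the main term is $-X_{k+1,i}/\expon_k$, i.e.\ the same correlator with the index shifted. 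Iterating around the entire loop returns $X_{k,i}$ with coefficient $\prod_{r=1}^{N}(-1/\expon_r)\neq 1$, and one solves the resulting linear equation for $X_{k,i}$ in terms of $A_i$ and $A_{i+1}$. This elimination step is unavoidable, since the correlator genuinely reappears on the right-hand side. You also leave the exceptional case $(N,\expon_i)=(2,2)$ untreated, which the paper handles separately via the computations of Section \ref{sec-exceptional}.
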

\begin{proof}
If $(N, \expon_i) = (2,2)$, then up to symmetry $W_1 = x_1^{\expon}x_2+x_1x_2^2$ and $X = \fourptcorr{ x_1}{ x_2}{ x_1^{\expon-1}x_2}{\alpha x_1^{\expon-1} }$. If $\expon=2$, then $X$ is already of final type. If $\expon>2$, the result follows from a reconstruction as in \eqref{loop-exc-2} and \eqref{loop-exc-3} in Section \ref{sec-exceptional}.

Now we assume $(N, \expon_i) \neq (2,2)$.
We apply the Reconstruction Lemma \ref{reconstruction_lemma}  to the correlator $A_i = \Fourptcorr{ x_i}{ x_{i-1}}{ x_i^{\expon_i-1}}{ \Hessbase{W_1^T} \alpha}$ with $\epsilon = x_i$, $\phi = x_i^{\expon_i-2}$, $\delta = x_{i-1}$, and $\gamma = \Hessbase{W_1^T} \alpha$. Because $\gamma  \delta = \gamma  \epsilon = 0$ for degree reasons. Also we know
\begin{equation}\label{loop-aux}
A_i= \Fourptcorr{ x_i}{ x_i}{ x_{i-1}x_i^{\expon_i-2}}{ \Hessbase{W_1^T} \alpha }.
\end{equation}
By Dimension Axiom in Lemma \ref{vanishing_lemma}, if $A_i$ is nonzero, then it is a correlator of type $\X{L}{4}$.

Now let us start the reconstruction. Let $X_{k,i}$ be the correlator in \eqref{loopy_form}.
We replace $x_{k-1}x_k^{\expon_k-1}$ with $-x_{k+1}^{\expon_{k+1}}/\expon_k$ in $X_{k,i}$ using the relation \eqref{milnor-loop},
and then apply the Reconstruction Lemma \ref{reconstruction_lemma} with
$\delta = x_k, \epsilon = x_{k+1}, \phi = x_{k+1}^{\expon_{k+1}-1},$ and $\gamma =\alpha\,\phi_{W_1^T}/x_{i}.$

If $i\neq k, k+1$, the terms with $\gamma  \delta$ and $\gamma  \epsilon$ are both zero by \eqref{loop_vanishing_lemma}, and
\begin{equation}\label{loop-aux1}
X_{k,i}
=-{1\over\expon_k}\Fourptcorr{ x_{k+1}}{ x_i}{x_kx_{k+1}^{\expon_{k+1}-1}}{ \alpha\,\phi_{W_1^T}/x_i }
=-{X_{k+1,i}\over\expon_k}.
\end{equation}
For $k=i-1$ or $k=i$, one of $\gamma  \delta$ or $\gamma  \epsilon$ is nonzero, and we get
\begin{align}
&X_{i-1,i}
=-{1\over\expon_{i-1}}\left(A_{i}+\Fourptcorr{ x_{i}}{ x_i}{x_{i-1}x_{i}^{\expon_{i}-1}}{ \alpha\,\phi_{W_1^T}/x_i }\right)
=-{A_{i}+X_{i,i}\over\expon_{i-1}},\label{loop-aux2}
\\
&X_{i,i}
=-{1\over\expon_i}\left(A_{i+1}+\Fourptcorr{ x_{i+1}}{ x_i}{x_ix_{i+1}^{\expon_{i+1}-1}}{ \alpha\,\phi_{W_1^T}/x_i}\right)
=-{-A_{i+1}+X_{i+1,i}\over\expon_{i}}.\label{loop-aux3}
\end{align}
Combining \eqref{loop-aux1}, \eqref{loop-aux2}, and \eqref{loop-aux3}, we find
$$
X_{k,i}=\prod_{r=k}^{i-1}\left(-{1\over\expon_r}\right)A_i-\prod_{r=k}^{i}\left(-{1\over\expon_r}\right)A_{i+1}+\prod_{r=1}^{N}\left(-{1\over\expon_r}\right)X_{k,i}.
$$
Using \eqref{loop-aux} we know $X_{k,i}$ can be reconstructed from correlators of type $\X{L}{4}$.

\end{proof}

\begin{lemma}\label{loop_final}
The correlator
 \begin{equation*}
X_{k}=\Fourptcorr{ x_k}{ x_{k-1}x_k^{\expon_k-1}}{ \alpha}{\beta\,\phi_{W_1^T}}, \quad \alpha, \beta \in \Jac{W^T-W_1^T}
\end{equation*}
vanishes.
Furthermore, the correlator
\begin{equation*}
Y_{k}=\Fourptcorr{ x_k}{ x_k}{ x_{k-1}x_k^{\expon_k-2} \alpha}{ \Hessbase{W_1^T} \beta }, \quad \alpha, \beta \in \Jac{W^T-W_1^T}
\end{equation*}
 can be reconstructed from correlators of type $\X{L}{4}$.
\end{lemma}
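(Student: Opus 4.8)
The plan is to prove the two statements in sequence, using the vanishing of $X_k$ as the engine that drives the reconstruction of $Y_k$. For the first statement I would run the same ``rotation around the loop'' argument used in Lemma \ref{loop_corr_lemma}. Starting from $X_k=\Fourptcorr{x_k}{x_{k-1}x_k^{\expon_k-1}}{\alpha}{\beta\,\phi_{W_1^T}}$, first rewrite the second insertion via \eqref{milnor-loop} as $x_{k-1}x_k^{\expon_k-1}=-\frac{1}{\expon_k}x_{k+1}^{\expon_{k+1}}$, and then apply the Reconstruction Lemma \ref{reconstruction_lemma} with $\delta=x_k$, $\epsilon=x_{k+1}$, $\phi=x_{k+1}^{\expon_{k+1}-1}$, $\gamma=\beta\,\phi_{W_1^T}$, and spectator $\alpha$. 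The key computational input is the identity $x_j\,\phi_{W_1^T}=0$ in $\Jac{W_1^T}$ for every $j$, which follows from the first relation in \eqref{loop_vanishing_lemma} because $x_j\phi_{W_1^T}$ always contains the factor $x_{j-1}x_j^{\expon_j}$. This makes both $\gamma\star\delta$ and $\gamma\star\epsilon$ vanish, so the only surviving term is $X_{k+1}$ up to the factor $-1/\expon_k$. Iterating this identity around the full cycle of indices modulo $N$ produces a self-referential equation
\[
X_k=\frac{(-1)^N}{\expon_1\cdots\expon_N}\,X_k,
\]
and since each $\expon_i\geq 2$ the scalar on the right is never $1$, forcing $X_k=0$.

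For the second statement I would feed this vanishing into a single application of the Reconstruction Lemma. Writing $Y_k=\Fourptcorr{x_k}{x_k}{x_{k-1}x_k^{\expon_k-2}\alpha}{\phi_{W_1^T}\beta}$, apply Lemma \ref{reconstruction_lemma} with $\gamma=x_k$ (the other $x_k$ serving as spectator), $\epsilon=x_{k-1}x_k^{\expon_k-2}$, $\phi=\alpha$, and $\delta=\phi_{W_1^T}\beta$. The term containing $\gamma\star\delta$ vanishes because $x_k\,\phi_{W_1^T}=0$, and the term containing $\gamma\star\epsilon$ equals $\Fourptcorr{x_k}{x_{k-1}x_k^{\expon_k-1}}{\alpha}{\phi_{W_1^T}\beta}=X_k=0$ by the first part. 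What remains is $\Fourptcorr{x_k}{x_k}{x_{k-1}x_k^{\expon_k-2}}{\alpha\beta\,\phi_{W_1^T}}$. The Dimension Axiom of Lemma \ref{vanishing_lemma} then forces $\alpha\beta$ to be proportional to the top-degree element of $\Jac{W^T-W_1^T}$ whenever the correlator is nonzero, so that $\alpha\beta\,\phi_{W_1^T}\propto\Hessbase{W^T}$ and $Y_k$ is a scalar multiple of a correlator of type $\X{L}{4}$, as desired.

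The main obstacle will be the vanishing bookkeeping at each stage of the rotation, carried out uniformly across the cycle: I must confirm that the spectator $\alpha$ and the Hessian factor $\phi_{W_1^T}$ are never disturbed as the index advances, and that $x_j\phi_{W_1^T}=0$ really is unconditional (in contrast to the conditional identity on the last line of \eqref{loop_vanishing_lemma}). I would also verify the small two-variable loops, especially $(N,\expon_k)=(2,2)$, where some insertions become broad and the reconstruction terms must be interpreted with care; these degenerate cases were already treated by the exceptional analysis of Section \ref{sec-exceptional} and by Lemma \ref{loop_corr_lemma}, so I expect them to pose only notational rather than substantive difficulty.
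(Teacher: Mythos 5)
Your proposal is correct and follows essentially the same route as the paper: the identical rotation argument (with the same choices $\delta = x_k$, $\epsilon = x_{k+1}$, $\phi = x_{k+1}^{\expon_{k+1}-1}$, $\gamma = \beta\,\phi_{W_1^T}$) yielding $X_k = \prod_{r=1}^{N}\bigl(-\tfrac{1}{\expon_r}\bigr)X_k$ and hence $X_k = 0$, followed by a single application of Lemma \ref{reconstruction_lemma} to $Y_k$ whose three terms are, up to the symmetric relabeling of $(\gamma,\delta)$ and $(\epsilon,\phi)$, exactly the paper's: one of type $\X{L}{4}$, one equal to $X_k = 0$, and one killed by $x_k\,\phi_{W_1^T} = 0$. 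Your explicit observation that $x_j\phi_{W_1^T}=0$ holds unconditionally (the paper invokes degree reasons instead) and your use of the Dimension Axiom to pin down $\alpha\beta$ are harmless refinements of the same argument.
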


\begin{proof}
 In $X_{k,i}$, replace the insertion $x_{k-1}x_k^{\expon_k-1}$ with $-x_{k+1}^{\expon_{k+1}}/\expon_k$ in $X_{k,i}$ using the relations \eqref{milnor-loop},
and then apply the Reconstruction Lemma \ref{reconstruction_lemma} with
$\delta = x_k, \epsilon = x_{k+1}, \phi = x_{k+1}^{\expon_{k+1}-1},$ and $\gamma =\beta\,\phi_{W_1^T}.$ Via a similar argument to the one used in the proof of Lemma \ref{loop_corr_lemma}, we have
$$
X_{k}=\prod_{r=1}^{N}\left(-{1\over\expon_r}\right)X_{k}.
$$
Then $X_k=0$ as desired. Now apply the Reconstruction Lemma to $Y_k$ with $\delta = x_k, \epsilon = \alpha, \phi = x_{k+1}^{\expon_{k+1}-1},$ and $\gamma =\beta\,\phi_{W_1^T}.$ We know $\gamma  \delta = 0$ for degree reasons. The term with $\epsilon \gamma$ is of type $\X{L}{4}$. The term with $\delta \phi$ is $X_{k+1}$, which we have seen is 0.
\end{proof}

\noindent
{\bf Step 4.}
We saw in Step 3 that if $X$ is a correlator of type $\X{L}{3}$, there are two general possibilities for $\mathbf{\K}$: $(0, \ldots, 0, -1, 1, \ldots, -1, 1, 1)$ and $(-1, 1, \ldots, -1, 1, 0, 1)$.
 In the first case, $p=N$; in the second, $p = N-1$. We will (a) reconstruct the second case from the first case, (b) reconstruct the first case from correlators with $\mathbf{\K}=(0, \ldots, 0, 1)$, and (c) reconstruct these last correlators from the correlators of final type.\\

\noindent
{\bf Step 4a.}
Assume $\mathbf{K}=(-1, 1, \ldots, -1, 1, 0, 1)$. We saw in Step 3 that $p=N-1$ and $\expon_p=2$. Let $\alpha_1$ and $\beta_1$ be the factors of $\alpha$ and $\beta$ in $W_1$, respectively.
By \eqref{mplusn_eq}, we know
$$\mathbf{\m+\n} = (2\expon_1-2, 0, \ldots, 2 \expon_{p-2}-2, 1, 2\expon_p-2, 0),$$
so without loss of generality $\beta_1 = \alpha_1/  x_{p-1}$ and
\[
\alpha_1 = x_1^{\expon_1-1} x_3^{\expon_3-1} \ldots x_{p-2}^{\expon_{p-2}-1} x_{p-1}x_p^{\expon_p-1}\propto x_2^{\expon_2-1} x_4^{\expon_4-1} \ldots x_{p-3}^{\expon_{p-3}-1} x_{p-1}^{\expon_{p-1}}x_{p+1}^{\expon_{p+1}-1}.
\]
Apply the Reconstruction Lemma to $X$ with $\delta = x_p$, $\epsilon = x_{p-1}$, $\gamma = \beta$, and 
$$\phi = x_2^{\expon_2-1} x_4^{\expon_4-1} \ldots x_{p-3}^{\expon_{p-3}-1} x_{p-1}^{\expon_{p-1}-1}x_{p+1}^{\expon_{p+1}-1}.$$
We can check that the correlator with $\epsilon  \gamma$ is of type $\X{L}{3}$ and ${\bf \K}=(0, \ldots, 0, -1, 1, \ldots, -1, 1, 1,0)$. It fits into the first case by shifting the index by $1$.
The remaining two correlators are
\[
A = \fourptcorr{x_p}{ x_{p-1}}{  \alpha_A}{ \beta_A} \quad \text{and} \quad B = \fourptcorr{ x_p}{ x_{p-1}}{ \alpha_B}{ \beta_B },
\]
where
\begin{align*}
 \alpha_A &= x_2^{\expon_2-1}x_4^{\expon_4-1} \ldots x_{p-1}^{\expon_{p-1}-1}x_px_{p+1}^{\expon_{p+1}-1} \alpha', & \beta_A &= x_1^{\expon_1-1} x_3^{\expon_3-1} \ldots x_{p-2}^{\expon_{p-2}-1} x_p^{\expon_p-1} \beta',\\
   \alpha_B &= x_2^{\expon_2-1}x_4^{\expon_4-1} \ldots x_{p-1}^{\expon_{p-1}-1}x_{p+1}^{\expon_{p+1}-1} \alpha', & \beta_B&=x_1^{\expon_1-1} x_3^{\expon_3-1} \ldots x_{p-2}^{\expon_{p-2}-1} x_p^{\expon_p} \beta'.
\end{align*}
Apply the Reconstruction Lemma to $B$ with $\delta = x_p$, $\epsilon = x_{p-1}$, $\phi= \alpha_B /x_{p-1} $, and $\gamma = \beta_B$. Then we get
\begin{equation}\label{loop-B}
B= \Fourptcorr{x_{p-1}}{ x_{p-1}}{ x_2^{\expon_2-1}x_4^{\expon_4-1} \ldots x_{p-1}^{\expon_{p-1}-2}x_px_{p+1}^{\expon_{p+1}-1} \alpha'}{
x_1^{\expon_1-1} x_3^{\expon_3-1} \ldots x_{p-2}^{\expon_{p-2}-1} x_p^{\expon_p} \beta' }.
\end{equation}
Since $\epsilon  \gamma$ has a factor of $x_{p-1}  x_p^{\expon_p}$, this monomial vanishes. Also $\gamma  \delta = 0$ because we can apply the Jacobi relations beginning with $x_p^{\expon_p} = (-\expon_{p-1})x_{p-2}x_{p-1}^{\expon_{p-1}-1}$ to get
$$\gamma  \delta\propto x_2^{\expon_2-1}x_4^{\expon_4-1}\ldots x_{p-1}^{\expon_{p-1}-1}x_p^2x_{p+1}^{\expon_{p+1}-1}.$$
This has a factor of $ x_{p-1}x_p^2 = x_{p-1}  x_p^{\expon_p} = 0$. Now we write the last insertion of \eqref{loop-B} in the standard basis (it equals $x_2^{\expon_2-1}x_4^{\expon_4-1} \ldots x_{p-1}^{\expon_{p-1}-1}x_px_{p+1}^{\expon_{p+1}-1}\beta'$). Then $B$ is of type $\X{L}{3}$ with the vector ${\bf \K}=(1,-1,1,\ldots,-1,1,1,0,-1)$. Again, $B$ fits into the first case by shifting the index.

Now apply the Reconstruction Lemma to $A$ with $\gamma = x_p, \epsilon = x_{p-1}x_p$, $\phi=\alpha_A / \epsilon$, and $\delta = \beta_A$. Then $\epsilon  \gamma=x_{p-1}x_p^{\expon_p}=0$. The remaining correlators are
\[
D = \Fourptcorr{ x_{p-1}}{ x_p}{ x_{p-1}x_p}{ x_1^{\expon_1-1}x_2^{\expon_2-1}\ldots x_{p-1}^{\expon_{p-1}-2}x_p^{\expon_p-1}x_{p+1}^{\expon_{p+1}-1}  \alpha''},
\]
and $E =\fourptcorr{x_{p-1}}{ x_{p-1}x_p}{ \alpha_E}{ \beta_E }$ with
\[
\alpha_E =  x_2^{\expon_2-1}x_4^{\expon_4-1}\ldots x_{p-1}^{\expon_{p-1}-2}x_{p+1}^{\expon_{p+1}-1}  \alpha', \quad \beta_E =  x_1^{\expon_1-1}x_3^{\expon_3-1} \ldots x_{p-2}^{\expon_{p-2}-1}x_p^{\expon_p} \beta'.
\]
Now $D$ can be reconstructed via Lemma \ref{loop_corr_lemma}. For $E$, apply the Reconstruction Lemma one last time with $\epsilon = x_{p-1}$, $\phi = x_p$, $\delta = \alpha_E$, and $\gamma = \beta_E$. Then $\epsilon  \gamma$ has a factor of $x_{p-1}x_p^{\expon_p}$ which is 0. Also, the correlator with $\delta  \phi$ equals $B$, which we already saw satisfies $p=N$. Finally, $\gamma  \delta = x_1^{\expon_1-1}x_2^{\expon_2-1}\ldots x_{p-1}^{\expon_{p-1}-2}x_p^{\expon_p}x_{p+1}^{\expon_{p+1}-1}$ is 0 by \eqref{loop_vanishing_lemma} when $(N, \expon_{p-1}) \neq (2,2)$. If $(N, \expon_{p-1}) = (2,2)$ then the correlator with $\gamma \delta$ is of type $\X{L}{4}$.\\

%

\noindent
{\bf Step 4b.}
Now assume $\mathbf{\K} = (0, \ldots, 0, -1, 1, \ldots, -1, 1, 1)$ and $p=N$.
Let $\alpha_1$ and $\beta_1$ be the factors of $\alpha$ and $\beta$ in $W_1$.
If $\mathbf{\K}\neq(0, \ldots, 0, 1),$ we claim that we can find monomials $\alpha_1' \propto \alpha_1$ and $\beta_1' \propto \beta_1$ ($\alpha_1'$ and $\beta_1'$ may not be in the standard basis) such that
\begin{align}
&\m'_i+\n'_i = \expon_i-1 \; \text{for}\; i<N-1 \notag\\
&(\m'_{N-1},\n'_{N-1}, \m'_N, \n'_N) = (\expon_{N-1},0, \expon_N-2, \expon_N-1) \label{loop5_fact1}
\intertext{and}
&x_N  \beta'_1 = 0. \label{loop5_fact2}
\end{align}
We find $\alpha_1'$ and $\beta_1'$ by analyzing the possibilities for $\mathbf{\K}$.

If $\mathbf{\K} = (-1,1, \ldots, -1,1,1)$ then
$$\mathbf{\m+\n} = (2\expon_1-2, 0, \ldots, 2 \expon_{N-2}-2, 0, 2\expon_N-2),$$ so
\[
\alpha_1 = \beta_1 = x_1^{\expon_1-1} x_3^{\expon_3-1} \ldots x_{N-2}^{\expon_{N-2}-1}x_N^{\expon_N-1}.
\]
Then $x_N  \beta_1 = 0$ by \eqref{loop_vanishing_lemma}. We may take $\beta_1' = \beta_1$ and
\[
\alpha_1\propto \alpha_1'=x_2^{\expon_2-1} x_4^{\expon_4-1} \cdots x_{N-3}^{\expon_{N-3}-1} x_{N-1}^{\expon_{N-1}}x_N^{\expon_N-2}\in\Jac{W_1^T}.
\]

If $\mathbf{\K} = (0, \ldots, 0, 0, -1, 1, \ldots, -1, 1, 1)$ then
$$\mathbf{\m+\n} = (\expon_1-1, \ldots, \expon_{r-1}-1, \expon_r, 2 \expon_{r+1}-2, 0, \ldots, 2 \expon_{N-2}-2, 0, 2\expon_N-3).$$
We can assume that
\[
\alpha_1 = x_1^{\m_1} \ldots x_{r_1}^{\m_{r-1}}x_r^{\m_r}x_{r+1}^{\expon_{r+1}-1} x_{r+3}^{\expon_{r+3}-1} \ldots x_{N-2}^{\expon_{N-2}-1} x_N^{\expon_N-2}.
\]
Since $\m_r + \n_r = \expon_r$, we know that $\m_r \geq 1$. Then we find that
\[
\alpha_1\propto x_1^{\m_1} \ldots x_{r-1}^{\m_{r-1}}x_r^{\m_r-1}x_{r+2}^{\expon_{r+2}-1} \ldots x_{N-3}^{\expon_{N-3}-1}x_{N-1}^{\expon_{N-1}} x_N^{\expon_N-2}.
\]
Secondly, in this case,
\[
x_N  \beta_1 = x_1^{\n_1} \ldots x_{r-1}^{\n_{r-1}}x_r^{\n_r}x_{r+1}^{\expon_{r+1}-1} x_{r+3}^{\expon_{r+3}-1} \ldots x_{N-2}^{\expon_{N-2}-1} x_N^{\expon_N}.
\]
Then
\[
x_N  \beta_1 \propto x_1^{\n_1} \ldots x_{r-1}^{\n_{r-1}}x_r^{\n_r-1}x_{r+2}^{\expon_{r+2}-1} \ldots x_{N-3}^{\expon_{N-3}-1}x_{N-1}^{\expon_{N-1}} x_N^{\expon_N}=0
\]
because it has a factor of $x_{N-1} x_N^{\expon_N}$. Thus we may take $\beta_1' = \beta_1$ and
\[
\alpha_1' = x_1^{\m_1} \ldots x_r^{\m_r-1}x_{r+2}^{\expon_{r+2}-1} \ldots x_{N-1}^{\expon_{N-1}} x_N^{\expon_N-2}.
\]

Having found monomials $\alpha_1'$ and $\beta_1'$ satisfying \eqref{loop5_fact1} and \eqref{loop5_fact2}, in the correlator $X$ replace $\alpha_1$ with $\alpha_1'$ in $\alpha$, and similarly with $\beta$. Let $\alpha' = \alpha/\alpha_1'  $ and $\beta' = \beta/\beta_1'  $.
Apply the Reconstruction Lemma \ref{reconstruction_lemma} with $\delta = x_N$, $\gamma = \beta$, $\epsilon = x_{N-1}$, and $\phi =  \alpha / \epsilon.$ Then the term with $\gamma  \epsilon$ has $\mathbf{\K} = (0, \ldots, 0, 1)$, as can be checked by computing $\mathbf{\m+\n}$ (note that all elements are already in the standard basis). The term with $\gamma  \delta$ vanishes by \eqref{loop5_fact2}. The final term with $\delta  \phi$ has the form
\[
\Fourptcorr{x_N}{ x_{N-1}}{x_1^{\m_1} \ldots x_{N-2}^{\m_{N-2}} x_{N-1}^{\expon_{N-1}-1} x_N^{\expon_N-1}  \alpha'}{\beta}.
\]
Apply the Reconstruction Lemma again with $\gamma = x_N$, $\delta = \beta$, $\epsilon = x_{N-1}x_N^{\expon_N-1}$, and $\phi = x_1^{\m_1} \ldots x_{N-2}^{\m_{N-2}} x_{N-1}^{\expon_{N-1}-2}  \alpha'.$ Then $\gamma  \epsilon$ has a factor equal to $x_{N-1} x_N^{\expon_N}$, so this correlator vanishes. The correlator with $\gamma  \delta$ also vanishes by \eqref{loop5_fact2}. Finally, by \eqref{loop5_fact1}, the correlator with $\delta  \phi$ is in the form of Lemma \ref{loop_corr_lemma}.\\

\noindent
{\bf Step 4c.}
Finally, we reconstruct correlators $X$ of type $\X{L}{3}$ with $\mathbf{\K} = (0, \ldots, 0, 0, 1)$ from correlators of final type. For such $X$, 
$$\mathbf{\m+\n} = (\expon_1-1, \ldots, \expon_{N-2}-1, \expon_{N-1}, 2\expon_N-3).$$
Thus both $\m_{N-1}$ and $\n_{N-1}$ are at least 1, and we can assume $\m_N=\expon_N-2$ and $\n_N = \expon_N-1$. If $\mathbf{\m} = (0, \ldots, 0, 1, \expon_N-2)$ we are done; otherwise there is some $i$ such that $\m_i$ is larger than it should be.

Apply the Reconstruction Lemma with $\delta = x_N$, $\gamma = \beta$, $\epsilon = x_i$, and $\phi =\alpha/\epsilon$.
Then $\gamma  \delta$ has a factor equal to $x_{N-1} x_N^{\expon_N}$, which is 0.
We can check that the correlator with $\gamma  \epsilon$ is of type $\X{L}{3}$ and $\mathbf{\K} = (0, \ldots, 0, 0, 1)$, but $\deg(\alpha) < \deg(\alpha_X)$.
So if we use the same reconstruction on this correlator, eventually the second form in the reconstruction will be
 $$\Fourptcorr{ x_i}{ x_i}{ x_{i-1}x_i^{\expon_i-2} \alpha}{ \Hessbase{W_1^T} \beta }, \quad \alpha, \beta \in \Jac{W^T-W_1^T}.$$
 This correlator can be reconstructed from correlators of type $\X{L}{4}$ by Lemma \ref{loop_final}.

The correlator containing $\delta  \phi$ equals
$$
\Fourptcorr{ x_N}{ x_i}{ x_1^{\m_1} \ldots x_i^{\m_i-1} \ldots x_{N-1}^{\m_{N-1}}x_N^{\expon_N-1}  \alpha'}{  x_1^{\n_1} \ldots x_{N-1}^{\n_{N-1}} x_N^{\expon_N-1}  \beta' }
$$
Here $\alpha'$ and $\beta'$ are the factors of $\alpha$ and $\beta$ not in $W_1$.
Apply the Reconstruction Lemma with $\gamma = x_N$, $\delta = \beta$, $\epsilon = x_{N-1}x_N^{\expon_N-1}$, and $\phi = \alpha/\epsilon$. Then $\gamma  \delta$ and $\gamma  \epsilon$ vanish because they have a factor equal to  $x_{N-1} x_N^{\expon_N}$. The final term with $\delta  \phi$ is in the form of Lemma \ref{loop_corr_lemma}.\qed

\appendix

\section{{Loop polynomial $W = x_1^{\expon_1} x_2 + x^{\expon_2} x_3 + \ldots + x_{N-1}^{\expon_{N-1}}x_N + x_N^{\expon_N}x_1, \expon_N=2$.}}\label{sec-exceptional}

For the loop polynomial $W = x_1^{\expon_1} x_2 + x^{\expon_2} x_3 + \ldots + x_{N-1}^{\expon_{N-1}}x_N + x_N^{\expon_N}x_1$, if $\expon_N=2$, then $X=\langle\agen_N,\agen_N,S_N,H\rangle$ is never concave. 
They can be classified into three exceptional families listed below.
For the first two families, we use WDVV equations to solve $X$ from concave correlators. For the last family, we apply a result of Gu\'er\'e \cite{Gu}.

We can use Lemma \ref{loop-comb} to compute the phases. 
Now we list all the cases as follows:
\begin{enumerate}
\item [{\bf Case 1.}]
$N =2$ and $\expon_{N-1}=2$. In this case, $\agen_N$ and $\agen_{N-1}$ are broad.

\item [{\bf Case 2.}]
$N=2$ and $\expon_{N-1}>2$. In this case, $\agen_N$ is broad.

\item [{\bf Case 3.}]
$N \geq 3$. In this case, $h_{1,+}^{(N-1)}\in (0,1)$ and $h_{1,-}^{(N-1)}\in (-2,-1)$. 
This implies
$\ell_{1,+}^{(N-1)}=0$ and $\ell_{1,-}^{(N-1)}=-2.$
A similar discussion using the normalization exact sequence as in Lemma \ref{lemma-chiodo} implies there is a singular curve $[C]\in \overline{\mathscr{W}}_{0,4}(\agen_N,\agen_N,S_N,H)$, such that
$H^0(C, \LL_{N-1}|_C)=\mathbb{C}$.
Thus the correlator is not concave.
\end{enumerate}

Now we compute the correlator $X=\langle\agen_N,\agen_N,S_N,H\rangle$ for each case as shown above. \\

\noindent
{\bf Case 1:} In this case $W=x_1^2 x_2+x_1 x_2^2$ and both $\agen_1$ and $\agen_2$ are broad. We recall that the mirror map $\mirror$ in \eqref{Krawitz-mirror-map} is given by
\begin{eqnarray*}
1=\mirror(1)=\lceil 1\; ; \; J_W\rfloor,  \quad J^2:=\mirror(x_1x_2)=\lceil 1\; ; \; J^{-1}_W\rfloor, \quad \theta_i=\mirror(x_i)=\lceil x_i\; ; \; 1\rfloor, \quad i=1,2.
\end{eqnarray*}
Since two variables $x_1$ and $x_2$ are symmetric in $W$, we only need to compute
$$X=\langle \theta_1, \theta_1, \theta_2, J^2\rangle= \langle \theta_1, \theta_2, \theta_2, J^2\rangle.$$
Both $\theta_1$ and $\theta_2$ are broad. It is very difficult for us to compute $X$ directly. However, all the correlators can still be determined by WDVV equations and the correlator
$$X_0:=\langle J^2, J^2, J^2, J^2, J^2, J^2, J^2\rangle.$$
We can check that $X_0$ is concave and
$$\deg\LL_1=\deg\LL_2=-3.$$
This correlator can be calculated by the Concavity Axiom using Theorem 1.1.1 in \cite{C}.
The computation of  $X_0$ is exactly the same as the computation of
$\langle J_{D_4}^2, J_{D_4}^2, J_{D_4}^2, J_{D_4}^2, J_{D_4}^2, J_{D_4}^2, J_{D_4}^2\rangle$ in the FJRW theory of a pair $(D_4=x_1^2 x_2+x_2^3, \{J_{D_4}\})$. The later is worked out in \cite{D_4}. Hence we get
$$X_0={2\over 27}.$$
By the Dimension Axiom and Integer Degrees Axiom on the B-side (see Lemma \ref{vanishing_lemma}), besides $X$ and $X_0$, all other possible nonvanishing primary correlators with at least four insertions are
\begin{eqnarray*}
\begin{split}
X_1&=\langle \theta_1, \theta_1, \theta_1, J^2\rangle=\langle \theta_2, \theta_2, \theta_2, J^2\rangle, \\
X_2&=\langle \theta_1, \theta_1, J^2, J^2, J^2\rangle=\langle \theta_2, \theta_2, J^2, J^2, J^2\rangle, \\
X_3&=\langle \theta_1, \theta_2, J^2, J^2, J^2\rangle.
\end{split}
\end{eqnarray*}
Since the pairing satisfies
$$\eta_{\theta_1,\theta_1}=\eta_{\theta_2,\theta_2}=-2, \quad \eta_{\theta_1,\theta_2}=1,$$
the inverse of the pairing matrix is
$$
\left(\eta^{\theta_i,\theta_j}\right)=
\left( \begin{array}{cc}
-{2\over3} & -{1\over3} \\
-{1\over3} & -{2\over3}
\end{array} \right).
$$
We apply Reconstruction Lemma \ref{reconstruction_lemma} to $X_1$ with $\gamma=\delta=\theta_2$, $\xi=\phi=\theta_1$. We find
$$
-2X_1=X+X-(-2X).
$$
Apply Reconstruction Lemma \ref{reconstruction_lemma} to $X_1, X_2$ and $X_3$, with $\gamma=\delta=J^2$, $\xi=\phi=\theta_1$. We get
\begin{eqnarray*}
\begin{split}
-2X_2&=2\left(-{2\over3}X_1^2-{1\over3}(2XX_1)-{2\over3}X^2\right),\\
-2X_3&=2\left(-{2\over3}XX_1-{1\over3}XX_1-{1\over3}X^2-{2\over3}X^2\right),\\
-2X_0&={4\choose2}\left(-{2\over3}X_2^2-{1\over3}(2X_2X_3)-{2\over3}X_3^2\right).
\end{split}
\end{eqnarray*}
Combine all the equations together, we get
\begin{equation}\label{loop-exc-22}
X_1=-2X, \quad X_2=2X^2, \quad X_3=-X^2, \quad X_0=6X^4.
\end{equation}
This implies
\begin{equation}\label{constant-c}
X={c\over3} \quad \text{for some fourth root of unity } c.
\end{equation}
Now the result follows by adjusting the mirror map $\mirror$ via
$$\mirror(x_i)=c^{-1}\lceil x_i\; ; \; 1\rfloor, \quad i=1,2$$
and adjusting the pairing similarly.\\

\noindent
{\bf Case 2:} If $N=2$, $\expon_N=2$, and $\expon_{N-1}>2$, then $W = x_1^{\expon}x_2+x_2^2x_1$ and $$(q_1,q_2) = \left({1\over 2\expon-1}, {\expon-1\over 2\expon-1}\right).$$
In this case $\Theta_2$ is broad, but the correlator $\langle \agen_1, \agen_1, \agen_1^{\expon-2}\agen_2, \agen_1^{\expon-1}\agen_2 \rangle$ is concave and we can apply Formula \eqref{correlator_formula}  to get
\begin{equation}\label{loop-exc-q1}
\langle \agen_1, \agen_1, \agen_1^{\expon-2}\agen_2, \agen_1^{\expon-1}\agen_2 \rangle=q_1.
\end{equation}
Next we will use reconstruction to compute the correlator $X = \langle \agen_2, \agen_2, \agen_1, \agen_1^{\expon-1}\agen_2 \rangle$. We notice that $W^T=W$, and under the Krawitz's map $\mirror$, the relations in $\Jac{W^T}$ become
\begin{equation}\label{jacobi-exc}
\expon \agen_1^{\expon-1}\agen_2+\agen_2^2=0, \quad  \agen_1^{\expon}+2\agen_1\agen_2=0.
\end{equation}
Now we apply the Reconstruction Lemma \ref{reconstruction_lemma} with $\gamma = \delta = \agen_2$, $\epsilon = \agen_1$, and $\phi = \agen_1^{\expon-2}\agen_2$. Then $\delta \star \phi = \agen_1^{\expon-2}\agen_2^2=0$ by \eqref{jacobi-exc} and our assumption that $\expon>2$. Then
\begin{eqnarray}\label{loop-exc-1}
\begin{split}
X&= \langle \agen_1, \agen_2, \agen_1^{\expon-2}\agen_2, \agen_1\agen_2 \rangle - \langle \agen_1, \agen_1, \agen_1^{\expon-2}\agen_2, \agen_2^2 \rangle\\
&=\langle \agen_1, \agen_2, \agen_1^{\expon-2}\agen_2, -{1\over2}\agen_1^{\expon} \rangle - \langle \agen_1, \agen_1, \agen_1^{\expon-2}\agen_2, -\expon\agen_1^{\expon-1}\agen_2 \rangle\\
&=-{1\over2}\langle \agen_1, \agen_2, \agen_1^{\expon-2}\agen_2, \agen_1^{\expon} \rangle +\expon q_1.
\end{split}
\end{eqnarray}
The second equality follows from \eqref{jacobi-exc} and the third equality is a consequence of the formula \eqref{loop-exc-q1}.
Now we apply the Reconstruction Lemma \ref{reconstruction_lemma} again to $\langle \agen_1, \agen_2, \agen_1^{\expon-2}\agen_2, \agen_1^{\expon} \rangle$ with $\delta = \agen_2$, $\gamma = \agen_1^{\expon-2}\agen_2$, $\epsilon = \agen_1$, and $\phi = \agen_1^{\expon-1}$. This time $\delta \star \gamma = 0$, and we find
\begin{eqnarray}\label{loop-exc-2}
\begin{split}
\langle \agen_1, \agen_2, \agen_1^{\expon-2}\agen_2, \agen_1^{\expon} \rangle
&=\langle \agen_1, \agen_1, \agen_1^{\expon-2}\agen_2, \agen_1^{\expon-1}\agen_2 \rangle + \langle \agen_1, \agen_2, \agen_1^{\expon-1}, \agen_1^{\expon-1}\agen_2 \rangle\\
&=q_1+\langle \agen_1, \agen_2, \agen_1^{\expon-1}, \agen_1^{\expon-1}\agen_2 \rangle.
\end{split}
\end{eqnarray}
Finally, we apply the Reconstruction Lemma \ref{reconstruction_lemma} to $\langle \agen_1, \agen_2, \agen_1^{\expon-1}, \agen_1^{\expon-1}\agen_2 \rangle$ with $\delta = \agen_2$, $\gamma = \agen_1^{\expon-1}\agen_2$, $\epsilon = \agen_1$, and $\phi = \agen_1^{\expon-2}$. Then $\gamma \star \delta = \agen_1^{\expon-1}\agen_2^2 = 0$ and $\epsilon \star \gamma = \agen_2\agen_1^{\expon}=0$ by \eqref{jacobi-exc}, so we get
\begin{equation}\label{loop-exc-3}
\langle \agen_1, \agen_2, \agen_1^{\expon-1}, \agen_1^{\expon-1}\agen_2 \rangle= \langle \agen_1, \agen_1, \agen_1^{\expon-2}\agen_2, \agen_1^{\expon-1}\agen_2 \rangle=q_1.
\end{equation}
Thus from \eqref{loop-exc-q1}, \eqref{loop-exc-1}, \eqref{loop-exc-2}, and \eqref{loop-exc-3}, we have deduced that
$$X = (\expon-1)q_1=q_2.$$

\noindent
{\bf Case 3:} In this case, $N \geq 3$ and $\expon_N=2$.
Now the correlator is not concave and Formula \eqref{correlator_formula} is not applicable directly. However, we can use the following techniques of Gu\'er\'e \cite{Gu} for computing the Polishchuk-Vaintrob virtual class. 
\begin{theorem}\cite{Gu}\label{jeremys_theorem}
Let $W$ be an invertible polynomial of atomic type. Let $Y$ be a correlator such that there is some index $i$ such that $H^0(C, \LL_i)=0$ for any geometric fiber $C$. Let $t(j)$ be the unique index such that $x_j^{\expon_{j}}x_{t(j)}$ is a monomial of $W$. Define
\begin{center}
\begin{tabular}{ll}
$\lambda_{t(j)} = \lambda_j^{-\expon_j},$ & if $H^0(C, \LL_i)\neq0$,\\
$\lambda_j = \lambda,$ & for every remaining index $j$.
\end{tabular}
\end{center}
Then the corresponding Polishchuk-Vaintrob virtual class $c_{\rm vir}(Y)$ in $H^*(\W{g}{k},\CC)$ is
\begin{equation}\label{jeremys_eq}
c_{\rm vir}(Y)= \lim_{\lambda \rightarrow 1}\left( \prod_{j=1}^N(1-\lambda_j)^{-\Ch_0(R\pi_*\LL_j)} \right) \exp\left( \sum_{j=1}^N\sum_{\ell \geq 1} s_\ell(\lambda_j)\Ch_\ell(R\pi_*\LL_j)\right),
\end{equation}
where $\Ch_\ell$ is the term of degree $\ell$ of the Chern character,
\[
s_\ell(x) = \frac{B_\ell(0)}{\ell} +(-1)^\ell\sum_{k=1}^\ell(k-1)!\left(\frac{x}{1-x} \right)^k\gamma(\ell,k),
\]
and $\gamma(\ell,k)$ is defined by the generating function
\[
\sum_{\ell \geq 0} \gamma(\ell,k) \frac{z^\ell}{\ell!}= \frac{(e^z-1)^k}{k!}.
\]
\end{theorem}
On the other hand, when all insertions in the correlator are narrow, Chang-Li-Li \cite{CLL} showed that the Polishchuk-Vaintrob and Fan-Jarvis-Ruan-Witten virtual classes are equal. Thus we can use Formula \eqref{jeremys_eq} to compute the correlator $X$ in {\bf Case 3}, where $t(j)=j+1$. According to \eqref{degree-computation}, on a generic fiber, the line bundle degrees are
$$\deg\LL_j=-1 \text{\ for\ } j<N, \quad \text{and}\ \deg\LL_N=-2.$$
Then $\Ch_0(R\pi_*\LL_j) = h^0(C, \LL_j)-h^1(C, \LL_j) = \deg(\LL_j)+1$ by Riemann-Roch. Thus $$\Ch_0(R\pi_*\LL_j)=0 \text{\ for\ } j<N, \quad \text{and}\ \Ch_0(R\pi_*\LL_N)=-1.$$
Also, since we are working on $\M{0}{4}$, by degree considerations the sum over $\ell$ has only the summand $\ell=1$, and the power series defined by the exponential terminates after the linear part. Thus, plugging in the function $s_j$, Formula \eqref{jeremys_eq} becomes
\begin{align*}
c_{\rm vir}(X) &= \lim_{\lambda \rightarrow 1} (1-\lambda_N) \left(1+ \sum_{j=1}^N \left(-\frac{1}{2}-\frac{\lambda_j}{1-\lambda_j}\right)\Ch_1(R\pi_*\LL_j)\right) \\
&=-\sum_{j=1}^N \lim_{\lambda \rightarrow 1} \frac{\lambda_j(1-\lambda_N)}{1-\lambda_j}\Ch_1(R\pi_*\LL_j),
\end{align*}
where $\lambda_N = \lambda^{-\expon_{N-1}}$ and $\lambda_j = \lambda$ for $j<N$. Because $\LL_j$ is concave and $\ld_j=-1$ for $j<N-1$, so $\Ch_1(R\pi_*\LL_j)=0$. Thus
\begin{eqnarray*}
c_{\rm vir}(X) &=&\left(- \lim_{\lambda \rightarrow 1} \frac{\lambda(1-\lambda^{-\expon_{N-1}})}{1-\lambda} \Ch_1(R\pi_*\LL_{N-1})-\Ch_1(R\pi_*\LL_N)\right)\\
&=&\expon_{N-1} \Ch_1(R\pi_*\LL_{N-1})-\Ch_1(R\pi_*\LL_{N}).
\end{eqnarray*}
As in the derivation of \eqref{correlator_formula}, we can apply Theorem 1.1.1 in \cite{C} to compute
\[
X =\expon_{N-1}(-q_{N-1}-2\rho_N^{(N-1)})-(-1+2\rho_N^{(N)}) =q_N.
\]

 \begin{bibdiv}
\begin{biblist}

\bib{A}{article}
{
author={P. Acosta},
title={FJRW-Rings and Landau-Ginzburg Mirror Symmetry in Two Dimensions},
journal={arXiv:0906.0970 [math.AG]}
}

\bib{Arnold-book}{book}
{
author={Arnold, V. I.},
author={Gusein-Zade, S. M.},
author={Varchenko, A. N.},
  title={Singularities of differentiable maps. Vol. I.},
   note={Monographs in Mathematics, 82.},
   publisher={Birkh\"auser Boston, Inc.,},
   place={Boston, MA,},
   date={1985},
   pages={iv+382},
}

\bib{BHe}{article}
{
author={P. Berglund and M. Henningson},
title={Landau-Ginzburg orbifolds, mirror symmetry and the elliptic genus},
journal={Nucl. Phys. B, 433(1995) 311-332.}
}

\bib{BH}{article}
{
author={P. Berglund and T. H\"ubsch},
title={A Generalized Construction of Mirror Manifolds},
journal={Nucl. Phys. B 393 (1993)
377-391.}
}

\bib{BCOV}{article}{
   author={Bershadsky, M.},
   author={Cecotti, S.},
   author={Ooguri, H.},
   author={Vafa, C.},
   title={Kodaira-Spencer theory of gravity and exact results for quantum
   string amplitudes},
   journal={Comm. Math. Phys.},
   volume={165},
   date={1994},
   number={2},
   pages={311--427},
}

\bib{Ce}{article}
{
    AUTHOR = {Cecotti, S.},
     TITLE = {{$N=2$} {L}andau-{G}inzburg vs.\ {C}alabi-{Y}au
              {$\sigma$}-models: nonperturbative aspects},
   JOURNAL = {Internat. J. Modern Phys. A},
  FJOURNAL = {International Journal of Modern Physics A. Particles and
              Fields. Gravitation. Cosmology. Nuclear Physics},
    VOLUME = {6},
      YEAR = {1991},
    NUMBER = {10},
     PAGES = {1749--1813},
      ISSN = {0217-751X},
   MRCLASS = {32G20 (14D07 32G81 32J17 32S35 81T40 81T60)},
  MRNUMBER = {1098365 (92h:32038)},
MRREVIEWER = {Tristan H{\"u}bsch},
}

\bib{CLL}{article}
{
author={Chang, H.-L.},
author={Li, J.},
author={Li, W.},
title={Witten's top Chern class via cosection localization},
journal={Invent. Math. 200 (2015), no. 3, 1015--1063}
}

\bib{C}{article}
{
author={Chiodo, A.},
 title={Towards an enumerative geometry of the moduli space of twisted
   curves and $r$-th roots},
   journal={Compos. Math.},
   volume={144},
   date={2008},
   number={6},
   pages={1461--1496},
}

\bib{CIR}{article}
{
author={Chiodo, A.},
author={Iritani, H.},
author={Ruan, Y.},
title={Landau-Ginzburg/Calabi-Yau correspondence, global mirror symmetry and Orlov equivalence},
journal={Publications math\'ematiques de l'IH\'ES.},
volume={119},
date={2013},
number={1},
pages={127-216},
}

\bib{CR}{article}
{
author={A. Chiodo},
author={Y. Ruan},
title={A global mirror symmetry framework for the Landau-Ginzburg/Calabi-Yau correspondence},
journal={Ann. Inst. Fourier (Grenoble) vol. 61, no. 7 (2011), 2803-2864}
}

\bib{D}{article}{
      author={Dubrovin, B.},
       title={Geometry of {$2$}{D} topological field theories},
        date={1996},
   booktitle={Integrable systems and quantum groups ({M}ontecatini {T}erme,
  1993), 120-348, Lecture Notes in Math., 1620, Springer, Berlin},
}

\bib{FSZ}{article}
{
author={Faber, C.},
author={Shadrin, S.},
author={Zvounkine, D.},
title={Tautological relations and the r-spin Witten conjecture},
journal={Ann. Sci. \'Ec. Norm. Sup\'er},
number={43},
year={2010},
page={621-658},
}
\bib{FJR}{article}
{  author={Fan, H.},
   author={Jarvis, T.},
   author={Ruan, Y.},
   title={The Witten equation, mirror symmetry, and quantum singularity
   theory},
   journal={Ann. of Math. (2)},
   volume={178},
   date={2013},
   number={1},
   pages={1--106},
}
\bib{FJR2}{article}
{
author={Fan, H.},
author={Jarvis, T.},
author={Ruan, Y.},
title={The Witten Equation and Its Virtual Fundamental Cycle},
journal={preprint at arxiv: 0712.4025[math.AG]}
}

\bib{D_4}{article}
{
author={Fan, H.},
author={Francis, A.},
author={Jarvis, T.},
author={Merrell, E.},
author={Ruan, Y.},
title={Witten's $D_4$ Integrable Hierarchies Conjecture},
journal={Chin. Ann. Math. Ser. B 37 (2016), no. 2, 175--192}
}
	
\bib{Get}{article}
{
author={Getzler, E.},
title={Intersection theory on $\overline{\mathcal{M}}_{1,4}$ and elliptic Gromov-Witten invariants.},
journal={J. Amer. Math. Soc.},
volume={10},
date={1997},
number={4},
pages={973--998}
}

\bib{Givental-equiv}{article}{
 author={Givental, A.},
   title={Equivariant Gromov–-Witten invariants},
 journal={Internat. Math. Res. Notices},
date={1996},
number={13},
   pages={613--663},
}

\bib{Givental-mirror}{article}{
   author={Givental, A.},
   title={A mirror theorem for toric complete intersections},
   conference={
      title={Topological field theory, primitive forms and related topics
      (Kyoto, 1996)},
   },
   book={
      series={Progr. Math.},
      volume={160},
      publisher={Birkh\"auser Boston},
      place={Boston, MA},
   },
   date={1998},
   pages={141--175},
}

\bib{G1}{article}
{
author={Givental, A.},
 title={Gromov-Witten invariants and quantization of quadratic Hamiltonians},
   note={Dedicated to the memory of I. G.\ Petrovskii on the occasion of his
   100th anniversary},
   journal={Mosc. Math. J.},
   volume={1},
   date={2001},
   number={4},
   pages={551--568, 645},
}

\bib{G2}{article}
{
author={Givental, A.},
 title={Semisimple Frobenius structures at higher genus},
      journal={Internat. Math. Res. Notices},
   date={2001},
   number={23},
   pages={1265-1286},
}

\bib{G3}{article}
{
 author={Givental, A.},
   title={Equivariant Gromov-Witten invariants},
   journal={Internat. Math. Res. Notices},
   date={1996},
   number={13},
   pages={613--663},
}

\bib{G-tutorial}{article}{
   author={Givental, A.},
   title={A tutorial on quantum cohomology},
   conference={
      title={Symplectic geometry and topology},
      address={Park City, UT},
      date={1997},
   },
   book={
      series={IAS/Park City Math. Ser.},
      volume={7},
      publisher={Amer. Math. Soc., Providence, RI},
   },
   date={1999},
   pages={231--264},
}

\bib{Gu}{article}
{
author={Gu\'er\'e, J.}
title={A Landau--Ginzburg mirror theorem without concavity},
journal={Duke Math. J. 165 (2016), no. 13, 2461--2527}
}

\bib{JKV}{article}
{
author={Jarvis, T.},
author={Kimura, T.},
author={Vaintrob, A.},
title={Moduli spaces of higher spin curves and integrable hierarchies},
journal={Compositio},
volume={126},
year={2001},
number={2},
pages={157-212},
}

\bib{mirror-book}{book}{
   author={Hori, K.},
   author={Katz, S.},
   author={Klemm, A.},
   author={Pandharipande, R.},
   author={Thomas, R.},
   author={Vafa, C.},
   author={Vakil, R.},
   author={Zaslow, E.},
   title={Mirror symmetry},
   series={Clay Mathematics Monographs},
   volume={1},
   publisher={American Mathematical Society},
   place={Providence, RI},
   date={2003},
}
	
\bib{KM}{article}
{
 author={Kontsevich, M.},
   author={Manin, Y.},
   title={Gromov-Witten classes, quantum cohomology, and enumerative
   geometry},
   journal={Comm. Math. Phys.},
   volume={164},
   date={1994},
   number={3},
   pages={525--562},
}

\bib{K}{article}
{
author={M. Krawitz},
title={FJRW rings and Landau-Ginzburg Mirror Symmetry},
journal={Ph.D. thesis, 
    University of Michigan, 
    2010.}
}

\bib{KS}{article}
{
author={M. Krawitz},
author={Y. Shen},
title={Landau-Ginzburg/Calabi-Yau Correspondence of all Genera for Elliptic Orbifold $\mathbb{P}^1$},
journal={preprint at arXiv: 1106.6270[math.AG]}
}

\bib{KreS}{article}
{
 author={Kreuzer, M.},
   author={Skarke, H.},
  title={On the classification of quasihomogeneous functions},
   journal={Comm. Math. Phys.},
   volume={150},
   date={1992},
   number={1},
   pages={137--147},
}

\bib{LLSaito}{article}
{
author={Li, C.},
author={Li, S.},
author={Saito, K.},
title={Primitive forms via polyvector fields},
journal={preprint  at arxiv: 1311.1659[math.AG]}
}

\bib{LLSS}{article}
{
 author={Li, C.},
   author={Li, S.},
   author={Saito, K.},
   author={Shen, Y.},
   title={Mirror symmetry for exceptional unimodular singularities},
   journal={J. Eur. Math. Soc. (JEMS)},
   volume={19},
   date={2017},
   number={4},
   pages={1189--1229},
}

\bib{Li-LG}{article}
{
  author={Li, S.},
   title={A mirror theorem between Landau-Ginzburg models},
   journal={Nuclear Phys. B},
   volume={898},
   date={2015},
   pages={707--714},
}

\bib{LLY}{article}{
   author={Lian, B-H.},
   author={Liu, K.},
   author={Yau, S-T.},
   title={Mirror principle. I},
   journal={Asian J. Math.},
   volume={1},
   date={1997},
   number={4},
   pages={729--763},
}

\bib{LLY2}{article}{
   author={Lian, B-H.},
   author={Liu, K.},
   author={Yau, S-T.},
   title={Mirror principle. II},
   journal={Asian J. Math.},
   volume={3},
   date={1999},
   number={1},
   pages={109--146},
}

\bib{LLY3}{article}{
   author={Lian, B-H.},
   author={Liu, K.},
   author={Yau, S-T.},
   title={Mirror principle. III},
   journal={Asian J. Math.},
   volume={3},
   date={1999},
   number={4},
   pages={771--800},
}

\bib{LLY4}{article}{
   author={Lian, B-H.},
   author={Liu, K.},
   author={Yau, S-T.},
   title={Mirror principle. IV},
   conference={
      title={Surveys in differential geometry},
   },
   book={
      series={Surv. Differ. Geom., VII},
      publisher={Int. Press, Somerville, MA},
   },
   date={2000},
   pages={475--496},
}

\bib{M}{article}
{
author={Milanov, T.},
title={Analyticity of the total ancestor potential in singularity theory},
journal={Advances in Math.},
volume={255},
date={2014},
pages={217--241},
}

\bib{MS}{article}
{
author={Milanov, T.},
author={Shen, Y.},
title={Global mirror symmetry for invertible simple elliptic singularities},
journal={Ann. Inst. Fourier (Grenoble) 66 (2016), no. 1, 271--330},
}

\bib{PV}{article}
{
author={Polishchuk, A.},
author={Vaintrob, A.},
title={Matrix factorizations and Cohomological Field Theories},
 journal={Journal f{\"u}r die reine und angewandte Mathematik (Crelles Journal)},
  volume={2016},
  number={714},
  pages={1--122},
  year={2016},
  publisher={De Gruyter}
}

\bib{Saito-quasihomogeneous}{article}{
   author={Saito, K.},
   title={Quasihomogene isolierte Singularit\"aten von Hyperfl\"achen},
   journal={Invent. Math.},
   volume={14},
   date={1971},
   pages={123--142},
}

\bib{Saito-simplyElliptic}{article}{
   author={Saito, K.},
   title={Einfach-elliptische Singularit\"aten},
   journal={Invent. Math.},
   volume={23},
   date={1974},
   pages={289--325},
}

\bib{Saito-universal}{article}{
      author={Saito, K.},
       title={Primitive forms for a universal unfolding of a function with an
  isolated critical point},
        date={1981},
        ISSN={0040-8980},
     journal={J. Fac. Sci. Univ. Tokyo Sect. IA Math.},
      volume={28},
      number={3},
       pages={775\ndash 792 (1982)},
}

\bib{Saito-primitive}{article}{
   author={Saito, K.},
   title={Period mapping associated to a primitive form},
   journal={Publ. Res. Inst. Math. Sci.},
   volume={19},
   date={1983},
   number={3},
   pages={1231--1264},
   issn={0034-5318},
}

\bib{Saito-residue}{incollection}{
      author={Saito, K.},
       title={The higher residue pairings {$K_{F}^{(k)}$} for a family of
  hypersurface singular points},
        date={1983},
   booktitle={Singularities, {P}art 2 ({A}rcata, {C}alif., 1981)},
      series={Proc. Sympos. Pure Math.},
      volume={40},
   publisher={Amer. Math. Soc.},
     address={Providence, RI},
       pages={441\ndash 463},
}

\bib{Saito-duality}{article}{
   author={Saito, K.},
   title={Duality for regular systems of weights},
   journal={Asian J. Math.},
   volume={2},
   date={1998},
   number={4},
   pages={983--1047},
}

 \bib{Saito-Takahashi}{article}{
    author={Saito, K.},
   author={Takahashi, A.},
   title={From primitive forms to Frobenius manifolds},
    conference={
       title={From Hodge theory to integrability and TQFT tt*-geometry},
    },
   book={
       series={Proc. Sympos. Pure Math.},
       volume={78},
       publisher={Amer. Math. Soc.},
       place={Providence, RI},
   },
    date={2008},
    pages={31--48},
 }

\bib{Saito-existence}{article}{
   author={Saito, M.},
   title={On the structure of Brieskorn lattice},
   journal={Ann. Inst. Fourier (Grenoble)},
   volume={39},
   date={1989},
   number={1},
   pages={27--72},
   issn={0373-0956},
}

\bib{Saito-uniqueness}{article}{
   author={Saito, M.},
   title={On the structure of Brieskorn lattices, II},
   journal={Journal of Singularities.},
   volume={18},
   date={2018},
   number={18},
   pages={248--271},
   issn={ },
}

\bib{T}{article}
{
  author={Teleman, C.},
   title={The structure of 2D semi-simple field theories},
   journal={Invent. Math.},
   volume={188},
   date={2012},
   number={3},
   pages={525--588},
}

\bib{Wa1}{article}
{
    AUTHOR = {Wall, C. T. C.},
     TITLE = {A note on symmetry of singularities},
   JOURNAL = {Bull. London Math. Soc.},
  FJOURNAL = {The Bulletin of the London Mathematical Society},
    VOLUME = {12},
      YEAR = {1980},
    NUMBER = {3},
     PAGES = {169--175},
      ISSN = {0024-6093},
     CODEN = {LMSBBT},
   MRCLASS = {32B30 (14B05 58C27)},
  MRNUMBER = {572095 (81f:32009)},
MRREVIEWER = {M. Sebastiani},
       URL = {http://dx.doi.org/10.1112/blms/12.3.169},
}

\bib{Wa2}{article}
{
    AUTHOR = {Wall, C. T. C.},
     TITLE = {A second note on symmetry of singularities},
   JOURNAL = {Bull. London Math. Soc.},
  FJOURNAL = {The Bulletin of the London Mathematical Society},
    VOLUME = {12},
      YEAR = {1980},
    NUMBER = {5},
     PAGES = {347--354},
      ISSN = {0024-6093},
     CODEN = {LMSBBT},
   MRCLASS = {58C27 (32B30)},
  MRNUMBER = {587705 (81j:58022)},
MRREVIEWER = {P. Orlik},
       URL = {http://dx.doi.org/10.1112/blms/12.5.347},
}

\bib{W}{article}
{
author={Witten, E.},
 title={Algebraic geometry associated with matrix models of
   two-dimensional gravity},
   conference={
      title={Topological methods in modern mathematics},
      address={Stony Brook, NY},
      date={1991},
   },
   book={
      publisher={Publish or Perish},
      place={Houston, TX},
   },
   date={1993},
   pages={235--269},
}

\end{biblist}
\end{bibdiv}

\end{document}